\newcommand{\mycolim}[1]{\mathbin{\operatorname*{colim}_{#1}^{}}}
\newcommand{\myhocolim}[1]{\mathbin{\operatorname*{hocolim}_{#1}^{}}}
\newcommand{\myotimes}[1]{\mathbin{\operatorname*{\otimes}_{#1}^{}}}
\renewcommand\Huge{\@setfontsize\Huge{19pt}{18}}
\renewcommand\huge{\@setfontsize\huge{30pt}{18}}
    \theoremstyle{plain}
\DeclareMathOperator{\Conf}{Conf}
\DeclareMathOperator{\Tor}{Tor}
\DeclareMathOperator{\Alg}{Alg}
\DeclareMathOperator{\tr}{tr}
\DeclareMathOperator{\Sym}{Sym}
\DeclareMathOperator{\fib}{fib}
\DeclareMathOperator{\Tot}{Tot}
\DeclareMathOperator{\Sp}{Sp}
\DeclareMathOperator{\f}{f}  
\DeclareMathOperator{\odd}{odd}  
\DeclareMathOperator{\even}{even}  
\DeclareMathOperator{\fff}{ff}
\DeclareMathOperator{\pff}{pff}
\DeclareMathOperator{\LAA}{\mathbf{L}}
\DeclareMathOperator{\triv}{T}
\DeclareMathOperator{\Comm}{Comm}
\DeclareMathOperator{\Susp}{Susp}
\DeclareMathOperator{\GL}{GL}
\newcommand{\HH}{H}   
    \theoremstyle{plain}
    \newtheorem{theorem}{Theorem}[section]
    \newtheorem{lemma}[theorem]{Lemma}
\newtheorem{proposition}[theorem]{Proposition} 
\newtheorem{corollary}[theorem]{Corollary}
    \newtheoremstyle{TheoremNum}
        {}{}              
        {\itshape}                      
        {}                              
        {\bfseries}                     
        {.}                             
        { }                             
        {\thmname{#1}\thmnote{ \bfseries #3}}
    \theoremstyle{TheoremNum}
    \newtheorem{theoremn}{Theorem}
     \newtheoremstyle{DefinitionNum}
        {}{}              
        {\itshape}                      
        {}                              
        {\bfseries}                     
        {.}                             
        { }                             
        {\thmname{#1}\thmnote{ \bfseries #3}}
    \theoremstyle{DefinitionNum}
    \newtheoremstyle{LemmaNum}
        {}{}              
        {\itshape}                      
        {}                              
        {\bfseries}                     
        {.}                             
        { }                             
        {\thmname{#1}\thmnote{ \bfseries #3}}
  \newtheoremstyle{CorollaryNum}
        {}{}              
        {\itshape}                      
        {}                              
        {\bfseries}                     
        {.}                             
        { }                             
        {\thmname{#1}\thmnote{ \bfseries #3}}
  \newtheoremstyle{PropositionNum}
        {}{}              
        {\itshape}                      
        {}                              
        {\bfseries}                     
        {.}                             
        { }                             
        {\thmname{#1}\thmnote{ \bfseries #3}}
\newtheorem{warning}{Warning}
\newtheorem*{Inequality Lemma}{Inequality Lemma}
\newtheorem*{Compactness Lemma}{Compactness Lemma}
\newtheorem*{Lower Bound Theorem}{Lower Bound Theorem}
\newtheorem*{Upper Bound Theorem}{Upper Bound Theorem}
\newtheorem*{Non-Classifiability Theorem}{Non-Classifiability Theorem}
\newtheorem*{Essential Membership Theorem}{Essential Membership Theorem}
\newtheorem*{Generic Word Theorem}{Generic Word Theorem}
\newtheorem*{Complexity Theorem}{Complexity Theorem}
\theoremstyle{definition}
\newtheorem{definition}[theorem]{Definition}
\newtheorem{remark}[theorem]{Remark}
\newtheorem{example}[theorem]{Example}
\newtheorem{construction}[theorem]{Construction}
\newtheorem{notation}[theorem]{Notation}
\theoremstyle{remark}
\newcommand{\CE}{\mathrm{CE}}
\DeclareMathOperator{\Hom}{Hom}
\DeclareMathOperator{\sMod}{sMod}
\DeclareMathOperator{\Lie}{Lie} 
\DeclareMathOperator{\C}{C}
\DeclareMathOperator{\D}{D}
\DeclareMathOperator{\Q}{Q}
\DeclareMathOperator{\U}{U}
\DeclareMathOperator{\LQ}{\mathbb{L}Q}
\DeclareMathOperator{\sLie}{sLie}
\newcommand{\NN}{\mathbb{N}}
\newcommand{\AAA}{\mathbf{A}^{\mathcal{H}_u}}
\newcommand{\ZZ}{\mathbb{Z}}
\newcommand{\EE}{\mathbb{E}}
\newcommand{\QQ}{\mathbb{Q}}
\newcommand{\RR}{\mathbb{R}}
\newcommand{\FF}{\mathbb{F}}
\newcommand{\GG}{\mathbb{G}}
\newcommand{\DD}{\mathbb{D}}
\newcommand{\CC}{\mathbb{C}}
\newcommand{\LL}{\LAA^{\mathcal{H}_u}}
\newcommand{\TT}{\mathbf{T}}
\DeclareMathOperator{\Fun}{Fun}
\DeclareMathOperator{\AR}{AR}
\DeclareMathOperator{\Free}{Free}
\DeclareMathOperator{\Ch}{Ch}
\DeclareMathOperator{\Ext}{Ext}
\DeclareMathOperator{\id}{id}
\DeclareMathOperator{\Map}{Map}
\DeclareMathOperator{\Mod}{Mod}
\DeclareMathOperator{\grMod}{grMod}
\DeclareMathOperator{\Barr}{Bar}
\newcommand{\HLie}{H^{\Lie}}
\newcommand{\HHLie}{H^{\Lie_{\mathcal{H}_u}}}
\title{The Lubin--Tate Theory of Configuration Spaces: I }
\date{} 
\begin{document} 
\author[D. Lukas B. Brantner]{D. Lukas B. Brantner }
\address{Lukas Brantner, Oxford University,   Universit\'{e} Paris--Saclay (CNRS)} 
\email{lukas.brantner@maths.ox.ac.uk, lukas.brantner@universite-paris-saclay.fr}

\author[Jeremy Hahn]{Jeremy Hahn }
\address{Jeremy Hahn, Massachusetts Institute of Technology}
\email{jhahn01@mit.edu}

\author[Ben Knudsen]{Ben Knudsen }
\address{Ben Knudsen, Northeastern University}
\email{b.knudsen@northeastern.edu}

\maketitle 
\begin{abstract} We construct a spectral sequence converging to the
Lubin--Tate theory, i.e. Morava  $E$-theory, of unordered configuration spaces and identify its ${\mathrm{E}^2}$-page as the homology of a Chevalley--Eilenberg-like complex for Hecke Lie algebras.
Based on this, we compute the $E$-theory of the weight $p$ summands of iterated loop spaces of spheres (parametrising the weight $p$  operations on $\EE_n$-algebras), as well as the $E$-theory 
 of the configuration spaces of $p$ points on  a punctured  surface.  We  read off the corresponding Morava $K$-theory groups, which appear in a  conjecture by Ravenel. Finally, we compute the $\FF_p$-homology of the space of unordered configurations of $p$ particles  on a punctured surface.
\end{abstract}\ \vspace{145pt} 
\tableofcontents

\newpage
\section{Introduction} \label{sec:Introduction}
Configuration spaces govern several  objects in mathematics and theoretical physics, ranging from $\EE_n$-algebras in topology to  Hurwitz spaces in  geometry and phase spaces in mechanics. 

The {ordered configuration space} of $k$ non-colliding particles in a manifold  $M$ is given by  \[\Conf_k(M)=\left\{(x_1,\ldots,x_k)\in M^k\ | \ x_i \neq x_j \mbox{ for all } i\neq j\right\},\] and the corresponding  {unordered} configuration space is defined as  $B_k(M)=\Conf_k(M)/\Sigma_k$. Here the symmetric group $\Sigma_k$ acts by permuting the particles.

It is a classical challenge to compute the homology groups of configuration spaces for various \mbox{manifolds $M$.}
When $M=\RR^n$ is $n$-dimensional Euclidean space, these groups are of particular significance as they classify Dyer--Lashof  {operations acting on the homology of $\EE_n$-algebras}, which are a valuable tool in computations \cite{kudo1956topology, browder1960homology,dyer1962homology, CohenLadaMay:HILS}. Through the Snaith splitting and its generalisations, this problem is closely  connected to the equally classical study of the homology of iterated loop spaces and other mapping spaces (cf. \cite{snaith1974stable,CohenMayTaylor:SCSC}). 

The rational homology groups of configuration spaces are well-understood in many cases of interest (cf. \cite{BoedigheimerCohenTaylor:OHCS, FelixThomas:RBNCS, MR1404924}). The mod $p$ homology groups are very computable 
when either $p=2$ or $M$ is an odd-dimensional   manifold (cf. \cite{BoedigheimerCohenTaylor:OHCS, MR975092, MR1240884}), but they remain mysterious in general, in particular when $M$ is a surface.

Knowledge is also scarce for \textit{generalised} homology theories. The simplest example of such a theory is given by complex $K$-theory, a classical invariant measuring the ``twistedness'' of a space through its complex vector bundles.
Vector bundles on configuration spaces are of particular interest in theoretical physics, where they connect to the Knizhnik--Zamolodchikov equations in conformal field theory (cf. \cite{etingof1998lectures}).

Chromatic homotopy theory provides, for every natural number $h$ and every prime $p$, two distinct generalisations of complex $K$-theory. The first is known as \textit{Morava $K$-theory} $K(h)$. 
Its value on a point is given by $K(h)_\ast \cong \FF_p[u^{\pm 1}]$, and it behaves in several ways like a field interpolating between $\QQ$ and $\FF_p$.
The second is the more refined \textit{Morava $E$-theory} $E=E_h$, also known as Lubin--Tate theory, which is a highly structured analogue of Lubin--Tate space in number theory satisfying $E_\ast  \cong W({\FF}_p)[[u_1,\ldots,u_{h-1}]][\beta^{\pm 1}]$ for $|\beta| = 2$. For $h=1$, Morava $E$-theory recovers $p$-completed complex $K$-theory, whereas Morava $K$-theory is a variant of  \mbox{complex $K$-theory mod $p$.}

\begin{wrapfigure}{r}{0.5\textwidth}
  \begin{center} \vspace{-0pt} \hspace{-10pt}
    \includegraphics[width=0.5\textwidth]{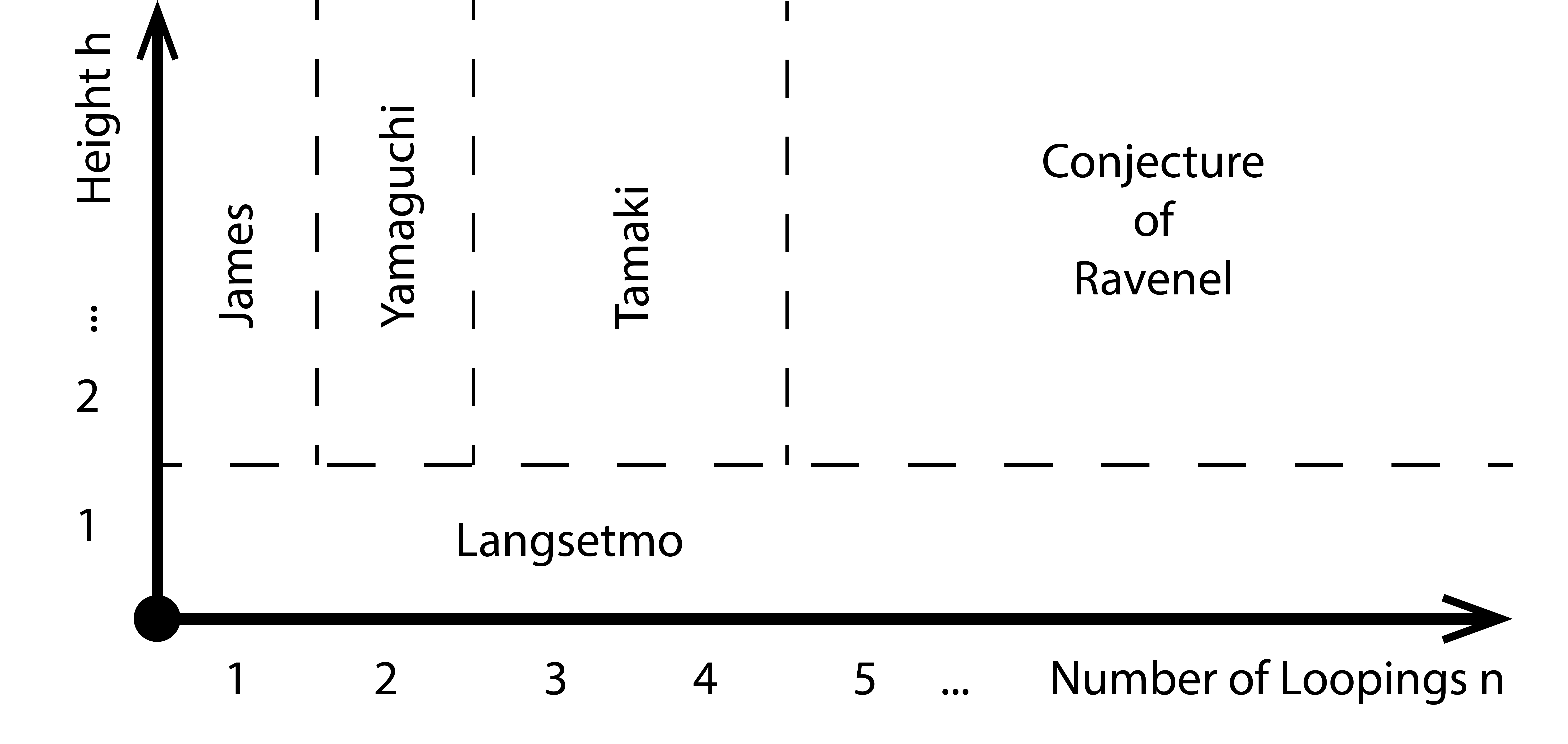}
  \end{center} 
\end{wrapfigure}The Morava $K$-theory of 2-fold, 3-fold, and even 4-fold loop spaces of spheres at arbitrary height $h$ has been computed by  Yamaguchi \cite{yamaguchi1988moravak} and Tamaki \cite{tamaki2002fiber}.  

\mbox{At height  $h$\hspace{2pt}=\hspace{2pt}$1$, the  Morava $K$-theory of $n$-} fold loop spaces of spheres 
has \mbox{been determined} by Langsetmo \cite{langsetmo1993k}, who combined an equivalence of Mahowald--Thompson  \cite{MR1153241} with McClure's computation of the \mbox{$K$-theory}  \mbox{of spaces $QX$ \cite[Chapter 9]{MR836132}.}

\mbox{For general $n$ and $h$, there is a conjectural description due to Ravenel \cite[Conjecture 3]{ravenel1998we}.}

As for $E$-theory, Langsetmo has essentially solved the height $h=1$ case \cite{MR1397734}, i.e.\ computed the \mbox{$p$-completed} complex $K$-theory of $n$-fold loop spaces of spheres for all $n$. Ravenel has formulated a conjecture concerning the $E$-homology of $\Omega^2 S^3$ \cite[Conjecture 3.4]{MR1199020} at general heights, which Goerss has linked to the theory of Dieudonn\'{e} modules (cf. \cite{MR1718079}).

In this work, we will introduce a new method for calculating the Morava $E$-theory of configuration spaces,  and apply it to perform several new computations.  Setting the height $h$ to $1$, we obtain new results about the $p$-adic $K$-theory of configuration spaces.  By taking the limit as $h$ tends to $\infty$, we additionally deduce new results about the classical, mod $p$ homology of configuration spaces of surfaces.

\subsection{Statement of Results}
The main tool used in this work is a convergent spectral sequence, together with an algebraic identification of its $\mathrm{E}^2$-page as the homology of an explicit complex. The identification of this chain complex can be viewed as our main new theoretical result. \vspace{-1pt}

The spectral sequence arises from a connection between configuration spaces and Lie algebras explored by the third author in \cite{knudsen2016higher}. Motivated by the work of Beilinson--Drinfeld on chiral algebras \cite{BeilinsonDrinfeld:CA}, as generalised and reinterpreted by Francis--Gaitsgory in \cite{FrancisGaitsgory:CKD}, this connection takes the form of an adjunction between $\mathbb{E}_n$-algebras and  {spectral Lie algebras} (in the sense of Salvatore \cite{salvatore1998configuration} and Ching \cite{ching2005bar}). The existence of this adjunction allows one to interpret the configuration spaces of $\mathbb{R}^n$ as a kind of universal enveloping algebra. Combining a version of the Poincar\'{e}--Birkhoff--Witt theorem with the theory of factorization homology \cite{AyalaFrancis:FHTM}, one obtains  a formula expressing the stable homotopy types of configuration spaces of manifolds $M$ in terms of the  {Lie algebra homology} $C^{\mathcal{L}}$ of related spectral Lie algebras.

When $M$ is  a framed manifold and $X$ is any spectrum, this   equivalence takes the form\vspace{0pt}
$$B(M;X) = \bigoplus_k B_k(M;X) \  \xrightarrow{ \ \ \simeq \ \ }  \ C^{\mathcal{L}}(\Free_{\Lie}(\Sigma^{n-1} X)^{M^+})\vspace{-2pt}, $$
where  $B_k(M;X):=\Sigma_+^\infty \Conf_k(M)\otimes_{\Sigma_k}X^{\otimes k}$.
Here $\Free_{\Lie}(\Sigma^{n-1} X)^{M^+}$ denotes   the  spectral Lie algebra of maps from the one-point compactification of $M$ to the free spectral Lie algebra  $\Free_{\Lie}(\Sigma^{n-1} X)$ on $\Sigma^{n-1} X$  (cf. Theorem \ref{thm:conf bar} below). 
As the functor $C^{\mathcal{L}}$ can be computed by a simplicial spectrum, we obtain a  spectral sequence converging to the $E$-theory of configuration spaces.
In good cases, we can identify its $\mathrm{E}^2$-page  with the derived abelianisation 
$\HH^{\Lie^{\mathcal{H}_u}}\hspace{-1pt}\left(\mathfrak{g}(M;X)\right)$
of the    {(unshifted)} Hecke Lie algebra\footnote{We will deviate in our grading conventions from \cite{brantnerthesis} and consider an unshifted variant of Hecke Lie algebras. To remind the reader of this minor difference, we will use the letter $\mathcal{H}_u$ instead of $\mathcal{H}$ throughout.} \vspace{-2pt} $$\mathfrak{g}(M;X) := E^\wedge_\ast(\Free_{\Lie}(\Sigma^{n-1} X)^{M^+}).\vspace{-2pt}$$

\textit{Hecke Lie algebra}s are  purely algebraic objects, which were introduced by the first author in order to describe the natural operations acting on the  $E$-theory of  spectral Lie algebras (cf. \cite[Theorem 4.4.4]{brantnerthesis}). Roughly speaking, Hecke Lie algebras are  Lie algebras in $E_*$-modules, equipped with an additive action by the cohomology $\Ext^\ast_{\Gamma}(\overline{E}_0, \overline{E}_0)$ of Rezk's ring $\Gamma$, which is in turn   closely related to the Hecke algebra of $\GL_n(\ZZ_p)$, cf. \cite{rezk2009congruence}, \cite[Section 14]{rezk2006units}. At the prime $p=2$, there are additional non-additive operations   witnessing certain  congruences.
To make this definition precise, one must keep careful track of  the way in which  operations compose, which is somewhat subtle as they lower homological degree. We refer to  \cite[Definition 4.4.2]{brantnerthesis} for a precise definition.\vspace{-1pt}

When working at an odd prime $p$, which we fix throughout this paper, the definition of Hecke Lie algebras simplifies significantly, and this simplification is recorded for the reader's convenience as  Definition \ref{HLA} below.  In this case, we construct an analogue $\CE_{\mathcal{H}_u}$ of the classical Chevalley--Eilenberg complex (cf. Definition \ref{def:hecke chevalley--eilenberg})  by first killing the additive operations in a derived fashion, and then taking the derived abelianisation of the resulting Lie algebra. In good cases, this complex computes Hecke Lie algebra homology---Theorem \ref{thm:hecke chevalley--eilenberg works} below. 

Combining these  observations, we arrive at the following result:\vspace{-2pt}
  
\begin{theoremn}[\ref{thm:main}] {\normalfont (Hecke spectral sequence)}
Let $M$ be a framed $n$-manifold and $X$ a spectrum, and suppose that the  Hecke Lie algebra $\mathfrak{g}(M;X):=E_*^{\wedge} (\Free_{\Lie}(\Sigma^{n-1}X)^{M^+} )$ is a finite and free $E_\ast$-module in each weight.  
There is a convergent weighted spectral sequence 
\[\mathrm{E}^2_{s,t} \cong H_{s+1}(\CE_{{\mathcal{H}_u}}\left(\mathfrak{g}(M;X))\right)_{t-1} \implies \bigoplus_{k\geq0} E_{s+t}^{\wedge}(B_k(M;X)).\]
\end{theoremn}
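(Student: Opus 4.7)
The plan is to chain together the geometric equivalence from Theorem \ref{thm:conf bar}, the bar resolution computing spectral Lie algebra homology, and the algebraic identification of Theorem \ref{thm:hecke chevalley--eilenberg works}. The output filtration comes for free from the weight grading, since everything in sight decomposes as a sum over weights $k$ and each weight piece is bounded.

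First I would apply Theorem \ref{thm:conf bar} to reduce the target to $E$-theory of a spectral Lie algebra homology object. Explicitly, the equivalence
\[\bigoplus_k \Sigma^{\infty}_+ \Conf_k(M) \myotimes{\Sigma_k} X^{\otimes k} \ \simeq \ C^{\mathcal{L}}\bigl(\Free_{\Lie}(\Sigma^{n-1} X)^{M^+}\bigr)\]
replaces the left-hand side of the spectral sequence by a single computable object. Passing to unordered configurations, $B_k(M;X)$, is built into this identification on the left.

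Next I would represent $C^{\mathcal{L}}$ by an explicit simplicial spectrum, namely the bar construction for spectral Lie algebras (the dual of Ching's cobar construction, which models $C^{\mathcal{L}}$ as the homotopy colimit of the standard simplicial resolution $|\mathrm{Bar}_\bullet(\Free_{\Lie}(\Sigma^{n-1}X)^{M^+})|$). Applying $E$-homology skeleton-by-skeleton yields the standard spectral sequence of a simplicial spectrum, with $E^1$-page computed from the $E$-homology of each simplicial degree and total degree $s+t$ converging to $E^{\wedge}_{s+t}$ of the geometric realization. Because the bar filtration respects the weight grading (each simplicial degree is a weight-graded smash power with $\Sigma_k$-actions), the whole spectral sequence splits along weight $k$, and the finite-free hypothesis ensures that each weight-$k$ piece is a bounded finitely generated $E_*$-module at each page; this is what gives convergence.

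The heart of the proof is the identification of the $\mathrm{E}^2$-page, and this is where I expect the main obstacle. Under the finite-free assumption on $\mathfrak{g}(M;X)$, the Künneth theorem for Morava $E$-theory lets one compute $E^{\wedge}_*$ of each simplicial level of $\mathrm{Bar}_\bullet(\Free_{\Lie}(\Sigma^{n-1}X)^{M^+})$ as an algebraic smash power of $\mathfrak{g}(M;X)$. Identifying the resulting bicomplex with the algebraic bar construction for $\mathfrak{g}(M;X)$ as a Hecke Lie algebra is the delicate step, because one must verify that all of the structure maps, including the Hecke operations induced by the $E$-theory Dyer--Lashof-type structure on bar levels, agree with those in the algebraic bar complex; the grading conventions (the $s+1$ and $t-1$ shifts in the statement come from the fact that $C^{\mathcal{L}}$ is modelled by the reduced bar construction, which introduces a suspension in homological and a unit shift in weight) must be tracked carefully. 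Once this is done, Theorem \ref{thm:hecke chevalley--eilenberg works} identifies the homology of the resulting algebraic bar complex with the Chevalley--Eilenberg homology $H_{s+1}(\CE_{\mathcal{H}_u}(\mathfrak{g}(M;X)))_{t-1}$, completing the proof.
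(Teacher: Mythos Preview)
Your outline is essentially the paper's proof: apply Theorem \ref{thm:conf bar}, take the skeletal filtration of the bar construction to get a spectral sequence, and identify the $\mathrm{E}^2$-page with Hecke Lie algebra homology via Theorem \ref{thm:hecke chevalley--eilenberg works}. Two points deserve sharpening.

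First, the identification of the $\mathrm{E}^1$-page is not a K\"unneth statement. The $n$th simplicial level of $\mathrm{Bar}_\bullet(\id,\mathscr{L},\Free^{\mathscr{L}}(\Sigma^{n-1}X)^{M^+})$ is an \emph{iterated} free spectral Lie algebra $\mathscr{L}^{\circ n}(-)$, not a smash power, so computing its $E$-homology requires knowing $\pi_*(\Free^{\mathscr{L}_E}(M))$ for $M$ finite free in each weight. This is precisely Corollary \ref{finitefree} (the weighted refinement of Theorem \ref{thm:brantner main}): it identifies $\pi_*\circ\mathscr{L}_E$ with the algebraic monad $\mathbf{L}^{\mathcal{H}_u}$ on finite free inputs, and repeated application converts the spectral bar construction into $\mathrm{Bar}_\bullet(\id,\mathbf{L}^{\mathcal{H}_u},\mathfrak{g}(M;X))$ on homotopy. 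Once that is in place, the face maps automatically agree (they are induced by the monad structure on both sides), so the ``delicate step'' you flag is already packaged into Corollary \ref{finitefree}. The chain of isomorphisms then runs through $\mathbb{L}\Q_{\Lie_{\mathcal{H}_u}}^{\Mod_{E_*}}$ via Lemma \ref{lem:hecke bar constructions} before invoking Theorem \ref{thm:hecke chevalley--eilenberg works}.

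Second, convergence in the paper does not use the finite-free hypothesis; it follows from the general fact that the homotopy spectral sequence of any bounded-below filtered spectrum converges. The finite-free hypothesis is used only to make Corollary \ref{finitefree} apply.
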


\begin{remark}
We will construct this spectral sequence for any  {form} of Morava $E$-theory, meaning any Morava $E$-theory associated to a formal group over a perfect field of \mbox{characteristic $p>2$.}
\end{remark}

In \Cref{Euclid}, we apply this result to compute the completed $E$-homology of the  $p^{th}$ Snaith summand of  $\Omega^nS^k$ for all $n, k$ at arbitrary chromatic height $h$, and establish the \mbox{following result:}

\begin{theoremn}[\ref{prop:general calculations}] \normalfont{($E$-theory, Euclidean case).}
Let $E$ be a Morava $E$-theory  at  an odd  \mbox{prime $p$.}
For any positive integer $n$ and integer $k$, the $E_*$-module $E_*^\wedge \left(\Conf_p(\RR^n)_+ \otimes_{h\Sigma_p} (S^k)^{\otimes p} \right)$ is  given by one of the following $E_\ast$-modules:
$$ E^\wedge_\ast( B_p(\RR^n; S^k))\cong    \begin{cases}
  \ \ \ \      \Sigma^{kp}E_\ast   \oplus \Sigma^{pk+n-1} E_*  \oplus \Sigma^{k-1} E^\ast(B\Sigma_p)/( \tr, e^{\frac{n}{2}-1} )    & \mbox{for } n \mbox{   even, } k \mbox{   even}\\
  \ \ \ \  \Sigma^{k-1} E^\ast(B\Sigma_p)/( \tr, e^{ \frac{n}{2}  }  )& \mbox{for } n \mbox{ even, } k \mbox{  odd}\\
  \ \ \ \  \Sigma^{kp}E_\ast  \oplus \Sigma^{k-1} E^\ast(B\Sigma_p)/(\tr, e^{ \frac{n-1}{2} })& \mbox{for } n \mbox{  odd, } k \mbox{  even}\\
  \ \ \ \  \Sigma^{k+ (2k+n-1)(\frac{p-1}{2})} E_\ast  \oplus  \Sigma^{k-1} E^\ast(B\Sigma_p)/(\tr, e^{  \frac{n-1}{2} })& \mbox{for } n \mbox{  odd, } k \mbox{  odd}
\end{cases} $$ 
  
Here $(\tr)$ denotes the transfer ideal associated to the inclusion of the trivial group, whereas $e\in E^0(B\Sigma_p)$ is the Euler class of the reduced complex standard representation.
\end{theoremn}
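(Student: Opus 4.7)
The plan is to apply the Hecke spectral sequence of Theorem \ref{thm:main} with $M = \RR^n$, equipped with its canonical framing, and $X = S^k$, and then to extract the weight-$p$ summand of the abutment, which computes $E_*^\wedge(\EE_n(S^k)(p))$. Since $(\RR^n)^+ = S^n$, the input Hecke Lie algebra is
\[
\mathfrak{g}(\RR^n; S^k) \;\simeq\; E_*^\wedge\!\left(\Sigma^{-n}\Free_{\Lie}(\Sigma^{n-1+k}\SSS)\right),
\]
whose weight-$j$ piece identifies with $E_*^\wedge\!\left(\Sigma^{-n}\bigl(\Lie(j)\otimes_{h\Sigma_j}\Sigma^{j(n-1+k)}\SSS\bigr)\right)$. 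In particular, in weight $1$ the Hecke Lie algebra is generated by a single class $x$ of degree $k-1$, and in weight $p$ its underlying $E_*$-module is controlled by the $\Sigma_p$-spectrum $\Lie(p)$.

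The first substantial step is to compute the $E$-homology of the homotopy quotient $(\Lie(p))_{h\Sigma_p}$, together with its module structure over the additive Hecke operations. Using the established relationship between the spectral Lie operad at $p$ and the (mod-$\Sigma_p$) partition complex, this $E$-homology may be expressed in terms of $E^*(B\Sigma_p)$, the transfer ideal $(\tr)$ associated to the inclusion $\{e\}\hookrightarrow\Sigma_p$, and the Euler class $e\in E^0(B\Sigma_p)$ of the reduced complex standard representation. The parity of $n$ controls how the sign representation of $\Sigma_p$ twists the Lie representation, which in turn fixes the exponent of $e$ appearing in the quotient module.

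Next, I compute the weight-$p$ part of the Chevalley--Eilenberg complex $\CE_{\mathcal{H}_u}(\mathfrak{g}(\RR^n; S^k))$ of Definition \ref{def:hecke chevalley--eilenberg}. Because the Hecke Lie algebra is generated in weight $1$ by a single class, derived abelianisation in weight $p$ amounts to quotienting the (degree-shifted) Hecke operation module $E^*(B\Sigma_p)\cdot x$ by the image of the transfer and by the relations identifying certain powers of $e$ with iterated brackets of $x$; the surviving quotients $\Sigma^{k-1}E^*(B\Sigma_p)/(\tr, e^j)$ are precisely those appearing in the statement. The additional free summands of the form $\Sigma^{kp}E_*$ or $\Sigma^{k+(2k+n-1)(p-1)/2}E_*$ arise from genuine Lie monomials of length $p$ that do not lie in the image of additive operations, such as the iterated bracket $[x,[x,\ldots[x,x]\ldots]]$ and its Browder-type partner shifted by $n-1$. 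Graded-commutativity of the bracket forces different families of monomials to vanish according to the parity of $k$ (for instance $[x,x]=0$ when $k$ is odd), producing exactly the four case distinctions.

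The main obstacle is the careful bookkeeping across the four parity cases of $(n,k)$, since each involves subtly different signs, surviving Lie words, and exponents of $e$. Once the $\mathrm{E}^2$-page has been identified in this way, it is concentrated in a narrow band of filtration within weight $p$, which forces the spectral sequence to collapse at $\mathrm{E}^2$; the remaining $E_*$-module extensions are visible from the structure of $E^*(B\Sigma_p)$ and can be resolved directly to yield the four stated formulas.
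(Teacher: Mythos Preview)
Your approach is broadly correct up to the identification of the input Hecke Lie algebra and the general shape of the $\mathrm{E}^2$-page, but there is a genuine gap at the final step: the spectral sequence does \emph{not} collapse at $\mathrm{E}^2$ in weight $p$. The $\mathrm{E}^2$-page has a divided-power class $\gamma_p([x])$ in filtration $s=p-1$ and a torsion summand $\Sigma^{k-1}E^*(B\Sigma_p)/(\tr, e^{\lceil n/2\rceil})$ in filtration $s=0$ (for $k$ even), and there is a nontrivial $d_{p-1}$ differential from the former hitting the latter. This differential reduces the exponent of $e$ by one, changing $e^{\lceil n/2\rceil}$ to $e^{\lfloor n/2\rfloor}$; without it you would get the wrong exponent in every case. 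Your ``narrow band'' argument fails because the band has width exactly $p-1$, leaving room for $d_{p-1}$.

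The paper detects this differential by comparison with the case $n=1$: the James splitting shows $E^\wedge_*(\Omega S^{k+1})$ is torsion-free, forcing the $p$-torsion class on the $(s=0)$-line to be killed by $d_{p-1}$ when $n=1$. One then propagates to higher odd $n$ via the map of spectral sequences induced by $\Omega\Free^{\mathscr{L}}(S^k)\to\Omega^n\Free^{\mathscr{L}}(S^{n+k-1})$, computing explicitly how this map acts on the $(s=0)$-line (multiplication by a power of $e$). The cases where $n$ and $k$ have the same parity are then deduced from the odd-$n$, even-$k$ case via Serre's $p$-local splitting $\Omega S^{2m}_{(p)}\simeq S^{2m-1}_{(p)}\times\Omega S^{4m-1}_{(p)}$, not by a direct spectral sequence analysis.

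A secondary point: your description of the free summands as arising from ``iterated brackets $[x,[x,\ldots]]$'' is not right. By Proposition~\ref{prop:hecke desuspension} the Lie bracket on $\mathfrak{g}(\RR^n;S^k)=\pi_*\Omega^n\Free^{\mathscr{L}}(\Sigma^{n+k-1}\SSS)$ vanishes identically for $n>0$. The free summands come instead from products and divided powers in the Chevalley--Eilenberg complex of classes $[x]$ and $[x']$ (the latter being the desuspension of the bracket $[u,u]$ taken \emph{before} looping, when $n+k-1$ is odd). For instance, $\Sigma^{kp}E_*$ comes from $[x]^p = p!\,\gamma_p([x])$ surviving after $\gamma_p([x])$ supports $d_{p-1}$.
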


The differentials in our spectral sequence exhibit  intriguing behaviour familiar from other spectral sequences (cf.  \cite{hunter1996homology}):  there is a divided power class $\gamma_p(x)$ which affords a nontrivial differential $d^{p-1}$ landing on a $p$-torsion element; hence $x^p$ survives, while \mbox{$\gamma_p(x)$ does not.}    

\begin{remark}We can interpret \Cref{prop:general calculations} as a description of the weight $p$ power operations acting on the (completed) $E$-homology of $\EE_n$-algebras. The torsion-free classes are related to expressions $x^p   , \   \ x^{p-2}\cdot[x,x]$, \  and  \ $x\cdot [x,x]^{\frac{p-1}{2}} $ coming from the Poisson structure; the torsion classes are a new chromatic phenomenon.
\end{remark}

\begin{remark}At height $1$, our result is particularly simple and stated as \Cref{warmup} below. In this case, it can (with some care) also be read off from the work of Langsetmo (cf. \cite{langsetmo1993k,MR1397734}), who obtains this computation by entirely different means.
\end{remark}

\begin{remark} Combining \Cref{prop:general calculations} with the work of Zhu \cite{zhu20power}, we can give very concrete formulae at height $2$. For example, at $p=3$, we have  
\begin{eqnarray*}E^\wedge_*(B_3(\mathbb{R}^{11})) &\cong& E_* \oplus \Sigma^{-1} E_*[\alpha]/(\alpha^4-6\alpha^2+(h-9)\alpha-3,\alpha^5) 
\end{eqnarray*} 
We refer to \Cref{sec:height 2} for more detailed computations. 
\end{remark}
\begin{remark}
The analogue of \Cref{prop:general calculations} at $p=2$ is much easier, since the configuration spaces can be expressed in terms of real projective spaces (compare \Cref{2iseasy} below).
\end{remark}
\begin{remark}
It is not difficult to read off  the Morava $K$-theory groups $K(h)_\ast( B_p(\RR^n; S^k))$ from \Cref{prop:general calculations}, and we will record the resulting
groups in \Cref{Ktheoryatheightp} below. In \Cref{K-theory section}, we  will also  outline the connection between our results and the 
 classical works of Langsetmo, Yamaguchi, and Tamaki.
\end{remark}

The $E$-cohomology of the configuration space of $p$ points in a general punctured surface can be computed along similar lines, and we obtain the following result (again at arbitrary height):

\begin{theoremn}[\ref{thm:open surfaces}]\normalfont{($E$-theory, surface case).}  Let $E$ denote a Morava $E$-theory at an odd prime $p$. 
\begin{enumerate}[wide, labelwidth=!, labelindent=0pt] 
\item \mbox{The $E$-cohomology of the space  $B_p(\dot{T})$ of $p$ unordered points in the punctured torus satisfies}
$$E^\ast(B_p(\dot{T}))) \cong \bigg(\bigoplus_{0 \leq i<p} \Sigma^i E_\ast^{\oplus \lfloor \frac{3i+2}{2}\rfloor}\bigg)
\oplus
 \Sigma^p E_\ast^{\oplus (p+1)} . $$
\item The $E$-cohomology of the unordered configuration space of $p$ points in  a punctured orientable genus $g$ surface $\mathcal{S}_{g,1}$ is  given by 
$$E^\ast(B_p(\mathcal{S}_{g,1})) \cong   \bigoplus_{0 \leq i \leq p} \Sigma^i E_\ast^{\oplus \beta_i},  $$
where $\beta_0,\ldots, \beta_p$ are integers specified on p.\pageref{formulabetti} of the main text.  
\end{enumerate}
\end{theoremn}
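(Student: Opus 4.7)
The plan is to apply the Hecke spectral sequence of \Cref{thm:main} with $n=2$, $X=S^0$, and $M$ equal to the open surface of interest, then extract the weight-$p$ summand to read off $E^\ast(B_p(M))$. Every open orientable surface is framed, so the hypothesis of \Cref{thm:main} holds for both parts.

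The first step is to identify the Hecke Lie algebra $\mathfrak{g}(M;S^0) = E^\wedge_\ast \left( \Free_{\Lie}(S^1)^{M^+}\right)$. The one-point compactification of $\mathcal{S}_{g,1}$ is the closed surface $\mathcal{S}_g$, and the attaching map of its top $2$-cell is a product of commutators, hence stably null-homotopic. This gives the stable splitting $\Sigma^\infty \mathcal{S}_g \simeq \bigvee^{2g} S^1 \vee S^2$, and consequently
\[\Free_{\Lie}(S^1)^{\mathcal{S}_g} \ \simeq\ \big(\Sigma^{-1}\Free_{\Lie}(S^1)\big)^{\oplus 2g}\oplus \Sigma^{-2}\Free_{\Lie}(S^1) \]
as spectra. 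The Lie bracket on the right is induced by the diagonal $\mathcal{S}_g \to \mathcal{S}_g \wedge \mathcal{S}_g$, whose only nontrivial component in this splitting is the intersection pairing $H^1(\mathcal{S}_g)^{\otimes 2}\to H^2(\mathcal{S}_g)$. Thus the only nontrivial brackets pair the $2g$ ``$\Sigma^{-1}$-type'' generators into the ``$\Sigma^{-2}$-type'' summand via the standard symplectic form. Passing to completed $E$-homology equips $\mathfrak{g}(M;S^0)$ with its Hecke operations, and each weight is finite free over $E_\ast$ because each underlying summand is so.

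Next I would compute the weight-$p$ part of $H_\ast(\CE^{\mathcal{H}_u}(\mathfrak{g}))$ by expanding the Chevalley--Eilenberg complex. The contribution of the $\Sigma^{-2}$-summand alone recovers, after a degree shift, the weight-$p$ CE complex computed in the Euclidean case in \Cref{prop:general calculations}. New terms arise from mixed summands that include some of the $2g$ degree-one generators, possibly combined via the bracket with pieces of the Euclidean input. Because these generators have trivial internal bracket and only interact with the $\Sigma^{-2}$-summand through the symplectic form, the combinatorics is governed by symmetric and exterior powers of a rank-$2g$ module tensored with the Euclidean CE data.

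The main obstacle is identifying the resulting homology in weight $p$: one must verify that the additional cohomological classes of $\mathcal{S}_g$ precisely cancel the torsion contributions present in \Cref{prop:general calculations}, leaving only the stated free $E_\ast$-module summands $\Sigma^i E_\ast^{\oplus \beta_i}$. Once the $E^2$-page is known to be concentrated in total degrees $0,\ldots,p$, degeneration of the spectral sequence follows by a sparseness argument. For the torus one reads off $\beta_i = \lfloor (3i+2)/2\rfloor$ for $0\leq i<p$ and $\beta_p = p+1$ directly; for general $g$ the $\mathrm{Sp}_{2g}$-equivariant decomposition of the exterior algebra on the $2g$ degree-one generators produces the integers $\beta_i$ recorded on p.\pageref{formulabetti}.
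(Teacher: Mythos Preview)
Your setup is correct: the stable splitting of $\mathcal{S}_g$, the identification of the Lie bracket via the symplectic form, and the finite-freeness hypothesis are all as in the paper. The main gaps are in what you expect to happen on the $\mathrm{E}^2$-page and afterwards.

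First, the torsion does \emph{not} cancel algebraically on the $\mathrm{E}^2$-page. In weight $p$ the Hecke Chevalley--Eilenberg complex splits as $\CE_1(p)\oplus\CE_2(p)$, where $\CE_2(p)$ is the purely classical CE complex (products and divided powers of weight-$1$ and weight-$2$ generators) and $\CE_1(p)$ records the Hecke operations of weight $p$. The $a_i,b_i$-summands contribute nothing to $\CE_1(p)$ because the relevant suspension map is an isomorphism there; only the $c$-summand survives, and it contributes exactly one torsion class $E_\ast/p$ to the $\mathrm{E}_2$-page. So the $\mathrm{E}_2$-page is the free module coming from the rational computation of \cite{BoedigheimerCohen:RCCSS,DrummondColeKnudsen:BNCSS} \emph{plus} this one torsion class, not a purely free module.

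Second, your degeneration argument fails: ``sparseness'' in total degrees $0,\ldots,p$ does not rule out differentials, and indeed there \emph{is} a nontrivial $d_{p-1}$. The paper handles this in two steps. It invokes \Cref{thm:free part} to show that every nontrivial $d_r$ with $r\geq 2$ must land in $p$-power torsion, so the only possible target is the single $E_\ast/p$ class just identified. Then it uses the embedding of an open disc $\mathbb{R}^2\hookrightarrow\mathcal{S}_{g,1}$ to produce a map of spectral sequences to the Euclidean one; since the torsion in the Euclidean sequence for $n=2$ is known to be killed by $d_{p-1}$ (from the proof of \Cref{prop:general calculations}), naturality forces the same here. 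After this single differential the $\mathrm{E}_p$-page is torsion-free and equal to $\mathrm{E}_\infty$, with no extension problems. Without \Cref{thm:free part} and the disc comparison, you have no mechanism to rule out further differentials or to identify the one that does occur.
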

\begin{remark} At height $1$, \Cref{thm:open surfaces}
 is a statement  about the $p$-adic $K$-theory of $B_p(\mathcal{S}_{g,1})$, i.e. about  vector bundles on configuration spaces of punctured surfaces. \end{remark}
\begin{remark}
The above  computation makes use of the corresponding computation over the rationals, which is originally due to B\"{o}digheimer--Cohen \cite{BoedigheimerCohen:RCCSS}, and has been revisited by the third author and Drummand-Cole in \cite{Knudsen:BNSCSVFH} and  \cite{DrummondColeKnudsen:BNCSS}.\end{remark}
\begin{remark}
The absence of torsion in \Cref{thm:open surfaces} may be thought of as a reflection of the fact that $E^\wedge_\ast(B_p(\RR^2))$  is a free $E_\ast$-module by \Cref{prop:general calculations}.
\end{remark}
\Cref{prop:general calculations} and \Cref{thm:open surfaces} illustrate how the general method introduced in \Cref{thm:main} can be used  to generate new chromatic information about specific labelled configuration \vspace{3pt} spaces.  While both statements appear to be new, particularly \Cref{thm:open surfaces} seems difficult to prove by more direct means. Preliminary calculations by the authors and Adela Zhang suggest that the main obstruction to extending our results to higher weights is the algebraic calculation of the $\mathrm{E}^2$-page, which can in principle be achieved in any finite range by computer, rather than the calculation of differentials.
Our final result is somewhat more surprising, as it gives new information about the 
$p$-primary part of the ordinary homology of configuration spaces (in the hard case where $p$ is odd and the manifold $M$ is even-dimensional): 

\begin{theorem}\label{thm:no torsion} For any odd prime $p$ and any genus $g$, the integral (co)homology of $B_p(\mathcal{S}_{g,1})$ has no $p$-power torsion.
\end{theorem}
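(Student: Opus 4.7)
The plan is to reduce the theorem to a total-Betti-number comparison, and then to obtain a matching mod-$p$ Betti number bound by an ordinary $\mathbb{F}_p$-analogue of the Hecke spectral sequence. By the universal coefficient theorem applied to the finite-type complex $B_p(\mathcal{S}_{g,1})$, the absence of $p$-power torsion in its integral cohomology is equivalent to the equality $\sum_i \dim_{\mathbb{F}_p} H^i(B_p(\mathcal{S}_{g,1}); \mathbb{F}_p) = \sum_i \dim_{\mathbb{Q}} H^i(B_p(\mathcal{S}_{g,1}); \mathbb{Q})$. The rational side is given by the B\"odigheimer--Cohen computation, and by reducing $E^*(B_p(\mathcal{S}_{g,1}))$ modulo the maximal ideal of $E_*$ one verifies that this rational total coincides with the integer $\sum_i \beta_i$ appearing in \Cref{thm:open surfaces}.

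To obtain the matching upper bound on the mod-$p$ side, I would run an $\mathbb{F}_p$-analogue of the Hecke spectral sequence, viewed as the ``$h = \infty$'' degeneration of \Cref{thm:main}. Applying ordinary $\mathbb{F}_p$-homology, rather than completed $E$-homology, to the equivalence $\bigoplus_k \Sigma^{\infty}_+ \Conf_k(M) \myotimes{\Sigma_k} X^{\otimes k} \simeq C^{\mathcal{L}}(\Free_{\Lie}(\Sigma^{n-1}X)^{M^+})$ and resolving the resulting simplicial spectrum produces a convergent spectral sequence whose $\mathrm{E}^2$-page is the classical (non-Hecke) Chevalley--Eilenberg homology of the $\mathbb{F}_p$-homology of $\Free_{\Lie}(\Sigma^{n-1}X)^{M^+}$: the Hecke operations that make the $E$-theoretic calculation intricate at finite height $h$ take values in the maximal ideal of $E_*$ and therefore disappear upon reduction. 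The weight-$p$ part of this $\mathrm{E}^2$-page then admits exactly the same divided-power-against-$p$-torsion analysis as in the proof of \Cref{thm:open surfaces}: a nontrivial $d^{p-1}$ eliminates each $\gamma_p$-class against its Euler-class partner, leaving behind the same total count $\sum_i \beta_i$.

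The main obstacle is the last comparison: in the $E$-theoretic setting, the surviving classes in expressions such as $E^*(B\Sigma_p)/(\tr, e^k)$ contribute a very explicit amount of dimension, and one must verify that their classical counterparts (analysed via the Atiyah--Hirzebruch spectral sequence for transfers and for Euler classes of the reduced standard representation, now with mod-$p$ coefficients) contribute the same total. Equivalently, one must show that the divided-power differentials of the mod-$p$ spectral sequence coincide, after an appropriate base change, with the mod-$p$ reductions of the corresponding Hecke differentials. Once this compatibility is established, the mod-$p$ Betti numbers of $B_p(\mathcal{S}_{g,1})$ sum to the same $\sum_i \beta_i$ as the rational Betti numbers, and the absence of $p$-torsion follows from universal coefficients.
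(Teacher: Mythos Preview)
Your reduction via universal coefficients to a total-Betti-number comparison is the right opening move, and is exactly how the paper begins. The gap is in how you propose to bound the mod-$p$ side.

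The claim that the Hecke operations ``take values in the maximal ideal of $E_*$ and therefore disappear upon reduction'' is incorrect on both counts. First, the Hecke operations do not in general land in $\mathfrak{m}\cdot E_*$: in the free Hecke Lie algebra on a class $x$, the operation $\alpha$ sends $x$ to a new generator $y$, not to a multiple of $p$ or $u_i$. Second, and more seriously, you are conflating two different reductions. Reducing $E$-theory modulo its maximal ideal gives Morava $K(h)$-theory, not ordinary $\mathbb{F}_p$-homology. If instead you apply $H_*(-;\mathbb{F}_p)$ directly to the bar construction, the $\mathrm{E}^2$-page is governed by the $\mathbb{F}_p$-homology of free spectral Lie algebras, which carries its own large family of Dyer--Lashof-type Lie operations (cf.\ Kjaer, Antol\'in-Camarena). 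These operations do not vanish; indeed, their presence is precisely what makes the mod-$p$ homology of configuration spaces in even-dimensional manifolds difficult. So your ``$h=\infty$ degeneration'' is not a simplification of the Hecke picture but a replacement by a different, and at present less tractable, algebraic structure. The ``main obstacle'' you flag at the end is therefore not a bookkeeping check but the entire content of the theorem.

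The paper's route avoids this by inserting Morava $K$-theory as an intermediary. Since $B_p(\mathcal{S}_{g,1})$ is a finite complex, one can choose $h$ large enough that the $K(h)$-based Atiyah--Hirzebruch spectral sequence degenerates, yielding $K(h)_*(B_p(\mathcal{S}_{g,1}))\cong H_*(B_p(\mathcal{S}_{g,1});\mathbb{F}_p)[\beta^{\pm 1}]$. On the other hand, $K(h)$ is $E_h/\mathfrak{m}$, so the $E$-cohomology computation of \Cref{thm:open surfaces}---which is carried out uniformly at \emph{every} height and shows the answer is free over $E_*$---tensors down to give $K(h)^*(B_p(\mathcal{S}_{g,1}))$ with the same ranks $\beta_i$. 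Comparing, the total $\mathbb{F}_p$-Betti number equals $\sum_i\beta_i$, which matches the rational total, and universal coefficients finishes. The essential point you are missing is this two-step bridge: the Hecke machinery controls $E$-theory at all heights, and only then does a large-height limit recover $\mathbb{F}_p$-information.
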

\begin{corollary}
The $\mathbb{F}_p$-Betti numbers of $B_p(\mathcal{S}_{g,1})$ coincide with the rational Betti numbers. Hence $\dim_{\FF_p}(H_i(B_p(\mathcal{S}_{g,1});\FF_p) = \beta_i$, where the numbers $\beta_i$ are specified in \Cref{thm:open surfaces}.  
\end{corollary}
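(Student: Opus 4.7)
The plan is to deduce the absence of $p$-power torsion from the freeness of the $E$-cohomology of $X := B_p(\mathcal{S}_{g,1})$ established in \Cref{thm:open surfaces}, via an Atiyah--Hirzebruch collapse at sufficiently large Morava $K$-theory heights, combined with the universal coefficient theorem.

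Set $N := \sum_{i=0}^{p} \beta_i$. First I would pass from $E$-theory to Morava $K$-theory: since $E^*(X)$ is a free $E_*$-module of total rank $N$ (concentrated in degrees $0$ through $p$) and $X$ is a finite CW complex, the standard reduction along $E \to K(h)$ yields $\dim_{K(h)_*} K(h)^*(X) = N$ for every height $h \geq 1$. Next, consider the Atiyah--Hirzebruch spectral sequence
\[E_2^{s,t} = H^s(X; K(h)^t) \Longrightarrow K(h)^{s+t}(X),\]
whose first possibly nonzero differential is $d_{2p^h - 1}$, because $|v_h| = 2(p^h - 1)$. The space $X$ is aspherical with surface braid group fundamental group, hence has finite cohomological dimension $d = d(p,g)$; choosing $h$ large enough that $2p^h - 1 > d$ forces the spectral sequence to collapse at $E_2$, which gives
\[\dim_{\mathbb{F}_p} H^*(X;\mathbb{F}_p) = \dim_{K(h)_*} K(h)^*(X) = N.\]
On the other hand, the rational computation of B\"odigheimer--Cohen that feeds into \Cref{thm:open surfaces} identifies the integers $\beta_i$ as the rational Betti numbers of $X$, so $\dim_{\mathbb{Q}} H^*(X;\mathbb{Q}) = N$ as well.

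To conclude, any $\mathbb{Z}/p^k$ summand in $H^n(X;\mathbb{Z})$ would contribute, via the universal coefficient theorem, one $\mathbb{F}_p$-class to $H^n(X;\mathbb{F}_p)$ through $\otimes \mathbb{F}_p$ and a second to $H^{n+1}(X;\mathbb{F}_p)$ through $\mathrm{Tor}$, strictly inflating $\dim_{\mathbb{F}_p} H^*(X;\mathbb{F}_p)$ above the rational rank. The equality above excludes this, so $H^*(X;\mathbb{Z})$ has no $p$-power torsion, and dually so does $H_*(X;\mathbb{Z})$. The main technical points to be careful about are in the middle step: rigorously transferring freeness of $E^*(X)$ to the correct $K(h)_*$-rank (a standard chromatic fact, mediated by the quotient of $E$ by its maximal ideal) and pinning down the first nonzero AHSS differential in $K(h)$-cohomology. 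Both are by now classical, so once invoked, the theorem reduces to bookkeeping against the known rational answer.
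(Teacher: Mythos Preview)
Your proposal is correct and follows essentially the same route as the paper. The paper packages the argument as Proposition~\ref{Fpsurface} (tensor $E^*(X)$ down to $K(h)$, then collapse the Atiyah--Hirzebruch spectral sequence at large height to read off the total $\mathbb{F}_p$-dimension) followed by the proof of Theorem~\ref{thm:no torsion} (compare with the rational total via the universal coefficient inequality $\dim_{\mathbb{Q}} H_i \leq \dim_{\mathbb{F}_p} H_i$ and conclude equality degree by degree), but the content is identical to what you wrote.
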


\subsection{Future directions} Our work points to  further questions, to which we \mbox{hope to return.}
\begin{enumerate} 
\item \emph{Higher weights}. We have restricted  attention to computations in Snaith weight $p$. A more organised approach will lead to similar computations in higher weights, and thereby perhaps even give  a proof of a form of  Ravenel's conjecture \cite[Conjecture 3]{ravenel1998we}   at all weights. The coherently cocommutative coalgebra structure  on stabilised configuration spaces (defined geometrically by  splitting configurations) will  be a helpful tool.  
\item \emph{Manifolds.} After incorporating certain actions of tangential structure groups,  our methods  extend to   configuration spaces of non-parallelisable manifolds. One could therefore     emulate the rational computation of \cite{DrummondColeKnudsen:BNCSS} and attempt to treat all surfaces.
\item \emph{Vector bundles}. We hope that our techniques will yield information about the complex $K$-theory of configuration spaces which is helpful for  the classification of vector bundles.
\item \emph{Coefficients}. We see this paper as a model, in the case of Lubin--Tate theory, for a program that is valid for any homology theory. 
We have seen that our methods give new information for the mod $p$ homology of configuration spaces of manifolds $M$,  about which little is known unless $\dim M$ is odd or $p=2$ (\cite{BoedigheimerCohenTaylor:OHCS}, $M=\mathbb{R}^n$ \cite[III]{CohenLadaMay:HILS}, or $M=S^2$ \cite{Schiessl:ICCSS}).

\end{enumerate}

\subsection{Linear outline} Following the introduction, Section \ref{sec:Preliminaries} recalls the basic (mostly linear) algebra underlying our work, and Section \ref{sec:SimplicialLieAlgebras} develops the theory of Lie algebra homology and the Chevalley--Eilenberg complex at a level of generality suitable for our purposes. Section \ref{sec:Hecke} goes on to produce an analogue of the Chevalley--Eilenberg complex for Hecke Lie algebras, and Section \ref{sec:SpectralSequenceInput} is concerned with the construction of the spectral sequence of interest and the identification of its second page in algebraic terms. In the final two sections, we carry out our computations.

\subsection{Acknowledgements}
The authors wish to thank Greg Arone, Paul Goerss,  Mike Hopkins, Jacob Lurie, Oscar Randal-Williams,  Neil Strickland, and  Ulrike Tillmann for helpful conversations related to this paper. 

The first author would also like to thank the Max Planck Institute in Bonn and Merton College, Oxford, for their support. Moreover, he is grateful to  the Mathematical Sciences Research Institute  in Berkeley, California, where his  work was  supported by the National Science Foundation under Grant No. DMS-1440140 during the  Spring 2019 semester.

The second author was supported by NSF Grant DMS-1803273.

The third author wishes to thank the Isaac Newton Institute for Mathematical Sciences for support and hospitality during the programme Homotopy Harnessing Higher Structures, where he benefited from the support of EPSRC grants EP/K032208/1 and EP/R014604/1. He was supported by NSF awards DMS-1606422 and DMS-190674 during the writing of this paper.

\section{Preliminaries }  \label{sec:Preliminaries}
 Let $R$ be a graded-commutative ring, where the grading is parametrised by the integers.
 In this section, we will briefly recall the basic  homological algebra of graded $R$-modules in the context of interest to us, and fix some notation for the remainder of this paper.\vspace{-2pt}
 
\subsection{Weighted graded $R$-modules} 
We start with the abelian category $\Mod_{R}$ of graded $R$-modules and grading-preserving maps between them. It  carries a symmetric monoidal structure given by the graded relative  tensor product $\otimes$. The symmetry isomorphism of $\otimes$ incorporates the  usual Koszul sign rule, which means that the isomorphism $M  \otimes N \cong N \otimes M $ sends  an element $m\otimes n $ to \mbox{$(-1)^{|m| |n|} n \otimes m$.}

The free graded $R$-algebra on a graded $R$-module $M$ takes the form $\Sym_R(M)= \bigoplus_w M^{\otimes w}_{\Sigma_w} $ and hence naturally splits as an infinite direct sum of ``weighted pieces" indexed by the naturals. In our later spectral sequences, it will be important to effectively access  weighted pieces of this kind. We therefore introduce an additional  grading:
\begin{definition}[Weighted graded  modules]
The category $\Mod^{\NN}_{R}$ of \textit{weighted graded \mbox{$R$-modules}} is given by the  category
of functors from the discrete category $\NN$ of nonnegative integers \mbox{to  $\Mod_R$.} Day convolution equips $\Mod^{\NN}_{R}$ with a 
 symmetric monoidal structure, which we will  denote \mbox{by $\otimes$.}

Concretely, an object $M\in \Mod^{\NN}_{R}$  is  simply an $\NN$-indexed collection of $\ZZ$-graded $R$-modules
$$M(0)\ , \ M(1) \ , \ M(2)  \ , \ M(3)  \ , \  \ldots .$$
Given $M, N \in \Mod^{\NN}_{R}$, the weight $w$ component of $M\otimes N$ is given by $\bigoplus_{u+v=w}M(u) \otimes N(v)$. \vspace{-2pt}
\end{definition}
We  shall call the $\ZZ$-grading the \textit{internal grading} and the $\NN$-grading the \textit{weight grading}. This means that elements  in internal degree $i$ and weight $w$   belong to the group  $M(w)_i$.
Note that  given $M, N \in  \Mod^{\NN}_{R}$, the symmetry isomorphism $M\otimes N \cong N \otimes M$ only implements the Koszul sign rule with respect to the internal grading: this means that if $m\in M(u)_i$ and $n \in N(v)_j$, then
$m\otimes n \in M\otimes N$ is sent to $(-1)^{i+j} n \otimes m$ in $ N\otimes M$.

The category $\Mod^{\NN}_{R}$ of weighted graded $R$-modules admits a canonical endofunctor:
\begin{definition}[Suspension]\label{suspension}
Given  $n\in \ZZ$, the \emph{$n^{th}$ suspension} of  $M\in \Mod^{\NN}_{R}$ is the unique weighted graded $R $-module $\Sigma^n M$ with $(\Sigma^n M)_i=M_{i-n}$ and with $R $-action inherited from $M$. 
\end{definition}
\begin{notation} Given a nonnegative integer $w\in \NN$ and 
a graded $R$-module $M\in \Mod_R$, we write $M\{w\}\in \Mod^{\NN}_{R}$ for the weighted graded $R$-module which is $M$ concentrated in weight $w$.\end{notation} 
\begin{definition}\label{freedef}
A weighted graded $R $-module  $M \in \Mod^{\NN}_{R}$
is said to be 
\begin{enumerate} 
\item  \emph{finitely generated} if the underlying ordinary $R$-module has this property;
\item  \emph{free} if it is   a direct sum of modules  {$\Sigma^rR{\{w\}} $ with $r\in \mathbb{Z}, w\in \NN$};
\item \textit{projective} if it is projective as an object in the abelian category $\Mod^{\NN}_{R}$; 
\item \textit{flat} if the functor $M\otimes -$ preserves short exact sequences.
\end{enumerate}

By the usual argument, projective modules are precisely the summands of free modules. Write $\Mod^{\NN}_{R ,\f}$ $(\Mod^{\NN}_{R ,\fff})$ for the full subcategory spanned by all (finite) free modules.

The following classical result will be indispensable \mbox{(cf. \cite{Lazard}, and \cite{FossumFosbury:CGM} for the graded case):}
\begin{theorem}[Lazard]\label{thm:lazard}
A  module $M\in \Mod^{\NN}_{R}$ is flat if and only if it is a filtered colimit of finite  free  modules.
\end{theorem}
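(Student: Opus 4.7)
The plan is to adapt the classical Lazard argument to the bigraded setting. The easy direction is immediate: each module $\Sigma^r R\{w\}$ is flat because tensoring with it is, up to a shift of both gradings, the identity; direct sums of flat modules are flat; and filtered colimits commute with finite limits in $\Mod_R^{\NN}$, so a filtered colimit of flats is flat.

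For the hard direction, I would first establish the \emph{equational criterion for flatness} in $\Mod_R^{\NN}$: a module $M$ is flat if and only if, for every finite relation $\sum_{i=1}^n a_i m_i = 0$ with homogeneous $a_i \in R$ and homogeneous $m_i \in M$ of matching bidegree, there exist homogeneous $n_j \in M$ and homogeneous $b_{ij} \in R$ with $m_i = \sum_j b_{ij} n_j$ and $\sum_i a_i b_{ij} = 0$ for every $j$. The usual proof over ungraded rings, phrased in terms of $\Tor_1^R(M, R/I) = 0$ for finitely generated ideals $I$, transfers verbatim once one observes that $\Mod_R^{\NN}$ is an abelian category with enough flats and that every finitely generated ideal of the internal grading can be localised at a single bidegree.

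Next, I would introduce the category $\mathcal{I}(M)$ whose objects are pairs $(F, \phi)$ with $F \in \Mod_{R,\fff}^{\NN}$ finite free and $\phi\colon F \to M$ a map, and whose morphisms are commuting triangles over $M$. There is a tautological cocone giving a map $\mycolim{\mathcal{I}(M)} F \to M$, which is always surjective because every homogeneous element of $M$ factors through some $\Sigma^r R\{w\} \to M$. Flatness, via the equational criterion, is exactly what is needed to show that $\mathcal{I}(M)$ is filtered: given two objects, their direct sum provides an upper bound; given two parallel arrows $f, g\colon F \to F'$ over $M$, applying the criterion to the relations obtained from the differences $(\phi' \circ f - \phi' \circ g)(e_i) = 0$ produces a factorization through a further finite free module that coequalises $f$ and $g$. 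Injectivity of the colimit map follows by the same relation-lifting argument.

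The main obstacle will be the careful bookkeeping in establishing and applying the equational criterion in the bigraded setting: one must track both the internal $\mathbb{Z}$-grading (with its Koszul signs) and the $\mathbb{N}$-weight grading simultaneously when choosing homogeneous lifts of relations, and verify that the resolving elements $b_{ij}$ and $n_j$ can be chosen of the correct bidegrees. Once this is in hand, the filtered property of $\mathcal{I}(M)$ and identification of the colimit with $M$ are formal consequences, completing the proof.
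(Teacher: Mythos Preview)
The paper does not supply a proof of this statement; it is quoted as a classical result with references to Lazard's original paper and to Fossum--Foxby for the graded case. Your outline follows the standard argument found in those references (equational criterion plus showing that the category of finite-free modules over $M$ is filtered), adapted to carry the extra weight grading, and is essentially correct as a sketch. One small point: in the coequalising step you will need the equational criterion for a finite \emph{system} of relations (one for each basis vector of $F$) rather than a single relation; this strengthened form follows by iterating the single-relation version, but it is worth noting explicitly since your statement of the criterion only covers one relation at a time.
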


\end{definition}

\subsection{The derived category of $\Mod_R^\NN$.} In order to compute derived functors, we will need to work in the (nonnegative) derived $\infty$-category $\mathcal{D}_{\geq 0}(\Mod^{\NN}_{R})$ of  $\Mod^{\NN}_{R}$ (cf. \cite[Section 1.3.2]{Lurie:HA}).
 As an $\infty$-category, this can be constructed by   freely adding geometric \mbox{realisations to $\Mod^{\NN}_{R ,f}$.} \\
For us, it will however be more  helpful to model $\mathcal{D}_{\geq 0}(\Mod^{\NN}_{R})$ by two concrete model categories.

\subsubsection*{Chain complexes} The first such model is given by the category $\Ch_{\geq 0}(\Mod^{\NN}_{R})$
of (homologically) nonnegatively graded chain complexes 
of weighted graded $R$-modules
$$ \ldots  \xrightarrow{\ } M_{2 }  \xrightarrow{d_{2 }} M_{1 } \xrightarrow{d_{1 }}  M_{0 } \xrightarrow{\ }  0 \xrightarrow{\ }  \ldots . $$
Chain maps, homology modules, and quasi-isomorphisms are defined in the usual way.

\begin{notation}
A chain complex   $M\in \Ch_{\geq 0}(\Mod^{\NN}_{R})$ is   equipped with 
  \mbox{three} different gradings.  The  \textit{homological grading} measures in which piece of the chain complex we are in; it is  indexed by $\NN$. The second 
and third grading use  that all  pieces $M_{n }$
are weighted graded $R $-modules; as before, \mbox{we will call them the \textit{internal grading} and \textit{weight grading}, respectively.} Hence elements of homological degree $n$,  internal degree $i$, and weight $w$ belong to the  $R_0$-module $M_{n}(w)_i$.
\end{notation}

The category of chain complexes carries a second natural endofunctor:
\begin{definition}[Shift]\label{shift}
Given an integer $n\in \ZZ$, the \emph{$n^{th}$ (homological) shift} of a chain complex \mbox{$M=(M_{i  }, d_i)$}  is given by 
the chain complex with $(M[n])_{ i }:=M_{i-n }$   with differential $(-1)^n d_i$. 
Here we use the convention that $M_i=0$ for all $i<0$.
\end{definition}
\begin{warning} The shift operator $[n]$
should not be confused with the
suspension operator $\Sigma^n$ (obtained by applying \Cref{suspension} in each component).
\end{warning}

\begin{definition}
A  chain complex in $ \Ch_{\geq 0}(\Mod^{\NN}_{R})$ is  \textit{levelwise free} if it is a free weighted graded $R $-module  in each homological degree (in the sense of Definition \ref{freedef}).
The terms \textit{levelwise finitely generated},  \textit{levelwise projective},  and \textit{levelwise flat} are defined in a similar way.
\end{definition} 

The category $ \Ch_{\geq 0}(\Mod^{\NN}_{R})$ is equipped with the (cofibrantly generated) projective model structure.
Its weak equivalences are the quasi-isomorphisms, its fibrations are the levelwise surjections, and its cofibrations are the levelwise injections with levelwise projective \mbox{cokernel.} 

Moreover,  $ \Ch_{\geq 0}(\Mod^{\NN}_{R})$ carries a natural symmetric monoidal structures given by the  usual tensor product of chain complexes. Given chain complexes $M, N \in  \Mod^{\NN}_{R}$, the symmetry isomorphism $M\otimes N \cong N \otimes M$ implements the Koszul sign rule with respect to the internal  and homological gradings, but not the weight grading: this means that if $m\in M_a(u)_i$ and $n \in N_b(v)_j$, then
$m\otimes n \in M\otimes N$ is sent to $(-1)^{i+j+a+b} n \otimes m$ in $ N\otimes M$.

\subsubsection*{Simplicial modules} \label{simplicialmodules}
 The second model for the (nonnegative) derived category is $\sMod_R^\NN$, 
the category of     simplicial objects in $\Mod^{\NN}_{R }$. These are contravariant functors from  the category $\Delta$ of nonempty finite linearly ordered sets to 
$\Mod^{\NN}_{R }$. Once more, its objects  
 carry three  different gradings, which we will  again refer to as the \textit{internal}, \textit{homological}, and \textit{weight} gradings.

\begin{definition}
An object in   $\sMod_R^\NN$    is said to be \textit{levelwise free} if it is a free weighted graded $R $-module  in each homological (i.e. simplicial) degree (cf. Definition \ref{freedef}).
The terms  \textit{levelwise finitely generated}, \textit{levelwise projective},  and \textit{levelwise flat} are defined analogously.
\end{definition}

The category $\sMod^\NN_{R }$ carries a (cofibrantly generated) model structure as well: its weak equivalences and fibrations are precisely those maps which induce weak equivalences or fibrations on underlying simplicial sets. For a well-known, more explicit  description of fibrations and cofibrations, we refer to \cite[Lemma 4.3]{schlemmer}.
The levelwise tensor product gives $\sMod^\NN_{R }$ a symmetric monoidal structure.

One pleasant feature of  $\sMod_R^\NN$ is that filtered colimits are automatically homotopy colimits:

\begin{lemma}\label{lem:filtered hocolim}
If $M:I\to \sMod_R^\NN$ is a  functor with $I$   filtered, then    $\displaystyle\myhocolim{i\in I}\ M_i\to \mycolim{i\in I}\ M_i$ is a weak equivalence.
\end{lemma}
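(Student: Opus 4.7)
The plan is to reduce the statement to the fact that filtered colimits in $\sMod_R^\NN$ preserve weak equivalences, and then deduce the lemma via the usual cofibrant replacement argument. Concretely, I would equip the functor category $\Fun(I,\sMod_R^\NN)$ with its projective model structure (which exists because $\sMod_R^\NN$ is cofibrantly generated). By the definition of homotopy colimit, one then has $\myhocolim{i\in I} M_i \simeq \mycolim{i\in I}\tilde{M}_i$ for any projective cofibrant replacement $\tilde{M}\to M$, which is in particular a pointwise weak equivalence. The natural comparison map $\myhocolim{i}M_i\to\mycolim{i}M_i$ factors as $\mycolim{i}\tilde{M}_i\to\mycolim{i}M_i$, so it suffices to show that colimits along filtered diagrams preserve pointwise weak equivalences in $\sMod_R^\NN$.

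To establish this, recall that filtered colimits in $\sMod_R^\NN$ are computed pointwise in the simplicial, internal, and weight gradings, because colimits in functor categories are pointwise and the weight index $\NN$ is discrete. Weak equivalences in $\sMod_R^\NN$ are detected bigrading-wise by the homotopy groups of the underlying simplicial abelian groups, which in turn coincide with the homology of the Moore normalised chain complex in each internal and weight bidegree. Since $\Mod_R^\NN$ is a Grothendieck abelian category, with filtered colimits inherited pointwise from $\Mod_R$, its filtered colimits are exact, and therefore commute with taking homology. This yields the key identity $\pi_n\bigl(\mycolim{i}M_i\bigr)\cong\mycolim{i}\pi_n(M_i)$ in every bidegree, from which the preservation of weak equivalences by filtered colimits is immediate.

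The only nontrivial input is the exactness of filtered colimits in the weighted abelian category $\Mod_R^\NN$, and this reduces at once to the corresponding fact for $\Mod_R$ since the weight grading is indexed by a discrete category. Everything else is formal model-categorical bookkeeping, so I do not expect a serious obstacle; the subtleties of the simultaneously simplicial, internally graded, and weight-graded setting are entirely absorbed by the pointwise nature of the relevant structures. If one prefers to avoid discussing the projective model structure on $\Fun(I,\sMod_R^\NN)$, one can alternatively model $\myhocolim{i\in I}M_i$ by a two-sided bar construction $B(*,I,M)$ and use the same pointwise-exactness input to verify that the canonical collapse map to $\mycolim{i\in I}M_i$ is a weak equivalence.
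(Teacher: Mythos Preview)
Your proposal is correct and follows essentially the same approach as the paper: both reduce to the fact that filtered colimits in $\sMod_R^\NN$ preserve weak equivalences, and then conclude via cofibrant replacement. The paper phrases the key input as ``weak equivalences are detected by compact objects'' while you phrase it as ``filtered colimits are exact, hence commute with homology''; these are equivalent formulations in this abelian setting, and your version is arguably more direct.
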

\begin{proof}
As quasi-isomorphisms of chain complexes (hence also weak equivalences in $\sMod_R^\NN$) are detected by compact objects,  filtered colimits of weak equivalences are  weak equivalences.
\end{proof}

\subsubsection*{The Dold--Kan correspondence}
The two model categories $ \Ch_{\geq 0}(\Mod^{\NN}_{R})$ and $\sMod_R^\NN$ have the same 
underlying $\infty$-category. This fact is witnessed by a well-known Quillen equivalence arising as a composite  of adjunctions
\begin{diagram}
\Ch_{\geq 0}(\Mod^{\NN}_{R}) \  \ \  & & 
\pile{ \rTo\\ \lTo   } & & \  \ \   (\Mod^{\NN}_{R })^{\Delta^{op}_\mathrm{inj}}\  \ \   & & 
\pile{ \rTo^{\ \  \iota_! \ \ }    \\  \lTo_{ \  \ \iota^*\ \ } }&  & \  \ \    (\Mod^{\NN}_{R })^{\Delta^{op} }= \sMod^\NN_{R }.
\end{diagram}
Here $\iota:\Delta^{op}_\mathrm{inj}\rightarrow  \Delta^{op}$ denotes the inclusion of the wide subcategory on    order-preserving injections. The upper arrow on the left identifies chain complexes with semisimplicial objects $M_{  \bullet}$ for which $d_i$ vanishes on $M_{  n}$ whenever $i<n$. Its right adjoint sends a semisimplicial object $M_{  \bullet}$ to the chain complex whose $n^{th}$ term is given by
$\bigcap_{i=0}^{n-1} \ker(d_i:M_{  n}\to M_{  {n-1}})$,  {with differential $(-1)^nd_n$.} 
We write $\Gamma$ and $N$ 
for the composite left and right adjoint,  respectively, and  observe:
\begin{lemma}\label{lem:flat dold--kan}
The functors $N$ and $\Gamma$ each preserve levelwise projectivity, levelwise flatness, and levelwise finite generation. The functor $\Gamma$ also preserves levelwise freeness.
\end{lemma}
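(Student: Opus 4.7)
The plan is to treat $N$ and $\Gamma$ separately, reducing each degree-wise claim to the fact that the four stability properties at hand are well-behaved under finite direct sums and, in the case of projectivity and flatness, under direct summands.

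For $\Gamma$, I would use the standard combinatorial formula
\[\Gamma(C)_n \;\cong\; \bigoplus_{\sigma\colon [n]\twoheadrightarrow[k]} C_k,\]
where $\sigma$ ranges over the finite set of order-preserving surjections out of $[n]$. Since finite direct sums in $\Mod_R^\NN$ preserve freeness, finite generation, projectivity, and flatness (the last because $(M\oplus N)\otimes-$ is exact whenever $M\otimes-$ and $N\otimes-$ both are), and each $C_k$ has the requisite property by hypothesis, the same holds of $\Gamma(C)_n$. This disposes of all four claims for $\Gamma$ in one stroke.

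For $N$, I would invoke the Dold--Kan decomposition, valid in any abelian category, which provides a natural splitting
\[X_n \;\cong\; \bigoplus_{\sigma\colon[n]\twoheadrightarrow[k]} N(X)_k\]
in each simplicial degree. This exhibits $N(X)_n$ as a direct summand of $X_n$, so that levelwise projectivity and levelwise flatness are inherited by $N(X)$ from $X$: summands of projectives are projective, and summands of flats are flat, the latter by applying the exactness criterion to a retract diagram. For finite generation, I would argue instead from the equivalent quotient description $N(X)_n \cong X_n / \sum_i s_i(X_{n-1})$, since quotients of finitely generated modules are finitely generated.

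The asymmetry in the lemma, whereby $N$ is not claimed to preserve freeness, reflects the fact that direct summands of free modules are in general only projective---for instance, $\mathbb{Z}/2$ is a non-free projective summand of $\mathbb{Z}/6$. There is no real obstacle in the argument; the only substantive input is the Dold--Kan splitting, which is applicable in $\Mod_R^\NN$ because it is an abelian category.
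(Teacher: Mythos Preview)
Your proposal is correct and follows essentially the same approach as the paper: the same combinatorial formula for $\Gamma$, and the same split-inclusion/Dold--Kan decomposition for $N$. Your separate quotient argument for finite generation under $N$ is unnecessary, since being a direct summand already makes $N(X)_n$ a quotient of $X_n$, but this is harmless.
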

\begin{proof}
The claims concerning the right adjoint $N$ all follow from the   fact \mbox{that the inclusion} \mbox{$N(M)_n\subseteq M_n$} is split with complement the span of the images of the degeneracies with \mbox{target $M_n$.}
The assertions about the functor $\Gamma$ follow from the well-known formula $\Gamma(V)_n=\bigoplus_{\pi:[n]\twoheadrightarrow[k]} V_k,$ where the sum is indexed over the set of surjective maps in $\Delta$ with domain $[n]$.
\end{proof}
The adjunction $(\Gamma \dashv N)$ can be used to identify the two models for the derived category:
\begin{theorem}[Dold--Kan correspondence]\label{thm:dold--kan}
The pair $\Gamma: \Ch_{\geq 0}(\Mod^{\NN}_{R})\leftrightarrows \sMod^\NN_{R }: N$ forms an adjoint equivalence of categories, and in fact a Quillen equivalence of model categories.
\end{theorem}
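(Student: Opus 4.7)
The plan is to reduce the statement to the classical Dold--Kan correspondence, applied to the abelian category $\Mod_R^\NN$, and then to upgrade the equivalence of categories to a Quillen equivalence.

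First I would observe that $\Mod_R^\NN$ is itself an abelian category: it is the product indexed by $w\in\NN$ of copies of $\Mod_R$, so kernels, cokernels, and exact sequences are computed weight-by-weight. The classical Dold--Kan theorem (in the form of \cite[Section 1.2.3]{Lurie:HA}) then asserts that for any abelian category $\mathcal{A}$ the normalisation functor $N$ and its left adjoint $\Gamma$ define inverse equivalences between simplicial objects in $\mathcal{A}$ and nonnegatively graded chain complexes in $\mathcal{A}$. Applying this with $\mathcal{A}=\Mod_R^\NN$ immediately yields that $\Gamma\colon\Ch_{\geq 0}(\Mod_R^\NN)\leftrightarrows\sMod_R^\NN:N$ is an adjoint equivalence of \emph{categories}. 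In particular, unit and counit are natural isomorphisms, so no additional verification is required beyond quoting the classical theorem.

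Next I would promote this to a Quillen equivalence. Both model categories are already in hand: the projective model structure on $\Ch_{\geq 0}(\Mod_R^\NN)$ and the standard model structure on $\sMod_R^\NN$ whose weak equivalences and fibrations are those detected by the underlying simplicial set functor. The key classical input is that, for a simplicial object $M_\bullet$ in an abelian category, the homology of the normalised chain complex $N(M)_\bullet$ agrees with the homotopy groups of the underlying simplicial set (viewed as a simplicial abelian group). Applied weight-by-weight, this shows that $N$ carries weak equivalences of simplicial weighted graded $R$-modules to quasi-isomorphisms of chain complexes, and conversely that $\Gamma$ carries quasi-isomorphisms to weak equivalences. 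Since $N$ and $\Gamma$ are inverse equivalences of the underlying $1$-categories, verifying the Quillen adjunction is routine: one checks that $\Gamma$ preserves cofibrations and trivial cofibrations, which follows from the splitting formula $\Gamma(V)_n=\bigoplus_{\pi\colon[n]\twoheadrightarrow[k]}V_k$ used in the proof of \Cref{lem:flat dold--kan}, together with the fact that cofibrations in $\Ch_{\geq 0}(\Mod_R^\NN)$ are levelwise injections with projective cokernel.

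The only mildly non-routine step is confirming that the  weak equivalences in $\sMod_R^\NN$  indeed correspond, via $N$, to quasi-isomorphisms in $\Ch_{\geq 0}(\Mod_R^\NN)$; this is the place where one must really invoke the classical fact that $\pi_\ast(M_\bullet)\cong H_\ast(N(M)_\bullet)$ for a simplicial abelian group. Once this is granted, the identification of weak equivalences combined with the already established $1$-categorical equivalence forces the derived unit and counit to be equivalences, and the Quillen equivalence follows. I expect this comparison of weak equivalences to be the main obstacle, but it is a textbook result (e.g.\ \cite[Theorem 1.2.3.7]{Lurie:HA} or the classical monograph of Goerss--Jardine) and requires no new ideas in our weighted graded setting.
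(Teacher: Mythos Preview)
Your proposal is correct and follows the standard argument: reduce to the classical Dold--Kan correspondence for the abelian category $\Mod_R^\NN$, then upgrade to a Quillen equivalence by checking that $N$ and $\Gamma$ identify the respective weak equivalences. The paper itself gives no proof of this theorem---it is stated as a classical result without argument---so there is nothing to compare against beyond noting that your sketch is the expected textbook route.
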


While the Dold--Kan  correspondence is an equivalence, it does \textit{not} respect the symmetric monoidal structure. The categories of commutative algebra objects in $\Ch_{\geq 0}(\grMod_{R})$  and $\sMod^\NN_{R }$  are therefore  {not} equivalent. 
In fact, only the latter category  has a well-behaved homotopy theory for general rings $R$. 

This relies on the symmetric power functor preserving weak equivalences between cofibrant (i.e. levelwise projective) simplicial modules.
In fact, we will show in \Cref{cor:flatsym invariance} that the symmetric power functor also preserves equivalences between the slightly larger class of levelwise flat simplicial modules. This may be thought of as a non-additive variant of the flat resolution lemma in homological algebra. 

Our argument will rely on the following simplicial variant of Lazard's theorem:

\begin{lemma}[Simplicial Lazard Theorem]\label{lem:flat filtered colimit}
A simplicial module $M\in \sMod_R^\NN$ is  levelwise flat if and only if it is a filtered colimit of simplicial modules which are levelwise finite free.
\end{lemma}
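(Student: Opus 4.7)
Plan:

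The forward direction is routine. If $M = \mycolim_{i \in I} F^{(i)}$ with $I$ filtered and each $F^{(i)}$ levelwise finite free, then for every simplicial degree $n$ the weighted graded $R$-module $M_n \cong \mycolim_i F^{(i)}_n$ is a filtered colimit of finite free modules, hence flat by Theorem \ref{thm:lazard}.

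For the backward direction, assume $M$ is levelwise flat. For each triple $(n,w,i)\in \NN\times\NN\times\ZZ$, let $y_{n,w,i} := R[\Delta^n]\{w\}[i]$ denote the representable simplicial module; its value at simplicial degree $m$ is $R[\Hom_\Delta([m],[n])]\{w\}[i]$, a finite free weighted graded $R$-module, and by Yoneda $\Hom_{\sMod_R^\NN}(y_{n,w,i}, N) \cong N_n(w)_i$. In particular, every finite direct sum of representables is levelwise finite free. I would then consider the overcategory $\mathcal{F}/M$ whose objects are pairs $(F,\alpha)$ with $F$ levelwise finite free and $\alpha : F\to M$ a morphism, and prove that $\mathcal{F}/M$ is filtered with tautological colimit $M$.

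Amalgamation in $\mathcal{F}/M$ is realized by direct sums $(F_1, \alpha_1)\oplus (F_2,\alpha_2) = (F_1\oplus F_2, \alpha_1 + \alpha_2)$, and surjectivity of the colimit map onto $M$ is immediate since any $x\in M_n(w)_i$ is hit by the representable $y_{n,w,i} \to M$ it classifies. The two remaining conditions---coequalization of parallel arrows in $\mathcal{F}/M$ and injectivity of the colimit comparison---both reduce to the following Key Lemma: given $(F,\alpha) \in \mathcal{F}/M$ and a sub-simplicial-module $K\subseteq F$ which is levelwise finitely generated and satisfies $\alpha(K)=0$, there exists a morphism $(F,\alpha)\to (F',\alpha')$ in $\mathcal{F}/M$ whose restriction to $K$ is zero. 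Equivalently, the induced map $F/K \to M$, from the levelwise finitely presented quotient $F/K$, factors through some levelwise finite free simplicial module.

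To prove this Key Lemma I would pass through the Dold--Kan correspondence (Theorem \ref{thm:dold--kan}): by Lemma \ref{lem:flat dold--kan} the normalization $N$ preserves levelwise flatness and levelwise finite generation, and $\Gamma$ preserves levelwise freeness and flatness, so it suffices to show that a map from a levelwise finitely presented chain complex to a levelwise flat chain complex factors through a levelwise finite free chain complex. This I would construct by induction on homological degree: at stage $k$, the compatibility of the differential $d^{F'}_k : F'_k \to F'_{k-1}$ with the data of $F/K$ and $M$ is encoded by a map from a finitely presented weighted graded $R$-module to a ``matching-type'' subobject built out of $F'_{k-1}$, $M_k$, and their boundary data. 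Applying Theorem \ref{thm:lazard} at each level produces a finite free factorization, provided the matching object at that level is flat; this is arranged by strengthening the inductive hypothesis so that at each stage $F'_{k-1}$ is chosen to split off its cycles, forcing flatness of the relevant kernel.

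The main obstacle is precisely the inductive coordination: at each homological degree the Lazard factorization must be chosen compatibly with the differentials and with both target maps, and the strengthened hypothesis must be preserved in perpetuity. Once this coordinated factorization is constructed, the filteredness of $\mathcal{F}/M$ and the identification of its colimit with $M$ follow formally.
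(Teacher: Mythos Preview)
Your overcategory strategy is the natural lift of the classical Lazard argument, and the reduction to the Key Lemma is sound. The gap lies in the proposed proof of the Key Lemma by \emph{upward} induction on homological degree. At stage $k$ you must factor the finitely presented $P_k$ through the matching object $W_k = M_k \times_{M_{k-1}} Z_{k-1}(F')$, and for Theorem~\ref{thm:lazard} to apply you need $W_k$ flat. Your strengthening makes $Z_{k-1}(F')$ a summand of $F'_{k-1}$, hence projective; but $W_k$ is the \emph{kernel} of $M_k \oplus Z_{k-1}(F') \to M_{k-1}$, and kernels of maps between flat modules need not be flat. Concretely, with $R=k[x,y,z]$, $M_0=R$, $M_1=R^3$, $d=(x,y,z)$, $P_0=0$, and the (permissible) choice $F'_0=0$, the matching object $W_1$ is the first Koszul syzygy module, which has $\Tor_1(W_1,k)\neq 0$. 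One can rescue this particular step by a smarter choice of $F'_0$, but in general the image of $M_k\oplus Z_{k-1}(F')\to M_{k-1}$ is contained in $Z_{k-1}(M)$, so no enlargement of the finite module $Z_{k-1}(F')$ forces surjectivity, and there is no evident uniform strengthening guaranteeing flatness of $W_k$. The cycle-splitting hypothesis is also not obviously self-propagating: splitting off $Z_{k-1}(F')$ requires $B_{k-2}(F')=\im(d_{k-1})$ to be projective, and a submodule of the projective $Z_{k-2}(F')$ need not be projective.

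The paper avoids all of this by running the argument in the opposite direction. After Dold--Kan it first reduces to the bounded case by writing $V$ as the filtered colimit of its brutal truncations, and then shows by \emph{downward} induction on $r$ that a bounded $V$ is a filtered colimit of complexes which are finite free in degrees $\ge r$ and equal to $V$ below. The inductive step combines the family $\{V^i\}_{i\in I}$ with a Lazard presentation $V_{r-1}\cong\mycolim{j\in J} W^j$ into an explicit category $K$ of triples $(i,j,\delta\colon V^i_r\to W^j)$, verified filtered and cofinal using only that each $V^i_r$ is finite free (hence compact); no flatness of any fiber product is required. If you wish to keep the overcategory approach, the analogous fix is to restrict $\mathcal{F}$ to those levelwise finite free objects whose normalization is bounded (equivalently, the compact ones) and prove your Key Lemma by downward induction from the top degree: at each step one factors a finitely presented quotient $(P_k\oplus F'_{k+1})/(\text{relations})$ through the flat module $M_k$, which is immediate from the ordinary Lazard theorem with no auxiliary hypotheses.
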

\begin{proof}
The ``if'' direction is immediate from Theorem \ref{thm:lazard} above. For the converse, it suffices to show that any levelwise flat chain complex $V\in \Ch_{\geq 0}(\Mod^{\NN}_{R})$ is a filtered colimit of levelwise finite free chain complexes, since $N$ and $\Gamma$ preserve colimits, $N$ preserves flatness, and $\Gamma$ preserves freeness by Lemma \ref{lem:flat dold--kan}. We may further assume that $V_n$ vanishes above some given degree $N$, since an arbitrary chain complex is a filtered colimit of its truncations.

We will show by downward induction on $r$ that for every $r \geq 0$, the complex $V$ is a filtered colimit of complexes $\{V^i\}_{i\in I}$ satisfying the following two properties:\begin{enumerate} 
\item $V^i_n$ is finite free for $r\leq n\leq N$ and all $i\in I$.
\item the map $V^i_n\rightarrow V_n$ is an isomorphism for all $0\leq n< r$ and all $i\in I$.
\end{enumerate} 
We aim to prove the case $r=0$. The base case $r=N+1$ is \mbox{trivially true.}
Assume that the claim holds for a given $r \geq 1$, and that this is verified by a filtered diagram $\{V^i\}$ of chain complexes over $V$.
 By Theorem \ref{thm:lazard}, we can write $\mycolim{j\in J} W^j\cong V_{r-1} $  \mbox{with $J$ filtered and each $W^j$ finite  free. }

Let $K$ denote the category of triples $(i\in I,j \in J, \delta: V^i_r \rightarrow W^j)$ such that the diagram 
 \[\xymatrix{
V^i_r\ar[d]_-{\delta}\ar[r]&V_r\ar[d]^-d\\
W^j\ar[r]&V_{r-1}
}\] commutes  and  the composite $V^i_{r+1}\xrightarrow{d}V^i_r\xrightarrow{\delta} W^j$ vanishes. A morphism from $(i,j,\delta)$ to $(i',j',\delta')$ is a pair of morphisms in $I$ and $J$ compatible with \mbox{$\delta$ and $\delta'$.}

By the conditions on objects of $K$, replacing $V_{r-1}\cong V^i_{r-1}$ with $W^j$ in $V^i$ defines a chain complex, and we obtain in this way a diagram $\varphi:K\to \Ch_{\geq 0}(\Mod^{\NN}_{R})$ over $V$. 

Thus, it will suffice to show, first, that the colimit of $\varphi$ is $V$; and, second, that $K$ is filtered.

It is enough to verify the first claim in each degree $n$, ignoring the differential. There is an obvious forgetful functor $\pi:K\to I\times J$, and we have a commuting diagram \[\xymatrix{
K\ar[d]_-\varphi\ar[r]^-\pi&I\times J\ar[d]^-{\psi}\\
\Ch_R\ar[r]^-{(-)_n}&\Mod_R,
}\] where \[\psi(i,j)=\begin{cases}
V^i_n&\quad n\geq r\\
W^j&\quad n=r-1\\
V_n&\quad n<r-1.
\end{cases}\]  By the assumptions on $\{V^i\}_{i\in I}$, it suffices to check that $\pi$ is a cofinal functor, i.e. that for all $(i,j) \in I \times J$, the comma category $K_{(i,j)/}  $ is nonempty and connected.
This follows easily using that each $V^i_r$ is a compact object and that $I$ and $J$ are filtered.

The verification that $K$ is filtered proceeds in a similar manner, using compactness and the fact that $I$ and $J$ are filtered.  
\end{proof}

\subsection{Symmetric powers}
The $k^{th}$ symmetric power of a module $M \in \Mod^\NN_{R}$ is defined  as  $$\Sym_R^k(M)=(M^{\otimes k})_{\Sigma_k}.$$ This functor preserves filtered colimits.
The total symmetric power is \mbox{$\Sym_R(M) = \bigoplus_k \Sym_R^k(M)$.}

 In this section, we will study the homotopical behaviour of the functor \mbox{$\Sym_R^k:  \sMod^\NN_{R} \rightarrow  \sMod^\NN_{R}$} obtained by applying $\Sym_R^k$ in each simplicial degree.
This will lead to a   technical tool for  our subsequent study of Lie algebra homology.

It is well-known \cite{DoldPuppe} that symmetric powers preserve weak equivalences between levelwise projective modules. In fact, the following slightly stronger claim holds:
 
\begin{lemma}\label{lem:flat sym}
\mbox{If $M$ is   levelwise flat, then  $\mathbb{L}\Sym^k_R(M)\to \Sym^k_R(M)$ is a weak equivalence.}
\end{lemma}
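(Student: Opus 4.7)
The plan is to reduce the levelwise flat case to the known levelwise projective case via the Simplicial Lazard Theorem \ref{lem:flat filtered colimit}. First, I would write $M \cong \mycolim{i\in I}\, M^i$ as a filtered colimit of simplicial modules each of which is levelwise finite free. In particular, each $M^i$ is levelwise projective, so the classical result of Dold--Puppe applies and tells us that $\Sym^k_R$ preserves weak equivalences between such objects; hence the canonical comparison $\mathbb{L}\Sym^k_R(M^i)\to \Sym^k_R(M^i)$ is a weak equivalence for every $i\in I$.

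Next, I would argue that $\mathbb{L}\Sym^k_R$ commutes with the filtered colimit in question. Concretely, pick a functorial cofibrant replacement $Q$ in $\sMod_R^\NN$ and form the filtered diagram $\{QM^i\}_{i\in I}$. By \Cref{lem:filtered hocolim}, filtered colimits in $\sMod_R^\NN$ are homotopy colimits, so the natural map $\mycolim{i}\, QM^i \to \mycolim{i}\, M^i = M$ is a weak equivalence of the same form as each $QM^i\to M^i$, and $\mycolim_i QM^i$ is still cofibrant (cofibrations are closed under filtered colimits). Since the ordinary functor $\Sym^k_R$ preserves filtered colimits, applying it yields
\[
\mathbb{L}\Sym^k_R(M) \;\simeq\; \Sym^k_R\bigl(\mycolim{i}\, QM^i\bigr) \;=\; \mycolim{i}\, \Sym^k_R(QM^i) \;\simeq\; \mycolim{i}\, \Sym^k_R(M^i) \;=\; \Sym^k_R(M),
\]
where the middle equivalence uses Dold--Puppe on each $M^i$ together with the fact that filtered colimits preserve weak equivalences (\Cref{lem:filtered hocolim}).

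The main obstacle is the middle step, namely checking that $\mathbb{L}\Sym^k_R$ actually commutes with the filtered (homotopy) colimit $\mycolim_i M^i$. This is not completely formal because $\Sym^k_R$ is not a left adjoint of $\infty$-categories; it is, however, a non-additive functor preserving ordinary filtered colimits, and the essential input we need is precisely \Cref{lem:filtered hocolim}, which upgrades filtered colimits in $\sMod_R^\NN$ to homotopy colimits. Once this is in hand, the four equivalences above chain together and the lemma follows.
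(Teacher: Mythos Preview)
Your approach is essentially identical to the paper's: reduce to the levelwise finite free case via the Simplicial Lazard Theorem, use Dold--Puppe there, and then pass filtered colimits through both $\Sym^k_R$ and $\mathbb{L}\Sym^k_R$. The paper organizes this as a commutative square and concludes by checking three of the four arrows are equivalences.

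There is one genuine slip in your justification of the key step. You assert that $\mycolim{i}\, QM^i$ is cofibrant because ``cofibrations are closed under filtered colimits''. In the projective model structure on $\sMod_R^\NN$, cofibrant objects are levelwise projective, and a filtered colimit of projective modules is in general only \emph{flat}, not projective (this is exactly Lazard's theorem in reverse). So $\mycolim{i}\, QM^i$ need not be cofibrant, and the identification $\mathbb{L}\Sym^k_R(M)\simeq \Sym^k_R(\mycolim{i}\, QM^i)$ is not justified by your argument; indeed, that identification is essentially the statement of the lemma applied to the levelwise flat object $\mycolim{i}\, QM^i$, which is circular.

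The paper sidesteps this by simply asserting the general fact that if $F$ preserves filtered colimits then $\mathbb{L}F$ preserves filtered homotopy colimits, invoking \Cref{lem:filtered hocolim}. One clean way to actually see this (and patch your argument) is to use a functorial cofibrant replacement $Q$ that itself preserves filtered colimits---for instance the monadic bar resolution $\Barr_\bullet(\Free,\Free\circ\U,-)$ built from the free/forgetful adjunction to sets, which is levelwise free and manifestly commutes with filtered colimits. Then $\mathbb{L}\Sym^k_R(\mycolim{i}\,M^i)=\Sym^k_R\bigl(Q(\mycolim{i}\,M^i)\bigr)=\Sym^k_R\bigl(\mycolim{i}\,QM^i\bigr)=\mycolim{i}\,\Sym^k_R(QM^i)$ on the nose, and the rest of your chain goes through.
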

\begin{proof}
By  Lemma \ref{lem:flat filtered colimit}, we can write $M\cong \mycolim{i\in I}M_i$ with $I$ filtered and each $M_i$ levelwise  finite free. 
Consider the following commutative square:
\[\xymatrix{
\mathbb{L}\Sym^k_R\ (\mycolim{i\in I} \ M_i)\ar[r] &\Sym^k_R(\mycolim{i\in I} \ M_i)\\
\mycolim{i\in I} \ \mathbb{L}\Sym^k_R\ (M_i)\ar[u]\ar[r]^-\sim&\mycolim{i\in I} \ \Sym^k_R\ (M_i).\ar[u]_-{}
}\] 
The right vertical arrow is an equivalence since $\Sym_R^k$ preserves filtered colimits. This implies that the left derived functor $\mathbb{L}\Sym^k$ preserves filtered homotopy colimits, which, by Lemma \ref{lem:filtered hocolim}, shows that the left vertical arrow is an equivalence.
This lower horizontal arrow is an equivalence since levelwise finite free modules are projective.
\end{proof}
\begin{corollary}[Invariance of symmetric powers for flat modules]\label{cor:flatsym invariance}
If $f:M\to M'$ is a weak equivalence of levelwise flat
 simplicial $R$-modules, then $\Sym_R^k(f)$ is a weak \mbox{equivalence for all $k\geq0$.}
\end{corollary}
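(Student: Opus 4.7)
The plan is to deduce the corollary directly from Lemma \ref{lem:flat sym} together with the fact that the left derived functor $\mathbb{L}\Sym^k_R$ is homotopical by construction. The point is simply that weak equivalences between levelwise flat modules look, from the homotopical viewpoint, the same as weak equivalences between cofibrant (levelwise projective) modules, thanks to the comparison established in Lemma \ref{lem:flat sym}.

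More concretely, given a weak equivalence $f:M\to M'$ between levelwise flat simplicial $R$-modules, I would contemplate the commutative square
\[\xymatrix{
\mathbb{L}\Sym^k_R(M) \ar[r] \ar[d]_-{\mathbb{L}\Sym^k_R(f)} & \Sym^k_R(M) \ar[d]^-{\Sym^k_R(f)} \\
\mathbb{L}\Sym^k_R(M') \ar[r] & \Sym^k_R(M').
}\]
The two horizontal arrows are weak equivalences by Lemma \ref{lem:flat sym}, since $M$ and $M'$ are both levelwise flat. The left vertical arrow is a weak equivalence because $\mathbb{L}\Sym^k_R$, being a left derived functor, automatically sends weak equivalences to weak equivalences (one can realise this concretely by functorial cofibrant replacement, using that $\Sym^k_R$ preserves weak equivalences between levelwise projective modules, which is the classical result of Dold--Puppe recalled in the text).

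By the two-out-of-three property of weak equivalences, the right vertical arrow $\Sym^k_R(f)$ is a weak equivalence, as desired. There is no serious obstacle here: all of the real work was carried out in Lemma \ref{lem:flat sym} (and, upstream of that, in the simplicial Lazard theorem \ref{lem:flat filtered colimit} and Lemma \ref{lem:filtered hocolim}), so the corollary is a formal consequence.
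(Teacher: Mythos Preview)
Your argument is correct and is precisely the intended deduction: the paper states the corollary without proof immediately after Lemma \ref{lem:flat sym}, and the commutative square with $\mathbb{L}\Sym^k_R$ together with two-out-of-three is exactly how one formalises the passage from lemma to corollary.
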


We will also need the following fact in our later computations: 

\begin{lemma}\label{lem:sym projective}
Assume that $2$ is invertible in $R$. If $M\in \sMod^\NN_{R}$ is levelwise free (respectively  levelwise projective or levelwise  flat), then $\Sym_R^k(M)$ has the same property.
\end{lemma}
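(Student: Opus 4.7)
The plan is to reduce to a statement about ordinary weighted graded $R$-modules, since $\Sym^k_R$ is applied simplicial-degree-wise; thus in each of the three cases it suffices to show that $\Sym^k_R\colon \Mod^\NN_R \to \Mod^\NN_R$ preserves the indicated property. I would then handle the three cases in order of difficulty: free, projective, flat.

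For the free case, I would first establish the multinomial decomposition
\[\Sym^k_R(N_1 \oplus N_2) \;\cong\; \bigoplus_{i+j=k}\Sym^i_R(N_1)\otimes \Sym^j_R(N_2),\]
which follows from writing $(N_1\oplus N_2)^{\otimes k}$ as a sum of $\Sigma_k$-modules induced from $\Sigma_i\times\Sigma_j$ and taking $\Sigma_k$-coinvariants. Iterating this for a free module $M = \bigoplus_\alpha \Sigma^{r_\alpha}R\{w_\alpha\}$ reduces the problem to computing $\Sym^k_R$ on a single generator $\Sigma^r R\{w\}$. The underlying tensor power is $\Sigma^{kr}R\{kw\}$, on which $\Sigma_k$ acts by Koszul signs: a transposition acts via $(-1)^r$. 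When $r$ is even the action is trivial and $\Sym^k_R(\Sigma^r R\{w\}) = \Sigma^{kr}R\{kw\}$ is free; when $r$ is odd the action is by the sign representation, and for $k\geq 2$ a transposition identifies any element $x$ with $-x$, so the coinvariants equal $R/2R$, which vanishes since $2$ is invertible. In every case we obtain a free module, and since $\Sym^k_R(M)$ is an (infinite) direct sum of tensor products of such terms, it is itself free. This is also the step where $2$-invertibility is genuinely required, and hence the main obstacle.

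For the projective case, if $M$ sits as a retract of a free module $F$ via $M \xrightarrow{\iota} F \xrightarrow{\pi} M$ with $\pi\circ\iota = \id$, then applying $(-)^{\otimes k}$ and passing to $\Sigma_k$-coinvariants yields $\Sym^k_R(M) \to \Sym^k_R(F) \to \Sym^k_R(M)$ composing to the identity; hence $\Sym^k_R(M)$ is a retract of $\Sym^k_R(F)$, which is free by the previous step, and so is projective. For the flat case, I would invoke the Simplicial Lazard Theorem (\Cref{lem:flat filtered colimit}) to write a levelwise flat $M$ as a filtered colimit of levelwise finite free simplicial modules $\{M_i\}$. Since $\Sym^k_R$ preserves filtered colimits, $\Sym^k_R(M)\cong \mycolim{i} \Sym^k_R(M_i)$, and each $\Sym^k_R(M_i)$ is levelwise free by the first step. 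Finally, filtered colimits of flat modules are flat (filtered colimits in $\Mod_R$ being exact), so $\Sym^k_R(M)$ is levelwise flat.
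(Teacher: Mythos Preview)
Your proof is correct and follows essentially the same route as the paper: reduce the free case via the binomial/multinomial decomposition to a single generator $\Sigma^r R\{w\}$, observe that the Koszul sign forces the $\Sigma_k$-coinvariants to vanish for $r$ odd and $k\geq 2$ when $2\in R^\times$, then handle the projective case by a retract argument and the flat case via the Simplicial Lazard Theorem together with preservation of filtered colimits. Your write-up is in fact slightly more explicit than the paper's (you spell out the $R/2R$ computation and the functorial retraction), but the logical structure is identical.
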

\begin{proof}
Since $\Sym_R^k$  satisfies a binomial formula on direct sums, it suffices to treat the case  $M=\Sigma^nR$   to verify that $\Sym_R^k$ preserves freeness.
Indeed, observe that for $k>1$, we have  \[\Sym_R^k\left(\Sigma^nR\right)\cong\begin{cases}
\Sigma^{nk}R&\quad n \text{ even}\\
0&\quad n \text{ odd}.
\end{cases}\] If $M$ is projective, then $M$ is a summand of a free $R$-module $F$, so $\Sym_R^k(M)$ is a summand of $\Sym_R^k(F)$, which is free by the previous case. If $M$ is flat, then $M$ is a filtered colimit of levelwise finite free $R$-modules by \Cref{lem:flat filtered colimit}. Since $\Sym_R^k(M)$ preserves filtered colimits, a second application of \Cref{lem:flat filtered colimit}  shows that $\Sym_R^k(M)$ is levelwise flat.
\end{proof}

\subsection{Divided Powers}\label{divpow}
The $n^{th}$ divided power of a module $M\in \Mod_R^\NN$  is given by  \[\Gamma_R^n(M)=(M^{\otimes n})^{\Sigma_n}.\] 
The functor $\Gamma_R$ preserves filtered colimits.
The total divided power is \mbox{$\Gamma_R(M) = \bigoplus_k \Gamma_R^k(M)$.}

\begin{proposition}\label{exponential} Divided powers satisfy the following well-known properties:
\begin{enumerate} 
\item There is a natural isomorphism $\Gamma_R(M_1\oplus M_2)\cong \Gamma_R(M_1)\otimes \Gamma_R(M_2)$ for all $M_1, M_2$.
\item If $M\cong R\langle x\rangle$ is free on one generator, then there is an isomorphism of $R$-modules \[\Gamma_R(M)\cong \begin{cases}
R\langle \gamma_i(x): 0\leq i< \infty,\, |\gamma_i(x)|=i|x|\rangle&\quad |x|\,\text{even}\\
R\oplus R\langle x\rangle&\quad |x|\,\text{odd}.
\end{cases}\] 
\end{enumerate}
\end{proposition}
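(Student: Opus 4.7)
The proof naturally splits into the exponential isomorphism in part (1) and the explicit description in part (2); both arguments proceed by carefully unwinding the definitions in the presence of Koszul signs.

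For part (1), my plan is to decompose $(M_1 \oplus M_2)^{\otimes n}$ by distributivity into a direct sum $\bigoplus_{k=0}^{n} \bigoplus_{S \subseteq \{1, \ldots, n\},\ |S|=k} M_1^{\otimes k} \otimes M_2^{\otimes(n-k)}$, indexed by the subsets $S$ recording the positions filled by factors of $M_1$. For fixed $k$, the symmetric group $\Sigma_n$ permutes the summands transitively, and the stabiliser of $S = \{1,\ldots,k\}$ is $\Sigma_k \times \Sigma_{n-k}$ acting on the representative summand in the natural way. The standard identification of invariants under a transitive action with invariants under the stabiliser of a representative then gives
$$\Gamma_R^n(M_1 \oplus M_2) \cong \bigoplus_{k=0}^n \bigl(M_1^{\otimes k} \otimes M_2^{\otimes(n-k)}\bigr)^{\Sigma_k \times \Sigma_{n-k}} \cong \bigoplus_{k=0}^n \Gamma_R^k(M_1) \otimes \Gamma_R^{n-k}(M_2),$$
where the second isomorphism commutes invariants past the tensor product (since the two factors of $\Sigma_k \times \Sigma_{n-k}$ act on separate tensor factors). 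Summing over $n$ produces the claimed exponential formula.

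For part (2), I would compute directly: the module $M^{\otimes n} = R\langle x^{\otimes n}\rangle$ is free on one generator, and $\Sigma_n$ acts on it through the Koszul sign homomorphism, which is trivial when $|x|$ is even and is the sign representation when $|x|$ is odd. In the even case we obtain $\Gamma_R^n(M) = R\langle x^{\otimes n}\rangle$ for all $n$, and setting $\gamma_n(x) := x^{\otimes n}$ gives the asserted free module with $|\gamma_n(x)| = n|x|$. In the odd case, any invariant $y \in \Gamma_R^n(M)$ for $n \geq 2$ satisfies $y = -y$ under the action of a single transposition; under the standing assumption (as in \Cref{lem:sym projective}) that $2$ is invertible in $R$, this forces $y = 0$, leaving $\Gamma_R(M) = R \oplus R\langle x\rangle$.

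The main obstacle is the identity $(A \otimes B)^{G \times H} \cong A^G \otimes B^H$ invoked in part (1), which does not hold for arbitrary modules. I would dispatch this by restricting attention to levelwise flat $M_i$ (the setting in which divided powers are applied in the remainder of the paper) and reducing to finite free modules via \Cref{thm:lazard}, where the identification is formal and direct; the secondary bookkeeping obstacle of tracking Koszul signs in the initial decomposition is absorbed uniformly into the graded symmetric structure on $\otimes$.
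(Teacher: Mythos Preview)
Your proof is correct and substantially more explicit than the paper's, which simply cites Roby's \emph{Lois polynomes et lois formelles} for part (1) and Neisendorfer's \emph{Algebraic Methods in Unstable Homotopy Theory} (Lemma 5.1.2, tensored up from $\mathbb{Z}$ to $R$) for part (2).

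A brief comparison: your direct computation in part (2) invokes the standing hypothesis $2 \in R^\times$ exactly where it is needed and is arguably cleaner than the base-change manoeuvre the paper gestures at. For part (1), your argument via the orbit decomposition of $(M_1\oplus M_2)^{\otimes n}$ and the identification of invariants of an induced module with invariants under the stabiliser is the standard combinatorial route; the paper sidesteps all of this by pointing to Roby. Your caution regarding the step $(A \otimes B)^{G \times H} \cong A^G \otimes B^H$ is well founded --- this identity does fail for arbitrary modules --- and it is worth remarking that Roby's $\Gamma$ is defined via a universal property (polynomial laws) rather than as $\Sigma_n$-invariants, the two definitions agreeing only for flat modules. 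Your proposed restriction to the flat case is therefore not a gap but a genuine clarification, and it covers precisely what the paper requires in its applications (cf.\ \Cref{cor:flatgamma invariance} and \Cref{lem:gamma projective}).
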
 
\begin{proof}
The first fact is standard (cf. \cite[Theorem III.4]{roby1963lois}). The second and third claim follow from \cite[Lemma 5.1.2]{Neisendorfer:AMUHT} after tensoring up from $\mathbb{Z}$ to $R$.
\end{proof}

Divided powers satisfy many of the same desirable properties of symmetric powers. The following results  are classical and proven along the same lines as in the preceeding section;   we will therefore be brief:

\begin{lemma}[Invariance of divided powers for flat modules]\label{cor:flatgamma invariance}
If $f:M\to M'$ is a weak equivalence of levelwise flat
 simplicial $R$-modules, then $\Gamma_R^k(f)$ is a weak \mbox{equivalence for all $k\geq0$.}
\end{lemma}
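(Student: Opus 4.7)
The plan is to mirror the proof of Corollary \ref{cor:flatsym invariance} step for step, since every structural ingredient employed there has a self-evident divided-power analogue. First, I would establish the divided-power counterpart of Lemma \ref{lem:flat sym}: for every levelwise flat simplicial module $M\in \sMod_R^\NN$, the canonical map $\mathbb{L}\Gamma^k_R(M) \to \Gamma^k_R(M)$ is a weak equivalence. Using the Simplicial Lazard Theorem (Lemma \ref{lem:flat filtered colimit}), I write $M \cong \mycolim{i\in I} M_i$ with $I$ filtered and each $M_i$ levelwise finite free, and examine the commuting square
\[\xymatrix{
\mathbb{L}\Gamma^k_R(\mycolim{i\in I} M_i)\ar[r] & \Gamma^k_R(\mycolim{i\in I} M_i)\\
\mycolim{i\in I} \mathbb{L}\Gamma^k_R(M_i)\ar[u]\ar[r]^-\sim & \mycolim{i\in I} \Gamma^k_R(M_i).\ar[u]
}\]
The right vertical arrow is an isomorphism because $\Gamma_R^k$ preserves filtered colimits, as recorded at the opening of Section \ref{divpow}. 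The left vertical arrow is a weak equivalence because filtered colimits in $\sMod_R^\NN$ compute homotopy colimits by Lemma \ref{lem:filtered hocolim}, and the left derived functor $\mathbb{L}\Gamma_R^k$ preserves homotopy colimits. The lower horizontal arrow is a weak equivalence because each $M_i$ is levelwise finite free, hence levelwise projective, and on levelwise projective simplicial modules $\Gamma^k_R$ agrees with $\mathbb{L}\Gamma^k_R$ by the classical Dold--Puppe theorem for strict polynomial functors.

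With the lemma in hand, the corollary itself is formal. Given a weak equivalence $f: M \to M'$ between levelwise flat simplicial modules, the naturality square
\[\xymatrix{
\mathbb{L}\Gamma^k_R(M)\ar[r]^-\sim\ar[d]_-{\mathbb{L}\Gamma^k_R(f)} & \Gamma^k_R(M)\ar[d]^-{\Gamma^k_R(f)}\\
\mathbb{L}\Gamma^k_R(M')\ar[r]^-\sim & \Gamma^k_R(M')
}\]
has weak equivalences along both horizontal arrows by the lemma, and the left vertical arrow is a weak equivalence because $\mathbb{L}\Gamma^k_R$ preserves weak equivalences by construction. Two-out-of-three then forces the right vertical arrow, namely $\Gamma^k_R(f)$, to be a weak equivalence.

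The only nontrivial input is the Dold--Puppe comparison for the strict polynomial functor $\Gamma^k$ on levelwise projective simplicial modules, which is classical; no real obstacle is anticipated. The argument is a perfect mirror of the symmetric-power case with $\Sym^k_R$ replaced by $\Gamma^k_R$ throughout, which matches the authors' description of these results as ``proven along the same lines'' and justifies their choice to be brief.
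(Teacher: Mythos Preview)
Your proposal is correct and matches the paper's intent exactly: the authors explicitly decline to give a proof, stating only that the result is ``proven along the same lines as in the preceding section,'' i.e., by replacing $\Sym^k_R$ with $\Gamma^k_R$ throughout the proofs of Lemma \ref{lem:flat sym} and Corollary \ref{cor:flatsym invariance}. Your reconstruction of that argument---via the Simplicial Lazard Theorem, preservation of filtered colimits by $\Gamma^k_R$, and the Dold--Puppe invariance on levelwise projectives---is precisely the intended one.
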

 
\begin{lemma}\label{lem:gamma projective}
Assume that $2$ is invertible in $R$. If $M\in \sMod^\NN_{R}$ is levelwise free (respectively  levelwise projective or levelwise  flat), then $\Gamma_R^k(M)$ has the same property.
\end{lemma}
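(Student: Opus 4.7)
The plan is to mirror the proof of Lemma \ref{lem:sym projective}, since $\Gamma_R^k$ shares with $\Sym_R^k$ the two structural features driving that argument: an exponential/binomial decomposition on direct sums (Proposition \ref{exponential}(1)) and preservation of filtered colimits (noted in Section \ref{divpow}). All three claims (free, projective, flat) are levelwise in the simplicial direction, so I would work one simplicial degree at a time throughout.

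For the free case, I would first reduce, via the exponential property $\Gamma_R(M_1\oplus M_2)\cong \Gamma_R(M_1)\otimes \Gamma_R(M_2)$, to the case of a single generator $M = \Sigma^n R\{w\}$; here it is crucial that the exponential property extends to arbitrary direct sums, which is the reason the argument produces a genuinely free (not merely projective) answer. Proposition \ref{exponential}(2) then describes $\Gamma_R^k(\Sigma^nR\{w\})$ completely: for $n$ even it is the one-generator free module $\Sigma^{nk}R\{kw\}$, while for $n$ odd (where $2$ invertible is needed to avoid $2$-torsion from $\gamma_2(x) = \tfrac{x^2}{2}$ type relations) it is $R\{0\}$, $\Sigma^nR\{w\}$, or zero according as $k=0$, $k=1$, or $k\geq 2$. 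Direct summing back up gives freeness of $\Gamma_R^k(M)$ for any free $M$.

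The projective case is a formal consequence. If $M$ is a retract of a levelwise free module $F$ via $M\hookrightarrow F \twoheadrightarrow M$, applying the functor $\Gamma_R^k$ in each simplicial degree exhibits $\Gamma_R^k(M)$ as a retract of $\Gamma_R^k(F)$, which is levelwise free by the previous paragraph, hence levelwise projective. For flatness, I would invoke the simplicial Lazard theorem (Lemma \ref{lem:flat filtered colimit}) to write $M\cong \mycolim{i\in I} M_i$ with $I$ filtered and each $M_i$ levelwise finite free. Since $\Gamma_R^k$ commutes with filtered colimits, $\Gamma_R^k(M)\cong \mycolim{i\in I}\Gamma_R^k(M_i)$, and each term is levelwise (finite) free by the freeness case; a second application of Lemma \ref{lem:flat filtered colimit} concludes levelwise flatness.

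There is no serious obstacle: the proof is a direct transcription of the $\Sym$ argument once one has the corresponding input facts (the exponential decomposition, the rank-one computation, preservation of filtered colimits, and the simplicial Lazard theorem), all of which are available in the text. The only point requiring any vigilance is that the odd-degree case in Proposition \ref{exponential}(2) is clean precisely because $2\in R^\times$; without this hypothesis, $\Gamma_R^k$ of an odd-degree generator need not be free, and the Step~1 reduction would break.
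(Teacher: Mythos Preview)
Your proposal is correct and follows exactly the approach the paper intends: the paper explicitly states that this lemma is ``proven along the same lines as in the preceding section'' (i.e., as Lemma~\ref{lem:sym projective}), and your argument transcribes that proof with $\Gamma_R^k$ in place of $\Sym_R^k$, using Proposition~\ref{exponential} for the rank-one computation and the exponential decomposition. There is nothing to add.
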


\section{Lie algebras and their homology} \label{sec:SimplicialLieAlgebras}
Given a Lie algebra over a graded ring $R$, we can divide out  the ideal spanned by all brackets and hence obtain its  abelianisation.  Carrying out this construction in a suitably derived way leads to a definition of the Lie algebra homology  $H^{\Lie}(\mathfrak{g})$ of $\mathfrak{g}$.
In this section, we show that if $\mathfrak{g}$ is flat over $R$, then  $H^{\Lie}(\mathfrak{g})$ is computed by the  \mbox{classical Chevalley--Eilenberg complex  \vspace{-2pt}$\CE(\mathfrak{g})$ of $\mathfrak{g}$. }

\subsection{Simplicial Lie algebras over a graded ring}
We fix a graded-commutative ring $R$ with $2\in R^\times$   and work in the category $\sMod^\NN_{R}$ of weighted graded $R$-modules.
\begin{definition}[Weighted graded Lie algebra]\label{def:Lie algebra}
A (weighted, graded) \emph{Lie algebra}  in $\Mod^\NN_{R}$ consists of a weighted graded $R$-module $\mathfrak{g}\in \Mod^\NN_{R}$ together with a map $[-, - ] :  \mathfrak{g}\otimes_R \mathfrak{g}\to \mathfrak{g}$   \vspace{-2pt}
satisfying the following identities for all homogeneous elements $a\in \mathfrak{g}_i$, $b\in \mathfrak{g}_j$,  $c\in \mathfrak{g}_k$:
\begin{enumerate} 
\item $[a,b]+(-1)^{i j }[b,a]=0\vspace{2pt}  $
\item $(-1)^{i k}  [a,[b,c]]   +  (-1)^{j i }[b,[c,a]]   +    (-1)^{k j }  [c,[a,b]]      =\vspace{2pt}    0  $
\item $[a,[a,a]]=0\vspace{2pt}  $. 
\end{enumerate}
A map of Lie algebras is an $R$-module map intertwining the respective brackets.
We write $\Lie_{R}^\NN$ for the category of (weighted, graded) Lie algebras over $R$.  \vspace{-2pt}
\end{definition}
Due to our standing assumption that $2\in R^\times$, we have $[a,a]=0$ for $a$ in even internal degree.
 
\begin{remark}
There is disagreement in the literature over the definition of Lie algebras over general rings. An operadic definition would only enforce the first two axioms (and hence \mbox{$3\cdot [a,[a,a]]=0$} for any $a$), but the resulting ``operadic Lie algebras'' in general fail to inject into their universal enveloping algebras, since the third axiom is satisfied by any Lie algebra obtained from an associative algebra. 
Since the relationship with the universal enveloping algebra is crucial in what we do, we have chosen the above set of axioms.  \vspace{-2pt}
\end{remark}

The   forgetful functor $\U^{\Lie_R}_{\Mod_R}:\Lie_R^\NN \to \Mod_R^\NN$ admits a left adjoint $\Free^{\Lie_R}_{\Mod_R}$, the free Lie algebra functor. The category $\Lie_R^\NN$ is equivalent to  algebras for the corresponding \mbox{monad $\LAA$.} Carrying out these constructions degreewise, we obtain the category $\sLie_R^\NN$ of simplicial Lie algebras. It is linked to $\sMod_R^\NN$ by a (monadic) free-forgetful adjunction. Abusing   notation, we will write $\Free^{\Lie_R}_{\Mod_R}$ and $\U^{\Lie}$
for its constituent functors,  and  denote the resulting monad by
$\LAA$.

\begin{lemma}\label{lem:free lie}
If $M\in \sMod^\NN_{R}$ is levelwise free, projective,  or flat, then so is \vspace{-2pt} $\LAA(M)$.
\end{lemma}
\begin{proof}
The levelwise free case reduces via extension of scalars to the case $R=\mathbb{Z}[\frac{1}{2}]$ of \cite[Prop. 8.5.1]{Neisendorfer:AMUHT} (the standing assumption that $2\in R^\times$ is used to guarantee that our definition of Lie algebra coincides with \cite[Def. 8.1.1]{Neisendorfer:AMUHT}). If $M$ is levelwise projective, then $M$ is a summand of a levelwise  free simplicial module, so the same is true of $\LAA(M)$ by the  levelwise free case. If $M$ is levelwise flat, then $M$ is a filtered colimit of levelwise finite free modules by \Cref{lem:flat filtered colimit}, so the same is true of $\LAA(M)$ by the free case since $\U_{\Mod}^{\Lie}$ preserves filtered colimits.
\end{proof}

The category $\sLie_R^\NN$ of simplicial Lie algebras carries a standard model structure:

\begin{proposition}[Model structure on simplicial Lie algebras]\label{prop:lie model structure}
The right transferred model structure along the following adjunction exists: \begin{diagram}\sMod^\NN_{R} &    &\pile{\rTo^{\Free^{\Lie_R}_{\Mod_R}}  \\  \lTo_{\U^{\Lie_R}_{\Mod_R}}}&  &\sLie^\NN_{R}  \end{diagram} Its weak equivalences and fibrations are defined on underlying simplicial modules.
\end{proposition}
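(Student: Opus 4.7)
The plan is to apply a standard transfer theorem for cofibrantly generated model structures (for example, the lifting criterion of Quillen, Crans, or Schwede--Shipley) to move the projective model structure on $\sMod_R^\NN$ across the adjunction $\Free^{\Lie_R}_{\Mod_R}\dashv \U^{\Lie_R}_{\Mod_R}$. The three things to verify are: (i) bicompleteness of $\sLie_R^\NN$; (ii) smallness of the domains of the images under $\Free^{\Lie_R}_{\Mod_R}$ of generating (trivial) cofibrations of $\sMod_R^\NN$; and (iii) the acyclicity condition that every transfinite composition of pushouts of $\Free^{\Lie_R}_{\Mod_R}(j)$, for $j$ a generating trivial cofibration in $\sMod_R^\NN$, has underlying weak equivalence in $\sMod_R^\NN$.

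Conditions (i) and (ii) are essentially formal. The forgetful functor $\U^{\Lie_R}_{\Mod_R}$ is monadic by Beck's theorem and thus creates all limits; colimits exist because the monad $\LAA$ is built from tensor powers and symmetric coinvariants, each of which preserves filtered colimits and reflexive coequalizers, allowing one to construct colimits in $\sLie_R^\NN$ via the usual reflexive coequalizer presentation. The accessibility of $\LAA$ together with local presentability of $\sMod_R^\NN$ implies that $\sLie_R^\NN$ is itself presentable, so every object (in particular the domains of generators) is small with respect to the entire category.

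The main obstacle is the acyclicity condition (iii), and I would handle it by the path object argument. The key observation is that every simplicial Lie algebra is fibrant in $\sMod_R^\NN$, since its underlying simplicial set is a simplicial abelian group and hence a Kan complex. Given $\mathfrak{g}\in \sLie_R^\NN$, one constructs a path object as the simplicial cotensor $\mathfrak{g}^{\Delta^1}$, formed as the end $\int_{[n]\in\Delta}\mathfrak{g}_n^{\Delta^1_n}$ in $\Mod_R^\NN$; this inherits a pointwise Lie bracket because finite products of Lie algebras are Lie algebras, and the diagonal factors as $\mathfrak{g}\xrightarrow{\sim}\mathfrak{g}^{\Delta^1}\twoheadrightarrow \mathfrak{g}\times\mathfrak{g}$ with the second map a fibration in $\sMod_R^\NN$ and the first map a weak equivalence. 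Combined with the fact that all objects are underlying-fibrant, the standard path-object argument of Quillen then shows that pushouts of $\Free^{\Lie_R}_{\Mod_R}(j)$ along arbitrary maps remain weak equivalences, and that this property is stable under transfinite composition, completing (iii) and hence the transfer.
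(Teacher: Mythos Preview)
Your proposal is correct and follows essentially the same route as the paper: the paper's proof simply notes that every object of $\sMod_R^\NN$ is fibrant and that the path object lifts to $\sLie_R^\NN$, then invokes a standard transfer criterion (citing \cite{johnson2014lifting}). You have spelled out the bicompleteness and smallness verifications in more detail, but the core argument---the Quillen path-object argument enabled by underlying fibrancy---is identical.
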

\begin{proof} 
Since every object in $\sMod_R^\NN$ is fibrant and the path object of $\sMod^\NN_{R}$ lifts to $\sLie^\NN_{R}$, the claim follows from well-known existence criteria---see e.g. \cite[Thm. 3.2, Rmk. 3.3]{johnson2014lifting}.
\end{proof}
 
\begin{notation}[Bar construction]
Given a monad $\TT$ acting on a category $\C$, a   right $\TT$-functor $F:\C\to \D$, and $X$ a simplicial $\TT$-algebra, we write $\Barr_\bullet(F,\TT,X)$ for the simplicial object of $\D$ given by taking the diagonal of the bisimplicial object obtained by applying the two-sided monadic bar construction on $F$ and $\TT$ levelwise to $X$.\vspace{-5pt}
\end{notation}
Using this, we can  construct an explicit cofibrant replacement functor :
\begin{lemma}\label{lem:bar resolution}
Let $\mathfrak{g}\in \sLie_R^\NN$ be a simplicial (weighted and graded) Lie algebra. The natural map $\Barr_\bullet(\Free_{\Mod}^{\Lie}, \LAA, \mathfrak{g})\to \mathfrak{g} $ is a weak equivalence. Moreover, if $\mathfrak{g} $ is levelwise projective, then $\Barr_\bullet(\Free_{\Mod}^{\Lie}, \LAA, \mathfrak{g})$ is a cofibrant object in $\sLie_R^\NN$.\
\end{lemma}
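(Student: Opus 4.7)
The plan is to prove the two assertions separately, following the standard template for monadic bar resolutions.

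For the weak equivalence, I would first pass to underlying simplicial modules: since weak equivalences in $\sLie_R^\NN$ are by definition detected on underlying simplicial modules (\Cref{prop:lie model structure}), it suffices to check that $U^{\Lie_R}_{\Mod_R}\Barr_\bullet(\Free^{\Lie_R}_{\Mod_R}, \LAA, \mathfrak{g}) \to U^{\Lie_R}_{\Mod_R}\mathfrak{g}$ is a weak equivalence of simplicial objects in $\sMod_R^\NN$. For each fixed simplicial level $k$, the augmented simplicial object $\Barr_\bullet(\Free, \LAA, \mathfrak{g}_k) \to \mathfrak{g}_k$ admits the usual extra degeneracy (insertion of the unit of $\LAA$ on the outside, which is not $\LAA$-equivariant but is a map of plain modules). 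This makes the underlying augmented simplicial module split, hence a simplicial homotopy equivalence and thus a weak equivalence. A bisimplicial map that is a weak equivalence rowwise remains one on diagonals, so the claim follows.

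For cofibrancy, the first observation is that in each simplicial degree $n$ the value of the bar construction is $\Free^{\Lie_R}_{\Mod_R}(\LAA^n U^{\Lie_R}_{\Mod_R}\mathfrak{g}_n)$. Assuming $\mathfrak{g}$ is levelwise projective, \Cref{lem:free lie} applied iteratively shows that $\LAA^n U\mathfrak{g}_n$ is a levelwise projective simplicial module, so each degree is freely generated on a projective. I would then argue that the bar construction has the ``almost free'' or split form required to be cofibrant in the transferred model structure: the degeneracy maps are induced (via the free functor) by insertion of the unit, so after the identification in each degree above, the iterated degeneracy maps exhibit the $n$-th latching object as $\Free^{\Lie_R}_{\Mod_R}$ applied to a summand of $\LAA^n U\mathfrak{g}_n$, with complementary summand a projective module $N_n$. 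Writing $P_\bullet$ for the resulting simplicial module whose non-degenerate part in degree $n$ is $N_n$, the bar construction coincides with $\Free^{\Lie_R}_{\Mod_R}(P_\bullet)$, which is a cell complex built from generating cofibrations of $\sLie_R^\NN$ and is therefore cofibrant.

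The main obstacle will be the verification of the splitness statement for the latching maps, i.e.\ that the degeneracies in the bar construction genuinely land in (and split off) free Lie subalgebras on projective summands. This is a slightly delicate bookkeeping point, but it is the standard ``step-by-step filtration'' argument used to show that the projective model structure on simplicial algebras over a monad makes the free monadic bar resolution cofibrant whenever the input is projective in the underlying category; see e.g.\ the treatment of cofibrant resolutions of simplicial algebras via monadic bar constructions in the references cited in \Cref{prop:lie model structure}. Once this identification is in hand, cofibrancy is immediate from \Cref{lem:free lie} together with the fact that $\Free^{\Lie_R}_{\Mod_R}$ applied to generating cofibrations of $\sMod_R^\NN$ gives generating cofibrations of $\sLie_R^\NN$.
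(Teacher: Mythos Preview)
Your proposal is correct and follows the standard template. The paper's own proof simply cites external references (Propositions 3.13, 3.17, 3.22 of Johnson--Noel), and your argument is precisely the content those references supply: the extra-degeneracy splitting after forgetting to modules for the weak equivalence, and the almost-free/latching analysis (using that $\LAA$ preserves projectives, \Cref{lem:free lie}) for cofibrancy.
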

\begin{proof} 
That the map in question is a weak equivalence follows from \cite[Prop. 3.13]{johnson2014lifting}, and cofibrancy follows from \cite[Prop. 3.17, 3.22]{johnson2014lifting}.
\end{proof}
 
\subsection{The universal enveloping algebra}\label{univ}
If $A\in \Alg_R^\NN$ is an associative  algebra object in $\Mod_R^\NN$, then $A$ determines a  Lie algebra  in the sense of \Cref{def:Lie algebra} with bracket given by  
$[a,b]=ab-(-1)^{|a||b|}ba$. This resulting functor $\Alg_R^\NN \rightarrow \Lie_R^\NN$ admits a left adjoint, the \emph{universal enveloping algebra} functor, which may be constructed explicitly as the quotient \[U(\mathfrak{g})=\frac{T_R(\mathfrak{g})}{[a,b]=a\otimes b-(-1)^{|a||b|}b\otimes a}.\] Here $T_R$ denotes the tensor algebra over $R$. Note that, since the Lie algebra $0$ is terminal and $U(0)=R$, the functor $U$ factors canonically through the category of augmented algebras.

The algebra $U(\mathfrak{g})$ is naturally filtered by word length. After passing to the associated graded algebra, the defining relation becomes that of the free graded-commutative $R$-algebra $\Sym_R(\mathfrak{g})$. The following classical theorem summarises this observation:

\begin{theorem}[Poincar\'{e}--Birkhoff--Witt]\label{thm:PBW} 
If $\mathfrak{g}\in \Lie_R^\NN$ is a Lie algebra with flat underlying module, then the natural map $\Sym_R(\mathfrak{g})\to \mathrm{gr}\,U(\mathfrak{g})$ of augmented bigraded $R$-algebras is \mbox{an isomorphism.}
\end{theorem}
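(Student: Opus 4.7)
The natural map $\phi\colon \Sym_R(\mathfrak{g}) \to \gr\, U(\mathfrak{g})$ is defined because commutators vanish modulo lower filtration in $\gr\, U(\mathfrak{g})$, and it is surjective because $\mathfrak{g}$ sits in filtration degree one and generates $U(\mathfrak{g})$. The content is injectivity, which I would prove by reducing to the case of a free Lie algebra on a free module.

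First, the plan is to dispatch the case $\mathfrak{g} = \LAA(V)$ with $V$ a finite free $R$-module: here $U(\LAA(V)) \cong T_R(V)$ is the tensor algebra, and one constructs a Hall (or Lyndon) basis of $\LAA(V) \subseteq T_R(V)$ for which ordered monomials form an $R$-basis of $T_R(V)$ compatible with the word-length filtration. The standing hypothesis $2 \in R^\times$ aligns \Cref{def:Lie algebra} with the setting of \cite[Ch.~8]{Neisendorfer:AMUHT}, so the classical argument there applies after extending scalars from $\ZZ[\tfrac{1}{2}]$. Since $U$ is a left adjoint and $\Sym_R^n$, $F_n U$ are built from finite operations, both sides of $\phi$ commute with filtered colimits; combined with Lazard's theorem (\Cref{thm:lazard}), this extends PBW to free Lie algebras on all flat modules.

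For a general flat $\mathfrak{g}$, I would pass to the bar resolution $B_\bullet := \Barr_\bullet(\Free^{\Lie}_{\Mod}, \LAA, \mathfrak{g}) \xrightarrow{\sim} \mathfrak{g}$ from \Cref{lem:bar resolution}. Each $B_n$ is a free Lie algebra, and iterated application of \Cref{lem:free lie} (starting from $\mathfrak{g}$ flat) shows that $B_\bullet$ is levelwise a free Lie algebra on a flat module, so the preceding case yields PBW levelwise. Since $U$ is a left adjoint it commutes with the realization $|B_\bullet| \simeq \mathfrak{g}$, while $\Sym_R^k$ preserves weak equivalences of levelwise flat simplicial modules by \Cref{cor:flatsym invariance}; matching the associated graded on both sides then yields the isomorphism $\Sym_R(\mathfrak{g}) \cong \gr\, U(\mathfrak{g})$.

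The main obstacle is verifying the compatibility of the word-length filtration on $U$ with the simplicial realization: one needs each $F_n U(B_\bullet)$ to be levelwise flat so that the short exact sequences $0 \to F_{n-1} U \to F_n U \to \Sym_R^n \to 0$ remain well-behaved after realization (in particular, so that $\pi_0$ of the realization recovers the discrete filtration on $U(\mathfrak{g})$). I would handle this by induction on $n$, using the levelwise PBW to identify $F_n U/F_{n-1} U$ with $\Sym_R^n$, and then \Cref{lem:sym projective} to see that $\Sym_R^n$ preserves levelwise flatness. Flatness then propagates up the filtration, and the realization argument goes through stage by stage.
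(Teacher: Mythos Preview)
The paper does not actually prove this theorem: it cites Higgins' direct algebraic proof of PBW for Lie algebras whose underlying module is flat, and remarks that the graded version follows by inserting Koszul signs (noting that the hypothesis $2\in R^\times$ seems necessary). Your approach---reducing to free Lie algebras via the monadic bar resolution and then descending along the word-length filtration---is a genuinely different, more homotopical route that reuses the simplicial machinery developed elsewhere in the paper. Higgins' argument is shorter and entirely elementary; yours is heavier than this classical statement requires, but fits the ambient toolkit.

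Your strategy is sound, but the final paragraph misidentifies the key ingredient. Levelwise flatness of $F_n U(B_\bullet)$ does \emph{not} by itself force the short exact sequences to behave well on $\pi_0$; flatness of the terms in a short exact sequence of simplicial modules says nothing about the connecting map in the long exact sequence. What you actually need for $\pi_0(F_n U(B_\bullet))\cong F_n U(\mathfrak{g})$ is: (i) $\pi_0(U(B_\bullet))\cong U(\mathfrak{g})$, which holds because $U$ is a left adjoint and the bar resolution presents $\mathfrak{g}$ as a reflexive coequalizer; and (ii) injectivity of $\pi_0(F_n U(B_\bullet))\to \pi_0(U(B_\bullet))$, which follows once $\pi_1\bigl(U(B_\bullet)/F_n U(B_\bullet)\bigr)=0$. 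The latter is exactly where your earlier invocation of \Cref{cor:flatsym invariance} pays off: levelwise PBW identifies the graded pieces of $U(B_\bullet)/F_n$ with $\Sym^j(B_\bullet)$ for $j>n$, each of which is acyclic in positive degrees because $\Sym^j(B_\bullet)\to\Sym^j(\mathfrak{g})$ is a weak equivalence. So the inductive input is acyclicity of $\Sym^j(B_\bullet)$, not flatness of $F_n U(B_\bullet)$; once you make that substitution, the argument goes through.
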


\begin{remark}
A reference for the flat but ungraded case is \cite{Higgins:BIBWT}, and the same argument applies in the graded context after adding appropriate Koszul signs. We do not spell out this straightforward adaptation, noting only that the assumption that $2\in R^\times$ seems to be necessary.
\end{remark}
 
\begin{proposition}[Invariance of the universal enveloping algebra]\label{prop:derived enveloping algebra}
The functor $U$ preserves weak equivalences between levelwise flat  {simplicial Lie algebras.}
\end{proposition}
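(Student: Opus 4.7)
The plan is to exploit the Poincar\'{e}--Birkhoff--Witt filtration on $U$ and reduce the claim to the already-established invariance of symmetric powers on levelwise flat simplicial modules (Corollary \ref{cor:flatsym invariance}). Let $f:\mathfrak{g}\to \mathfrak{h}$ be a weak equivalence of levelwise flat simplicial Lie algebras. For each simplicial degree $k$, the underlying Lie algebra $\mathfrak{g}_k$ is flat, so Theorem \ref{thm:PBW} applies and the PBW word-length filtration $F_0\subseteq F_1\subseteq F_2\subseteq\cdots$ on $U(\mathfrak{g}_k)$ satisfies $F_n/F_{n-1}\cong \Sym_R^n(\mathfrak{g}_k)$. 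Since the PBW filtration is functorial in $\mathfrak{g}$ and compatible with the simplicial structure maps, this yields, for every $n\geq 0$, a natural short exact sequence of simplicial $R$-modules
\[
0\to F_{n-1}(\mathfrak{g})\to F_n(\mathfrak{g})\to \Sym_R^n(\mathfrak{g})\to 0.
\]

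Next, I would prove by induction on $n$ that $F_n(f):F_n(\mathfrak{g})\to F_n(\mathfrak{h})$ is a weak equivalence. The base case $F_0=R$ is trivial. For the inductive step, Corollary \ref{cor:flatsym invariance} shows that $\Sym_R^n(f)$ is a weak equivalence (since $\mathfrak{g}$ and $\mathfrak{h}$ are levelwise flat by hypothesis). Passing to the long exact sequences in homotopy groups (equivalently, in homology of the normalised chain complexes) associated to the two short exact sequences above for $\mathfrak{g}$ and $\mathfrak{h}$, the five lemma combined with the inductive hypothesis yields that $F_n(f)$ is a weak equivalence.

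Finally, since $U(\mathfrak{g})=\mycolim{n}\, F_n(\mathfrak{g})$ is a sequential (hence filtered) colimit, Lemma \ref{lem:filtered hocolim} identifies it with the homotopy colimit, so $U(f)$ is a weak equivalence as the sequential colimit of weak equivalences. The main subtlety to verify carefully is that the PBW filtration is indeed a filtration by \emph{subobjects} in the simplicial module category (so the quotient $F_n/F_{n-1}$ is computed levelwise and agrees with $\Sym_R^n(\mathfrak{g})$ in each simplicial degree); this requires noting that levelwise flatness, which is preserved by $\LAA$ (Lemma \ref{lem:free lie}) and by $\Sym_R^n$ (Lemma \ref{lem:sym projective}), propagates through the inductive construction so that all short exact sequences in play remain short exact after applying the simplicial structure maps. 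Everything else is a routine application of the tools already set up in the preceding subsections.
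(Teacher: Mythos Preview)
Your proposal is correct and follows essentially the same approach as the paper, which simply cites the five lemma, Theorem \ref{thm:PBW}, and Corollary \ref{cor:flatsym invariance}; you have spelled out in detail the filtration argument that the paper leaves implicit. The aside about Lemma \ref{lem:free lie} is not actually needed here (you are not applying $\LAA$ anywhere), but this does no harm.
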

\begin{proof}
The claim follows from the five lemma, Theorem \ref{thm:PBW}, and repeated use of Lemma \ref{cor:flatsym invariance}.
\end{proof}

\subsection{Lie algebra homology}\label{section:lie algebra homology}

A module $M\in \Mod_\RR^\NN$ determines a Lie algebra $\triv_{\Lie}^{\Mod}(M)$ in $ \Lie_R^\NN$ with underlying  module $M$ and vanishing Lie bracket. This construction gives a functor $\triv_{\Lie}^{\Mod}:\Mod_R^\NN\to \Lie_R^\NN$, which admits a left adjoint $\Q_{\Lie}^{\Mod}$. Explicitly, $\Q_{\Lie}^{\Mod}(\mathfrak{g})$ is obtained by forming the quotient of the underlying  module of $\mathfrak{g}$ by the submodule generated by all iterated brackets of elements of $\mathfrak{g}$. Applying these functors levelwise, we obtain an adjunction at the level of categories of simplicial objects. Since fibrations and weak equivalences of simplicial Lie algebras are defined on underlying modules, it follows that this is in fact a Quillen adjunction.

\begin{definition}[Lie algebra homology]
The  \emph{Lie algebra homology} of a  Lie algebra $\mathfrak{g} \in \sLie_R^\NN$ is defined as  \[\HLie(\mathfrak{g})=H\left(N\left(\LQ_{\Lie}^{\Mod}(\mathfrak{g})\right)[1]\oplus R\right).\]
\end{definition}
\begin{remark}
Since $\HLie(\mathfrak{g})$ is the homology of a chain complex of objects in $\Mod_R^\NN$, it is a graded object in $\Mod_R^\NN$, i.e.,  a bigraded weighted $R$-module.
\end{remark}

There is a small complex that is often available for computing this Lie algebra homology. To state the following   construction (essentially due to \cite{ChevalleyEilenberg:CTLGLA} and \cite{may1966cohomology}), we will need the divided power functors from \Cref{divpow}:
\begin{definition}[Chevalley--Eilenberg, May]\label{dfn:chevalley--eilenberg}
The \emph{Chevalley--Eilenberg complex} of a Lie algebra $\mathfrak{g}\in \Lie_R^\NN$ is the chain complex \[\CE(\mathfrak{g})=(\Gamma_R(\mathfrak{g}[1]),d),\] where $d$ is defined as follows; 
if $a_1,\ldots, a_m\in \mathfrak{g}^{\odd}$ have odd total degree and $b_1,\ldots, b_n \in \mathfrak{g}^{\even}$ have even total degree, then the value of $d$ on a generic element   $$ \gamma_{r_1}(\sigma a_1)\cdots \gamma_{r_m}(\sigma a_m)\   \langle \sigma b_1,\cdots,  \sigma b_n\rangle $$ of $\Gamma_R(\mathfrak{g}[1])$ is given by the following formula:

{\small\begin{align*}
&\ \ \   \sum_{1\leq i<j\leq m} \gamma_{r_1}(\sigma a_1)\cdots\gamma_{r_{i}-1}(\sigma a_{i})\cdots\gamma_{r_{j}-1}(\sigma a_{j}) \cdots\gamma_{r_m}(\sigma a_m)  \  \langle\sigma[a_{i},a_{j}], \sigma b_1,\cdots, \sigma b_n\rangle\\
&+\sum_{1\leq i<j\leq n}(-1)^{i+j-1} \gamma_{r_1}(\sigma a_1)\cdots  \gamma_{r_m}(\sigma a_m)  \   \langle  \sigma [b_i,b_j], \sigma b_1,\cdots\widehat{\sigma b_i},\cdots, \widehat{\sigma b_j},\cdots , \sigma b_n \rangle\\
& {+\ \frac{1}{2}}\ \sum_{i=1}^m  \gamma_{r_1}(\sigma a_1)\cdots\gamma_{r_i-2}(\sigma a_i)\cdots \gamma_{r_m}(\sigma a_m) \  \langle\sigma[a_i,a_i], \sigma b_1,\cdots , \sigma b_n\rangle\\
&+\sum_{i=1}^m\sum_{j=1}^n(-1)^{ {j-1}  } \gamma_1\left(\sigma[a_i,b_j]\right)\gamma_{r_1}(\sigma a_1)\cdots \gamma_{r_i-1}(\sigma a_i)\cdots \gamma_{r_m}(\sigma a_m) \  \langle\sigma b_1,\cdots, \widehat{\sigma b_j},\cdots, \sigma b_n\rangle
\end{align*}}
\end{definition}

If $\mathfrak{g}\in \sLie_R^\NN$ is a simplicial Lie algebra, we define $\CE(\mathfrak{g})$ by applying the previous construction in each simplicial degree, thereby obtaining a simplicial chain complex in $\Mod_R^\NN$. Its homotopy groups are  isomorphic to the homology of the corresponding total complex (obtained by using the Dold--Kan correspondence).

We have the following theorem about this homology, which in some form goes back at least as far as \cite{ChevalleyEilenberg:CTLGLA}---see also \cite{may1966cohomology,priddy1970koszul} for settings closer to ours: 

\begin{theorem} \label{thm:chevalley--eilenberg works}
If $\mathfrak{g} \in \sLie_R^\NN$ is levelwise flat, then there is a natural isomorphism $$H(\CE(\mathfrak{g}))\cong \HLie(\mathfrak{g}).$$
\end{theorem}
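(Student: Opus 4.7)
The plan is the classical three-step argument: (a) show that $\CE$ is homotopy invariant on levelwise flat simplicial Lie algebras, (b) verify the theorem directly when $\mathfrak{g}$ is a free Lie algebra on a flat module, and (c) conclude for general $\mathfrak{g}$ by reducing to the free case via the monadic bar resolution of Lemma~\ref{lem:bar resolution}. At odd primes the essential adaptation of the classical argument is that $\CE$ is built from divided rather than symmetric powers, which is precisely what allows this pattern to go through over arbitrary $R$ with $2 \in R^\times$.

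\textbf{Invariance of $\CE$.} Given a weak equivalence $f\colon \mathfrak{g}\to \mathfrak{h}$ of levelwise flat simplicial Lie algebras, the shifted map $f[1]\colon \mathfrak{g}[1]\to \mathfrak{h}[1]$ is a weak equivalence of levelwise flat simplicial modules. By Lemma~\ref{cor:flatgamma invariance} each divided power $\Gamma_R^k(f[1])$ is then a weak equivalence of simplicial modules. Viewing $\CE(\mathfrak{g})$ as a first-quadrant double complex whose axes are the divided-power degree and the simplicial degree, each row is thus a quasi-isomorphism, so the induced map on totalizations---which is what computes $H(\CE(-))$---is itself a quasi-isomorphism; convergence is automatic because each total degree receives contributions from only finitely many bidegrees.

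\textbf{The free case and reduction to it.} For $\mathfrak{g} = \LAA(M)$ with $M$ levelwise flat, we have $\mathbb{L}Q(\LAA(M)) \simeq M$ (since $\LAA(M)$ is cofibrant and the abelianization of a free Lie algebra is its generating module), so $H^{\Lie}(\LAA(M))\cong H(N(M)[1]\oplus R)$. To match this with $H(\CE(\LAA(M)))$, I would use the PBW theorem (Theorem~\ref{thm:PBW}) to identify $U(\LAA(M))\cong T_R(M)$ and compare $\CE(\LAA(M))$ with the reduced bar complex of $T_R(M)$ through the PBW filtration: its associated graded is precisely $\CE$, and a direct Koszul-type computation identifies $\Tor^{T_R(M)}_\ast(R,R)$ with $R \oplus M[1]$, matching the Lie algebra homology side. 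For general levelwise flat $\mathfrak{g}$, apply the monadic bar resolution $B_\bullet = \Barr_\bullet(\LAA, \LAA, \mathfrak{g}) \to \mathfrak{g}$ of Lemma~\ref{lem:bar resolution}; iterated use of Lemma~\ref{lem:free lie} shows that $B_\bullet$ is levelwise flat, so Step~1 gives $\CE(B_\bullet) \xrightarrow{\sim} \CE(\mathfrak{g})$. On the other hand, applying Step~2 bar-degreewise, together with the fact that $Q$ applied to the bar resolution models $\mathbb{L}Q(\mathfrak{g})$, yields $H(\CE(B_\bullet)) \cong H^{\Lie}(\mathfrak{g})$, completing the proof.

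\textbf{Main obstacle.} The heart of the argument is the free-Lie-algebra computation $H(\CE(\LAA(M))) \cong R \oplus M[1]$. Over a field of characteristic zero this is a classical Koszul-duality statement between the commutative and Lie operads, but for general $R$ one must genuinely use divided powers in place of symmetric powers (cf.\ Proposition~\ref{exponential}), so the characteristic-zero argument cannot be imported verbatim. Constructing the PBW filtration on the bar complex of $T_R(M)$, identifying its associated graded with $\CE(\LAA(M))$, and verifying acyclicity outside of divided-power degrees zero and one is where the technical content lies; the invariance and reduction steps are then essentially formal consequences.
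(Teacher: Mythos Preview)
Your approach differs from the paper's. The paper does not reduce to free Lie algebras; instead it proves directly, for projective $\mathfrak{g}$, that $\HLie(\mathfrak{g})\cong\Tor^{U(\mathfrak{g})}(R,R)$ (Proposition~\ref{prop:lie homology is tor}, via the theory of derivations and $\mathfrak{g}$-modules) and that $\CE(\mathfrak{g})$ computes this $\Tor$. The latter is done by introducing the \emph{extended} complex $\overline{\CE}(\mathfrak{g})=U(\mathfrak{g})\otimes_R\Gamma_R(\mathfrak{g}[1])$: PBW identifies its associated graded with the Koszul complex $\Sym_R(\mathfrak{g})\otimes_R\Gamma_R(\mathfrak{g}[1])\to R$, which is acyclic for flat $\mathfrak{g}$ (Lemma~\ref{lem:chevalley--eilenberg projective}), so $\overline{\CE}(\mathfrak{g})$ is a $U(\mathfrak{g})$-projective resolution of $R$ and $\CE(\mathfrak{g})=R\otimes_{U(\mathfrak{g})}\overline{\CE}(\mathfrak{g})$ computes the $\Tor$. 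Your invariance step~(a) matches how the paper then passes from projective to flat.

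Your bar-reduction strategy is reasonable and has the virtue of bypassing the derivation theory behind Proposition~\ref{prop:lie homology is tor}. But Step~(b) has a genuine gap. The claim ``its associated graded is precisely $\CE$'' is not correct as written: PBW-filtering the bar complex of $T_R(M)=U(\LAA(M))$ yields on associated gradeds the bar complex of $\Sym_R(\LAA(M))$, not $\CE(\LAA(M))$; identifying these is itself a Koszul-type statement, and even then one must account for the nontrivial Chevalley--Eilenberg differential coming from the bracket on $\LAA(M)$. The clean way to establish $H(\CE(\LAA(M)))\cong R\oplus M[1]$ is exactly the extended-complex argument of Lemma~\ref{lem:chevalley--eilenberg projective}---which already applies to arbitrary projective $\mathfrak{g}$, making the subsequent bar reduction unnecessary. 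So the ``main obstacle'' you flag is real, your sketch does not surmount it, and the natural fix recovers the paper's more direct route.
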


\begin{remark}
Away from characteristic zero, it is important to remember that $\CE(\mathfrak{g})$ is a simplicial chain complex with simplicial structure  induced by that of $\mathfrak{g}$. In particular, it is not in general  the result of an operation applied to any differential graded Lie algebra.
\end{remark}

 \begin{remark}
Our standing assumption is that $2$ is invertible, but \Cref{dfn:chevalley--eilenberg} and \Cref{thm:chevalley--eilenberg works} extend to $p=2$ 
if one adopts the small changes described in \cite[Section 5]{may1966cohomology}.
\end{remark}

Although this result is classical, we do not know of a statement in the literature covering exactly the required level of generality. In order to be self-contained, we offer a complete proof. 
The first step is to reinterpret Lie algebra homology as Tor groups  over the universal enveloping algebra from 
\Cref{univ}.  A proof of the following well-known result is contained in the following subsection:

\begin{proposition}\label{prop:lie homology is tor}
Given any Lie algebra $\mathfrak{g}\in \Lie_R^\NN$ whose underlying module is flat, there is a natural isomorphism $\HLie(\mathfrak{g})\cong \Tor^{U(\mathfrak{g})}(R,R).$
\end{proposition}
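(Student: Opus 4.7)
The plan is to reduce the statement to the case of free Lie algebras, where both sides admit an explicit identification, and then to propagate by naturality under a simplicial resolution.

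First, I would take the monadic bar resolution $\mathfrak{g}^\bullet := \Barr_\bullet(\Free_{\Mod}^{\Lie}, \LAA, \mathfrak{g}) \to \mathfrak{g}$ of Lemma~\ref{lem:bar resolution}, so that each level $\mathfrak{g}^n = \Free_{\Mod}^{\Lie}(M^n)$ is a free Lie algebra on a projective weighted graded $R$-module $M^n$. By Lemma~\ref{lem:free lie} each $\mathfrak{g}^n$ is levelwise flat, so Proposition~\ref{prop:derived enveloping algebra} ensures that $U(\mathfrak{g}^\bullet)\to U(\mathfrak{g})$ is a weak equivalence of simplicial augmented $R$-algebras with levelwise flat underlying modules, and $U(\mathfrak{g}^n) = T_R(M^n)$ a tensor algebra.

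Next, I would settle the proposition for $\mathfrak{g} = \Free^{\Lie}(M)$ with $M$ projective. Here $\Q_{\Lie}^{\Mod}(\Free^{\Lie}(M)) = M$, so by construction $\HLie(\Free^{\Lie}(M)) \cong R \oplus M[1]$. On the algebra side, $T_R(M)$ admits the two-term Koszul resolution
$$0 \longrightarrow T_R(M)\otimes_R M \xrightarrow{\,a\otimes m\,\mapsto\,am\,} T_R(M) \xrightarrow{\,\epsilon\,} R \longrightarrow 0,$$
which is functorial in $M$; tensoring with $R$ over $T_R(M)$ collapses to the two-term complex $M \xrightarrow{\,0\,} R$, so $\Tor^{T_R(M)}(R,R) \cong R \oplus M[1]$, naturally in $M$.

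To promote this to general $\mathfrak{g}$, I would form the bisimplicial $R$-module $B(R, U(\mathfrak{g}^\bullet), R)$, with the two-sided bar construction in one direction and the resolution $\mathfrak{g}^\bullet$ in the other. Realising in the resolution direction, the levelwise flatness of $U(\mathfrak{g}^\bullet)$ together with the compatibility of tensor products with sifted colimits of flat modules identifies the result with $B(R, U(\mathfrak{g}), R)$, whose homology is $\Tor^{U(\mathfrak{g})}(R,R)$. Running the spectral sequence that first takes homology in the bar direction, the previous step identifies the $E^1$-page with $R \oplus M^\bullet[1]$, concentrated in bar-degrees $0$ and $1$; since $R$ sits in bidegree $(0,0)$ and the only other nonzero column is bar-degree~$1$, no further differentials can act, and the abutment collapses to $R \oplus H_*(N(M^\bullet))[1] \cong H(N(\LQ_{\Lie}^{\Mod}(\mathfrak{g}))[1] \oplus R) = \HLie(\mathfrak{g})$. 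Naturality in $\mathfrak{g}$ is inherited from the functoriality of the bar resolution and of the Koszul comparison.

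The principal obstacle is controlling the interaction between the bar construction and the geometric realisation of $\mathfrak{g}^\bullet$, and ensuring the functorial Koszul-to-bar quasi-isomorphism persists through this realisation. Both issues hinge on the levelwise flatness of $U(\mathfrak{g}^\bullet)$, afforded by PBW (Theorem~\ref{thm:PBW}) together with Lemma~\ref{lem:sym projective} and Lemma~\ref{lem:free lie}, which keeps the tensor products, kernels, and sifted colimits involved well-behaved.
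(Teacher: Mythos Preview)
Your approach is sound in spirit and genuinely different from the paper's, but there is a gap in the passage from projective to flat $\mathfrak{g}$.

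You assert that each $M^n=\LAA^n(\mathfrak{g})$ is projective; for merely flat $\mathfrak{g}$, Lemma~\ref{lem:free lie} only gives that $M^n$ is flat. This is harmless for the Koszul step, since $T_R(M)\otimes_R M$ is still $T_R(M)$-flat when $M$ is $R$-flat, so $\Tor^{T_R(M^n)}(R,R)\cong R\oplus M^n[1]$ and your bisimplicial spectral sequence argument go through unchanged, yielding $\Tor^{U(\mathfrak{g})}(R,R)\cong R\oplus H(N(M^\bullet))[1]$. The real gap is the final identification $H(N(M^\bullet))\cong H(N(\LQ_{\Lie}^{\Mod}(\mathfrak{g})))$: this relies on the Corollary following Lemma~\ref{lem:bar resolution}, which is stated only for $R$-projective $\mathfrak{g}$, because it requires $\Barr_\bullet(\Free_{\Mod}^{\Lie},\LAA,\mathfrak{g})$ to be cofibrant. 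For flat $\mathfrak{g}$ this is not established, and it is not obvious that $Q$ applied to this resolution computes $\LQ$. The fix is easy: your argument is complete in the projective case, and then one reduces flat to projective by noting that both $\HLie$ (being a left derived functor) and $\Tor^{U(-)}(R,R)$ (via Proposition~\ref{prop:derived enveloping algebra} and flat invariance of the bar construction) send weak equivalences between levelwise flat simplicial Lie algebras to isomorphisms. This is precisely the reduction the paper itself performs at the end of its proof.

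For comparison, the paper takes a more structural route through derivations. It introduces a K\"ahler-style functor $\Omega_{\mathfrak{g}}$ on $\Lie_{R/\mathfrak{g}}$, proves $\Omega_{\mathfrak{g}}(\mathfrak{g})\cong I(U(\mathfrak{g}))$ and $\Omega_{\mathfrak{g}}(\Free_{\Mod}^{\Lie}\circ\LAA^{n}(\mathfrak{g}))\cong U(\mathfrak{g})\otimes_R\LAA^{n}(\mathfrak{g})$, and thereby converts the monadic bar resolution of $\mathfrak{g}$ directly into a $U(\mathfrak{g})$-projective resolution of $I(U(\mathfrak{g}))$; tensoring with $R$ over $U(\mathfrak{g})$ then recovers $\Barr_\bullet(\id,\LAA,\mathfrak{g})$ on the nose, and the short exact sequence $I(U(\mathfrak{g}))\to U(\mathfrak{g})\to R$ finishes the job. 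Your approach avoids developing the $\Omega_{\mathfrak{g}}$ machinery at the cost of the bisimplicial bookkeeping; it is arguably more elementary, while the paper's method is closer to the general Andr\'e--Quillen framework and makes the naturality of the comparison with $\Tor$ more transparent.
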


Assuming this result for now, the second step is to connect the Chevalley--Eilenberg complex from \Cref{dfn:chevalley--eilenberg} 
to a suitable $U(\mathfrak{g})$-resolution of the ground ring $R$.

\begin{definition}\label{def:extended chevalley--eilenberg}
The \emph{extended Chevalley--Eilenberg complex} of $\mathfrak{g}$ is the bigraded $R$-module \[\overline{\CE}(\mathfrak{g})= U(\mathfrak{g})\otimes_R\Gamma_R(\mathfrak{g}[1])\] equipped with the differential $d$ sending 
$$(-1)^{|a_0|}d\left(a_0\otimes \gamma_{r_1}(\sigma a_1)\cdots \gamma_{r_m}(\sigma a_m)\ \otimes \   \langle\sigma b_1,\cdots,  \sigma b_n\rangle\right)$$
to  the following expression: 
{\small\begin{align*}
&\ \ \ \ \ \   \sum_{i=1}^m a_0a_i\otimes\gamma_{r_1}(\sigma a_1)\cdots \gamma_{r_i-1}(\sigma a_i)\cdots \gamma_{r_m}(\sigma a_m) \  \langle\sigma b_1,\cdots, \sigma b_n\rangle\\
&+  \ \ \ \sum_{j=1}^n(-1)^{j-1}a_0b_j\otimes \gamma_{r_1}(\sigma a_1)\cdots \gamma_{r_m}(\sigma a_m) \  \langle\sigma  b_1,\cdots ,\widehat{\sigma b_j},\cdots, \sigma b_n\rangle\\
& +  \sum_{1\leq i<j\leq m}a_0\otimes \gamma_{r_1}(\sigma a_1)\cdots\gamma_{r_{i}-1}(\sigma a_{i})\cdots\gamma_{r_{j}-1}(\sigma a_{j}) \cdots\gamma_{r_m}(\sigma a_m) \  \langle\sigma[a_{i},a_{j}], \sigma b_1,\cdots, \sigma b_n\rangle\\
&+\sum_{1\leq i<j\leq n}(-1)^{i+j-1}a_0\otimes \gamma_{r_1}(\sigma a_1)\cdots \gamma_{r_m}(\sigma a_m) \  \langle \sigma [b_i,b_j], \sigma b_1,\cdots\widehat{\sigma b_i},\cdots, \widehat{\sigma b_j},\cdots , \sigma b_n\rangle\\
&+ \frac{1}{2} \sum_{i=1}^m a_0\otimes \gamma_{r_1}(\sigma a_1)\cdots\gamma_{r_i-2}(\sigma a_i)\cdots \gamma_{r_m}(\sigma a_m) \  \langle\sigma[a_i,a_i], \sigma b_1,\cdots , \sigma b_n\rangle\\
&+ \ \sum_{i=1}^m\sum_{j=1}^n(-1)^{j-1}a_0\otimes \gamma_1\left(\sigma[a_i,b_j]\right)\gamma_{r_1}(\sigma a_1)\cdots \gamma_{r_i-1}(\sigma a_i)\cdots \gamma_{r_m}(\sigma a_m) \  \langle\sigma b_1,\cdots, \widehat{\sigma b_j},\cdots, \sigma b_n\rangle
\end{align*}}
\end{definition}

\begin{lemma}\label{lem:chevalley--eilenberg projective}
If $\mathfrak{g}\in \Lie_R^\NN$ is $R$-flat, then the augmentation $\overline{\CE}(\mathfrak{g})\to R$ is a quasi-isomorphism. Therefore, if $\mathfrak{g}$ is $R$-projective, then $\overline{\CE}(\mathfrak{g})$ is a $U(\mathfrak{g})$-projective resolution of $R$.
\end{lemma}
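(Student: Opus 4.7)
The plan is to filter $\overline{\CE}(\mathfrak{g}) = U(\mathfrak{g}) \otimes_R \Gamma_R(\mathfrak{g}[1])$ so that its associated graded becomes the classical graded Koszul complex of $\Sym_R(\mathfrak{g})$; acyclicity of this Koszul complex together with an induction along the filtration then gives the quasi-isomorphism, while projectivity is formal. Concretely, I would set
$$\mathcal{F}^{\leq N}\overline{\CE}(\mathfrak{g}) := \sum_{k+j\leq N} F_k^{\mathrm{PBW}}U(\mathfrak{g}) \otimes_R \Gamma_R^{j}(\mathfrak{g}[1]),$$
where $F_\bullet^{\mathrm{PBW}}$ denotes the word-length filtration on $U(\mathfrak{g})$ and $\Gamma_R^j$ denotes the weight-$j$ part of the divided-power algebra (i.e.\ elements with $\sum r_i + n = j$ in the notation of \Cref{def:extended chevalley--eilenberg}). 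Inspecting the six lines of the differential, the first two (the ``multiplications'' $a_0 \mapsto a_0 a_i$ and $a_0 \mapsto a_0 b_j$) carry $F_k U \otimes \Gamma^j$ into $F_{k+1} U \otimes \Gamma^{j-1}$ and therefore preserve $k+j$, while the remaining four (all bracket terms) land in $F_k U \otimes \Gamma^{j-1}$ and strictly lower $k+j$ by one. Hence $d$ preserves $\mathcal{F}^{\leq N}$, and by \Cref{thm:PBW} the associated graded is the classical graded Koszul complex
$$\gr\overline{\CE}(\mathfrak{g}) \;\cong\; \Sym_R(\mathfrak{g}) \otimes_R \Gamma_R(\mathfrak{g}[1]),$$
equipped with the usual Koszul differential induced by the first two lines alone.

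Next I would verify that this Koszul complex is a resolution of $R$ for $R$-flat $\mathfrak{g}$. By \Cref{exponential}, both $\Sym_R$ and $\Gamma_R$ turn direct sums into tensor products, so for $\mathfrak{g}$ finite free on generators $x_1,\ldots, x_n$ the Koszul complex factors as a tensor product of one-generator Koszul complexes (of the form $R[x]\otimes \Lambda_R(\sigma x)$ for $|x|$ even, and the odd-degree analogue $\Lambda_R(x) \otimes \Gamma_R(\sigma x)$ obtained from \Cref{exponential}); each factor is acyclic by a direct computation, or equivalently by an explicit contracting homotopy. \Cref{thm:lazard} then writes an arbitrary flat $\mathfrak{g}$ as a filtered colimit of finite free modules, and since $\Sym_R$, $\Gamma_R$, and tensor products commute with filtered colimits while filtered colimits of acyclic complexes remain acyclic, the Koszul complex is a resolution of $R$ for every flat $\mathfrak{g}$.

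To conclude the first claim I would now combine the preceding two paragraphs by a straightforward induction along the filtration. The mapping cone of the augmentation $\overline{\CE}(\mathfrak{g}) \to R$ inherits the filtration $\mathcal{F}^{\leq N}$, and the long exact sequence of the pair $(\mathcal{F}^{\leq N-1},\mathcal{F}^{\leq N})$ reduces acyclicity of $\mathcal{F}^{\leq N}$ to the Koszul acyclicity of the corresponding graded piece; since $\mathcal{F}^{\leq -1} = 0$ supplies the base case and the filtration is exhaustive (the cone being the directed colimit of its $\mathcal{F}^{\leq N}$'s, which commutes with homology), the cone is acyclic and $\overline{\CE}(\mathfrak{g}) \to R$ is a quasi-isomorphism. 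The second claim is then formal: if $\mathfrak{g}$ is $R$-projective then so is $\mathfrak{g}[1]$, hence so is each $\Gamma_R^j(\mathfrak{g}[1])$ by the ordinary (non-simplicial) version of \Cref{lem:gamma projective}; and tensoring on the left with $U(\mathfrak{g})$ carries $R$-projective modules to $U(\mathfrak{g})$-projective modules, exhibiting $\overline{\CE}(\mathfrak{g})$ as a $U(\mathfrak{g})$-projective resolution of $R$.

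The main obstacle I anticipate is the careful book-keeping of Koszul signs when identifying the first two lines of $d$ with the classical Koszul differential of $\Sym_R(\mathfrak{g}) \otimes_R \Gamma_R(\mathfrak{g}[1])$ and when checking compatibility of the one-generator decomposition in the finite free case with the signs in \Cref{dfn:chevalley--eilenberg}; everything else is routine diagram chasing.
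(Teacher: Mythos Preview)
Your proposal is correct and follows essentially the same approach as the paper: pass to the associated graded for the diagonal filtration combining the PBW filtration on $U(\mathfrak{g})$ with the weight filtration on $\Gamma_R(\mathfrak{g}[1])$, invoke PBW to identify the result with the Koszul complex $\Sym_R(\mathfrak{g})\otimes_R\Gamma_R(\mathfrak{g}[1])$, and then reduce its acyclicity to the one-generator case via the exponential formula and Lazard's theorem. The paper is terser (it outsources the Koszul acyclicity to a reference and remarks that one may take $R=\mathbb{Z}$), but your more detailed filtration analysis and explicit induction are exactly the content being compressed there.
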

\begin{proof}
It suffices to prove the claim after passing to the associated graded for the diagonal filtration induced by the natural filtrations of $U(\mathfrak{g})$ (cf. \Cref{univ}) and $\Gamma_R(\mathfrak{g}[1])$. Invoking the Poincar\'{e}--Birkhoff--Witt theorem,  this complex is isomorphic to the Koszul complex \[\left( \Sym_R(\mathfrak{g})\otimes_R\Gamma_R(\mathfrak{g}[1]),\, \partial \right)\to R\] for the graded $R$-module underlying $\mathfrak{g}$, which is acyclic for flat modules. Indeed, the flat case reduces to the case of a suspension of $R$ via filtered colimits and direct sums, and we may set $R=\mathbb{Z}$ without loss of generality, in which case the claim is classical (cf.\ \cite[Prop. 7.1]{McCleary:UGSS}).  
\end{proof}

\begin{proof}[Proof of Theorem \ref{thm:chevalley--eilenberg works}]
In light of the isomorphism $\CE(\mathfrak{g})\cong R\otimes_{\otimes_{U(\mathfrak{g})}}\overline{\CE}(\mathfrak{g})$, the $R$-projective case follows from Proposition \ref{prop:lie homology is tor} and Lemma \ref{lem:chevalley--eilenberg projective}. In the general case, it suffices to show that levelwise application of $\CE(-)$ preserves weak equivalences between $R$-flat simplicial Lie algebras, which follows by induction along the  filtration of $\CE(\mathfrak{g})$ from  \Cref{cor:flatgamma invariance}. 
\end{proof}

\subsection{Derivations and the proof of Proposition \ref{prop:lie homology is tor}}

We begin by observing an immediate consequence of Lemma \ref{lem:bar resolution}.

\begin{corollary}
There is a natural weak equivalence $\LQ_{\Lie}^{\Mod}(\mathfrak{g})\simeq \Barr_\bullet(\id, \LAA, \mathfrak{g})$ for $R$-projective Lie algebras $\mathfrak{g} $.
\end{corollary}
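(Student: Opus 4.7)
The plan is to unwind the meaning of $\LQ_{\Lie}^{\Mod}$ as the left derived functor of the abelianisation $\Q_{\Lie}^{\Mod}$, feed it the cofibrant replacement provided by \Cref{lem:bar resolution}, and then observe that $\Q_{\Lie}^{\Mod}$ behaves particularly simply on free Lie algebras.

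First I would recall that $\LQ_{\Lie}^{\Mod}(\mathfrak{g})$ is computed, by definition, as $\Q_{\Lie}^{\Mod}$ applied to any cofibrant replacement of $\mathfrak{g}$ in the model structure of \Cref{prop:lie model structure}. Since $\mathfrak{g}$ is assumed levelwise $R$-projective, \Cref{lem:bar resolution} furnishes exactly such a replacement: the augmentation
\[
\Barr_\bullet\bigl(\Free_{\Mod}^{\Lie},\LAA,\mathfrak{g}\bigr)\ \xrightarrow{\ \sim\ }\ \mathfrak{g}
\]
is a weak equivalence with cofibrant domain. Thus, up to equivalence,
\[
\LQ_{\Lie}^{\Mod}(\mathfrak{g})\ \simeq\ \Q_{\Lie}^{\Mod}\bigl(\Barr_\bullet(\Free_{\Mod}^{\Lie},\LAA,\mathfrak{g})\bigr),
\]
where $\Q_{\Lie}^{\Mod}$ is applied in each simplicial degree.

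The key algebraic identity to invoke is $\Q_{\Lie}^{\Mod}\circ \Free_{\Mod}^{\Lie}=\id_{\Mod_R^\NN}$. This is a formal consequence of the two adjunctions $\Free_{\Mod}^{\Lie}\dashv \U_{\Mod}^{\Lie}$ and $\Q_{\Lie}^{\Mod}\dashv \triv_{\Lie}^{\Mod}$: passing to right adjoints, the composite $\U_{\Mod}^{\Lie}\circ \triv_{\Lie}^{\Mod}$ is manifestly the identity on modules (a module equipped with the trivial bracket, forgotten back to a module, is the same module), and uniqueness of left adjoints therefore forces $\Q_{\Lie}^{\Mod}\circ \Free_{\Mod}^{\Lie}=\id$. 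Concretely, the abelianisation of a free Lie algebra on $M$ recovers $M$ in its degree one part, since all iterated brackets are by construction set to zero and free Lie algebras have no other relations.

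Applying this identity degreewise to the bar construction, the $n$-simplices $\Free_{\Mod}^{\Lie}\circ\LAA^n(\mathfrak{g})$ are sent to $\LAA^n(\mathfrak{g})$, with simplicial structure maps induced by those of $\Barr_\bullet(\Free_{\Mod}^{\Lie},\LAA,\mathfrak{g})$ (the top face, coming from the $\Free_{\Mod}^{\Lie}$-structure map on a free Lie algebra, is precisely the multiplication $\LAA\circ \LAA^n(\mathfrak{g})\to \LAA^n(\mathfrak{g})$ up to the standard identification, and the remaining faces and degeneracies come from the $\LAA$-algebra structure of $\mathfrak{g}$). This gives a natural isomorphism of simplicial objects
\[
\Q_{\Lie}^{\Mod}\bigl(\Barr_\bullet(\Free_{\Mod}^{\Lie},\LAA,\mathfrak{g})\bigr)\ \cong\ \Barr_\bullet(\id,\LAA,\mathfrak{g}),
\]
and composing with the equivalence above yields the claim. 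The only subtlety requiring care is the last identification of the two bar complexes; the main obstacle is simply to confirm that the boundary maps match, which is a bookkeeping check against the explicit form of the two-sided bar construction and will require no nontrivial input.
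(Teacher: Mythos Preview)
Your approach is correct and is precisely what the paper intends: the corollary is stated there as an immediate consequence of \Cref{lem:bar resolution}, and the identity $\Q_{\Lie}^{\Mod}\circ\Free_{\Mod}^{\Lie}\cong\id$ (which the paper invokes explicitly in the analogous Hecke setting in the proof of \Cref{thm:main}) is exactly what converts $\Barr_\bullet(\Free_{\Mod}^{\Lie},\LAA,\mathfrak{g})$ into $\Barr_\bullet(\id,\LAA,\mathfrak{g})$ after applying $\Q_{\Lie}^{\Mod}$ levelwise.

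One small correction to your parenthetical, since you flagged this as the step requiring care: the face $d_0$ in the resulting bar construction is \emph{not} the monad multiplication $\mu$ but rather the projection $\LAA(M)\to M$ onto the weight-one summand. Indeed, the right $\LAA$-functor structure on $\Free_{\Mod}^{\Lie}$ is the counit $\Free_{\Mod}^{\Lie}\circ\U_{\Mod}^{\Lie}\to\id$, and pushing this forward along $\Q_{\Lie}^{\Mod}$ (using $\Q_{\Lie}^{\Mod}\circ\Free_{\Mod}^{\Lie}\cong\id$) yields the abelianisation map $\LAA\to\id$, not $\mu$. This does not affect the validity of your argument, because the identification of bar constructions is functorial in the leftmost variable and hence automatic, but it is worth getting the description right.
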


In order to compare this monadic bar construction to the derived tensor product in question, we will make use of some  classical ideas (cf. \cite{MR0257068,Barr:CECT}) relating algebraic homology theories to modules and derivations. We begin with several definitions.

\begin{definition}
Let $\mathfrak{g} $ be a Lie algebra. A $\mathfrak{g} $-\emph{module} is a module $N$ equipped with a  \mbox{linear map} \[\mathfrak{g}\otimes_RN\to N,\] written $a\otimes x\mapsto ax$, such that $[a,b]x = a(bx)-(-1)^{|a||b|}b(ax)$ for all    homogeneous  $a,b\in \mathfrak{g} $ and $x\in N$. A map of $\mathfrak{g} $-modules is an $R$-linear map intertwining the action maps.
\end{definition}

Write $\Mod_{\mathfrak{g}} $ for the category of $\mathfrak{g} $-modules, and note that this category is naturally isomorphic to the category of (left) $U(\mathfrak{g})$-modules.

\begin{construction}
Let $N$ be a $\mathfrak{g} $-module. Define a bracket on $\mathfrak{g}\ltimes N:= \mathfrak{g}\oplus N$ by the formula \[\left[(a,x),(b,y)\right]=\left([a,b], ay+(-1)^{|b||x|}by\right)\] on   homogeneous  elements. One checks from the definition of a $\mathfrak{g} $-module that this bracket satisfies the axioms of a graded Lie algebra in the sense of \Cref{def:Lie algebra}.
\end{construction}

We refer to $\mathfrak{g}\ltimes N$ as the \emph{split square-zero extension} of $\mathfrak{g}$ by $N$. This construction extends in the obvious way to a limit-preserving functor $\Mod_{\mathfrak{g}}\to \Lie_{R/\mathfrak{g}}$ with left adjoint $\Omega_{\mathfrak{g}}$. This object has a further functoriality for base change in that, given maps of Lie algebras $\mathfrak{g}'\to \mathfrak{g}_1\to \mathfrak{g}_2$, there is a canonical map $\Omega_{\mathfrak{g}_1}(\mathfrak{g}')\to \Omega_{\mathfrak{g}_2}(\mathfrak{g}')$ of $\mathfrak{g}_1$-modules, where the target is viewed as a $\mathfrak{g}_1$-module by restriction along the second map. This map arises as the adjoint of the dashed filler in the following commuting diagram of Lie algebras:
 \[\xymatrix{
& \mathfrak{g}_1\ltimes\Omega_{\mathfrak{g}_2}(\mathfrak{g}')\ar[d]\ar[r]&\mathfrak{g}_2\ltimes\Omega_{\mathfrak{g}_2}(\mathfrak{g}')\ar[d]\\
\mathfrak{g}'\ar@{-->}[ru]\ar[r] & \mathfrak{g}_1\ar[r]& \mathfrak{g}_2.
}\]

\begin{lemma}\label{lem:free differentials}
Let $\mathfrak{g}'\to \mathfrak{g} $ be a map of Lie algebras, and regard $\Free_{\Mod}^{\Lie}\circ \LAA^{\circ n}(\mathfrak{g}')$ as an object of $\Lie_{R/\mathfrak{g}}$ via the structure map to $\mathfrak{g}'$. There is an isomorphism of $\mathfrak{g}$-modules (natural in $\mathfrak{g}'\to \mathfrak{g} $):\[\Omega_{\mathfrak{g}}(\Free_{\Mod}^{\Lie}\circ \LAA^{\circ n}(\mathfrak{g}'))\cong U(\mathfrak{g})\otimes_R \LAA^{\circ n}(\mathfrak{g}')\] .
\end{lemma}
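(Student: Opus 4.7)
The key point is that $\LAA^{\circ n}(\mathfrak{g}')$ is, by construction, simply an $R$-module (obtained by iterating the monad $\LAA$ on the underlying module of $\mathfrak{g}'$), so writing $M:=\LAA^{\circ n}(\mathfrak{g}')$ lets us view $\Free_{\Mod}^{\Lie}\circ\LAA^{\circ n}(\mathfrak{g}') = \Free_{\Mod}^{\Lie}(M)$ as a \emph{free} Lie algebra on a module. I would therefore reduce the lemma to the general claim that for any $R$-module $M$ equipped with a structure map to the underlying module of $\mathfrak{g}$, there is a natural isomorphism
\[
\Omega_{\mathfrak{g}}\!\bigl(\Free_{\Mod}^{\Lie}(M)\bigr)\;\cong\; U(\mathfrak{g})\otimes_R M
\]
of $\mathfrak{g}$-modules, where $\Free_{\Mod}^{\Lie}(M)\to\mathfrak{g}$ is the Lie-algebraic adjoint of the given module map. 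Specialising $M=\LAA^{\circ n}(\mathfrak{g}')$ with structure map the composite of the iterated monad action $\LAA^{\circ n}(\mathfrak{g}')\to\mathfrak{g}'$ with $\mathfrak{g}'\to\mathfrak{g}$ then recovers the stated lemma.

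\textbf{Proof of the general claim, via Yoneda.} For any $\mathfrak{g}$-module $N$ I would string together the natural bijections
\begin{align*}
\Hom_{\Mod_{\mathfrak{g}}}\!\bigl(\Omega_{\mathfrak{g}}(\Free_{\Mod}^{\Lie}(M)),\,N\bigr)
&\cong \Hom_{\Lie_R^{\NN}/\mathfrak{g}}\!\bigl(\Free_{\Mod}^{\Lie}(M),\,\mathfrak{g}\ltimes N\bigr)\\
&\cong \Hom_{\Mod_R^{\NN}/\mathfrak{g}}\!\bigl(M,\,\mathfrak{g}\oplus N\bigr)\\
&\cong \Hom_R(M,N)\\
&\cong \Hom_{\Mod_{\mathfrak{g}}}\!\bigl(U(\mathfrak{g})\otimes_R M,\,N\bigr).
\end{align*}
The first bijection is the adjunction $\Omega_{\mathfrak{g}}\dashv(\mathfrak{g}\ltimes-)$ recorded just before the lemma. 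The second is the free--forget adjunction $\Free_{\Mod}^{\Lie}\dashv \U^{\Lie}$ applied relative to $\mathfrak{g}$, using that the underlying module of $\mathfrak{g}\ltimes N$ is $\mathfrak{g}\oplus N$. The third uses that a map $M\to\mathfrak{g}\oplus N$ of $R$-modules whose projection to $\mathfrak{g}$ is a prescribed map is determined freely by its second coordinate. The fourth is the extension--restriction adjunction for the ring map $R\to U(\mathfrak{g})$, together with the equivalence between $\mathfrak{g}$-modules and left $U(\mathfrak{g})$-modules noted in \Cref{univ}. The Yoneda lemma then produces the required isomorphism.

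\textbf{Naturality and obstacles.} Each step above is natural in $M$ (with respect to maps over $\mathfrak{g}$) and in $\mathfrak{g}$; in the latter case one uses the base-change compatibility of $\Omega$ spelled out immediately before the lemma, together with the functoriality of $U(-)$. Since $\LAA^{\circ n}$ is a functor on $\Mod_R^{\NN}$ and the structure map to $\mathfrak{g}$ is built functorially from $\mathfrak{g}'\to\mathfrak{g}$, naturality in the pair $(\mathfrak{g}'\to\mathfrak{g})$ is inherited. I do not anticipate any substantive obstacle here: the whole argument is a universal-property calculation exhibiting $U(\mathfrak{g})\otimes_R(-)$ as the composite of the three left adjoints $\Mod_R^{\NN}\to \Lie_R^{\NN}/\mathfrak{g}\to \Mod_{\mathfrak{g}}$, namely free Lie algebra, relativisation, and abelianisation.
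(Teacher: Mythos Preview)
Your proof is correct and essentially identical to the paper's: both set up the same chain of four natural bijections (the $\Omega_{\mathfrak{g}}\dashv(\mathfrak{g}\ltimes-)$ adjunction, the free Lie algebra adjunction, projection away from the fixed $\mathfrak{g}$-component, and extension of scalars along $R\to U(\mathfrak{g})$) and conclude by Yoneda. The only cosmetic difference is that you abstract to a general module $M$ before specialising, whereas the paper keeps $\LAA^{\circ n}(\mathfrak{g}')$ throughout.
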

\begin{proof}
By adjunction, for any $\mathfrak{g} $-module $N$, we have \begin{align*}
\Hom_{\Mod_{\mathfrak{g}}}(\Omega_{\mathfrak{g}}(\Free_{\Mod}^{\Lie}\circ \LAA^{\circ n}(\mathfrak{g}')),N)&\cong \Hom_{\Lie_{R/\mathfrak{g}}}(\Free_{\Mod}^{\Lie}\circ \LAA^{\circ n}(\mathfrak{g}'), \mathfrak{g}\ltimes N)\\
&\cong \Hom_{\Mod_{R/U^{\Lie}_{\Mod}(\mathfrak{g})}}(\LAA^{\circ n}(\mathfrak{g}'), \mathfrak{g}\oplus N)\\
&\cong \Hom_{\Mod_R}(\LAA^{\circ n}(\mathfrak{g}'),  N)\\
&\cong \Hom_{\Mod_{\mathfrak{g}}}(U(\mathfrak{g})\otimes_R\LAA^{\circ n}(\mathfrak{g}'),  N).
\end{align*}
\end{proof}

Split square-zero extensions are closely related to the theory of derivations.

\begin{definition}  A \emph{derivation} of $\mathfrak{g} $ into a $\mathfrak{g} $-module $N$ is an $R$-module map $f: \mathfrak{g}\to N$ \mbox{such that} \[f([a,b])=af(b)+(-1)^{|a||b|}bf(a)\] for all  homogeneous $a,b\in \mathfrak{g} $. We write $\mathrm{Der}(\mathfrak{g},N)$ for set of derivations of $\mathfrak{g} $ into $N$. 
\end{definition}

\begin{lemma}\label{lem:derivations}
There is a bijection $\Hom_{\Lie_{R/\mathfrak{g}}}(\mathfrak{g}, \mathfrak{g}\ltimes N)\cong \mathrm{Der}(\mathfrak{g},N)$ naturally in $N$ and $\mathfrak{g} $.
\end{lemma}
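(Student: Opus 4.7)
The plan is to write down the bijection explicitly and check it is natural. Given a map $\varphi\colon\mathfrak{g}\to \mathfrak{g}\ltimes N$ of Lie algebras over $\mathfrak{g}$, the composite with the canonical projection $\mathfrak{g}\ltimes N\to \mathfrak{g}$ must be the identity on $\mathfrak{g}$. Hence $\varphi$ is determined by its second component: there is a unique degree-preserving $R$-linear map $f\colon\mathfrak{g}\to N$ such that $\varphi(a)=(a,f(a))$ for all homogeneous $a\in\mathfrak{g}$. Assigning $f$ to $\varphi$ defines the proposed bijection, with inverse sending a derivation $f$ to the map $a\mapsto(a,f(a))$.

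The key point is the equivalence between the bracket-preservation of $\varphi$ and the derivation property of $f$. Unwinding the definition of the bracket on the split square-zero extension gives
\[[\varphi(a),\varphi(b)]=[(a,f(a)),(b,f(b))]=\bigl([a,b],\ af(b)+(-1)^{|b||f(a)|}bf(a)\bigr),\]
while on the other hand $\varphi([a,b])=([a,b],f([a,b]))$. Since $f$ is degree-preserving we have $|f(a)|=|a|$, and so $\varphi$ is a Lie algebra homomorphism precisely when
\[f([a,b])=af(b)+(-1)^{|a||b|}bf(a)\]
for all homogeneous $a,b\in\mathfrak{g}$, which is exactly the defining identity of a derivation. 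Consequently, the two maps described above are mutually inverse bijections.

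Naturality in $N$ and in $\mathfrak{g}$ is straightforward: a map of $\mathfrak{g}$-modules $N\to N'$ post-composes with $\varphi$ on the left-hand side and with $f$ on the right-hand side, both inducing the same operation; likewise a map $\mathfrak{g}'\to \mathfrak{g}$ in $\Lie^{\NN}_R$ restricts derivations and maps over $\mathfrak{g}$ compatibly. The only subtle point is the sign convention, which is forced to match by the fact that $f$ preserves internal degree; no real obstacle arises.
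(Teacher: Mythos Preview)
Your proof is correct and follows essentially the same approach as the paper: both identify a map over $\mathfrak{g}$ with its second component $f$ and verify via the same bracket computation that $(\id_{\mathfrak{g}},f)$ is a Lie algebra map if and only if $f$ is a derivation. The paper is slightly more terse, observing that both sides inject into $\Hom_R(\mathfrak{g},N)$ and then reducing to the identical computation you carry out.
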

\begin{proof}
Both sets inject into the set of $R$-module maps $f: \mathfrak{g}\to N$, so it suffices to show that the condition of being a derivation is the same as the condition that $(\id_{\mathfrak{g}},f)$ be a map of Lie algebras. This comparison is implied by  the following simple  computation  in $\mathfrak{g}\ltimes N$:  \[[a+f(a), b+f(b)]=[a,b]+af(b)+(-1)^{|a||b|}bf(a)\]
\end{proof}
Using this universal property, we now connect $\Omega_{\mathfrak{g} }$ to the universal enveloping algebra. As a matter of notation, we write $I(A)$ for the augmentation ideal of an augmented algebra.

\begin{lemma}\label{lem:augmentation and derivations}
There is a natural  isomorphism of $\mathfrak{g}$-modules $\Omega_{\mathfrak{g}}(\mathfrak{g})\cong I(U(\mathfrak{g}))$.
\end{lemma}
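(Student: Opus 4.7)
The plan is to exhibit $\Omega_{\mathfrak{g}}(\mathfrak{g})$ and $I(U(\mathfrak{g}))$ as corepresentatives of the same functor $\mathrm{Der}(\mathfrak{g},-) \colon \Mod_{\mathfrak{g}} \to \Set$, and then conclude by Yoneda. For the left-hand side, the defining adjunction $\Omega_{\mathfrak{g}} \dashv (\mathfrak{g} \ltimes -)$ combined with \Cref{lem:derivations} directly yields a natural bijection
$$\Hom_{\Mod_{\mathfrak{g}}}(\Omega_{\mathfrak{g}}(\mathfrak{g}), N) \;\cong\; \Hom_{\Lie_{R/\mathfrak{g}}}(\mathfrak{g}, \mathfrak{g} \ltimes N) \;\cong\; \mathrm{Der}(\mathfrak{g}, N).$$

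For $I(U(\mathfrak{g}))$, the key observation is that the canonical map $\iota \colon \mathfrak{g} \to U(\mathfrak{g})$ factors through the augmentation ideal, and once viewed as landing in the left $U(\mathfrak{g})$-module $I(U(\mathfrak{g}))$ it is itself a derivation; this is just a transcription of the relation $[a,b] = ab - (-1)^{|a||b|}ba$ imposed in the definition of $U(\mathfrak{g})$. Restriction along $\iota$ therefore induces a natural map
$$\iota^{*} \colon \Hom_{\Mod_{\mathfrak{g}}}(I(U(\mathfrak{g})), N) \longrightarrow \mathrm{Der}(\mathfrak{g}, N),$$
and it remains to see this is a bijection. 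Injectivity is immediate: since $\mathfrak{g}$ generates $I(U(\mathfrak{g}))$ as a left $U(\mathfrak{g})$-module (every monomial $a_{1}\cdots a_{n}$ in $I(U(\mathfrak{g}))$ can be rewritten as $(a_{1}\cdots a_{n-1}) \cdot a_{n}$), a $U(\mathfrak{g})$-module map out of $I(U(\mathfrak{g}))$ is determined by its restriction to $\mathfrak{g}$. For surjectivity, given $f \in \mathrm{Der}(\mathfrak{g}, N)$, I would equip $R \oplus N$ with a $\mathfrak{g}$-action by the formula $a \cdot (r, n) := (0,\, rf(a) + a \cdot n)$; a brief computation shows that the $\mathfrak{g}$-module axiom $[a,b]\cdot v = a(bv) - (-1)^{|a||b|} b(av)$ reduces in the $R$-component exactly to the derivation identity for $f$, so $R \oplus N$ becomes a left $U(\mathfrak{g})$-module sitting in a short exact sequence $0 \to N \to R \oplus N \to R \to 0$. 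The $U(\mathfrak{g})$-linear map $U(\mathfrak{g}) \to R \oplus N$ defined by $u \mapsto u \cdot (1, 0)$ then lifts the augmentation, so its restriction to $I(U(\mathfrak{g}))$ followed by projection onto $N$ defines a $U(\mathfrak{g})$-module extension of $f$.

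Combining the two corepresentation statements, the Yoneda lemma delivers a natural isomorphism of $\mathfrak{g}$-modules $\Omega_{\mathfrak{g}}(\mathfrak{g}) \cong I(U(\mathfrak{g}))$, which by construction is the map classifying the universal derivation $\iota$; naturality in $\mathfrak{g}$ is automatic because every ingredient is visibly functorial. The only nontrivial step is the sign-sensitive verification that the formula $a \cdot (r, n) = (0,\, rf(a) + a\cdot n)$ actually defines a $\mathfrak{g}$-module structure on $R \oplus N$; but this reduces, summand by summand, precisely to the derivation axiom for $f$, and everything else is formal.
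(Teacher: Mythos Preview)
Your proof is correct and follows the same overall strategy as the paper: both arguments reduce to showing that $I(U(\mathfrak{g}))$ corepresents $\mathrm{Der}(\mathfrak{g},-)$, with injectivity coming from the fact that $\mathfrak{g}$ generates $I(U(\mathfrak{g}))$ as a left module. The one genuine difference is in the surjectivity step. The paper writes down the explicit formula $\overline{f}(a_1\otimes\cdots\otimes a_n)=a_1\cdots a_{n-1}f(a_n)$ on $I(T_R(\mathfrak{g}))$ and then checks by hand, in two separate cases, that it annihilates the defining relations of $U(\mathfrak{g})$. Your route via the $\mathfrak{g}$-module structure on $R\oplus N$ is slicker: once that structure is verified, the universal property of $U(\mathfrak{g})$ hands you the map $U(\mathfrak{g})\to R\oplus N$ for free, and the descent to $I(U(\mathfrak{g}))$ is automatic. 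Unwinding your construction on a monomial $a_1\cdots a_n$ recovers exactly the paper's formula, so the two are the same map; your packaging simply trades the explicit descent check for a (shorter) module-axiom check. One small caution: in the graded setting the formula $a\cdot(r,n)=(0,\,rf(a)+an)$ needs a Koszul sign in the first term (e.g.\ $(-1)^{|a||r|}rf(a)$) to be $R$-linear and to match the derivation identity; you flagged this as the sign-sensitive step, and it is worth actually carrying it through.
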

\begin{proof}
By Lemma \ref{lem:derivations}, it suffices to show that $I(U(\mathfrak{g}))$ corepresents the functor $\mathrm{Der}(\mathfrak{g},-)$. 

First, if $f: \mathfrak{g}\to N$ is a derivation, then we obtain a map $\overline f:I(T_R(\mathfrak{g}))\to N$ by setting \[\overline f(a_1\otimes\cdots \otimes a_n)=a_1\cdots a_{n-1}f(a_n).
\] Since $\mathfrak{g} $ is a $\mathfrak{g} $-module,  we observe that for all homogeneous $t_1,t_2\in T_R(\mathfrak{g})$ with $t_2\neq1$, we have:
 \begin{align*}
\overline f\left(t_1\left(a\otimes b-(-1)^{|a||b|}b\otimes a-[a,b]\right) t_2\right)=0.
\end{align*}  
In the other case, we can use that $f$ is a derivation to deduce 
 \begin{align*}
\overline f\left(t_1\left(a\otimes b-(-1)^{|a||b|}b\otimes a-[a,b]\right)\right)&=0.
\end{align*} Thus, $\overline f$ descends to the quotient $I(U(\mathfrak{g}))$; the resulting map is a map of $\mathfrak{g} $-modules by \mbox{construction.}

Conversely, given a map of $\mathfrak{g} $-modules $f:I(U(\mathfrak{g}))\to N$,   the composite $\mathfrak{g}\to U(\mathfrak{g})\to N$ is a derivation as \begin{align*}
0&=f(ab)-(-1)^{|a||b|}f(ba)-f([a,b])=af(b)-(-1)^{|a||b|}bf(a)-f([a,b]).
\end{align*} We have defined inverse bijections $\Hom_{\Mod_{\mathfrak{g}}}(I(U(\mathfrak{g})), N)\cong \mathrm{Der}(\mathfrak{g},N)$; naturality is obvious.
\end{proof}  
\begin{corollary}\label{cor:flat differentials}
Levelwise application of $\Omega_{\mathfrak{g} }$ preserves weak equivalences of levelwise flat simplicial Lie algebras over $\mathfrak{g} $.
\end{corollary}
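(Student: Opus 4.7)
The plan is to resolve $\mathfrak{g}'$ using the monadic bar construction $P_\bullet(\mathfrak{g}') := \Barr_\bullet(\Free_{\Mod}^{\Lie}, \LAA, \mathfrak{g}')$ from \Cref{lem:bar resolution}, and to exploit the explicit formula of \Cref{lem:free differentials}, which identifies $\Omega_{\mathfrak{g}}(P_n(\mathfrak{g}')) \cong U(\mathfrak{g}) \otimes_R \LAA^{\circ n}(\mathfrak{g}')$. Given a weak equivalence $f: \mathfrak{g}'_1 \to \mathfrak{g}'_2$ of levelwise flat simplicial Lie algebras over $\mathfrak{g}$, I would apply the two-out-of-three property to the naturality square with corners $\Omega_{\mathfrak{g}}(P_\bullet(\mathfrak{g}'_i))$ and $\Omega_{\mathfrak{g}}(\mathfrak{g}'_i)$ for $i=1,2$, the vertical arrows being $\Omega_{\mathfrak{g}}(P_\bullet(f))$ and $\Omega_{\mathfrak{g}}(f)$ and the horizontal arrows being the augmentations.

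The first step is to show that the left vertical arrow $\Omega_{\mathfrak{g}}(P_\bullet(f))$ is a weak equivalence. In each simplicial level this is the map $\id_{U(\mathfrak{g})}\otimes_R \LAA^{\circ n}(f)$. Iterating \Cref{lem:free lie}, $\LAA^{\circ n}(\mathfrak{g}'_i)$ is levelwise flat, and a filtered colimit argument mirroring the proof of \Cref{lem:flat sym} shows that $\LAA^{\circ n}$ preserves weak equivalences between levelwise flat simplicial modules. Tensoring with $U(\mathfrak{g})$ over $R$ then preserves this equivalence, since levelwise flat simplicial modules compute the derived tensor product (\Cref{lem:flat filtered colimit}).

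The second step is to verify that both augmentations $\Omega_{\mathfrak{g}}(P_\bullet(\mathfrak{g}'_i)) \to \Omega_{\mathfrak{g}}(\mathfrak{g}'_i)$ are weak equivalences. I would first note that $\Omega_{\mathfrak{g}}$ is a left Quillen functor: its right adjoint $N\mapsto \mathfrak{g}\ltimes N$ preserves fibrations and trivial fibrations, because on underlying simplicial sets it is simply the product $\mathfrak{g}\times N$ (see \Cref{prop:lie model structure}). When $\mathfrak{g}'_i$ is levelwise projective, \Cref{lem:bar resolution} identifies $P_\bullet(\mathfrak{g}'_i)$ as a cofibrant replacement, and Ken Brown's lemma applied to $\Omega_{\mathfrak{g}}$ supplies the desired equivalence. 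The general levelwise flat case would then be reduced to the levelwise projective one by presenting $\mathfrak{g}'_i$ as a filtered homotopy colimit of levelwise projective simplicial Lie algebras, using that $\Omega_{\mathfrak{g}}$ commutes with colimits, that $P_\bullet$ commutes with filtered colimits, and that filtered colimits in $\sMod_R^\NN$ are homotopy colimits by \Cref{lem:filtered hocolim}.

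The principal obstacle is this last reduction. Whereas \Cref{lem:flat filtered colimit} makes the passage from flat to projective transparent at the level of modules, lifting it to simplicial Lie algebras is more delicate, as a levelwise flat simplicial Lie algebra need not obviously be a filtered colimit of levelwise projective ones. One workable route is to use the flat-to-free presentation at the module level combined with the functor $\LAA$: the underlying modules of $\mathfrak{g}'_i$ can be written as filtered colimits of levelwise finite free modules, and $\LAA$ then produces a filtered colimit of levelwise projective simplicial Lie algebras which can be compared to $\mathfrak{g}'_i$ by an inductive argument using the bar construction itself. Once this technical step is settled, two-out-of-three applied to the naturality square completes the argument.
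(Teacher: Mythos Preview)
Your approach diverges from the paper's and contains a genuine circularity.

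The paper places this corollary immediately after Lemma~\ref{lem:augmentation and derivations} with no proof, because the intended argument is a one-liner: combining that lemma with the base-change discussion preceding Lemma~\ref{lem:free differentials} yields a natural identification
\[
\Omega_{\mathfrak{g}}(\mathfrak{g}') \;\cong\; U(\mathfrak{g}) \otimes_{U(\mathfrak{g}')} I(U(\mathfrak{g}')),
\]
and one then appeals to Proposition~\ref{prop:derived enveloping algebra}, which already establishes that $U$ (hence $I\circ U$) preserves weak equivalences between levelwise flat simplicial Lie algebras. The point is that $\Omega_{\mathfrak{g}}$ has been expressed entirely in terms of the enveloping-algebra functor, whose homotopy invariance on flat inputs is known. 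Your route via bar resolutions and Lemma~\ref{lem:free differentials} is a legitimate alternative strategy, but it is considerably longer and, as written, does not close.

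The gap is in your second step, and it is not the one you flag. You write that when $\mathfrak{g}'_i$ is levelwise projective, $P_\bullet(\mathfrak{g}'_i)$ is a cofibrant replacement and Ken Brown's lemma gives $\Omega_{\mathfrak{g}}(P_\bullet(\mathfrak{g}'_i)) \simeq \Omega_{\mathfrak{g}}(\mathfrak{g}'_i)$. But Ken Brown only guarantees that a left Quillen functor preserves weak equivalences between \emph{cofibrant} objects, and a levelwise projective simplicial Lie algebra is not cofibrant in the transferred model structure of Proposition~\ref{prop:lie model structure} in general. So Ken Brown says nothing about the augmentation $P_\bullet(\mathfrak{g}'_i) \to \mathfrak{g}'_i$. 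Worse, the assertion ``$\Omega_{\mathfrak{g}}$ sends this particular weak equivalence of levelwise projective objects to a weak equivalence'' is exactly an instance of the corollary you are trying to prove. Your filtered-colimit reduction from flat to projective therefore buys nothing: the projective case is itself unproven by your method. You have correctly located step~4 as the crux, but the obstacle is the gap between \emph{levelwise projective} and \emph{cofibrant}, not between flat and projective.
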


In particular, $\Omega_{\mathfrak{g} }$ preserves weak equivalences between cofibrant objects and so admits a total left derived functor.

\begin{lemma}
If $\mathfrak{g} $ is an $R$-projective Lie algebra, then the augmentation \[N(\Omega_{\mathfrak{g}}(\Barr_\bullet(\Free_{\Mod}^{\Lie},\LAA, \mathfrak{g})))\to \Omega_{\mathfrak{g}}(\mathfrak{g})\cong I(U(\mathfrak{g}))\] is a cofibrant replacement in the category of chain complexes of $U(\mathfrak{g})$-modules. 
\end{lemma}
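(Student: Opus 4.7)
The plan is to verify two conditions: that the stated map is a quasi-isomorphism of chain complexes of $U(\mathfrak{g})$-modules, and that its source is cofibrant in the projective model structure on $\Ch_{\geq 0}(\Mod_{U(\mathfrak{g})})$.

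For the quasi-isomorphism, I would begin with the weak equivalence $\Barr_\bullet(\Free_{\Mod}^{\Lie}, \LAA, \mathfrak{g}) \to \mathfrak{g}$ provided by \Cref{lem:bar resolution}. Since $\mathfrak{g}$ is $R$-projective, iterated application of \Cref{lem:free lie} shows that each $\Free_{\Mod}^{\Lie}\LAA^{\circ n}(\mathfrak{g})$ is $R$-projective, so both sides of this map are levelwise flat simplicial Lie algebras over $\mathfrak{g}$. \Cref{cor:flat differentials} then guarantees that levelwise application of $\Omega_{\mathfrak{g}}$ yields a weak equivalence, whose target is identified with $I(U(\mathfrak{g}))$ via \Cref{lem:augmentation and derivations}. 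Since normalization carries weak equivalences of simplicial modules to quasi-isomorphisms of chain complexes, the desired quasi-isomorphism follows.

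For cofibrancy, I would unpack the simplicial degrees of the source using \Cref{lem:free differentials}, which yields a natural isomorphism of $U(\mathfrak{g})$-modules
$$\Omega_{\mathfrak{g}}\bigl(\Free_{\Mod}^{\Lie}\circ \LAA^{\circ n}(\mathfrak{g})\bigr) \cong U(\mathfrak{g}) \otimes_R \LAA^{\circ n}(\mathfrak{g}).$$
Since $\LAA^{\circ n}(\mathfrak{g})$ is $R$-projective by \Cref{lem:free lie} and extension of scalars sends $R$-projectives to $U(\mathfrak{g})$-projectives (by adjunction, as restriction is exact), the simplicial $U(\mathfrak{g})$-module $\Omega_{\mathfrak{g}}(\Barr_\bullet(\Free_{\Mod}^{\Lie},\LAA, \mathfrak{g}))$ is levelwise $U(\mathfrak{g})$-projective. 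By \Cref{lem:flat dold--kan}, its normalization inherits levelwise projectivity, producing a bounded-below chain complex of projective $U(\mathfrak{g})$-modules, which is cofibrant in the projective model structure on $\Ch_{\geq 0}(\Mod_{U(\mathfrak{g})})$.

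The only slightly delicate point is invoking \Cref{cor:flat differentials} at the correct level of generality: one must check flatness of both the bar resolution and its target simultaneously, which is handled uniformly by \Cref{lem:free lie}. Beyond this piece of bookkeeping I do not expect any serious obstacle, since the bar resolution and the identification of differentials with $U(\mathfrak{g}) \otimes_R \LAA^{\circ n}(\mathfrak{g})$ have already done all the real work.
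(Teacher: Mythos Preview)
Your proposal is correct and follows essentially the same approach as the paper: both arguments establish cofibrancy via \Cref{lem:free differentials} and \Cref{lem:free lie}, and the quasi-isomorphism via \Cref{lem:bar resolution}, \Cref{cor:flat differentials}, and \Cref{lem:augmentation and derivations}. The only difference is presentational---the paper phrases the quasi-isomorphism as ``both sides compute $\mathbb{L}\Omega_{\mathfrak{g}}(\mathfrak{g})$'' while you argue directly that $\Omega_{\mathfrak{g}}$ preserves the bar-resolution weak equivalence; your version is arguably more transparent, and your explicit invocation of \Cref{lem:flat dold--kan} for transferring levelwise projectivity through normalization fills in a step the paper leaves implicit.
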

\begin{proof}
In each chain degree, the chain complex in question is $U(\mathfrak{g})$-free on a projective $R$-module by Lemmas \ref{lem:free lie} and \ref{lem:free differentials}. To see that the map is a quasi-isomorphism, we note that the lefthand side computes $\mathbb{L}\Omega_{\mathfrak{g}}(\mathfrak{g})$ by Lemma \ref{lem:bar resolution}, while the righthand side computes $\mathbb{L}\Omega_{\mathfrak{g}}(\mathfrak{g})$ by the projectivity of $\mathfrak{g} $, Lemma \ref{lem:augmentation and derivations}, and Corollary \ref{cor:flat differentials}.
\end{proof}

\begin{corollary}
For $R$-projective simplicial Lie algebras $\mathfrak{g} $, there is a natural weak equivalence \[\LQ_{\Lie}^{\Mod}(\mathfrak{g})\simeq R\otimes^\mathbb{L}_{U(\mathfrak{g})} I(U(\mathfrak{g})).\]
\end{corollary}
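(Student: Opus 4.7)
The plan is to combine the cofibrant resolution of $I(U(\mathfrak{g}))$ constructed in the preceding lemma with a universal-property rewriting of the (derived) abelianisation functor $\LQ_{\Lie}^{\Mod}$.

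First, I would verify the pointwise identification
\[\Q_{\Lie}^{\Mod}(\mathfrak{h})\cong R\otimes_{U(\mathfrak{h})}\Omega_{\mathfrak{h}}(\mathfrak{h})\cong R\otimes_{U(\mathfrak{h})}I(U(\mathfrak{h})),\]
the second isomorphism being Lemma \ref{lem:augmentation and derivations}. The first isomorphism is a chase of universal properties: maps $\Q_{\Lie}^{\Mod}(\mathfrak{h})\to N$ in $\Mod_R^\NN$ correspond by adjunction to Lie algebra maps $\mathfrak{h}\to \triv_{\Lie}^{\Mod}(N)$, by Lemma \ref{lem:derivations} to derivations of $\mathfrak{h}$ into $N$ regarded as a trivial $\mathfrak{h}$-module, and finally to $U(\mathfrak{h})$-module maps $\Omega_{\mathfrak{h}}(\mathfrak{h})\to N$ in which $U(\mathfrak{h})$ acts on the target via the augmentation $U(\mathfrak{h})\to R$.

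To match this against the preceding lemma, which phrases everything in terms of $\Omega_{\mathfrak{g}}(-)$ rather than $\Omega_{(-)}(-)$, I would next verify a base-change isomorphism
\[\Omega_{\mathfrak{g}}(\mathfrak{g}')\cong U(\mathfrak{g})\otimes_{U(\mathfrak{g}')}\Omega_{\mathfrak{g}'}(\mathfrak{g}')\]
for every map $\mathfrak{g}'\to\mathfrak{g}$ of Lie algebras. Both sides corepresent the functor sending a $\mathfrak{g}$-module $N$ to the set of derivations of $\mathfrak{g}'$ into $N$ with respect to its restricted $\mathfrak{g}'$-action. Tensoring down over $U(\mathfrak{g})$ then yields $R\otimes_{U(\mathfrak{g})}\Omega_{\mathfrak{g}}(\mathfrak{g}')\cong R\otimes_{U(\mathfrak{g}')}\Omega_{\mathfrak{g}'}(\mathfrak{g}')\cong \Q_{\Lie}^{\Mod}(\mathfrak{g}')$.

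Finally, I would assemble the pieces. Set $B_\bullet:=\Barr_\bullet(\Free_{\Mod}^{\Lie},\LAA,\mathfrak{g})$, noting that $\Q_{\Lie}^{\Mod}(B_\bullet)$ simplicial-degreewise recovers $\Barr_\bullet(\id,\LAA,\mathfrak{g})$, which computes $\LQ_{\Lie}^{\Mod}(\mathfrak{g})$ by the preceding corollary. The preceding lemma asserts that $N(\Omega_{\mathfrak{g}}(B_\bullet))\to I(U(\mathfrak{g}))$ is a projective resolution in chain complexes of $U(\mathfrak{g})$-modules, whence
\[R\otimes^{\mathbb{L}}_{U(\mathfrak{g})}I(U(\mathfrak{g}))\simeq R\otimes_{U(\mathfrak{g})}N(\Omega_{\mathfrak{g}}(B_\bullet))\cong N\!\left(R\otimes_{U(\mathfrak{g})}\Omega_{\mathfrak{g}}(B_\bullet)\right)\cong N(\Q_{\Lie}^{\Mod}(B_\bullet))\simeq \LQ_{\Lie}^{\Mod}(\mathfrak{g}).\]
The only step that is not purely formal is the base-change identity for $\Omega_{\mathfrak{g}}$; everything else is bookkeeping with the adjunctions already spelled out in the text.
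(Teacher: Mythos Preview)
Your argument is correct and follows essentially the same route as the paper's (implicit) proof: tensor the projective resolution of $I(U(\mathfrak{g}))$ supplied by the preceding lemma down to $R$, and identify the result levelwise with $\Barr_\bullet(\id,\LAA,\mathfrak{g})\simeq\LQ_{\Lie}^{\Mod}(\mathfrak{g})$. The only difference is cosmetic: the paper would make the levelwise identification by invoking Lemma~\ref{lem:free differentials} directly (so that $R\otimes_{U(\mathfrak{g})}\Omega_{\mathfrak{g}}(B_n)\cong R\otimes_{U(\mathfrak{g})}\bigl(U(\mathfrak{g})\otimes_R\LAA^{\circ n}(\mathfrak{g})\bigr)\cong\LAA^{\circ n}(\mathfrak{g})$), whereas you prove the more general natural isomorphism $R\otimes_{U(\mathfrak{g})}\Omega_{\mathfrak{g}}(-)\cong\Q_{\Lie}^{\Mod}(-)$ of functors on $\Lie_{R/\mathfrak{g}}$ via base change and then specialise. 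Your version is slightly more conceptual and avoids using that each $B_n$ is free, but the content is the same.
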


\begin{proof}[Proof of Proposition \ref{prop:lie homology is tor}]
Assume first that $\mathfrak{g} $ is $R$-projective. The short exact sequence $$I(U(\mathfrak{g}))\to U(\mathfrak{g})\to R$$ of $\mathfrak{g} $-modules gives rise to a cofiber sequence \[\LQ_{\Lie}^{\Mod}(\mathfrak{g})\to R \to R\otimes^\mathbb{L}_{U(\mathfrak{g})} R.\] The augmentation $U(\mathfrak{g})\to R$ gives rise to a retraction of the righthand map, and the claim follows. In the general case, it suffices to verify that $R\otimes^\mathbb{L}_{U(\mathfrak{g})}R$ preserves levelwise weak equivalences between levelwise flat simplicial Lie algebras, which follows from the fact that $U(\mathfrak{g})$ is flat for $R$-flat $\mathfrak{g} $ by Theorem \ref{thm:PBW} and the functor $U$ preserves weak equivalences between $R$-flat simplicial Lie algebras by Proposition \ref{prop:derived enveloping algebra}.
\end{proof}

  \label{sec:Hecke}\section{A Chevalley--Eilenberg complex for Hecke Lie algebras} \label{sec:Hecke} 
In this section, we  recall the definition of Hecke Lie algebras  and 
develop their homotopy theory. Hecke Lie algebras were introduced in the first author's thesis    \cite{brantnerthesis} in order to describe the operations acting on the $E$-theory of $K(h)$-local Lie algebras.
In this work, we restrict attention to the case of an odd prime $p$,   where  Hecke Lie algebras \mbox{are particularly simple.}

We fix a (smooth, $1$-dimensional, commutative) formal group $G_0$ of height $0<h<\infty$ over a perfect field $k$ of characteristic $p$. We write $E$ for the corresponding Lubin--Tate spectrum  constructed by Goerss, Hopkins, and Miller \cite{MR2125040} \cite{MR1642902}. The ring spectrum $E$ is complex orientable, and we fix a complex orientation $x^E \in \widetilde{E}^0(\CC P^\infty)$. Given a (virtual) complex vector bundle $V\to X$, there is a Thom class $\tau_V \in \widetilde{E}^0(X^V)$ such that multiplication by $\tau_V$ defines an isomorphism $E^\ast(X) = \widetilde{E}^\ast(X_+) \xrightarrow{\simeq} \widetilde{E}^\ast(X^V)$.
In the case of the trivial complex line bundle over a point, the Thom isomorphism gives rise to an isomorphism $E^\ast \cong E^{\ast-2}$ and hence to a periodicity generator $u \in \pi_2(E) = { {E}_2}$.\vspace{-2pt}

\subsection{Power operations on $\EE_\infty$-rings} We begin with some recollections concerning power operations on $K(h)$-local $\EE_\infty$-$E$-algebras, as developed in  \cite{wilkerson1982lambda}, \cite{hopkins1998k}, and \cite{rezk2009congruence}).

Given an integer $i\in \ZZ$, we write $\Gamma^{-i}$ for Rezk's (uncompleted) ring of  additive degree $i$ power operations, which acts naturally on the $i^{th}$ homotopy group $\pi_i(R)$ for any $K(h)$-local $
\EE_\infty$-$E$-algebra $R$ (cf. \cite[Section 6.2.]{rezk2009congruence}).
 The ring $\Gamma^{-i}$ is endowed with a canonical weight grading $\Gamma^{-i} \cong \bigoplus_w  \Gamma^{-i}(w)$, where
\begin{equation} \label{e1} \Gamma^{-i}(w) \cong \ker \left( \pi_{i} \left((\Sigma^{-i}E)^{\otimes w}_{h\Sigma_{w}}\right)  \rightarrow \bigoplus_{0<j<w} \pi_{i} \left((\Sigma^{-i}E)^{\otimes w}_{h(\Sigma_j \times \Sigma_{w-j})}\right)\right).\end{equation} 
The map shown is induced by the transfer. We have\footnote{We differ from Rezk's ``logarithmic'' grading convention.}  $\Gamma^{-i}(w)= 0$ unless $w$ is a power of $p$.
\begin{warning}
The rings $\Gamma^i$ are not related to the free divided power functor $\Gamma^\ast$ appearing in the Chevalley--Eilenberg complex; the meaning of the symbol $\Gamma$ will be clear from the context.\vspace{-3pt}
\end{warning}

We can also consider \emph{weighted} $K(h)$-local $\EE_\infty$-$E$-algebras, i.e., commutative algebra objects in the functor $\infty$-category $\Fun(\mathbb{Z}_{\geq 0}, \Mod_{E}^{\wedge})$ 
with its Day convolution symmetric monoidal structure (cf. \cite{glasman2016day},\cite[Section 2.2.6]{Lurie:HA}). In this context, Rezk's rings act in a weighted manner, in the sense that there are action maps \[\Gamma^{-i}(w) \times \pi_i(R(n)) \longrightarrow \pi_i(R(w\cdot n))\] 
refining the usual action on the underlying  $K(h)$-local $\EE_\infty$-$E$-algebra $L_{K(h)}\left( \bigoplus_n R(n)\right)$.
 
The rings $\Gamma^{-i}$ are linked by \emph{twisting} homomorphisms
$E_{ k} \myotimes{E_0} \Gamma^{-i} \myotimes{E_0} E_{-k} \to  \Gamma^{-i-k}$ sending a tensor $\lambda \otimes \alpha \otimes \mu$ to the operation $x \mapsto ( \lambda \cdot \alpha(\mu\cdot x))$. This twisting map respects weight and is an isomorphism whenever $k$ is even. 

More interesting are the \emph{suspension} homomorphisms \[\ldots\xrightarrow{\cong} \Gamma^2 \hookrightarrow \Gamma^1 \xrightarrow{\cong} \Gamma^0 \hookrightarrow \Gamma^{-1} \xrightarrow{\cong} \Gamma^{-2} \hookrightarrow  \ldots.\]

These morphisms, which we denote generically by $\Susp$, also respect weight. They are defined as follows. First, note   that for all  $w$ and $i$, there is an identification  of $E_0$-modules 
\[\pi_i\left( (\Sigma^i E)^{\otimes w}_{h\Sigma_w}\right) \cong  \widetilde{E}^{\wedge}_{ i} \left((S^{ i})^{\otimes w}_{h\Sigma_w}\right).\]
Smashing the diagonal $S^1 \rightarrow (S^1)^{\otimes w}$ with $(S^i)^{\otimes w}$ and applying $(-)_{\Sigma_w}$ produces a \vspace{-1pt}map \[\widetilde E^{\wedge}_i \left( (S^{ i})^{\otimes w}_{h\Sigma_w} \right)\cong \widetilde E^{\wedge}_{i+1} \left(  S^1 \otimes_{h\Sigma_w} (S^{ i})^{\otimes w} \right) \rightarrow \widetilde E^{\wedge}_{i+1} \left( (S^{ i+1})^{\otimes w}_{h\Sigma_w} \right),\vspace{-1pt} \]
which restricts to the desired map $\Gamma^{-i}(w) \rightarrow \Gamma^{-i-1}(w)$ \vspace{-1pt}(cf. e.g. \cite[Remark 7.5]{rezk2009congruence}).

The following result provides a useful alternative description of \vspace{-1pt}the suspension.

\begin{proposition}[Suspensions via   Euler class]\label{suspensiondiagram} For each  $w\geq 0$, there is a \vspace{-2pt}commutative diagram 
\begin{diagram}
\cdots & \rTo &  \Gamma^0(w) &&   \rInto ^{(e\cdot -)^\vee} &&  \Gamma^0(w)& & \rTo^{\cong} &  &\Gamma^0(w)& &   \rInto^{(e\cdot -)^\vee}  & & \Gamma^0(w)& & \rTo  & \cdots \\
 & &  \dTo^\cong  & & & &  \dTo^\cong    && &  &  \dTo^\cong  & && &  \dTo^\cong    \\
\cdots& \rTo  & \Gamma^2(w) & &   \rInto^\Susp  & & \Gamma^1(w)&&  \rTo^\Susp & &\Gamma^0(w)&&  \rInto^\Susp & & \Gamma^{-1}(w)&&  \rTo  &  \cdots, \\
\end{diagram}
where $e \in  \widetilde{E}^0(B\Sigma_{w+})$ is the Euler class of the reduced complex\vspace{-2pt} \vspace{-1pt}standard \mbox{representation of $\Sigma_w$.}
\end{proposition}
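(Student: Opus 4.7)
The plan is a Thom-space analysis driven by the splitting of the complex standard representation of $\Sigma_w$ into its trivial and reduced parts. Let $V$ denote the complex standard representation of $\Sigma_w$ (so $V \cong \CC^w$ with $\Sigma_w$ permuting coordinates), and let $\bar\rho$ denote the reduced standard representation, so that $V \cong \CC \oplus \bar\rho$ as complex $\Sigma_w$-representations.

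First, I would identify the relevant spectra. Since $(S^{2k})^{\wedge w} \cong S^{kV}$ as $\Sigma_w$-spectra, we have $(S^{2k})^{\wedge w}_{h\Sigma_w} \simeq (B\Sigma_w)^{kV}$. Using the splitting of $V$ we obtain $(B\Sigma_w)^{kV} \simeq \Sigma^{2k}(B\Sigma_w)^{k\bar\rho}$, and the Thom isomorphism for the complex bundle $k\bar\rho$ combined with $2$-periodicity of $E$ yields a canonical identification $\widetilde{E}^\wedge_{2k}((B\Sigma_w)^{kV}) \cong E^\wedge_0(B\Sigma_w)$. Since these identifications are natural and intertwine transfers, they restrict to isomorphisms $\Gamma^{-2k}(w) \cong \Gamma^0(w)$, giving the vertical arrows at even levels; the odd levels $\Gamma^{-(2k+1)}(w) \cong \Gamma^0(w)$ are obtained by applying the same analysis one $S^1$-suspension at a time.

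Second, I would analyze $\Susp \circ \Susp$. The $\Sigma_w$-equivariant iterated diagonal $\Delta: S^2 \to (S^2)^{\wedge w} = S^V$ is precisely the one-point compactification of the inclusion $\CC \hookrightarrow V$, $z \mapsto (z,\ldots,z)$, of the trivial subrepresentation. Under the splitting $V = \CC \oplus \bar\rho$, this is $\mathrm{id}_{S^\CC} \wedge (\text{zero section}: S^0 \to S^{\bar\rho})$. Passing to $h\Sigma_w$-coinvariants produces the map $S^2 \wedge B\Sigma_{w+} \to \Sigma^2 (B\Sigma_w)^{\bar\rho}$, which is $\Sigma^2$ of the zero section of the complex bundle $\bar\rho \to B\Sigma_w$. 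In $E$-cohomology the zero section pulls the Thom class $\tau_{\bar\rho}$ back to the Euler class $e \in E^0(B\Sigma_w)$, so dually in completed $E$-homology it corresponds, under the Thom identifications above, to $(e \cdot -)^\vee$.

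Combining these two steps identifies $\Susp \circ \Susp$, after passage to $\Gamma^0(w)$, with $(e \cdot -)^\vee$, proving commutativity for two-step compositions. The single suspensions then split this product into its two factors: one (from the trivial $\CC$-summand of $V$) is pure $\Sigma^2$-suspension followed by the periodicity isomorphism and is therefore an iso, while the other (from the $\bar\rho$-summand) is the zero section carrying the Euler class. This dictates the alternating $\cong$ versus $\hookrightarrow$ pattern in the bottom row. The main technical obstacle is coherence: one must verify that the various Thom isomorphisms commute with the transfer maps used to define $\Gamma^{-i}(w)$, and that the single-$S^1$ suspensions correctly assign the ``trivial-direction'' factor to the iso spot and the ``reduced-direction'' factor to the Euler class spot, which requires tracking the $\CC \oplus \bar\rho$ splitting one suspension coordinate at a time rather than two at once.
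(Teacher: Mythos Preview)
Your proposal is correct and follows essentially the same route as the paper: identify $(S^{2j})^{\otimes w}_{h\Sigma_w}$ with the Thom spectrum $B\Sigma_w^{jV}$, use the Thom isomorphism (compatible with transfers) to produce the vertical isomorphisms $\Gamma^{-2j}(w)\cong\Gamma^0(w)$, recognise the double suspension as the Thomification of the inclusion $\CC\hookrightarrow V$ so that the zero section of $\bar\rho$ appears and pulls the Thom class back to the Euler class $e$, and then insert the odd degrees by hand using that every second suspension is an isomorphism. The paper's proof is slightly terser but structurally identical.
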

\begin{proof}When $i=2j$, we can identify $ (S^{2j})^{\otimes w}_{h\Sigma_w} $ with the Thom spectrum
 $B\Sigma_{w }^{j\cdot V_{w}}$, where $V_{w}$ denotes the (complexified) standard \mbox{representation of $\Sigma_w$.} 
Under this identification, the map $\Sigma^{2 } (S^{2j})^{\otimes w}_{h\Sigma_w} \rightarrow  (S^{2(j+1)})^{\otimes w}_{h\Sigma_w}$ induced by the diagonal, as above, corresponds to the ``Thomification'' of the map of $\Sigma_m$-representations $ j \cdot V_m\oplus \CC  \rightarrow (j+1) \cdot V_{m } $ induced by the diagonal embedding of the trivial representation $\CC$ into the standard representation $V_m$.\vspace{0pt}
 
The $E_0$-linear dual of the Thom isomorphism provides the\vspace{-1pt} identification $\widetilde{E}^{\wedge}_{2j}(B\Sigma_{w}^{j\cdot V_{w}})\cong \widetilde{E}^{\wedge}_0(B\Sigma_{w+})$, and, since the Thom isomorphism respects transfers, this identification restricts to an isomorphism of left $E_0$-modules $\Gamma^{-2j}(w) \cong \Gamma^{0}(w)$ defined in \eqref{e1}. We have used repeatedly that the completed $E$-homology of symmetric groups and the $E_0$-modules $\Gamma^i(w)$ are finitely generated and free \cite[Theorem 1.1.]{strickland1998morava}. The composite\vspace{-1pt} \[\widetilde{E}^0(B\Sigma_{w+}) \cong \widetilde{E}^{2(j+1)}(B\Sigma_{w+}^{(j+1)\cdot V_m}) \rightarrow \widetilde{E}^{2(j+1)}(B\Sigma_{w+}^{ j\cdot V_m\oplus \CC }) \cong \widetilde{E}^0(B\Sigma_{w+})\vspace{-2pt}\] induced by the diagonal $\CC \rightarrow V_m$ is well known to be multiplication by $e \in  \widetilde{E}^0(B\Sigma_{w+})$ -- see \cite[p.14]{strickland1998morava}, for example. 
Passing to \mbox{$E_0$-linear} duals and to the respective submodules of additive operations, and then inserting the odd degree rings ``by hand'' (using that every second suspension is an isomorphism), we obtain the result.\vspace{-4pt}
\end{proof}
At weight $w=p$, the module $\Gamma^0(p)^\vee$ is   given by $E^0(B\Sigma_{p})/(\tr)$, there $\tr $ denotes the image of the transfer map from the trivial group (cf. \cite[Section 10.3]{rezk2009congruence}).
The Euler class of the reduced standard representation of $B\Sigma_p$ may be viewed as an element $e \in E^0(B\Sigma_p)$, and by abuse of notation, we shall also use $e$ to denote its image in $\Gamma^0(p)^\vee$.\vspace{-1pt}

\begin{example}[The $K$-theory of $\EE_\infty$-rings]
The $E$-theory corresponding to the commutative formal group law $\widehat{\GG}_m$ of \mbox{height $1$} over $\FF_p$ is given by $p$-complete complex $K$-theory. 
We recall from \cite{hopkins1998k} that
 there is a $K(1)$-local equivalence  of spectra $S^0_{h\Sigma_p} \simeq B\Sigma_{p+} \xrightarrow{ \ (\epsilon, \tr) \ } S^0 \oplus S^0$, where $\epsilon$ denotes the collapse map and $\tr$ is the transfer map.
The homotopy unique map $\psi:S^0 \rightarrow B\Sigma_{p+}$
with $\epsilon \circ \psi \simeq 1$ and $\tr \circ \psi \simeq 0$ defines a class $\Psi_0\in K^{\wedge}_0(B\Sigma_{p+}) = \pi_0(L_{K(1)}K^{\otimes p}_{h\Sigma_p})$. As the transfer vanishes, this class lies in $\Gamma^0(p)$. Using periodicity and the suspensions $\Gamma^{2n+1}(p) \xrightarrow{\cong}\Gamma^{2n}(p)$, we obtain classes $\Psi_i \in \Gamma^i$ for all $i$.
At height $1$, the Euler class appearing in \Cref{suspensiondiagram} is just $p$, and the  suspension homomorphism $\Gamma^i \rightarrow \Gamma^{i-1}$ is therefore determined by the assignment 
$$\Psi_i \ \ \  \mapsto \ \ \  \begin{cases} \ \ \ \ \ \ \Psi_{i-1} & \mbox{for }   i   \mbox{ odd} \\  \ \ \ 
p \cdot \Psi_{i-1} &\mbox{for }   i  \mbox{ even.}\end{cases}$$ 
The class $\Psi_i$ generates $\Gamma^i$ freely and there is an equivalence $\Gamma^i= \ZZ_p[\Psi_i]$.
\end{example}

In fact, we can  explicitly describe the ring $\Gamma^0(p)^\vee$ and the Euler class $e$ at all heights:
\begin{proposition}[Euler class basis]\label{explicit!}
If the coordinate on  $E$-theory is chosen so that the resulting formal group law is $p$-typical, then there are isomorphisms
$$\Gamma^0(p)^\vee \cong E^0(B\Sigma_p)/(\tr) \cong E_0[e]/f(e),$$ 
where 
$e $ is the Euler class of the reduced standard representation and
$f(e)=e^{\frac{p^h-1}{p-1}}+\cdots+p$
is the unique monic degree $\frac{p^h-1}{p-1}$ polynomial over $E_0$ for which $f(-x^{p-1}) = \frac{[p](x)}{x}$ in $E_0[[x]]/[p](x)$.
\end{proposition}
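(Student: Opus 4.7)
The plan is to combine Strickland's computation of the Morava $E$-cohomology of symmetric groups \cite{strickland1998morava} with standard $p$-typical formal group calculations. By Strickland's theorem, the quotient $E^0(B\Sigma_p)/(\tr)$ is free of rank $(p^h-1)/(p-1)$ as an $E_0$-module and corepresents the functor sending a complete local $E_0$-algebra $R$ to the set of subgroup schemes of order $p$ in $G \otimes_{E_0} R$. The isomorphism $\Gamma^0(p)^\vee \cong E^0(B\Sigma_p)/(\tr)$ is already observed in the text preceding the proposition, so the task reduces to identifying this ring with $E_0[e]/f(e)$ for the polynomial $f$ described.

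First, I would identify the Euler class explicitly. Restricting the reduced standard representation $\bar V$ to $C_p \hookrightarrow \Sigma_p$ decomposes it as $\bigoplus_{i=1}^{p-1} L^i$ for a faithful character $L$. With $x = c_1^E(L) \in E^0(BC_p) = E_0[[x]]/[p](x)$, this gives
\[
e|_{BC_p} = \prod_{i=1}^{p-1}[i](x).
\]
Passing through the normaliser $N_{\Sigma_p}(C_p) = C_p \rtimes (\ZZ/p)^\times$, whose Weyl group acts on $E^0(BC_p)$ by $x \mapsto [k](x)$, one obtains (modulo transfers) an identification of $E^0(B\Sigma_p)/(\tr)$ with the $(\ZZ/p)^\times$-invariant subring of $E^0(BC_p)/(\tr)$. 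The element $e$ is manifestly $(\ZZ/p)^\times$-invariant (the action permutes the factors), and Wilson's theorem yields $e \equiv -x^{p-1}$ modulo the maximal ideal of $E_0$.

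Next, I would argue that $e$ generates $E^0(B\Sigma_p)/(\tr)$ as an $E_0$-algebra: by an $E_0$-rank count combined with the mod-$p$ reduction (where $-x^{p-1}$ already generates the relevant invariant subring in ordinary mod $p$ cohomology), the cyclic $E_0$-submodule generated by powers of $e$ fills out all $(p^h-1)/(p-1)$ dimensions. Hence $e$ satisfies a unique monic polynomial $f \in E_0[T]$ of degree $(p^h-1)/(p-1)$ with $E^0(B\Sigma_p)/(\tr) \cong E_0[e]/(f(e))$. To verify the identity $f(-x^{p-1}) = [p](x)/x$ in $E_0[[x]]/[p](x)$, I would use Strickland's moduli interpretation: the power series $[p](x)/x$ cuts out $G[p] \setminus \{0\}$ in the formal disk, and this zero locus breaks into $(p^h-1)/(p-1)$ subgroups of order $p$ as orbits of $(\ZZ/p)^\times$. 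Under $p$-typicality, the substitution $y = -x^{p-1}$ realises the quotient by this action, and $[p](x)/x$ descends to a monic polynomial in $y$ of the required degree, whose roots correspond to the values $e(H) \in E_0$ attached to subgroups $H$. This descended polynomial is therefore equal to the minimal polynomial $f$ of $e$.

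The main obstacle is the last step: while the mod-$p$ congruence $e \equiv -x^{p-1}$ is immediate from Wilson's theorem, the integral identity $f(-x^{p-1}) = [p](x)/x$ requires carefully controlling the $(\ZZ/p)^\times$-action on $E_0[[x]]/[p](x)$ and invoking the full strength of $p$-typicality. The role of $p$-typicality is to ensure that $-x^{p-1}$ (rather than some higher-order modification) can serve as a global invariant coordinate on $(G[p] \setminus \{0\})/(\ZZ/p)^\times$, so that $[p](x)/x$ genuinely expresses as a monic polynomial in $-x^{p-1}$ of the expected degree, matching the minimal polynomial of $e$ inside Strickland's moduli ring.
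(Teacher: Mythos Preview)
Your strategy mirrors the paper's, but you leave the decisive step as an acknowledged obstacle rather than proving it. The point you are missing is that $p$-typicality yields the \emph{exact} identity $e = -x^{p-1}$ in $E^0(BC_p) = E_0[[x]]/[p](x)$, not merely the mod-$\mathfrak{m}$ congruence you extract from Wilson's theorem. Indeed, the logarithm of a $p$-typical law has the form $\log(x) = \sum_{i \ge 0} \ell_i x^{p^i}$, so for any $(p-1)$st root of unity $\omega \in E_0$ one has $\log(\omega x) = \omega \log(x)$ and hence $[\omega](x) = \omega x$ on the nose. In the quotient by $[p](x)$ the integer $k$ and its Teichm\"uller lift $\omega_k$ become indistinguishable (their difference is a multiple of $p$, and $[p](x)=0$), so $[k](x) = \omega_k x$ there, and $\prod_{k=1}^{p-1}[k](x) = \bigl(\prod_k \omega_k\bigr)x^{p-1} = -x^{p-1}$, the product of the lifts being the Teichm\"uller lift of $(p-1)! \equiv -1$. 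This is exactly what the paper asserts in one line.

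Once this is in place your detour through Strickland's moduli description is unnecessary. The $(\ZZ/p)^\times$-action on $E_0[[x]]/[p](x)$ is literally $x \mapsto \omega x$, so the invariant subring is generated over $E_0$ by $x^{p-1}$; the transfer ideal is generated by the invariant element $[p](x)/x$, which therefore already is a polynomial in $-x^{p-1}$, yielding $f$ directly with $f(-x^{p-1}) = [p](x)/x$. Your rank count via Strickland and the subsequent moduli argument are correct in spirit but redundant: the paper's route is shorter precisely because it upgrades your mod-$\mathfrak{m}$ congruence to an integral equality at the outset.
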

\begin{proof}
A standard transfer argument shows that $E^*(B\Sigma_p)$ is the subring of $(\mathbb{Z}/p)^{\times} = \text{Aut}(C_p)$-fixed points inside $E^*(BC_p)$.  The Gysin sequence associated to the fibration $S^1 \longrightarrow BC_p \longrightarrow BS^1 \simeq \mathbb{CP}^{\infty}$ yields the formula $E^*(BC_p) \cong E_*[[x]]/[p](x)$.  The Euler class $e \in E^*(B\Sigma_p)$ sits inside of $E^*(BC_p)$ as $\prod_{k=1}^{p-1} [k](x) = -x^{p-1}$, where the last equality is a consequence of the $p$-typicality of the formal group law.  Finally, the transfer ideal $(\text{tr})$ is generated as an $E_*$-module by $\frac{[p](x)}{x}$.  For details, we refer to \cite{hopkins2000generalized}  and  \cite[Section 4.3.6]{marsh2010morava}. 
\end{proof} 
\subsection{Hecke modules and Hecke Lie algebras}\label{four}
We now review the theory of Hecke modules and Hecke Lie algebras \cite[Section 4.3-4]{brantnerthesis}. As we will see in Section \ref{section:spectral lie}, these definitions exactly capture the structure of the operations acting on the completed $E$-homology of $K(h)$-local Lie algebras.

\begin{remark}[Grading convention]
Our conventions differ from those of \cite{brantnerthesis} by a shift. Specifically, if $\mathfrak{g}$ is a shifted Lie algebra with shifted bracket  $[-,-]'$, then $\Sigma^{-1} \mathfrak{g}$ becomes a graded Lie algebra with bracket defined by the formula $[u,v] := (-1)^{\deg(u)}\sigma^{-1} \left( [\sigma(u),\sigma(v)]' \right)$, where $\sigma:   \Sigma^{-1} \mathfrak{g} \xrightarrow{ } \mathfrak{g}$ denotes the evident shifting bijection and $\sigma^{-1}$ its inverse.

Our definitions of the Hecke power ring,  Hecke modules,   and Hecke Lie algebras will therefore all differ from the corresponding notions in \cite{brantnerthesis} by a shift. To make this small difference clear throughout, we have added the subscript ``u" in various places, thereby stressing that we are working \textit{u}nshifted Hecke Lie algebras.
 \end{remark}
 
\begin{definition}[Power rings] A \textit{power ring} is a collection $P=\{P_i^j(w)\}_{(i,j,w)\in \ZZ^2 \times \NN}$ of abelian groups with elements $\iota_{ii}\in P_i^i(1)$ for all $i$, together with associative and unital composition maps $$P_i^j(v)  \otimes P_j^k(w) \longrightarrow P_i^k(vw).$$ 
\end{definition}

\begin{example}
The power ring $P^{\Comm}$ of additive operations on $K(h)$-local $\EE_\infty$-rings \mbox{is given by}
\[(P^{\Comm})_{i}^{j}(w) =  E_{j-i} \myotimes{E_0} \Gamma^{-i}(w),\] with composition defined using the twisting maps and multiplication in the $\Gamma^{i}$.
\end{example}
\begin{definition}[Modules over power rings]
A (weighted) \textit{module} over the power ring $P$ is a weighted graded abelian group $M\in \Mod_\ZZ^\NN$ equipped with multiplication maps $P_{i}^{j}(w)\otimes M_i \rightarrow M_j$ compatible with composition in $P$. 
A map of $P$-modules is a map of weighted graded abelian groups intertwining with multiplication.
\end{definition}

We can now define the power ring of primary interest:

\begin{definition}[Hecke operations on Lie algebras] \label{Heckepowerring}
The power ring $ {\mathcal{H}_u^{\Lie}}$ of additive operations on (unshifted) $K(h)$-local spectral Lie algebras is given by 
\[({\mathcal{H}_u^{\Lie}})_{i}^{j}(w) = \begin{cases}  \Ext^a_{\Gamma^{i}}({E}_0,{E}_{-i+j+a} ) &\mbox{if } w  = p^a \\ 
0 & \mbox{if } w \mbox{ is not a power of $p$,} \end{cases}\]
where we regard ${E}_0$ and ${E}_{-i+j+a}$ as trivial $\Gamma^{i}$-modules, with composition defined as the counterclockwise composite in the commutative diagram
\[\xymatrix{
({\mathcal{H}_u^{\Lie}})_{i}^{j}(p^a)  \otimes ({\mathcal{H}_u^{\Lie}})_{j}^{k}(p^b)\ar@{=}[d]\ar@{-->}[r]&({\mathcal{H}_u^{\Lie}})_{i}^{k}(p^{a+b})\ar@{=}[ddd]\vspace{-3pt}\\
(\Ext^a_{\Gamma^{i}}({E}_0,{E}_{-i+j+a}))\otimes (\Ext^b_{\Gamma^{j}}({E}_0,{E}_{-j+k+b}))\ar[d]\vspace{-3pt}\\
( \Ext^a_{\Gamma^{i}}({E}_0,{E}_{-i+j+a}) ) \otimes(\Ext^b_{\Gamma^{j+a}}({E}_0,{E}_{-j+k+b}) )\ar[d]\vspace{-3pt}\\
( \Ext^a_{\Gamma^{i}}({E}_0,{E}_{-i+j+a}))  \otimes (\Ext^b_{\Gamma^{i}}({E}_{-i+j+a},{E}_{-i+k+a+b}) )\ar[r]
&(\Ext^{a+b}_{\Gamma^{i}}({E}_0,{E}_{-i+k+a+b})),\vspace{-3pt}
}\] The first map is  suspension, the second twisting, and the third is the Yoneda product. 
\end{definition} 

\begin{remark}
The modules $\left(\mathcal{H}^{\Lie}\right)_i^j(w)$ are free; their ranks are given by the generating series $$ \sum_{a=0}^\infty  
\left(\mathcal{H}^{\Lie}\right)_i^j(p^a) \cdot T^a  = (1+T) \cdot (1+pT) \cdot \ldots \cdot (1+p^{h-1} T);$$ cf.\ \cite[Prop. 4.6, Prop. 8.8]{rezk2017rings}  \cite[Sec. 1.4.1]{brantnerthesis}. In particular, $ (\mathcal{H}^{\Lie})_i^j (w) = 0$ for $w>p^h$.
\end{remark}

\begin{example}[Hecke operations  at height one]\label{Lieheightone} For $p$-adic $K$-theory defined over $\ZZ_p$, we have 
$$({\mathcal{H}_u^{\Lie}})_{i}^{j}(w) = \begin{cases} E_{j-i} \cdot \iota_i &\mbox{if } w  = 1 \\ 
E_{j-i+1} \cdot \alpha_i &\mbox{if } w  = p\\
0 & \mbox{else,}  \end{cases} \vspace{-1pt}$$
Here $\iota_i$ is the identity operation in degree $i$, and the weight of $\alpha_i$ is $p$. Composition is defined as
\begin{align*}(\lambda_{j-i}\cdot  \iota_i) \otimes  (\lambda_{k-j} \cdot \iota_j) &\mapsto  (\lambda_{k-j}\lambda_{j-i}  \cdot  \iota_i)\vspace{-1pt}\\
(\lambda_{j-i}\cdot  \alpha_i) \otimes  (\lambda_{k-j+1} \cdot \iota_{j-1}) & \mapsto (\lambda_{k-j+1}\lambda_{j-i}  \cdot  \alpha_i)\vspace{-1pt} \\
(\lambda_{j-i}\cdot  \iota_i) \otimes  (\lambda_{k-j+1} \cdot \alpha_{j}) & \mapsto  (\lambda_{k-j+1}\lambda_{j-i}  \cdot  \alpha_i)\vspace{-1pt}\\
(\lambda_{j-i}\cdot  \alpha_i) \otimes  (\lambda_{k-j+1} \cdot \alpha_{j-1}) &  \mapsto  0.
\end{align*}
Here we have used that the Frobenius on $\ZZ_p = W(\FF_p)$ is trivial at height $h=1$.
\end{example}

\begin{definition}[Hecke modules]
A  \textit{Hecke module} is a weighted module over the power ring ${\mathcal{H}_u^{\Lie}}$. We write $\Mod_{{\mathcal{H}_u}}^\NN $ for the category of Hecke modules. 
\end{definition}
 Note that a Hecke module is in particular a weighted $E_*$-module.  We write $\Free_{\Mod_{E_\ast}}^{\Mod_{{\mathcal{H}_u}}}$ and $\U_{\Mod_{E_\ast}}^{\Mod_{{\mathcal{H}_u}}}$
 the corresponding free and forgetful functors.
The category $\Mod_{{\mathcal{H}_u}}^\NN $ can also be interpreted as 
 the category of algebras for the following additive monad $\AAA$ on $\Mod_{E_\ast}^\NN$:
\begin{definition}[The monad of additive operations]\label{AdditiveMonadAAA}
Let us  consider the endofunctor 
 $\AAA$ on $ \Mod_{E_\ast}^\NN $  sending $M \in \Mod_{E_\ast}^\NN$ to the weighted $E_\ast$-module  $\AAA(M) \in \Mod_{E_\ast}^\NN$
whose $j^{th}$  degree  $\AAA(M)_j$ is the quotient of the free abelian monoid on symbols  
\[\{ \  [\alpha|x] \ | \ \alpha \in ({\mathcal{H}_u^{\Lie}})_{i}^j , x_i \in M_{i} \}\] by all relations 
$[\alpha_1,x] + [\alpha_2,x] = [\alpha_1+\alpha_2 , x]  \ ,  \ [\alpha , x+y] = [\alpha ,x] + [\alpha , y] \ ,  \ [\lambda |x] =  [1|\lambda x]$
for $x_i \in M_{i}$,   $\alpha \in ({\mathcal{H}_u^{\Lie}})_{i}^j$,  
 $\lambda \in ({\mathcal{H}_u^{\Lie}})_{i}^j(1)$.  {We implicitly endow symbols  with their natural weight.}

The composition map $({\mathcal{H}_u^{\Lie}})_{i}^{j}  \otimes ({\mathcal{H}_u^{\Lie}})_{j}^{k} \rightarrow ({\mathcal{H}_u^{\Lie}})_{i}^{k}$ from \Cref{Heckepowerring}  equips $\AAA$ with the structure of a monad whose category of algebras is given by  $\Mod_{{\mathcal{H}_u}}^\NN $ .
\end{definition}

\begin{definition}[Hecke Lie algebras] \label{HLA}
A \textit{Hecke Lie algebra} consists of an (unshifted, weighted) Hecke module $\mathfrak{g}\in \Mod_{{\mathcal{H}_u}}^\NN$ equipped with the structure of a Lie algebra on its underlying weighted $E_\ast$-module, subject to the relation $[x,\alpha(y)] = 0 $ for all $x\in \mathfrak{g}_k, y \in \mathfrak{g}_i$, and $\alpha\in ({\mathcal{H}_u^{\Lie}})_{i}^{j}(w)$ with $w>1$. A map of Hecke Lie algebras is a map of Hecke modules intertwining the brackets. 
\end{definition}

We write  $\Lie_{{\mathcal{H}_u}}^\NN$ for the resulting  category of Hecke Lie algebras. This category is the category of modules for a  monad $\LL$ on $\Mod_{E_\ast}^\NN$, and we denote the corresponding free and forgetful functors by $\Free_{\Mod_{E_\ast}}^{\Lie_{{\mathcal{H}_u}}}$ and $\U_{\Mod_{E_\ast}}^{\Lie_{{\mathcal{H}_u}}}$, respectively. These adjunctions factor through free and forgetful adjunctions to $\Lie_{E_*}^\NN$ and $\Mod_{{\mathcal{H}_u}}^\NN$, which we write as $(\Free_{\Lie_{E_\ast}}^{\Lie_{{\mathcal{H}_u}}},\U_{\Lie_{E_\ast}}^{\Lie_{{\mathcal{H}_u}}})$ and $(\Free_{\Mod_{{\mathcal{H}_u}}}^{\Lie_{{\mathcal{H}_u}}},\U_{\Mod_{{\mathcal{H}_u}}}^{\Lie_{{\mathcal{H}_u}}})$, respectively.
We now place these notions within a homotopical setting. The next result follows from the same considerations as Proposition \ref{prop:lie model structure} above.

 \begin{proposition}[Transferred model structures]\label{prop:hecke lie model structure}
Right transferred model structures exist along the following adjunctions:\vspace{-6pt}
\begin{diagram}
\sLie_{E_\ast}^\NN&  & &\pile{ \rTo^{\Free_{\Lie_{E_\ast}}^{\Lie_{{\mathcal{H}_u}}}}_{\perp} \\ \lTo_{\U^{\Lie_{{\mathcal{H}_u}}}_{\Lie_{E_\ast}}}} & & & \sLie_{{\mathcal{H}_u}}^\NN\\
\\
\uTo^{ \Free^{\Lie_{E_\ast}}_{\Mod_{E_\ast}}} \ \dashv \ \dTo_{\U^{\Lie_{E_\ast}}_{\Mod_{E_\ast}}} & & & &&&   \dTo^{\U^{\Lie_{{\mathcal{H}_u}}}_{\Mod^{{\mathcal{H}_u}}}}\  \vdash \ \uTo_{\Free^{\Lie_{{\mathcal{H}_u}}}_{\Mod^{{\mathcal{H}_u}}}}  \\
\\
\sMod_{E_*}^\NN& && \pile{\lTo^{\U^{\Mod_{{\mathcal{H}_u}}}_{\Mod_{E_\ast}}}_{\top} \\\rTo_{\Free^{\Mod_{{\mathcal{H}_u}}}_{\Mod_{E_\ast}}} } && &  \sMod_{\mathcal{H}_u}^\NN, 
\end{diagram} 
\end{proposition}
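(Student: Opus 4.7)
The plan is to apply the same lifting theorem used in Proposition \ref{prop:lie model structure}, namely [Johnson--Scherer, Thm. 3.2], to each of the four adjunctions. The inputs we take for granted are the standard cofibrantly generated model structure on $\sMod_{E_*}^\NN$ and the transferred model structure on $\sLie_{E_*}^\NN$ from Proposition \ref{prop:lie model structure}; in both of these categories every object is fibrant, since weak equivalences and fibrations are detected on the underlying simplicial set. The target model structures on $\sMod_{{\mathcal{H}_u}}^\NN$ and $\sLie_{{\mathcal{H}_u}}^\NN$ will then be defined to have weak equivalences and fibrations detected on the underlying object of $\sMod_{E_*}^\NN$.

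For each of the four adjunctions, I would verify the two standard hypotheses of the lifting criterion. First, the left adjoint comes from a monad that preserves filtered colimits: the Hecke module monad $\mathbf{A}$ is additive and commutes with all colimits that commute with the forgetful functor, the free Lie monad $\LAA$ has the same property by Lemma \ref{lem:free lie} (its underlying functor is a sifted-colimit-preserving polynomial functor), and the composite monad $\LL$ inherits this from its two factors. Second, the path object construction in $\sMod_{E_*}^\NN$, given by cotensoring with $\Delta^1$, lifts to each of the three upper categories: cotensoring with a finite simplicial set preserves any structure defined simplicial-degreewise, so it preserves Hecke module actions, Lie brackets, and the additional Hecke Lie relation $[x,\alpha(y)]=0$ for $\alpha$ of weight $>1$ that appears in Definition \ref{HLA}. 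Combined with the fact that the unit of each adjunction is a weak equivalence on path objects (by the same degreewise verification), this suffices to conclude that the transferred structure exists.

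Finally, because the model structure on $\sLie_{{\mathcal{H}_u}}^\NN$ is approached by two distinct routes in the diagram (via $\sMod_{{\mathcal{H}_u}}^\NN$ and via $\sLie_{E_*}^\NN$), I would point out that the resulting model structures agree on the nose: the forgetful functor $\sLie_{{\mathcal{H}_u}}^\NN \to \sMod_{E_*}^\NN$ factors through both intermediate categories, so the weak equivalences and fibrations in any of the three transferred structures are by definition the same, namely those maps whose image in $\sMod_{E_*}^\NN$ is a weak equivalence or fibration. The main point of care, and really the only non-formal step, is confirming that the path object construction is compatible with the extra relation built into the definition of Hecke Lie algebras; this reduces to the trivial observation that if a bilinear identity holds in every simplicial degree of $M$, it continues to hold in $M^{\Delta^1}$.
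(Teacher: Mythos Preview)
Your proposal is correct and follows essentially the same approach as the paper, which simply states that ``the next result follows from the same considerations as Proposition~\ref{prop:lie model structure}'' (i.e., every object in the base is fibrant and path objects lift, so the Johnson--Noel criterion applies). You have spelled out these considerations in more detail than the paper does, and your additional observation that the two routes to the model structure on $\sLie_{{\mathcal{H}_u}}^\NN$ yield the same answer is a useful remark the paper leaves implicit.
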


Bar constructions again provide convenient cofibrant replacements in these model categories:
\begin{lemma}[Cofibrant replacement]\label{lem:hecke bar constructions}
Let $\mathfrak{g}$ be a simplicial Hecke Lie algebra and $M$ a simplicial Hecke module. \vspace{-3pt}
\begin{enumerate} 
\item The natural maps $\Barr_\bullet(\Free_{\Mod_{E_\ast}}^{\Lie_{{\mathcal{H}_u}}}, \LL, \mathfrak{g})\to \mathfrak{g} $ and $\Barr_\bullet(\Free_{\Mod_{E_\ast}}^{\Mod_{{\mathcal{H}_u}}}, \AAA, M)\to M$ are weak equivalences. 
\item If $\mathfrak{g} $ is levelwise projective, then $\Barr_\bullet(\Free_{\Mod_{E_\ast}}^{\Lie_{{\mathcal{H}_u}}}, \LL, \mathfrak{g})$ , $\U_{\Mod^{{\mathcal{H}_u}}}^{\Lie^{\mathcal{H}_u}}(\Barr_\bullet(\Free_{\Mod_{E_\ast}}^{\Lie_{{\mathcal{H}_u}}}, \LL, \mathfrak{g}))$ are both cofibrant.
\item If $M$ is levelwise projective, then $\Barr_\bullet(\Free_{\Mod_{E_\ast}}^{\Mod_{{\mathcal{H}_u}}}, \AAA, M)$ is cofibrant.
\end{enumerate}
\end{lemma}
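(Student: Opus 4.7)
The plan is to mirror the proof of Lemma \ref{lem:bar resolution} and reduce all three claims to the general machinery for monadic bar constructions in transferred model categories developed in \cite{johnson2014lifting}. The key point is that the model structures in Proposition \ref{prop:hecke lie model structure} are all transferred from $\sMod_{E_*}^\NN$ along forgetful functors, so weak equivalences in $\sLie_{\mathcal{H}_u}^\NN$ and $\sMod_{\mathcal{H}_u}^\NN$ are detected on underlying simplicial $E_*$-modules. I would first record this observation and then dispatch each clause in turn.

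For part (1), I would apply \cite[Prop. 3.13]{johnson2014lifting}: since the forgetful functor to $\sMod_{E_*}^\NN$ creates weak equivalences and the bar construction $\Barr_\bullet(\id,\TT,X)\to X$ admits an extra degeneracy on underlying objects, it is a simplicial homotopy equivalence, hence a weak equivalence. Applying this with $\TT=\LL$ (resp.\ $\AAA$) and forgetting along $\U^{\Lie_{\mathcal{H}_u}}_{\Mod_{E_*}}$ (resp.\ $\U^{\Mod_{\mathcal{H}_u}}_{\Mod_{E_*}}$) proves the two augmentations are weak equivalences.

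For parts (2) and (3), I would invoke \cite[Prop. 3.17, 3.22]{johnson2014lifting}: once the target of $\Barr_\bullet$ is levelwise projective (so that geometric realisation is computed correctly and the identity map to the constant diagram is a cofibration in each simplicial level), each term $\Free\circ \TT^{\circ n}(\mathfrak{g})$ is free on a projective object, and the resulting simplicial resolution is cofibrant in the transferred model structure. This immediately gives cofibrancy of $\Barr_\bullet(\Free_{\Mod_{E_*}}^{\Lie_{\mathcal{H}_u}},\LL,\mathfrak{g})$ in $\sLie_{\mathcal{H}_u}^\NN$ and of $\Barr_\bullet(\Free_{\Mod_{E_*}}^{\Mod_{\mathcal{H}_u}},\AAA,M)$ in $\sMod_{\mathcal{H}_u}^\NN$.

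The one step I expect to be subtle is the remaining claim in (2), namely that $\U^{\Lie_{\mathcal{H}_u}}_{\Mod_{\mathcal{H}_u}}$ applied to the Lie bar construction is still cofibrant in $\sMod_{\mathcal{H}_u}^\NN$. The cleanest route is to observe that each simplicial level of the bar construction is a free Hecke Lie algebra on a free Hecke module and to show that the forgetful functor $\U^{\Lie_{\mathcal{H}_u}}_{\Mod_{\mathcal{H}_u}}$ sends free Hecke Lie algebras on projective Hecke modules to cofibrant (in fact retracts of free) Hecke modules. This should follow from a PBW-style analysis of $\Free^{\Lie_{\mathcal{H}_u}}_{\Mod_{\mathcal{H}_u}}(V)$: using the defining relation $[x,\alpha(y)]=0$ for $\alpha$ of weight $>1$, the free Hecke Lie algebra admits a natural weight-filtration whose associated graded splits as a direct sum of Hecke modules obtained by applying iterated bracket operations to the generators, each factor being free. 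A further appeal to \cite[Prop. 3.17]{johnson2014lifting} then promotes levelwise projectivity to cofibrancy in the simplicial model structure, completing the argument.
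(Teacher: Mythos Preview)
Your proposal is essentially the paper's proof: parts (1), (3), and the first half of (2) are dispatched exactly as you say, by reducing to Lemma~\ref{lem:bar resolution} and the cited results of \cite{johnson2014lifting}.

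For the remaining clause in (2), the paper takes a shorter path than your PBW-style filtration argument. The levels of the bar construction are of the form $\Free_{\Mod_{E_\ast}}^{\Lie_{\mathcal{H}_u}}(\LL^{\circ n}(\mathfrak{g}))$, i.e.\ free Hecke Lie algebras on \emph{$E_\ast$-modules} (not on Hecke modules). The paper simply observes that the composite $\U_{\Mod_{\mathcal{H}_u}}^{\Lie_{\mathcal{H}_u}}\circ\Free_{\Mod_{E_\ast}}^{\Lie_{\mathcal{H}_u}}$ already takes values in free Hecke modules; this follows from the explicit description of free Hecke Lie algebras in \cite[Section 4.4.2]{brantnerthesis} (the Hecke operations are adjoined freely on top of the Lie words). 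One then feeds this back into \cite[Prop.~3.17, 3.22]{johnson2014lifting} exactly as before. Your detour through $\Free_{\Mod_{\mathcal{H}_u}}^{\Lie_{\mathcal{H}_u}}$ and a weight-filtration is not wrong, but it introduces an intermediate functor and a PBW argument that are not needed once you notice the input is only an $E_\ast$-module.
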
 
\begin{proof}
The first and third claim follow in the same way as Lemma \ref{lem:bar resolution}. For the second, one needs the further observation that $\U_{\Mod_{\mathcal{H}_u}}^{\Lie_{\mathcal{H}_u}}\circ\Free_{\Mod_{E_\ast}}^{\Lie_{\mathcal{H}_u}}$ takes values in free Hecke modules.
\end{proof}

\subsection{Indecomposables}\label{indec}
In this section, we will extend the adjunction considered in Section \ref{section:lie algebra homology} to a commuting diagram of adjunctions of the form
\begin{diagram}
\Lie_{{\mathcal{H}_u}}^\NN &  & &\pile{\rTo^{\Q_{\Lie_{{\mathcal{H}_u}}}^{\Lie_{E_\ast}}}_\perp\\ \lTo_{\triv_{\Lie_{{\mathcal{H}_u}}}^{\Lie_{E_\ast}}}} & & & \Lie^{\NN}_{E_\ast}\\
\\
\dTo^{\Q_{\Lie_{{\mathcal{H}_u}}}^{\Mod_{{\mathcal{H}_u}}}} \ \dashv \ \uTo_{\triv_{\Lie_{{\mathcal{H}_u}}}^{\Mod_{{\mathcal{H}_u}}}} & & & &&&      \uTo^{\triv_{\Lie}^{\Mod}} \ \dashv \ \dTo_{\Q_{\Lie}^{\Mod}}\\
\\
\Mod^{{\mathcal{H}_u}}_{E_\ast} & && \pile{\lTo^{\triv_{\Mod_{{\mathcal{H}_u}}}^{\Mod_{E_\ast}}}_\top \\\rTo_{\Q_{\Mod_{{\mathcal{H}_u}}}^{\Mod_{E_\ast}}} } && &  \Mod^\NN_{E_\ast} 
\end{diagram}
The right adjoints take an algebraic structure and produce a richer one by defining certain operations to be identically zero. The left adjoints are functors of indecomposables, which take an algebraic structure and produce a simpler one by forming the quotient by the image of certain operations. 

More formally, we make the following definitions at the level of objects for an $E_*$-module $M$, a Hecke module $N$, a Lie algebra $\mathfrak{g}$, and Hecke Lie algebra $\mathfrak{h}$.

\begin{enumerate} 
\item The underlying Hecke module of $\triv_{\Lie_{{\mathcal{H}_u}}}^{\Mod_{{\mathcal{H}_u}}}(N)$ is $N$, and the Lie bracket is the zero map. The underlying $E_*$-module of $\Q_{\Lie_{{\mathcal{H}_u}}}^{\Mod_{{\mathcal{H}_u}}}(\mathfrak{h})$ is the quotient of $\U_{\Mod_{E_\ast}}^{\Lie_{\mathcal{H}_u}}(\mathfrak{h})$ by the submodule generated by the elements of the form $\alpha([x_1,[x_2,[ \ldots , [x_{n-1},x_n]\ldots ]]])$ with $\alpha\in {\mathcal{H}_u^{\Lie}}$, $n>1$, and $x_1,\ldots,x_n \in \mathfrak{g}$, and the Hecke operations descend to the quotient.\\

\item The underlying graded abelian group of ${\triv_{\Mod_{{\mathcal{H}_u}}}^{\Mod_{E_\ast}}}(M)$ is that of $M$, and the Hecke module structure is defined by setting \[ \alpha (x) = \begin{cases}\ \  \alpha \cdot x & \mbox{if }  \alpha \in ({\mathcal{H}_u^{\Lie}})_i^j[1]\cong E_{j-i}
\\ \ \ 0 & \mbox{if } w>1  \end{cases}\]
for $x\in {M}_i$ and $\alpha \in ({\mathcal{H}_u^{\Lie}})_i^j(w)$. The $E_*$-module $\Q_{\Mod_{{\mathcal{H}_u}}}^{\Mod_{E_\ast}}(N)$ is the quotient of $\U_{\Mod_{E_\ast}}^{\Mod_{\mathcal{H}_u}}(N)$ by the submodule generated by the elements of the form $\alpha(x)$ with $ \alpha \in ({\mathcal{H}_u^{\Lie}})_i^j(w)$ with $w>1$ and $x\in N_i$.\\

\item The underlying Hecke module of $\triv_{\Lie_{{\mathcal{H}_u}}}^{\Lie_{E_\ast}}(\mathfrak{g})$ is $\triv_{\Mod_{{\mathcal{H}_u}}}^{\Mod_{E_\ast}}(\U_{\Mod}^{\Lie}(\mathfrak{g}))$, the underlying Lie algebra is $\mathfrak{g}$, and the two define a Hecke Lie algebra structure. The underlying $E_*$-module of $\Q_{\Lie_{{\mathcal{H}_u}}}^{\Lie_{E_\ast}}(\mathfrak{h})$ is $\Q_{\Mod_{{\mathcal{H}_u}}}^{\Mod_{E_\ast}}(\U^{\Lie_{{\mathcal{H}_u}}}_{\Mod^{{\mathcal{H}_u}}}(\mathfrak{h}))$, and the Lie bracket descends to the quotient.
\end{enumerate}

These definitions extend to arrows in obvious ways, and it is easily checked that  the claimed adjunctions hold and that the respective diagrams of left and right adjoints commute up to unique natural isomorphism. 

We obtain a simplicial variant of our diagram of adjunctions by applying $\Fun(\Delta^{op}, -)$. These adjunctions are all Quillen, since the right adjoints each preserve fibrations and trivial fibrations by definition. \\

In particular, we obtain a\vspace{3pt} Quillen adjunction
\begin{diagram}\Q_{\Lie_{{\mathcal{H}_u}}}^{\Mod_{E_\ast}}: \sLie_{{\mathcal{H}_u}}^\NN &    &&\pile{\rTo
  \\ \perp \\  
\lTo} & &  & \sMod_{E_\ast}^\NN: \triv_{\Lie_{{\mathcal{H}_u}}}^{\Mod_{E_\ast}},\end{diagram}   
\ \\ where $\Q_{\Lie_{{\mathcal{H}_u}}}^{\Mod_{E_\ast}}=\Q_{\Lie}^{\Mod}\circ\Q_{\Lie_{{\mathcal{H}_u}}}^{\Lie_{E_\ast}}\cong\Q_{\Mod_{{\mathcal{H}_u}}}^{\Mod_{E_\ast}}\circ\Q_{\Lie_{{\mathcal{H}_u}}}^{\Mod_{{\mathcal{H}_u}}}$.\vspace{5pt}

\begin{definition}[Hecke Lie algebra homology]\label{def:hecke lie homology}
The  \emph{Hecke Lie algebra homology} of a simplicial Hecke Lie algebra $\mathfrak{g} $ is the bigraded $E_\ast$-module \[\HHLie(\mathfrak{g})=H\left(N\left(\LQ_{\Lie_{\mathcal{H}_u}}^{\Mod_{E_\ast}}(\mathfrak{g})\right)[1]\oplus E_*\right).\vspace{5pt}\] 
\end{definition}
 
\subsection{The Hecke--Chevalley--Eilenberg complex}
In this section, we introduce a small complex for computing Hecke Lie algebra homology. Using the identity $$\Q_{\Lie_{{\mathcal{H}_u}}}^{\Mod_{E_\ast}}= \Q_{\Lie_{E_\ast}}^{\Mod_{E_\ast}}\circ\Q_{\Lie_{{\mathcal{H}_u}}}^{\Lie_{E_\ast}},$$ our strategy will be to pair an understanding of the left derived functor of $\Q_{\Lie_{{\mathcal{H}_u}}}^{\Lie_{E_\ast}}$ with our earlier exploration of Lie algebra homology.

\begin{construction}[The additive resolution]\label{construction:additive resolution}
Given a Hecke Lie algebra $\mathfrak{g}$, we endow $$\Barr_\bullet (\id,\AAA,\U_{\Mod^{\mathcal{H}_u}}^{\Lie_{{\mathcal{H}_u}}}(\mathfrak{g}))$$ with the structure of a simplicial Lie algebra by defining the Lie bracket by the equation
\[\bigg[\ [\alpha_1 | \cdots | \alpha_n | x]\ , \ [\beta_1 | \cdots | \beta_n | y] \ \bigg]  = \begin{cases}  \ \ \alpha_1 \cdots \alpha_n \beta_1 \cdots  \beta_n [x,y] &\mbox{if all $\alpha_i, \beta_j$ have weight $1$} \\ 
\ \  0 & \mbox{otherwise.}  \end{cases}\] One checks that this operation is well-defined, satisfies the identities of a Lie algebra, and respects the simplicial structure maps. The same formula defines the structure of a simplicial Hecke Lie algebra on the simplicial Hecke module $\Barr_\bullet (\Free_{\Mod_{E_\ast}}^{\Mod_{{\mathcal{H}_u}}},\AAA,\U_{\Mod^{\mathcal{H}_u}}^{\Lie_{{\mathcal{H}_u}}}(\mathfrak{g}))$. \\ \noindent We denote these algebraically enhanced bar construction by $\AR(\mathfrak{g})$ and $\overline{\AR}(\mathfrak{g})$, respectively, and refer to the former as the \emph{additive resolution} of $\mathfrak{g}$. 
\end{construction}

\begin{definition}[Hecke Chevalley--Eilenberg complex]\label{def:hecke chevalley--eilenberg}
The \emph{Hecke Chevalley--Eilenberg complex} of a Hecke Lie algebra $\mathfrak{g}$ is the simplicial chain complex of weighted $E_\ast$-modules\vspace{-2pt}
\[\CE_{\mathcal{H}_u}(\mathfrak{g})=\CE\left(\AR(\mathfrak{g})\right).\vspace{-2pt}\]
If $\mathfrak{g} $ is a simplicial Hecke Lie algebra, then $\CE_{\mathcal{H}_u}(\mathfrak{g})$ is defined as the bisimplicial chain complex obtained by applying the previous construction levelwise.
\end{definition}

\begin{notation}[Gradings in $\CE_{\mathcal{H}_u}(\mathfrak{g})$]\label{quadruple}
Given a   (weighted) Hecke Lie algebra $\mathfrak{g}$, the Hecke Chevalley--Eilenberg complex $\CE_{\mathcal{H}_u}(\mathfrak{g})$ is a simplicial chain complex in the abelian category $\Mod_{E_\ast}^\NN$ of weighted graded $E_\ast$-modules. It is therefore  equipped with four different gradings, which we will list as a tuple $(i,j,r,w)$ in the following order:
\begin{enumerate} 
\item $i$ is the \textit{internal degree} (coming from the abelian category of graded $E_\ast$-modules).
\item $j$ is the  \textit{homological degree} (corresponding to the divided  power degree in the Chevalley--Eilenberg complex in \Cref{dfn:chevalley--eilenberg}, i.e. the chain complex direction)
\item $r$ is the \textit{simplicial degree} (indicating the position in the additive resolution $\AR(\mathfrak{g})$)
\item $w$ is the weight (coming from the ambient weight-grading on the category $\Mod_{E_\ast}^\NN$).
\end{enumerate}

\end{notation}
We can now state the main theorem of this section.

\begin{theorem}\label{thm:hecke chevalley--eilenberg works}
There is a natural isomorphism $\HHLie(\mathfrak{g})\cong H(\CE_{\mathcal{H}_u}(\mathfrak{g}))$ for $E_*$-projective simplicial Hecke Lie algebras $\mathfrak{g}$.
\end{theorem}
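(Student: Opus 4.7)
The plan is to reduce Hecke Lie algebra homology to ordinary Lie algebra homology by factoring the Quillen left adjoint $\Q^{\Mod_{E_\ast}}_{\Lie_{\mathcal{H}_u}} = \Q^{\Mod_{E_\ast}}_{\Lie_{E_\ast}} \circ \Q^{\Lie_{E_\ast}}_{\Lie_{\mathcal{H}_u}}$ supplied by the commuting diagram of left adjoints in \Cref{indec}. Deriving this factorization and unwinding \Cref{def:hecke lie homology} yields the natural isomorphism
\[
\HHLie(\mathfrak{g}) \;\cong\; \HLie\!\left(\LQ^{\Lie_{E_\ast}}_{\Lie_{\mathcal{H}_u}}(\mathfrak{g})\right).
\]
It will therefore suffice to realize $\LQ^{\Lie_{E_\ast}}_{\Lie_{\mathcal{H}_u}}(\mathfrak{g})$ as a levelwise $E_\ast$-flat simplicial Lie algebra weakly equivalent to $\AR(\mathfrak{g})$, since then \Cref{thm:chevalley--eilenberg works} identifies $H(\CE_{\mathcal{H}_u}(\mathfrak{g})) = H(\CE(\AR(\mathfrak{g})))$ with $\HLie(\AR(\mathfrak{g})) \cong \HHLie(\mathfrak{g})$.

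The strict identification $\Q^{\Lie_{E_\ast}}_{\Lie_{\mathcal{H}_u}}(\overline{\AR}(\mathfrak{g})) \cong \AR(\mathfrak{g})$ will follow by inspection: the functor $\Q^{\Mod_{E_\ast}}_{\Mod_{\mathcal{H}_u}}$ sends the free Hecke module $\Free^{\Mod_{\mathcal{H}_u}}_{\Mod_{E_\ast}}(N)$ back to $N$ on underlying $E_\ast$-modules, so each simplicial level of $\overline{\AR}(\mathfrak{g})$ collapses to $\AAA^{\circ n}(\U^{\Lie_{\mathcal{H}_u}}_{\Mod_{\mathcal{H}_u}}(\mathfrak{g}))$, and the Lie bracket of \Cref{construction:additive resolution} was tailored to vanish on any symbol involving a Hecke generator of weight $>1$, so it descends cleanly to give exactly the bracket on $\AR(\mathfrak{g})$. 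The augmentation $\overline{\AR}(\mathfrak{g}) \to \mathfrak{g}$ is moreover a weak equivalence in $\sLie_{\mathcal{H}_u}^\NN$, because the underlying map of simplicial Hecke modules is a bar resolution in the sense of \Cref{lem:hecke bar constructions}(1) and \Cref{prop:hecke lie model structure} detects weak equivalences of simplicial Hecke Lie algebras on underlying simplicial modules.

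The main obstacle will be upgrading this to the statement that $\Q^{\Lie_{E_\ast}}_{\Lie_{\mathcal{H}_u}}(\overline{\AR}(\mathfrak{g}))$ presents the \emph{derived} functor, since $\overline{\AR}(\mathfrak{g})$ is not manifestly cofibrant in $\sLie_{\mathcal{H}_u}^\NN$. My plan is to compare $\overline{\AR}(\mathfrak{g})$ to the standard cofibrant replacement of $\mathfrak{g}$ via a double bar construction. Setting
\[
D_{p,q} \;=\; \Barr_p\!\left(\Free^{\Lie_{\mathcal{H}_u}}_{\Mod_{E_\ast}}, \LL, \overline{\AR}_q(\mathfrak{g})\right),
\]
one obtains weak equivalences $D \to \overline{\AR}(\mathfrak{g})$ (by collapsing the $\LL$-bar via its augmentation, levelwise and hence on the diagonal) and $D \to \Barr_\bullet(\Free^{\Lie_{\mathcal{H}_u}}_{\Mod_{E_\ast}}, \LL, \mathfrak{g})$ (by using the weak equivalence $\overline{\AR}(\mathfrak{g}) \to \mathfrak{g}$ in the inner direction), whose second target is a genuine cofibrant replacement of $\mathfrak{g}$ by \Cref{lem:hecke bar constructions}(2). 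Because every level of $D$ and of $\overline{\AR}(\mathfrak{g})$ is built from free Hecke modules on $E_\ast$-projective inputs, applying $\Q^{\Lie_{E_\ast}}_{\Lie_{\mathcal{H}_u}}$ to this zigzag produces a weak equivalence $\AR(\mathfrak{g}) \simeq \LQ^{\Lie_{E_\ast}}_{\Lie_{\mathcal{H}_u}}(\mathfrak{g})$ in $\sLie_{E_\ast}^\NN$; carrying this argument through cleanly — in particular, showing that $\Q^{\Lie_{E_\ast}}_{\Lie_{\mathcal{H}_u}}$ preserves the relevant weak equivalences despite $\overline{\AR}(\mathfrak{g})$ itself not being cofibrant — is the technically delicate part of the proof.

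Finally, to invoke \Cref{thm:chevalley--eilenberg works} one must check that $\AR(\mathfrak{g})$ is levelwise $E_\ast$-flat. Each level has the form $\AAA^{\circ n}(\U^{\Lie_{\mathcal{H}_u}}_{\Mod_{\mathcal{H}_u}}(\mathfrak{g}))$, and the explicit description of $\AAA$ from \Cref{four} writes $\AAA(M)$ as a direct sum of tensors of the Hecke operation modules $({\mathcal{H}_u^{\Lie}})_i^j = \bigoplus_{a\geq 0} \Ext^a_{\Gamma^i}(E_0, E_\ast)$ with the underlying $E_\ast$-module of $M$. Since Rezk's Ext rings are finite free $E_\ast$-modules at odd primes (compare \Cref{explicit!} and the surrounding discussion), the monad $\AAA$ preserves $E_\ast$-flatness, and induction on $n$ starting from the $E_\ast$-projectivity of $\mathfrak{g}$ yields the required flatness; composing the resulting chain of isomorphisms completes the proof.
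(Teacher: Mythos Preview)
Your overall strategy --- factor $\LQ^{\Mod_{E_\ast}}_{\Lie_{\mathcal{H}_u}}$ through $\LQ^{\Lie_{E_\ast}}_{\Lie_{\mathcal{H}_u}}$, identify the latter with $\AR(\mathfrak{g})$, then invoke \Cref{thm:chevalley--eilenberg works} --- is exactly the paper's, and the strict identification $\Q^{\Lie_{E_\ast}}_{\Lie_{\mathcal{H}_u}}(\overline{\AR}(\mathfrak{g})) \cong \AR(\mathfrak{g})$ is correct. However, the double bar zigzag is an unnecessary detour: since brackets of Hecke operations vanish, the Hecke Lie structure map factors as $\LL(\mathfrak{g}) \to \AAA(\mathfrak{g}) \to \mathfrak{g}$, and this extends to a \emph{direct} map of simplicial Hecke Lie algebras $\Barr_\bullet(\Free^{\Lie_{\mathcal{H}_u}}_{\Mod_{E_\ast}}, \LL, \mathfrak{g}) \to \overline{\AR}(\mathfrak{g})$ over $\mathfrak{g}$. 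No intermediate $D$ is needed.

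More importantly, the step you yourself flag as ``technically delicate'' is not resolved by your sketch, and the double bar does not help: the levels of $\overline{\AR}(\mathfrak{g})$ are free Hecke \emph{modules}, not free Hecke \emph{Lie algebras}, so $\overline{\AR}(\mathfrak{g})$ has no reason to be cofibrant in $\sLie^\NN_{\mathcal{H}_u}$, and ``built from free Hecke modules on $E_\ast$-projective inputs'' does not license applying a left Quillen functor on that category. The paper's resolution is to descend along the identity $\U^{\Lie_{E_\ast}}_{\Mod_{E_\ast}} \circ \Q^{\Lie_{E_\ast}}_{\Lie_{\mathcal{H}_u}} \cong \Q^{\Mod_{E_\ast}}_{\Mod_{\mathcal{H}_u}} \circ \U^{\Lie_{\mathcal{H}_u}}_{\Mod_{\mathcal{H}_u}}$ and argue in $\sMod^\NN_{\mathcal{H}_u}$ instead: there, both $\U^{\Lie_{\mathcal{H}_u}}_{\Mod_{\mathcal{H}_u}}\Barr_\bullet(\Free^{\Lie_{\mathcal{H}_u}}_{\Mod_{E_\ast}}, \LL, \mathfrak{g})$ and $\Barr_\bullet(\Free^{\Mod_{\mathcal{H}_u}}_{\Mod_{E_\ast}}, \AAA, \U^{\Lie_{\mathcal{H}_u}}_{\Mod_{\mathcal{H}_u}}(\mathfrak{g}))$ are genuinely cofibrant by \Cref{lem:hecke bar constructions}(2)(3), so the left Quillen functor $\Q^{\Mod_{E_\ast}}_{\Mod_{\mathcal{H}_u}}$ preserves the weak equivalence between them. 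That is the missing idea. Your explicit flatness check on $\AR(\mathfrak{g})$ is a sensible thing to record (the paper leaves it implicit), though your citation of \Cref{explicit!} only covers weight $p$; the general case needs Koszulness of the $\Gamma$-rings.
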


The theorem follows by combining Theorem \ref{thm:chevalley--eilenberg works} with the following result.

\begin{proposition}\label{prop:additive resolution works} There is a natural weak equivalence of simplicial Lie algebras \[\LQ_{\Lie_{{\mathcal{H}_u}}}^{\Lie_{E_\ast}}(\mathfrak{g}) \simeq\AR(\mathfrak{g})\] for $E_\ast$-projective simplicial Hecke Lie algebras $\mathfrak{g}$.
\end{proposition}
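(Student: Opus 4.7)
The plan is to reduce to the case of a free Hecke Lie algebra via a bisimplicial bar resolution argument. First I would verify the key special case: suppose $\mathfrak{h} = \Free_{\Mod_{E_\ast}}^{\Lie_{{\mathcal{H}_u}}}(M)$ is a free Hecke Lie algebra on an $E_*$-module $M$. The Hecke Lie axiom $[x,\alpha y]=0$ for non-scalar $\alpha$ forces every iterated bracket in $\mathfrak{h}$ to involve only scalars at its leaves, so the underlying Hecke module is identified with the free Hecke module on the ordinary free Lie algebra: $\U_{\Mod_{\mathcal{H}_u}}^{\Lie_{\mathcal{H}_u}}(\mathfrak{h}) \cong \AAA(\Free_{\Mod_{E_*}}^{\Lie_{E_*}}(M))$. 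Hence $\AR(\mathfrak{h})$ is the $\AAA$-bar resolution of a free $\AAA$-algebra, which admits the classical extra-degeneracy simplicial contraction onto the submodule $\Free_{\Mod_{E_*}}^{\Lie_{E_*}}(M)$; this submodule is precisely $\Q_{\Lie_{{\mathcal{H}_u}}}^{\Lie_{E_\ast}}(\mathfrak{h})$, which agrees with $\LQ(\mathfrak{h})$ since $\mathfrak{h}$ is free (hence cofibrant). The prescribed Lie bracket on $\AR(\mathfrak{h})$ vanishes on exactly the part that is being collapsed (because it involves non-scalar Hecke operations) and restricts to the free Lie bracket on the surviving factor, so the contraction promotes to a weak equivalence of simplicial Lie algebras.

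For the general case, take the cofibrant bar resolution $B_{\bullet,\bullet} = \Barr_\bullet(\Free_{\Mod_{E_*}}^{\Lie_{{\mathcal{H}_u}}}, \LL, \mathfrak{g})$ of \Cref{lem:hecke bar constructions}, so that $\LQ(\mathfrak{g}) \simeq \Q(B_{\bullet,\bullet})$ after realization (or diagonalization, when $\mathfrak{g}$ is itself simplicial). Apply $\AR$ levelwise to form a bisimplicial Lie algebra, and realize in two directions. Since each $B_m$ is a free Hecke Lie algebra, the special case above gives $\AR(B_m) \simeq \Q(B_m) = \LQ(B_m)$, whence realizing over $m$ yields $|\AR(B_{\bullet,\bullet})| \simeq |\Q(B_{\bullet,\bullet})| \simeq \LQ(\mathfrak{g})$. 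On the other hand, the augmentation $B_{\bullet,\bullet} \to \mathfrak{g}$ is a weak equivalence of simplicial Hecke Lie algebras, and $\AR$ preserves weak equivalences between $E_*$-projective Hecke Lie algebras---a routine consequence of the flat invariance of iterated $\AAA$-tensor powers, in the spirit of \Cref{cor:flatsym invariance} and \Cref{lem:filtered hocolim}. Hence $|\AR(B_{\bullet,\bullet})| \simeq \AR(\mathfrak{g})$, and combining the two routes yields the desired equivalence $\AR(\mathfrak{g}) \simeq \LQ(\mathfrak{g})$.

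The main obstacle will be the free-case analysis: one must verify that the standard extra-degeneracy homotopy contracting the bar resolution of a free $\AAA$-algebra is compatible with the asymmetric Lie bracket of Construction \ref{construction:additive resolution}, which treats scalar and non-scalar Hecke operations very differently. A secondary technical point is ensuring that the bisimplicial diagonalization and invariance arguments carry through under the $E_*$-projectivity hypothesis, in particular when $\mathfrak{g}$ is genuinely simplicial so that $B_{\bullet,\bullet}$ is itself a bisimplicial object requiring a careful diagonal comparison.
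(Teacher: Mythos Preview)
Your approach is correct and genuinely different from the paper's. The paper introduces an auxiliary simplicial \emph{Hecke} Lie algebra $\overline{\AR}(\mathfrak{g}) = \Barr_\bullet(\Free_{\Mod_{E_\ast}}^{\Mod_{{\mathcal{H}_u}}},\AAA,\U(\mathfrak{g}))$ and a direct zigzag $\Barr_\bullet(\Free^{\Lie_{{\mathcal{H}_u}}},\LL,\mathfrak{g}) \to \overline{\AR}(\mathfrak{g}) \to \mathfrak{g}$ of Hecke Lie algebras; it then applies $\Q_{\Lie_{{\mathcal{H}_u}}}^{\Lie_{E_\ast}}$, using that both ends of the first arrow are cofibrant as Hecke modules (Lemma~\ref{lem:hecke bar constructions}) so that $\Q$ computes $\LQ$, and finally observes $\Q(\overline{\AR}(\mathfrak{g})) \cong \AR(\mathfrak{g})$. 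You instead follow the standard ``prove for free objects, then resolve'' template: verify $\AR(\mathfrak{h}) \simeq \Q(\mathfrak{h})$ directly for free $\mathfrak{h}$, then propagate via the $\LL$-bar resolution. The paper's route is shorter because $\overline{\AR}$ provides an intermediary sitting in the Hecke Lie world where cofibrancy does the work; your route is more elementary but requires the extra bisimplicial bookkeeping.

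Two small imprecisions are worth correcting. First, your ``main obstacle'' is slightly misidentified: you do not need the extra-degeneracy contraction itself to be a map of Lie algebras, only that the augmentation $\AR(\mathfrak{h}) \to \Q(\mathfrak{h})$ is a Lie map (which is clear from the formula in Construction~\ref{construction:additive resolution}) and a weak equivalence on underlying modules (which the extra degeneracy gives). Second, the invariance of $\AR$ under the weak equivalence $|B_\bullet| \to \mathfrak{g}$ is not really about ``flat invariance of tensor powers'' in the spirit of Corollary~\ref{cor:flatsym invariance}: the monad $\AAA$ is \emph{additive}, not polynomial. The cleanest argument is that $\Barr_\bullet(\Free^{\Lie_{{\mathcal{H}_u}}},\LL,\mathfrak{g}) \to \mathfrak{g}$ admits an extra degeneracy already at the level of underlying $E_\ast$-modules, and any additive functor (in particular each $\AAA^n$) preserves such simplicial homotopy equivalences outright.
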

\begin{proof}
Because brackets of Hecke power operations vanish, the Hecke Lie structure map of $\mathfrak{g}$ factors canonically as the composite $\LL(\mathfrak{g}) \rightarrow \AAA(\mathfrak{g}) \rightarrow \mathfrak{g}$, and these maps extend to maps $\Barr_\bullet(\Free_{\Mod_{E_\ast}}^{\Lie_{{\mathcal{H}_u}}},\LL,\mathfrak{g}) \to  \overline{\AR}(\mathfrak{g})\to\mathfrak{g}$ of simplicial Hecke Lie algebras, from which we obtain the commutative diagram
\[\xymatrix{
\LQ_{\Lie_{{\mathcal{H}_u}}}^{\Lie_{E_\ast}}\left(\Barr_\bullet(\Free_{\Mod_{E_\ast}}^{\Lie_{{\mathcal{H}_u}}},\LL,\mathfrak{g})\right)\ar[r]\ar[d]&\LQ_{\Lie_{{\mathcal{H}_u}}}^{\Lie_{E_\ast}}\left(\,\overline{\AR}(\mathfrak{g})\right)\ar[r]\ar[d]&\LQ_{\Lie_{{\mathcal{H}_u}}}^{\Lie_{E_\ast}}(\mathfrak{g})\\
\Q_{\Lie_{{\mathcal{H}_u}}}^{\Lie_{E_\ast}}\left(\Barr_\bullet(\Free_{\Mod_{E_\ast}}^{\Lie_{{\mathcal{H}_u}}},\LL,\mathfrak{g})\right)\ar[r]&\Q_{\Lie_{{\mathcal{H}_u}}}^{\Lie_{E_\ast}}\left(\,\overline{\AR}(\mathfrak{g})\right).
}\] We will argue that all of the arrows in this diagram are weak equivalences. With this claim established, the result will follow, since the map $\overline{\AR}(\mathfrak{g})\to \AR(\mathfrak{g})$ induced by the augmentation of $\AAA$ induces an isomorphism \[\Q_{\Lie_{{\mathcal{H}_u}}}^{\Lie_{E_\ast}}\left(\,\overline{\AR}(\mathfrak{g})\right)\xrightarrow{\simeq} \AR(\mathfrak{g})\] of simplicial Lie algebras.

The top row of horizontal maps and the lefthand vertical map are weak equivalences by Lemma \ref{lem:hecke bar constructions}, so it remains to verify that the weak equivalence $\Barr_\bullet(\Free_{\Mod_{E_\ast}}^{\Lie_{{\mathcal{H}_u}}},\LL, \mathfrak{g}) \xrightarrow{\sim} \overline{\AR}(\,\mathfrak{g})$ is preserved by $\Q_{\Lie_{{\mathcal{H}_u}}}^{\Lie_{E_\ast}}$, which, after applying $\U^{\Lie_{E_\ast}}_{\Mod_{E_\ast}}$ and using the isomorphism \[\\U^{\Lie_{E_\ast}}_{\Mod_{E_\ast}}\circ \Q_{\Lie_{{\mathcal{H}_u}}}^{\Lie_{E_\ast}}\cong \Q_{\Mod_{{\mathcal{H}_u}}}^{\Mod_{E_\ast}} \circ \U^{\Lie_{{\mathcal{H}_u}}}_{\Mod^{{\mathcal{H}_u}}},\] is equivalent to showing that $\Q_{\Mod_{{\mathcal{H}_u}}}^{\Mod_{E_\ast}}$ preserves the weak equivalence \[\U^{\Lie_{{\mathcal{H}_u}}}_{\Mod^{{\mathcal{H}_u}}}\left(\Barr_\bullet(\Free_{\Mod_{E_\ast}}^{\Lie_{{\mathcal{H}_u}}},\LL, \mathfrak{g})\right) \xrightarrow{\sim} \U^{\Lie_{{\mathcal{H}_u}}}_{\Mod^{{\mathcal{H}_u}}}\left(\,\overline{\AR}(\mathfrak{g})\right)\cong \Barr_\bullet\left(\Free_{\Mod_{E_\ast}}^{\Mod_{{\mathcal{H}_u}}},\AAA,  \U^{\Lie_{{\mathcal{H}_u}}}_{\Mod^{{\mathcal{H}_u}}}(\mathfrak{g})\right).\] Since $\Q_{\Mod_{{\mathcal{H}_u}}}^{\Mod_{E_\ast}}$ is left Quillen, it suffices to observe that the source and target of this weak equivalence are both cofibrant by Lemma \ref{lem:hecke bar constructions}
\end{proof}

\subsection{Spectral Lie algebras and Hecke Lie algebras}\label{section:spectral lie}

The structure of a Hecke Lie algebra is the shadow in $E$-homology cast by a much richer structure, which lifts the Lie algebra axioms from classical algebra to the world of stable homotopy. These \emph{spectral Lie algebras} are spectra equipped with an action of the spectral Lie operad, the operadic Koszul dual of the commutative operad in spectra. This operad was first studied in \cite{salvatore1998configuration} and \cite{ching2005bar}, and its algebras have been the subject of much recent study in a variety of contexts \cite{antolin2015mod,kjaer2016odd,survey,Gijsbertszeugs}.

In the context of Lubin--Tate theory, we have the following result, which was established in the first author's thesis \cite[Theorem 4.4.4.]{brantnerthesis}. We write $\Free^{\mathscr{L}_E}$ for the left adjoint to the forgetful functor on the $\infty$-category of spectral Lie algebras in $K(h)$-local $E$-module spectra.

\begin{theorem}[Brantner]\label{thm:brantner main}\ \vspace{-4pt}
\begin{enumerate} 
\item The homotopy groups of a (weighted) spectral Lie algebra in $K(h)$-local $E$-module spectra canonically form a (weighted) Hecke Lie algebra.\vspace{2pt}
\item For a flat (weighted) $E$-module spectrum $M$, the canonical map
$$\Free_{\Mod_{E_\ast}}^{\Lie_{{\mathcal{H}_u}}}(\pi_\ast(M))\rightarrow \pi_\ast\left(\Free^{\mathscr{L}_E}( L_{K(h)}(M))\right)$$
induces an isomorphism on completions.\vspace{-3pt}
\end{enumerate}
\end{theorem}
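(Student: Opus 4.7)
The plan is to identify the power ring of natural additive operations acting on the $E$-homology of $K(h)$-local spectral Lie algebras, and then recognise it as the Hecke power ring $\mathcal{H}_u^{\Lie}$ of Definition~\ref{Heckepowerring}. By Yoneda, weight-$w$ additive operations sending internal degree $i$ to internal degree $j$ are corepresented by maps $\Free^{\mathscr{L}_E}(\Sigma^i L_{K(h)}E)(w) \to \Free^{\mathscr{L}_E}(\Sigma^j L_{K(h)}E)(w)$ of $K(h)$-local $E$-modules. The starting point is therefore the splitting $\Free^{\mathscr{L}_E}(X)(w) \simeq \mathscr{L}_E(w) \otimes_{h\Sigma_w} X^{\otimes w}$ together with the identification (due to Ching, following Salvatore and Arone--Mahowald) of $\mathscr{L}(w)$ with a suspended Spanier--Whitehead dual of the partition complex $\Pi_w$. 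This reduces the computation of weight-$w$ additive operations to an understanding of the completed $E$-cohomology of $\Pi_w$ smashed with tensor powers of spheres, suitably equivariantly.

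The second step is the key technical input. After $K(h)$-localisation, the partition complex $\Pi_{p^a}$ carries a cellular filtration whose subquotients are controlled by Strickland's computation of $E^\ast(B\Sigma_{p^k})$ and of the corresponding transfer ideals. The spectral sequence associated to this filtration can be shown to collapse so as to identify the weight-$p^a$ operations of bidegree $(i,j)$ with $\Ext^a_{\Gamma^i}(E_0, E_{-i+j+a})$, and the Strickland transfer argument simultaneously forces vanishing at non-prime-power weights. Tracking how operadic composition in $\mathscr{L}$ interacts with the Koszul structure relating $\mathscr{L}$ and $\Comm$ then matches the induced composition law with the one prescribed in Definition~\ref{Heckepowerring} (suspension, twisting, and Yoneda product). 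This establishes part (1): the Lie bracket is extracted as the arity-$2$ operation, the Hecke operations come from the $K(h)$-local $\EE_\infty$-structure on the free functor, and the vanishing $[x, \alpha(y)] = 0$ for operations of weight $>1$ reflects the fact that any such bracket factors through a transfer from a Young subgroup, which kills the relevant operation classes.

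For part (2), I would use Lazard's theorem (Theorem~\ref{thm:lazard}) together with the fact that both sides preserve filtered colimits weight-by-weight to reduce to the case $M = \bigoplus_{i \in I} \Sigma^{n_i} E$ of a finite direct sum of sphere-suspensions. The operadic splitting makes the weight-$w$ component of the right-hand side into a sum over $\Sigma_w$-orbits of sequences in $I^w$ of the universal single-sphere cases, and the free Hecke Lie algebra on the left admits the same combinatorial decomposition via a standard Hall-basis style normal form. The universal case $M = \Sigma^i E$ is exactly what was computed in paragraph~two: the left-hand side is by definition the free Hecke Lie algebra on a single generator, the comparison map sends this generator to the Hurewicz image of the fundamental class, surjectivity is built into the corepresentability setup, and injectivity is the completeness of the power-ring identification.

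The main obstacle, and the heart of the original argument in \cite[Theorem 4.4.4]{brantnerthesis}, is the identification in the second paragraph of $E_\ast$-operations on spectral Lie algebras with $\Ext^\ast_{\Gamma^i}$-groups. This requires a delicate analysis of bar/cobar resolutions relating $\mathscr{L}$ and $\Comm$ after $K(h)$-localisation, combined with a description of the completed $E$-(co)homology of $B\Sigma_{p^a}$ refined enough to extract not only the additive groups but also the Yoneda composition, the suspension maps, and the twisting isomorphisms in a compatible way. Everything else --- the operadic splitting, the formal corepresentability, and the reduction of part (2) to the universal case --- is comparatively routine bookkeeping once this identification is in hand.
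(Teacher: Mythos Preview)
The paper does not prove this theorem. It is stated as a citation of the first author's thesis \cite[Theorem 4.4.4]{brantnerthesis}, with the only additional remark being that the weighted variant ``follows immediately from the argument of \cite[Theorem 4.4.4]{brantnerthesis} by formally recording weights.'' There is therefore nothing in the paper to compare your proposal against: the result is imported as a black box, and the paper's subsequent arguments (Corollary~\ref{finitefree}, Proposition~\ref{prop:hecke desuspension}, etc.) build on top of it.

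Your sketch is a plausible high-level outline of the thesis argument --- the Koszul duality between $\mathscr{L}$ and $\Comm$, the role of the partition complex, Strickland's description of $E^0(B\Sigma_{p^a})/(\text{transfers})$, and the resulting identification of operations with $\Ext$ over $\Gamma^i$ are indeed the ingredients one would expect, and you correctly flag that the hard part is the collapse/identification step rather than the formal Yoneda bookkeeping. But since the paper itself offers no proof, the appropriate response here is simply to cite \cite{brantnerthesis} as the paper does, rather than to attempt an independent argument. If you want to include a sketch for the reader's benefit, it should be clearly labelled as such and not as a proof.
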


We pause to explain the meaning of the weighted variant of the theorem, which follows immediately from the argument of \cite[Theorem 4.4.4.]{brantnerthesis} by formally recording weights. The $K(h)$-local Lie operad acts on weighted $K(h)$-local $E$-module spectra (i.e. elements of the functor category $\Fun(\NN, \Mod_E^\wedge)$ with Day convolution) by placing the entire operad in weight $0$. 

At the level of homotopy groups, the resulting \emph{weighted $K(h)$-local Lie algebras} are naturally equipped with the following asserted in  Theorem \ref{thm:brantner main} (1):
 \[[-,-]: \pi_i(\mathfrak{g}(a)) \times \pi_j(\mathfrak{g}(b))\rightarrow \pi_{i+j}(\mathfrak{g}(a+b))\]\[( {\mathcal{H}_u^{\Lie}})_{i}^{j}(b)  \times \pi_i (\mathfrak{g}(a)) \rightarrow \pi_j (\mathfrak{g}(b\cdot a))\] 

This \emph{weighted Hecke Lie algebra} structure is controlled by an extension of the monad $\LL$ to weighted $E_*$-modules. For this, we endow the free 
Hecke Lie algebra  $\LL(M)$ on a weighted graded $E_\ast$-module $M$ with a weight-grading. It has the property that if 
$\{ m_s\in M_{i_s}(a_s)\}_{s=1}^k$ are homogeneous, 
$w$ is a Lyndon word in $k$ letters involving the $i^{th}$ letter $n_i$ times, 
and $\alpha \in ( {\mathcal{H}_u^{\Lie}})_{\sum_s i_s}^{j}(b)$ is a Hecke operation of weight $b$, then 
$\alpha (w(m_1,\ldots,m_k))$ has weight $b \cdot (\sum_s m_s a_s)$.

The  free $K(h)$-local Lie algebra on a finite and free $E$-module  spectrum is usually neither finite nor free due to the presence of completed  infinite direct sums. Instead, it is only \textit{completed-free}, i.e. the $K(h)$-localisation of a free $E$-module spectrum. This is reflected by the algebraic completion on homotopy groups appearing in part $(2)$ of Theorem \ref{thm:brantner main}.

The weighted context allows us to bypass this complication by restricting attention to the full subcategory $\Fun(\NN,\Mod_{E}^{\wedge})^{\pff,>0}$ of graded $E$-module spectra which are  pointwise finite  free  in each weight and \textit{concentrated in positive weights}.  Using the explicit description of free Hecke Lie algebras provided in 
\cite[Section 4.4.2]{brantnerthesis}, we see that the free Hecke Lie algebra on some $M\in\Fun(\NN,\Mod_{E}^{\wedge})^{\pff,>0}$  is again finite and free in each weight. Hence, we can deduce the 
following simplification of Theorem \ref{thm:brantner main}:
\begin{corollary}\label{finitefree}
The monad $\Free^{\mathscr{L}}_E$ preserves  $\Fun(\NN,\Mod_{E}^{\wedge})^{\pff,>0}$, and the canonical map $$\Free_{\Mod_{E_\ast}}^{\Lie_{{\mathcal{H}_u}}}(\pi_\ast(M))\rightarrow \pi_\ast\left(\Free^{\mathscr{L}_E}( L_{K(h)}(M))\right)$$ is an isomorphism for any $M\in\Fun(\NN,\Mod_{E}^{\wedge})^{\pff,>0}$.
\end{corollary}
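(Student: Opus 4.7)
\emph{Proof plan.} The whole corollary flows from turning the completed isomorphism of \Cref{thm:brantner main}(2) into an uncompleted one by exploiting the positive-weight hypothesis. The central mechanism is an arity bound: for any $M\in \Fun(\NN,\Mod_{E}^{\wedge})^{\pff,>0}$ and any weight $w\geq 1$, the weight-$w$ component of $M^{\otimes k}$ is the \emph{finite} direct sum $\bigoplus M(w_1)\otimes\cdots\otimes M(w_k)$ over compositions $w=w_1+\cdots+w_k$ with each $w_i\geq 1$, and it vanishes outright for $k>w$.

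I would first establish that $\Free^{\mathscr{L}_E}$ sends $\Fun(\NN,\Mod_{E}^{\wedge})^{\pff,>0}$ into itself. Using the arity decomposition $\Free^{\mathscr{L}_E}(N)\simeq L_{K(h)}\bigoplus_{k\geq 1}\mathscr{L}(k)\otimes_{h\Sigma_k} N^{\otimes k}$, the arity bound restricts the weight-$w$ part to a finite sum (over $k\leq w$) of $K(h)$-localised homotopy orbits of the shape $(\mathscr{L}(k)\otimes M(w_1)\otimes\cdots\otimes M(w_k))_{h\Sigma_{\mathbf w}}$, where $\Sigma_{\mathbf w}\subseteq\Sigma_k$ is the Young stabiliser of the composition $\mathbf w$. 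Since each $M(w_i)$ is finite free over $E$ and $\mathscr{L}(k)$ is a finite spectrum, such homotopy orbits are finite free over $E$ after $K(h)$-localisation, by Strickland's theory of the $E$-(co)homology of symmetric groups recorded in \cite{strickland1998morava} (compare \Cref{explicit!}); in particular, they are already $K(h)$-local, so the outer localisation is inert, and summing over the finitely many $k\leq w$ preserves finite freeness.

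For the comparison map, applying the same arity bound on the algebraic side via the explicit Lyndon-word description of the free Hecke Lie algebra in \cite[Section 4.4.2]{brantnerthesis} (a Lyndon word of length $k$ in generators of positive weight contributes to weight $\geq k$) shows that $\Free_{\Mod_{E_\ast}}^{\Lie_{{\mathcal{H}_u}}}(\pi_*M)$ is also finite free over $E_*$ in each weight. Both sides of the canonical comparison are therefore weight-by-weight finite free $E_*$-modules; as such, they coincide with their $E_*^\wedge$-completions, so \Cref{thm:brantner main}(2) promotes the completed comparison to an honest isomorphism.

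The main obstacle is the finite-freeness of the $K(h)$-localised Young-subgroup homotopy orbits at each fixed arity; I would address this by restricting along $\Sigma_{\mathbf w}\subseteq \Sigma_k$ and appealing to Strickland's finite-freeness of $E^\ast(B\Sigma_k)$, which transfers to the orbit spectra after smashing with the finite spectra $\mathscr{L}(k)$. Once this structural input is in hand, the rest of the argument is pure bookkeeping by weight.
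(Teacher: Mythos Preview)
Your proposal is correct and follows the same overall strategy as the paper: exploit the positive-weight hypothesis to bound arities, establish finite freeness in each weight, and then observe that finite free modules coincide with their completions so that \Cref{thm:brantner main}(2) upgrades to an uncompleted isomorphism.

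The one genuine difference is in the order of logic. You establish finite freeness on \emph{both} sides independently: on the algebraic side via the Lyndon-word description from \cite[Section~4.4.2]{brantnerthesis}, and on the spectral side via a direct Strickland-based analysis of the Young-subgroup orbits. The paper is more economical: it only checks finite freeness on the \emph{algebraic} side, and then uses the completed isomorphism of \Cref{thm:brantner main}(2) to transfer this to the spectral side. In particular, the paper never needs to analyze $(\mathscr{L}(k)\otimes M(w_1)\otimes\cdots\otimes M(w_k))_{h\Sigma_{\mathbf w}}$ directly. This is worth noting because your Strickland-based step, while morally right, needs more than you indicate: Strickland gives finite freeness of $E^\wedge_*(B\Sigma_k)$, but you must still pass through the nontrivial $\Sigma_{\mathbf w}$-action on the finite spectrum $\mathscr{L}(k)$ (not just the permutation action on the tensor factors), and ``transfers to the orbit spectra after smashing'' is doing real work. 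The paper's route sidesteps this entirely, which is why it can state the corollary with essentially no proof beyond the sentence preceding it.
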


\subsection{Looping}
The $\infty$-category of weighted $K(h)$-local Lie algebras admits small limits and the forgetful functor creates them; in particular, we can form the loop object $\Omega \mathfrak{g} \simeq 0 \times_{\mathfrak{g}} 0$ on such a Lie algebra $\mathfrak{g}$, the underlying spectrum of which is simply the desuspension of the underlying spectrum of $\mathfrak{g}$. 

Our next result describes the Hecke Lie structure on $\pi_\ast(\Omega^n \mathfrak{g})$ in terms of that on $\pi_\ast(\mathfrak{g})$.

\begin{proposition}[Hecke operations on looped Lie algebras]\label{prop:hecke desuspension}
For $\mathfrak{g}$ a weighted $K(h)$-local Lie algebra and $n>0$, the Hecke Lie algebra $\pi_\ast(\Omega^n \mathfrak{g})$ has vanishing Lie bracket, and the Hecke module structure is given by the following commutative diagram: 
\begin{diagram}({\mathcal{H}_u^{\Lie}})_i^j(w)\times \pi_i(\Omega^n \mathfrak{g}(a))   & & \rTo & & \pi_j(\Omega^n \mathfrak{g}(w\cdot a))  \\
\dTo^{\Susp^n \times (\cong)} & && & \dTo_\cong  \\
({\mathcal{H}_u^{\Lie}})_{i+n}^{j+n}(w) \times   \pi_{i+n}(\mathfrak{g}(a))  & & \rTo & & \pi_{j+n}(\mathfrak{g}(w\cdot a)).
\end{diagram} 
For $w=p^s$, the morphism $ ({\mathcal{H}_u^{\Lie}})_i^j(p^s) \xrightarrow{\Susp^n}({\mathcal{H}_u^{\Lie}})_{i+n}^{j+n}(p^s)$ is given by the morphism  $$\Ext^s_{\Gamma^{i}}({E}_0,{E}_{-i+j+s} ) \rightarrow  \Ext^s_{\Gamma^{i+n}}({E}_0,{E}_{-i+j+s} )$$ induced by the suspension morphism $\Gamma^{i+n} \rightarrow \Gamma^i$ linking Rezk's $\Gamma$-rings. 
\end{proposition}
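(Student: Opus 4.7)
The plan is to exploit \Cref{thm:brantner main}, which identifies the Hecke Lie algebra structure on $\pi_*(\mathfrak{h})$ for any $K(h)$-local spectral Lie algebra $\mathfrak{h}$ with the operations coming from the spectral Lie operad $\mathcal{L}$ and Rezk's rings $\Gamma^i$; the proof then reduces to tracking each piece of this structure through the loop functor $\Omega^n$ applied at the level of spectral Lie algebras. I would begin with the Hecke module operations. By the definition of $\mathcal{H}_u^{\Lie}$ (cf.\ \Cref{Heckepowerring}), an element $\alpha \in (\mathcal{H}_u^{\Lie})_i^j(p^s)$ acts on $\pi_i(\mathfrak{h}(a))$ through a sequence of Yoneda products in Rezk's rings, twisting, and $\Sigma_{p^s}$-orbit constructions on the underlying spectrum. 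Under the natural identification $\pi_i(\Omega^n \mathfrak{h}(a)) \cong \pi_{i+n}(\mathfrak{h}(a))$, the action of $\alpha$ on the loop object should correspond to the action of the iterated suspension $\Susp^n(\alpha) \in (\mathcal{H}_u^{\Lie})_{i+n}^{j+n}(p^s)$ on $\mathfrak{h}$ itself. I would justify this by combining the construction of $\Susp$ (the ``diagonal-then-orbit'' procedure appearing immediately before \Cref{suspensiondiagram}) with the naturality of the Thom isomorphism; the $n$-fold suspension of the defining diagrams produces precisely the desired shift by $n$ in both the internal degree and the indexing of Rezk's $\Gamma$.

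For the vanishing of the Lie bracket on $\pi_*(\Omega^n \mathfrak{h})$ when $n > 0$, I would argue as follows. The bracket on $\pi_*(\mathfrak{h})$ is induced by the binary operad structure map $\mathcal{L}(2) \wedge_{\Sigma_2} \mathfrak{h}^{\wedge 2} \to \mathfrak{h}$. Under the canonical equivalence $(\Omega^n \mathfrak{h})^{\wedge 2} \simeq \Omega^{2n}(\mathfrak{h}^{\wedge 2})$, the naturally induced binary operation lands in $\Omega^{2n}\mathfrak{h}$ rather than $\Omega^n \mathfrak{h}$. The Lie bracket on $\Omega^n \mathfrak{h}$ is the unique one making the defining limit cone into a cone of spectral Lie algebras; this forces any correction between the two to factor through a map of spectra controlled by $\pi_{-n}$ of a connective spectrum, which vanishes. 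Consequently the induced bracket is null on homotopy groups whenever $n > 0$, so $\pi_*(\Omega^n \mathfrak{h})$ inherits only the Hecke module structure described in the first part.

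The main obstacle will be making the bracket-vanishing argument precise, since the bracket on $\Omega^n \mathfrak{h}$ is only characterised a priori through the universal property of the limit in the $\infty$-category of spectral Lie algebras, not through an explicit formula. I would address this either by presenting $\Omega^n \mathfrak{h}$ as an iterated homotopy fibre (so that the bracket descends from $\mathfrak{h}$ in a computable way) or through a variant of the Eckmann--Hilton argument: the bracket on a loop object, tracked through the swap $(\Omega^n \mathfrak{h})^{\wedge 2} \simeq \Omega^{2n}(\mathfrak{h}^{\wedge 2})$, fails to land in the correct degree, and thus the connectivity of the sphere spectrum forces it to be trivial. Once this is in hand, the commutative diagram in the statement follows immediately from the naturality of $\Susp$ and the identification of the $n$-fold suspension $\Gamma^{i+n} \to \Gamma^i$ on Ext groups.
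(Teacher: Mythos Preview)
Your Hecke-operation argument and the paper's run in the same spirit, but the paper isolates a single concrete mechanism that your proposal does not: the structure map of the loop object factors as
\[
\Free^{\mathscr{L}_E}(\Omega^n\mathfrak{g})\longrightarrow\Omega^n\bigl(\Free^{\mathscr{L}_E}(\mathfrak{g})\bigr)\longrightarrow\Omega^n\mathfrak{g},
\]
simply because $\Omega^n\mathfrak{g}$ is a limit in spectral Lie algebras. Given a universal operation $\alpha\in\pi_j\bigl(\Free^{\mathscr{L}_E}(\Sigma^iE)\bigr)$ and a class $x\colon\Sigma^iE\to\Omega^n\mathfrak{g}$, the paper pushes $\alpha$ through this factorisation and adjoints once; the resulting map $\Sigma^{j+n}E\to\mathfrak{g}$ is visibly the composite of the canonical map $\Sigma^n\Free^{\mathscr{L}_E}(\Sigma^iE)\to\Free^{\mathscr{L}_E}(\Sigma^{i+n}E)$ (which \emph{is} $\Susp^n(\alpha)$ by definition) with the action on $\mathfrak{g}$. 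No unpacking of Yoneda products, twisting, or Thom classes is needed at this stage; the identification of $\Susp^n$ with the map on $\Ext$ groups induced by $\Gamma^{i+n}\to\Gamma^i$ is a separate statement quoted from \cite{brantnerthesis}.

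Your bracket-vanishing argument has a genuine gap. Saying that ``the naturally induced binary operation lands in $\Omega^{2n}\mathfrak{h}$'' is not a statement about the actual bracket on $\Omega^n\mathfrak{h}$, which by construction \emph{does} land in $\Omega^n\mathfrak{h}$; and the sentence ``any correction factors through $\pi_{-n}$ of a connective spectrum'' is not attached to any map you have actually produced. The missing step is again the factorisation above. In arity two it specialises (after writing $\Omega^n(-)=(-)^{S^n}$) to
\[
(\Omega^n\mathfrak{g})^{\wedge 2}\ \simeq\ (\mathfrak{g}^{\wedge 2})^{S^{2n}}\ \xrightarrow{\ \delta^*\ }\ (\mathfrak{g}^{\wedge 2})^{S^n}\ \longrightarrow\ \Omega^n\mathfrak{g},
\]
with the middle arrow induced by the reduced diagonal $\delta\colon S^n\to S^{2n}$, which is null for $n>0$. (This same diagonal mechanism is later spelled out, for a general cotensor $\mathfrak{g}^Z$, in the proof of \Cref{prop:bracket}.) Your connectivity and Eckmann--Hilton remarks are morally pointing at $\pi_n(S^{2n})=0$, but without first establishing that the bracket factors through $\delta^*$ there is nothing for that vanishing to act on. Once the factorisation is in hand, both parts of the proposition follow uniformly.
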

\begin{proof}
Let $\alpha  \in \pi_j(\Free^{\mathscr{L}_E}(\Sigma^iE))$ be a universal operation from degree $i$ to degree $j$ and suppose that $x\in \pi_i(\Omega^n \mathfrak{g})$ is represented by a map $\Sigma^i E \rightarrow \Omega^n \mathfrak{g}$.

The class $\alpha(x)$ is then represented by the following diagram:
\begin{diagram}
\Sigma^jE & \rTo^{\alpha} &  \Free^{\mathscr{L}_E}(\Sigma^i E)& & & & \rTo^{\overline{x}} &     &  \Omega^n(\mathfrak{g}) \\
                 &         &                                                    \dTo       &    & &   &  &    \ruTo(6,2)  &  \uTo \\
                &         &                                               \Free^{\mathscr{L}_E}( \Omega^n(\mathfrak{g}))    & &     &  \rDashto         &  &   &  \Omega^n(\Free^{\mathscr{L}_E}( \mathfrak{g})) 
\end{diagram}
Here we have factored the structure map of the Lie algebra $\Omega^n(\mathfrak{g})$ through the canonical map $      \Free^{\mathscr{L}_E}( \Omega^n(\mathfrak{g}))       \rightarrow  \Omega^n(\Free^{\mathscr{L}_E}( \mathfrak{g})) $ and the structure map of $\mathfrak{g}$; a more detailed construction of this map is given in the proof of Proposition \ref{prop:bracket}.

The map $\alpha(x): \Sigma^jE \rightarrow \Omega^n (\mathfrak{g})$ can therefore be represented by shifting the composite from top left to bottom right in the following diagram down by $n$: 
\begin{diagram}
\Sigma^{j+n}E & \rTo^{\Sigma^n(\alpha)} & \Sigma^n \Free^{\mathscr{L}_E}(\Sigma^i E)& \rTo &  \Sigma^n  \Free^{\mathscr{L}_E}( \Omega^n(\mathfrak{g}))    \\ 
&    \rdDashto(2,3)&  \dTo & & \dTo   &   \rdTo(2,2)  \\ 
  \\ 
&  &     \Free^{\mathscr{L}_E}(\Sigma^{i+n} E) &  \rTo &       \Free^{\mathscr{L}_E}( \mathfrak{g})   & \rTo &  \mathfrak{g}
\end{diagram}
The first claim of the proposition now follows by observing that the dotted diagonal arrow picks out $\Susp^n(\alpha)$.
The second claim is established in Theorem 4.2.19 of \cite{brantnerthesis}.\end{proof}
Motivated by \Cref{prop:hecke desuspension}, we introduce the following definition:
\begin{definition}[Looping Hecke Modules] \label{loopingmodules}
Given a Hecke module $M$, we write $\Omega M$ for the Hecke module with $\U^{\Mod^{\mathcal{H}_u}}_{\Mod}(\Omega M)=\Sigma^{-1} M$ and Hecke operations defined by the formula of Proposition \ref{prop:hecke desuspension}. This construction extends to an endofunctor of Hecke modules. \end{definition}

In order to understand this endofunctor, we will use that Hecke operations of weight $p$ have a particularly simple description.  By the Koszulness of the ring $\Gamma^{i}$ \cite{rezk2017rings}, we have 
$$({\mathcal{H}_u^{\Lie}})_i^{i-1}(p) = 
\Ext^1_{\Gamma^i}( {E}_0, {E}_{0})\cong \Gamma^i(p)^\vee.\vspace{3pt}$$ Combining this isomorphism with the $E_0$-linear dual of Proposition \ref{suspensiondiagram} and invoking Proposition \ref{prop:hecke desuspension}, we deduce the following:\vspace{3pt}

\begin{proposition}[Suspending via the Euler class]\label{Gamma0}
There is a commutative diagram 
\begin{diagram}
\ldots & \rTo^{\cong}&&  \Gamma^0(p)^\vee& &    \rInto ^{(e\cdot -) } &  &\Gamma^0(p)^\vee& &\rTo^{\cong}  & & \Gamma^0(p)^\vee& &   \rInto^{(e\cdot -) }   & & \Gamma^0(p)^\vee&&    \rTo^{\cong} &  \ldots \\
 && &  \dTo^\cong    && &  &  \dTo^\cong  & & & & \dTo^\cong    & && &  \dTo^\cong    \\
\ldots &  \rTo^\cong   & & \Gamma^{-1}(p)^\vee&& \rInto & &\Gamma^0(p)^\vee&&   \rTo^\cong & & \Gamma^{ 1}(p)^\vee&&\rInto    & & \Gamma^{2}(p)^\vee&&     \rTo^\cong&  \ldots \\
 && &  \dTo^\cong    && &  &  \dTo^\cong  & & & & \dTo^\cong    & && &  \dTo^\cong    \\
 \ldots &  \rTo^\cong   & & ({\mathcal{H}_u^{\Lie}})_{-1}^{-2}(p) && \rInto & &({\mathcal{H}_u^{\Lie}})_0^{-1}(p) &&   \rTo^\cong & &({\mathcal{H}_u^{\Lie}})_1^{0}(p) &&\rInto    & & ({\mathcal{H}_u^{\Lie}})_2^{1}(p) &&     \rTo^\cong&  \ldots \\
\end{diagram}\ \vspace{3pt}

\end{proposition}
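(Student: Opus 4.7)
The plan is to build the diagram one row at a time, deducing each from earlier results by dualisation and by the Koszul identification of Hecke operations of weight $p$.

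First, I would produce the top two rows. The diagram in \Cref{suspensiondiagram} identifies the suspension map $\Susp\colon \Gamma^{i+1}(w)\to\Gamma^i(w)$ with multiplication by the Euler class $e\in E^0(B\Sigma_{w+})$ under the $2$-periodicity induced by the Thom isomorphism. At $w=p$, Strickland's theorem gives that each $\Gamma^i(p)$ is a finitely generated free $E_0$-module, so we may $E_0$-linearly dualise the entire diagram without loss of information. The dualised suspension $\Gamma^i(p)^\vee \to \Gamma^{i+1}(p)^\vee$ is then $(e\cdot -)^{\vee\vee} = (e\cdot -)$ under the natural double-dual identification, which produces the top two rows precisely as claimed (with the injections and isomorphisms swapping roles relative to the undualised diagram, as they should).

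Next, I would connect the middle row to the bottom row via Koszulness. By \cite{rezk2012rings} the rings $\Gamma^i$ are Koszul, and so there is a canonical isomorphism $({\mathcal{H}_u^{\Lie}})_i^{i-1}(p) = \Ext^1_{\Gamma^i}(E_0,E_0)\cong \Gamma^i(p)^\vee$, as noted in the paragraph immediately preceding the statement. Twisting by $E_*$ extends this to isomorphisms $({\mathcal{H}_u^{\Lie}})_i^{j}(p)\cong \Gamma^i(p)^\vee$ for all $i$ and the appropriate $j$ shown in the bottom row. The vertical maps in the bottom half of the diagram are then the suspension homomorphisms on Hecke operations described in \Cref{prop:hecke desuspension}, where $\Susp^1$ on Hecke operations was identified with the map on $\Ext^1$ induced by Rezk's suspension $\Gamma^{i+1}\to\Gamma^i$. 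Functoriality of $\Ext^1$ then forces the resulting square to commute, since both the top and bottom suspensions come from the same map of coefficient rings.

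The final step is to assemble these into a single commutative diagram, which amounts to a verification that the three natural constructions at play — Thom-isomorphism-induced $2$-periodicity, $E_0$-linear dualisation, and passage from $\Gamma^i(p)^\vee$ to $\Ext^1_{\Gamma^i}(E_0,E_0)$ — are all strictly compatible with the suspension maps. Each compatibility has already been recorded: the first is the content of \Cref{suspensiondiagram}; the second is automatic given freeness; the third is Koszulness together with the naturality clause of \Cref{prop:hecke desuspension}. The only place where care is needed is the orientation of arrows in dualising the top row, since injections dualise to surjections only up to the sign conventions built into the Thom/periodicity identifications; here I expect no real obstacle once one writes out the identifications, but this bookkeeping with the Euler class (and the pattern of isomorphisms versus injections in the diagram) is the one step I would pay explicit attention to.
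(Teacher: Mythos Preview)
Your proposal is correct and follows essentially the same approach as the paper: the paper's argument is the single sentence preceding the proposition, namely combining the Koszulness isomorphism $({\mathcal{H}_u^{\Lie}})_i^{i-1}(p)\cong\Gamma^i(p)^\vee$ with the $E_0$-linear dual of \Cref{suspensiondiagram} and the identification of the Hecke suspension in \Cref{prop:hecke desuspension}, and you have unpacked exactly these three ingredients. One minor wording slip: what you call ``the vertical maps in the bottom half'' are the Koszul isomorphisms, while the suspension homomorphisms are the horizontal maps---but your subsequent sentence makes clear you understand this.
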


\begin{corollary}
For $\mathfrak{g}$ a weighted $K(h)$-local Lie algebra and $n>0$, the following diagram commutes: 
\begin{diagram}\label{cor:loop operations}
 & & \Gamma^0(p)^\vee \times \pi_0(\Omega^i(\mathfrak{g}(a))) &  \rTo  & ({\mathcal{H}_u^{\Lie}})_0^{-1}(p) \times \pi_0(\Omega^i(\mathfrak{g}(a))) &  \rTo  &  \pi_{-1}(\Omega^i(\mathfrak{g}(p\cdot a))) \\
 & \ldTo^{\left(e^{\lfloor \frac{i}{2}\rfloor} \cdot -\right)} &\dTo & &  \dTo^{\Susp\otimes (\cong)} & &   \dTo_\cong \\
 \Gamma^{0}(p)^\vee \times   \pi_i(\mathfrak{g}(a)) & \rTo_{} &\Gamma^{i}(p)^\vee \times   \pi_i(\mathfrak{g}(a)) & \rTo & ({\mathcal{H}_u^{\Lie}})_i^{i-1}(p)  \times \pi_i(R(a)) & \rTo & \pi_{i-1}(\mathfrak{g}(p\cdot a))\\ 
\end{diagram} \ \vspace{-8pt} \\ ${}_{ \ }$
\ \ \ \ \ \ \ \ \ \ \ \ \ \ \ \  $^{(\mathrm{Thom})^\vee \times \id}$
  \ \vspace{3pt}
\end{corollary}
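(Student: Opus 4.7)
The plan is to deduce this corollary by directly composing the two immediately preceding results. First, I would invoke \Cref{prop:hecke desuspension}, applied with the exponent $n=i$, to rewrite the action of a class in $({\mathcal{H}_u^{\Lie}})_0^{-1}(p)$ on $\pi_0(\Omega^i \mathfrak{g}(a))$ as the action of its $i$-fold suspension in $({\mathcal{H}_u^{\Lie}})_i^{i-1}(p)$ on $\pi_i(\mathfrak{g}(a))$. This establishes commutativity of the right-hand square of the target diagram.

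Next I would identify $({\mathcal{H}_u^{\Lie}})_i^{i-1}(p) \cong \Ext^1_{\Gamma^i}(E_0,E_0) \cong \Gamma^i(p)^\vee$, using the Koszulness of $\Gamma^i$ (as explained just before \Cref{Gamma0}), and similarly for $i=0$. Under these identifications, the suspension homomorphism $\Susp^i\colon ({\mathcal{H}_u^{\Lie}})_0^{-1}(p) \to ({\mathcal{H}_u^{\Lie}})_i^{i-1}(p)$ of \Cref{prop:hecke desuspension} is precisely the $i$-fold composite of the suspension maps $\Gamma^{j-1}(p)^\vee \to \Gamma^j(p)^\vee$ appearing on the bottom row of the diagram in \Cref{Gamma0}.

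Then I would apply \Cref{Gamma0} to compute this composite. Reading off its diagram, the $i$ consecutive suspension arrows $\Gamma^0(p)^\vee \to \Gamma^1(p)^\vee \to \cdots \to \Gamma^i(p)^\vee$ are alternately isomorphisms and multiplications by $e$ (after the Thom identifications in the top row). Specifically, passing from $\Gamma^{2k}(p)^\vee$ to $\Gamma^{2k+1}(p)^\vee$ is an isomorphism, whereas passing from $\Gamma^{2k+1}(p)^\vee$ to $\Gamma^{2k+2}(p)^\vee$ is multiplication by $e$. A short count shows that exactly $\lfloor i/2 \rfloor$ such $e$-multiplications are incurred along the way, regardless of the parity of $i$, which produces the diagonal map $(e^{\lfloor i/2 \rfloor}\cdot -)$ that makes the left triangle commute. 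The Thom isomorphism $(\mathrm{Thom})^\vee$ on the bottom-left arrow is precisely the identification used in the top row of \Cref{Gamma0}'s diagram (and in the proof of \Cref{suspensiondiagram}) to recognize each $\Gamma^j(p)^\vee$ as $\Gamma^0(p)^\vee$.

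I do not expect any serious obstacle here: once the relevant identifications are lined up, the corollary is essentially a bookkeeping consequence of \Cref{prop:hecke desuspension} and \Cref{Gamma0}. The only point that warrants care is verifying that the parity-dependent count of Euler-class factors really produces the uniform floor function $\lfloor i/2 \rfloor$, and that the Thom identifications employed in the top and bottom halves of \Cref{Gamma0} are compatible with those appearing in the statement of the corollary; both verifications reduce to inspecting the explicit Thom-class construction recalled in the proof of \Cref{suspensiondiagram}.
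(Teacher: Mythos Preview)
Your proposal is correct and takes essentially the same approach as the paper: the corollary is stated without proof immediately after \Cref{Gamma0}, as a direct combination of \Cref{prop:hecke desuspension} (for the right square) and \Cref{Gamma0} (for the Euler-class count in the left triangle), exactly as you outline.
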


\subsection{Delooping}

In good circumstances, maps between   looped Hecke modules can be  delooped.

\begin{proposition}[Delooping maps of Hecke modules]\label{prop:hecke delooping}
Let $f:\Omega M_1\to \Omega M_2$ be a map of Hecke modules. If $M_1$ and $M_2$ are both torsion-free, then $f=\Omega g$ for a unique map of Hecke modules $g:M_1\to M_2$.
\end{proposition}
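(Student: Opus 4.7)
Uniqueness is immediate: any $g$ with $\Omega g=f$ must equal $f$ on underlying $E_\ast$-modules after the grading shift, which pins $g$ down uniquely. For existence, let $g:M_1\to M_2$ be the $E_\ast$-linear map obtained by shifting the grading of $f$. Compatibility of $g$ with weight-$1$ Hecke operations (which reduce to scalar multiplication by $E_\ast$) is automatic, so the substance of the statement concerns operations of weight $p^s$ with $s\geq 1$.

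For any $\beta\in ({\mathcal{H}_u^{\Lie}})_i^j(p^s)$ and any $x\in (\Omega M_1)_i=(M_1)_{i+1}$, Proposition \ref{prop:hecke desuspension} identifies $\beta\cdot_{\Omega M}x$ with $\Susp(\beta)\cdot_M x$. The hypothesis that $f$ is a map of Hecke modules, $f(\beta\cdot_{\Omega M_1}x)=\beta\cdot_{\Omega M_2}f(x)$, therefore translates to $g(\Susp(\beta)\cdot x)=\Susp(\beta)\cdot g(x)$. In other words, $g$ is automatically compatible with every Hecke operation lying in the image of the suspension map $\Susp:({\mathcal{H}_u^{\Lie}})_i^j(p^s)\to ({\mathcal{H}_u^{\Lie}})_{i+1}^{j+1}(p^s)$.

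The crux is therefore to handle operations outside the image of $\Susp$, and the main step is to show that its cokernel is $p$-power torsion. For weight $p$ this is a direct consequence of Propositions \ref{Gamma0} and \ref{explicit!}: after the identification $({\mathcal{H}_u^{\Lie}})_{i+1}^{j+1}(p)\cong \Gamma^0(p)^\vee\cong E_0[e]/(f(e))$ with $f(e)=e^{(p^h-1)/(p-1)}+\cdots+p$, the suspension is alternately an isomorphism and multiplication by the Euler class $e$. Since $\Gamma^0(p)^\vee/(e)\cong E_0/(p)$, the cokernel is annihilated by $p$, so for every weight-$p$ operation $\alpha$ the operation $p\cdot\alpha$ lies in $\mathrm{Image}(\Susp)$. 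The already-established compatibility applied to $p\alpha$ yields $p\cdot(g(\alpha\cdot x)-\alpha\cdot g(x))=0$, and the $p$-torsion-freeness of $M_2$ forces $g(\alpha\cdot x)=\alpha\cdot g(x)$.

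The main obstacle is extending this cokernel analysis to higher weights $p^s$ with $s\geq 2$. There the suspension on $({\mathcal{H}_u^{\Lie}})_i^j(p^s)=\Ext^s_{\Gamma^{i}}(E_0,E_\ast)$ must be traced through Definition \ref{Heckepowerring} via the Koszul resolution of $E_0$ over $\Gamma^{i}$. The Koszulness and $p$-completeness of the rings $\Gamma^{i}$ should still imply that each such cokernel is $p$-power torsion (being built from iterated extensions of cokernels of multiplication by $e$ on $\Gamma^0(p)^\vee$), at which point the torsion-freeness argument applies uniformly to all weights and completes the proof.
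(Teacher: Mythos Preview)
Your approach is the same as the paper's: define $g$ as the shifted $E_\ast$-map, observe via Proposition~\ref{prop:hecke desuspension} that $g$ automatically commutes with every operation of the form $\Susp(\beta)$, and then use $p$-power torsion of the cokernel of $\Susp$ together with torsion-freeness of $M_2$ to finish. Your weight-$p$ argument is complete and matches the paper's.

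The only gap is that you have not actually proved the torsion-cokernel claim in weight $p^s$ for $s\geq 2$; ``should still imply'' and the heuristic about ``iterated extensions of cokernels of multiplication by $e$'' are not an argument. The paper fills this in as Proposition~\ref{Hexponent}: the suspension $({\mathcal{H}_u^{\Lie}})_i^j(p^a)\to({\mathcal{H}_u^{\Lie}})_{i+1}^{j+1}(p^a)$ has cokernel annihilated by $p^{a^2}$. The proof does use Koszulness, but not in the way you sketch. One represents a class in $\Ext^a_{\Gamma^{i+1}}(E_0,E_\ast)$ by an $E_0$-linear map $f:\Gamma^{i+1}(p)^{\otimes a}\to E_\ast$ (Koszulness gives the bar-type resolution with terms $\Gamma(p)^{\otimes\bullet}$). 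Proposition~\ref{gammaexponent} shows that for any $x\in\Gamma^{i}(p)^{\otimes a}$ one has $p^{a^2}x\in\Gamma^{i+1}(p)^{\otimes a}$ under the suspension inclusion, so $x\mapsto f(p^{a^2}x)$ defines a map $\Gamma^{i}(p)^{\otimes a}\to E_\ast$ whose restriction along $\Susp$ is $p^{a^2}f$. This exhibits $p^{a^2}[f]$ in the image of $\Susp$, and your torsion-freeness argument then applies verbatim with $N=p^{a^2}$.
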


In order to prove this result, we shall need to bound the torsion of the cokernel of suspensions. We begin by introducing 
 the following notation:
\begin{definition}[Torsion cokernel] Let   $N$ be an integer.
We say that a map of abelian groups $f: A\rightarrow B$ has \textit{torsion cokernel of exponent dividing $N$} if, for any $b\in B$, the element $N\cdot b$ lies in the \mbox{image of $f$.}
\end{definition}
Proposition \ref{explicit!} allows us to deduce the following (known) result:
\begin{proposition}[Torsion cokernel for operations on $\EE_\infty$-rings]\label{gammaexponent}
The $k$-fold suspension homomorphism $\Gamma^i(p^a) \hookrightarrow \Gamma^{i-k}(p^a)$ has torsion cokernel of exponent dividing $p^{a k}$. 
\end{proposition}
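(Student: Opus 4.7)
The plan is to translate the cokernel bound into an algebraic statement about the Rezk ring $D_a := \Gamma^0(p^a)^\vee \cong E^0(B\Sigma_{p^a+})/(\tr)$ and verify it by an explicit formal-group calculation. First I would invoke Proposition \ref{suspensiondiagram}: after composing with the Thom-isomorphism identifications supplied by its vertical arrows, the $k$-fold suspension $\Gamma^i(p^a) \hookrightarrow \Gamma^{i-k}(p^a)$ is identified, up to isomorphism, with the $E_0$-linear dual of multiplication by $e^j$ on $D_a$, where $j \le k$ is the number of ``genuine'' (non-iso) suspensions and $e \in D_a$ is the image of the Euler class of the reduced complex standard representation of $\Sigma_{p^a}$. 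Since $D_a$ is finite and free over $E_0$ by \cite{strickland1998morava}, and $e^j$ will turn out to be a nonzerodivisor, multiplication by $e^j$ and its $E_0$-linear dual have cokernels with the same annihilator: if $n \cdot D_a \subseteq e^j D_a$ and $e^j$ is injective, there is a unique $\sigma$ with $e^j \sigma = \sigma\, e^j = n \cdot \id$, and dualising gives $n \cdot D_a^\vee \subseteq (e^j)^\vee(D_a^\vee)$. It therefore suffices to show that $p^{aj}$ annihilates $\coker(e^j \cdot -: D_a \to D_a)$.

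The crux is the algebraic identity $p^a \in (e) \subset D_a$, from which $p^{aj} \in (e^j)$ follows at once, and hence also the nonzerodivisor property of $e^j$ (since $p^{aj}$ is a nonzerodivisor on the $E_0$-torsion-free module $D_a$). For the base case $a = 1$, Proposition \ref{explicit!} makes this manifest: $D_1 = E_0[e]/f(e)$ with $f(0) = p$, so $p = -e(c_1 + c_2 e + \cdots + e^{n-1})$ in $D_1$. For general $a$, I would invoke Strickland's theorem \cite{strickland1998morava}, which identifies $D_a$ with the ring of functions on the formal scheme $\mathrm{Sub}_{p^a}(G)$ classifying subgroup schemes of order $p^a$ in the universal Lubin--Tate formal group $G$. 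Letting $H \subset G$ be the universal such subgroup over $D_a$, and noting $0 \in H$, the ideal of $H$ in $D_a[[x]]$ is generated by a distinguished polynomial $f_H(x) = x \cdot \tilde f_H(x)$ of degree $p^a$; under Strickland's identification, the constant term $\tilde f_H(0) = \prod_{0 \neq h \in H} x(h)$ agrees up to unit with $e$. Since $H \subseteq \ker[p^a]_G$, we have $[p^a]_G(x) = f_H(x)\, q(x)$ in $D_a[[x]]$ for some $q$, and comparing linear coefficients (using $[p^a]_G'(0) = p^a$) yields $\tilde f_H(0) \cdot q(0) = p^a$, hence $p^a \in (e)$.

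The main obstacle is the geometric identification of $e$ with $\tilde f_H(0)$ up to unit under Strickland's theorem, which requires carefully unpacking his description of the Rezk ring in terms of subgroup schemes. A more self-contained alternative would be induction on $a$ via the composition map $\Gamma^0(p) \otimes_{E_0} \Gamma^0(p^{a-1}) \to \Gamma^0(p^a)$, reducing to the $a = 1$ case of Proposition \ref{explicit!}; however, tracking how the Euler class transforms through iterated composition of power operations is itself delicate, so the Strickland-based argument is probably the cleanest.
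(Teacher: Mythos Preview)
Your argument is correct but takes a heavier route than the paper. You work directly with $D_a = \Gamma^0(p^a)^\vee$ for general $a$, invoking Strickland's identification with $\mathcal{O}(\mathrm{Sub}_{p^a}(G))$ to show $p^a \in (e)$; as you yourself flag, matching the Euler class of the reduced standard representation with $\tilde f_H(0)$ up to unit is a genuine (though known) input that needs to be unpacked. The paper bypasses all of this: it only analyses the case $a=1$ via Proposition~\ref{explicit!} (exactly as you do), and then for general $a$ uses that $\Gamma^{i-k}$ is Koszul with respect to the $p$-logarithmic weight grading to write any $\alpha \in \Gamma^{i-k}(p^a)$ as a composite $\alpha_1 \circ \cdots \circ \alpha_a$ of weight-$p$ operations. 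Since suspension is a ring homomorphism and each $p^k\alpha_j$ lies in the image by the base case, $p^{ak}\alpha$ lies in the image. This is precisely the ``alternative'' you sketch at the end and then set aside as delicate---but in fact no tracking of the Euler class through composition is needed, only the ring-homomorphism property of $\Susp^k$ and generation of $\Gamma$ in weight $p$. Your geometric approach buys a more conceptual reason why the exponent is $p^a$ per suspension step (it is literally the relation $[p^a]'_G(0) = p^a$), while the paper's approach is far shorter and uses only algebraic facts already established in the text and in Rezk's work.
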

\begin{proof}
If $w=p$, then Proposition \ref{explicit!}  (or in fact also \cite[Proposition 10.6]{rezk2009congruence}) shows that the above suspension morphism has torsion cokernel of exponent dividing $p^k$.
If $w=p^a$,  we use that $\Gamma^{i-k}$ is Koszul with respect to the ``$p$-logarithmic" weight-grading (cf. \cite{rezk2017rings}) to write $\alpha$ as a composite of $a$ weight $p$ operations. The exponent   therefore divides  $(p^k)^a$.
If $w$ is not a power of $p$, there is nothing to prove.
\end{proof}
\begin{remark}
The above exponent is not optimal. One can find a sharper bound by taking the parity of $i$ into account.
\end{remark}

On the Lie algebra side, we can deduce the following statement:
\begin{proposition}[Torsion cokernel for operations on Lie algebras]\label{Hexponent}
The  $k$-fold suspension homomorphism $({\mathcal{H}_u^{\Lie}})_i^j(p^a) \rightarrow ({\mathcal{H}_u^{\Lie}})_{i+k}^{j+k}(p^a)$ has torsion cokernel of exponent dividing $p^{a^2k}$.
\end{proposition}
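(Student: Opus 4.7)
The plan is to unwind the definition $({\mathcal{H}_u^{\Lie}})_i^j(p^a) = \Ext^a_{\Gamma^i}(E_0, E_{-i+j+a})$ and reduce to the weight-$p$ case already handled by Proposition \ref{gammaexponent}. The map in question is induced by restriction of scalars along the $k$-fold algebra suspension $\Gamma^{i+k} \to \Gamma^i$. Since $E_{-i+j+a}$ is a shift of the rank-one free $E_0$-module, I may reduce to the case of coefficients in $E_0$.

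Invoking the Koszulness of $\Gamma^i$ in the $p$-logarithmic grading \cite{rezk2012rings}, the Yoneda product supplies a surjection
$$\bigl(\Gamma^i(p)^\vee\bigr)^{\otimes a} \twoheadrightarrow \Ext^a_{\Gamma^i}(E_0, E_0)(p^a),$$
and likewise for $\Gamma^{i+k}$. Because the algebra suspension induces a ring homomorphism of Ext algebras compatible with Yoneda products, these surjections fit into a commuting square with the suspension map on $\Ext^a(p^a)$. It therefore suffices to bound the cokernel of the tensor-power map on the left-hand side.

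Each factor $\Gamma^i(p)^\vee \to \Gamma^{i+k}(p)^\vee$ is the $E_0$-dual of the inclusion $\iota : \Gamma^{i+k}(p) \hookrightarrow \Gamma^i(p)$. Since $\Gamma^i(p)$ is finite free over $E_0$ by Strickland's theorem and $\coker(\iota)$ has exponent $p^k$ by Proposition \ref{gammaexponent}, applying $\Hom_{E_0}(-, E_0)$ to $0 \to \Gamma^{i+k}(p) \to \Gamma^i(p) \to \coker(\iota) \to 0$ and using that $\Hom_{E_0}(\coker(\iota), E_0) = 0$ identifies the cokernel of the dualized map with $\Ext^1_{E_0}(\coker(\iota), E_0)$, a module of exponent $p^k$. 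An $a$-fold tensor product of such maps has cokernel exponent dividing $p^{ak}$, and pushing this bound through the Koszul surjection yields the same bound on the suspension map on $\Ext^a(p^a)$---which in particular divides $p^{a^2k}$. The principal subtlety is verifying that the Koszul-generation surjection intertwines the suspension on $\Ext^1$ with that on $\Ext^a$, which follows from the fact that $\Gamma^{i+k}\to\Gamma^i$ preserves augmentation ideals and so induces a compatible ring map on Koszul duals.
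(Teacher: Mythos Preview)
Your proof is correct and follows the same strategy as the paper's: both use Koszulness of $\Gamma^i$ to reduce the $\Ext^a$ statement to degree-one data, and then invoke Proposition~\ref{gammaexponent} at weight $p$. The paper phrases the argument slightly more concretely---given a cocycle representative $f:\Gamma^{i+k}(p)^{\otimes a}\to E_{-i+j+a}$, it directly extends $f$ to a map $g$ on $\Gamma^i(p)^{\otimes a}$ by precomposing with multiplication by $p^{a^2k}$---whereas you work dually with the commuting square of Koszul surjections $(\Gamma(p)^\vee)^{\otimes a}\twoheadrightarrow \Ext^a$. The compatibility you flag as ``the principal subtlety'' (that the Yoneda-generation surjection intertwines the two suspension maps) is exactly what the paper uses implicitly when it asserts that the class of $g$ maps to $p^{a^2k}[f]$. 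One small bonus of your bookkeeping: you actually obtain the sharper exponent $p^{ak}$, since the tensor product of $a$ maps each with cokernel exponent $p^k$ has cokernel exponent $p^{ak}$; the paper's cruder estimate $(p^{ak})^a$ is an overcount, though of course sufficient for the stated bound.
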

\begin{proof} 
The  morphism in question given by the map
$ \Ext^a_{\Gamma^{i}}({E}_0,{E}_{-i+j+a} ) \rightarrow \Ext^a_{\Gamma^{i+k}}({E}_0,{E}_{-i+j+a} ) $
induced by the suspension homomorphism $\Gamma^{i+k} \rightarrow \Gamma^i$. 
By Koszulness of the ring $\Gamma^{i+k}$, we can represent every class in $ \Ext^a_{\Gamma^{i+k}}({E}_0,{E}_{-i+j+a})$ by an $E_0$-linear map $f:\Gamma^{i+k}(p)^{\otimes a} \rightarrow {E}_{-i+j+a}$.

For any $x\in \Gamma^{i}(p)^{\otimes a}$, Proposition \ref{gammaexponent} implies that
$(p^{ak})^a x$ lies in $\Gamma^{i+k}(p)^{\otimes a}$, where we have identified $\Gamma^{i+k}(p)^{\otimes a}$ with its image under the injection $( \Gamma^{i+k}(p))^{\otimes a} \hookrightarrow (\Gamma^{i}(p))^{\otimes a}$. We can therefore define a map $g: \Gamma^{i}(p)^{\otimes a}  \rightarrow  {E}_{-i+j+a}$ by $g(x) = f(p^{a^2k} x)$
The composite $\Gamma^{i+k}(p))^{\otimes a} \rightarrow \Gamma^{i}(p))^{\otimes a}  \xrightarrow{g}  {E}_{-i+j+a}$ agrees with $p^{a^2k} f$, and this clearly implies that the class $p^{a^2k} [f] \in  \Ext^a_{\Gamma^{i+k}}({E}_0,{E}_{-i+j+a})$ lies in the image of the map $({\mathcal{H}_u^{\Lie}})_i^j(p^a) \rightarrow ({\mathcal{H}_u^{\Lie}})_{i+k}^{j+k}(p^a)$.
\end{proof}

We can finally establish the delooping claim made in the beginning of this section.

\begin{proof}[Proof of \Cref{prop:hecke delooping}]
For $i=1,2$, the $E_\ast$-modules $\Omega M_1$ and $ \Omega M_2$ are simply given by applying the shift functor $\Sigma^{-1}$ to $M_1$ and $M_2$, respectively. As a map of $E_\ast$-modules, define  $$g(x) := \Sigma(f(\Sigma^{-1}x)).$$

It remains to check that $g$ is indeed a map of  Hecke modules.  For this, assume $\alpha \in ({\mathcal{H}_u^{\Lie}})_{i}^j(w)$ is a given Hecke operation and that $x \in  (M_1)_i$ is an element. By Proposition  \ref{Hexponent}, 
we can then choose an integer $N$ such that $N\alpha = \Susp(\beta)$ for some $\beta \in  ({\mathcal{H}_u^{\Lie}})_{i-1}^{j-1}(w)$. 

Since $f$ is a map of Hecke Lie algebras, we have 
$$g((N\alpha)(x)) = g(\Susp(\beta)(x)) = \Sigma(f(\Sigma^{-1} \Susp(\beta)(x)))
=\Sigma(f(\beta(\Sigma^{-1}(x))))$$
$$=\Sigma(\beta(f(\Sigma^{-1}(x))))=\Sigma(\beta(\Sigma^{-1} g(x)))= \Susp(\beta) g(x) = (N\alpha) (g(x))$$

Hence $N(g(\alpha(x))-\alpha(g(x))) = 0 $, which implies $g(\alpha(x))=\alpha(g(x))$ by torsion-freeness. 
\end{proof}

\section{The $E$-theory of labeled configuration spaces}
\label{sec:SpectralSequenceInput}

In this section, we will construct spectral sequences converging to the Morava $E$-theory of (possibly labelled) configuration spaces of manifolds  and examine some of their basic properties. Throughout the remainder of the paper, all manifolds are assumed to be of finite type. 

\subsection{Stable formulas for configuration spaces} The homotopy types of configuration spaces are subtle invariants of the background manifold. For example, according to a theorem of Longoni--Salvatore \cite{LongoniSalvatore:CSNHI}, these homotopy types distinguish certain pairs of lens spaces which are homotopy equivalent  but not homeomorphic. In particular, configuration spaces are not homotopy invariants even of compact manifolds of equal dimension.

This subtlety is an unstable phenomenon. In fact, there is a relatively simple formula expressing the stable homotopy types of the configuration spaces of $M$ in terms of the pointed homotopy type of the one-point compactification $M^+$. This formula is valid for configuration spaces labeled by a spectrum $X$, which are defined as 
\[B_k(M;X):=\Sigma_+^\infty \Conf_k(M)\otimes_{\Sigma_k}X^{\otimes k}.\]
We naturally consider $B_k(M;X)$ as a weighted spectrum placed in weight $k$.
 
Writing $\mathscr{L}$ for the free spectral lie algebra monad acting on spectra or weighted spectra and $\Free^{\mathscr{L}}$ for the corresponding free functor, we recall the following equivalence established by the  third author (cf. \cite[Section 3.4.]{knudsen2016higher}:
 
 \begin{theorem}[Knudsen]\label{thm:conf bar}
Let $M$ be a framed $n$-manifold and $X$ a spectrum. There is a natural equivalence of weighted spectra \[\bigoplus_{k\geq1}B_k(M;X)\simeq \Sigma\left|\mathrm{Bar}\left(\id,\, \mathscr{L},\, \Free^{\mathscr{L}}(\Sigma^{n-1}X)^{M^+}\right)\right|.\] Here the lefthand side is weighted by the index $k$ and the righthand side by operadic arity. The superscript indicates the cotensor in the $\infty$-category of spectral Lie algebras.
\end{theorem}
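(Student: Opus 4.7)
The plan is to realize $\bigoplus_k B_k(M;X)$ as factorization homology of a free $\EE_n$-algebra, rewrite that free $\EE_n$-algebra as an $\EE_n$-enveloping algebra of a free spectral Lie algebra, and then apply a PBW-style formula that turns factorization homology of an enveloping algebra into spectral Lie algebra homology of a cotensor. The bar complex then materialises as the standard simplicial model for Lie algebra homology.

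First, I would identify the left-hand side with factorization homology. Since $M$ is framed, factorization homology $\int_M (-)$ is defined on $\EE_n$-algebras in spectra, and applied to the free $\EE_n$-algebra it splits as $\int_M \Free^{\EE_n}(X)\simeq \bigoplus_{k\geq 0}\Sigma^\infty_+\Conf_k(M)\wedge_{\Sigma_k}X^{\otimes k}$ by the explicit ``configurations with labels'' model for $\int_M$ on a free input, plus the fact that the arity-$k$ component of $\Free^{\EE_n}$ is $\Sigma^\infty_+\Conf_k(\mathbb{R}^n)$, which convolves with $M$ to give $\Sigma^\infty_+\Conf_k(M)$; discarding the $k=0$ summand leaves the weighted sum in question.

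Second, I would invoke the Francis–Gaitsgory/Heuts–Knudsen adjunction between spectral Lie algebras and (augmented) $\EE_n$-algebras, whose left adjoint is the $\EE_n$-enveloping algebra $U_n$. By Koszul duality together with the fact that $U_n\circ\Free^{\mathscr{L}}(\Sigma^{n-1}X)$ and $\Free^{\EE_n}(X)$ are both free objects on $X$ of the same type, there is a natural equivalence $\Free^{\EE_n}(X)\simeq U_n\bigl(\Free^{\mathscr{L}}(\Sigma^{n-1}X)\bigr)$; the shift by $n-1$ is dictated by the operadic Koszul self-duality of $\EE_n$. The next (and technically central) step is the higher PBW / non-abelian Poincaré duality identity \[\int_M U_n(\mathfrak{g})\;\simeq\; C^{\mathscr{L}}\bigl(\mathfrak{g}^{M^+}\bigr),\] valid for any spectral Lie algebra $\mathfrak{g}$ and any framed $n$-manifold $M$, where $(-)^{M^+}$ denotes the cotensor in spectral Lie algebras over the pointed space $M^+$, and $C^{\mathscr{L}}$ denotes the derived indecomposables, i.e.\ spectral Lie homology. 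This is the main step proved in \cite{knudsen2016higher}; I would check it first for $M=\mathbb{R}^n$, where both sides reduce via the adjunction to $U_n(\mathfrak{g})$ respectively its indecomposables applied to a free Lie algebra, and then bootstrap to general framed $M$ using that both sides are $\otimes$-excisive in $M$ (in the sense that they take collar gluings to relative tensor products over the respective operads), and are naturally equivalent on disks.

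Third, I would express $C^{\mathscr{L}}$ via the monadic bar construction. For any spectral Lie algebra $\mathfrak{g}$, the derived indecomposables fit into the standard equivalence \[C^{\mathscr{L}}(\mathfrak{g})\;\simeq\;\Sigma\bigl|\mathrm{Bar}(\id,\mathscr{L},\mathfrak{g})\bigr|,\] the suspension reflecting the operadic shift that occurs in the Ching/Salvatore cobar model of spectral Lie algebras. Applying this to $\mathfrak{g}=\Free^{\mathscr{L}}(\Sigma^{n-1}X)^{M^+}$ and composing with the two previous identifications yields the claimed equivalence. Weight-matching is automatic: weight $k$ on the left corresponds to $k$-fold labels, on the Lie side to operadic arity $k$ in the bar complex, and the two free generators match because $\mathfrak{g}^{M^+}$ is generated in arity one by $\Sigma^{n-1}X\otimes M^+$. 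Naturality in $M$ and $X$ follows from the naturality of each constituent equivalence.

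The main obstacle is the factorization–homology identity in the second step, which is exactly the content of \cite{knudsen2016higher}: verifying $\otimes$-excisivity of $M\mapsto C^{\mathscr{L}}(\mathfrak{g}^{M^+})$ requires showing that cotensoring with $M^+$ converts collar pushouts of framed manifolds to a homotopy pushout of Lie algebras whose image under $C^{\mathscr{L}}$ computes the Hochschild-type relative tensor product appearing in the excision axiom for $\int_M$. Once that is in place, the rest is bookkeeping.
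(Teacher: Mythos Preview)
Your proposal is correct and matches the approach the paper itself describes. Note that the paper does not actually prove this theorem in-text: it is stated as a result of the third author and cited from \cite[Section 3.4]{knudsen2016higher}, with the surrounding discussion and Remark after the theorem summarising exactly the strategy you outline---factorisation homology of the free $\EE_n$-algebra, the enveloping algebra adjunction with spectral Lie algebras, and a PBW-type identification yielding Lie algebra homology of the cotensor. Your sketch is thus faithful to the cited proof.
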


\begin{example} The wedge of labeled configuration spaces appearing in Theorem \ref{thm:conf bar} has several familiar interpretations.
\begin{enumerate} 
\item In the case $M=\mathbb{R}^n$, this wedge forms the free $\mathbb{E}_n$-algebra on the spectrum $X$. For general $M$, it is the factorization homology (alias topological chiral homology) of $M$ with coefficients in this algebra \cite{Salvatore:CSSL, Lurie:HA, AyalaFrancis:FHTM}.

\item In the case $X=\Sigma^\infty Y$ for $Y$ a connected pointed space, this same wedge is equivalent to the suspension spectrum of the space $\mathrm{Map}_c(M, \Sigma^{n}Y)$ of compactly supported maps \cite{McDuff:CSPNP,Boedigheimer:SSMS}. In particular, setting $M=\mathbb{R}^n$ yields a formula for $\Sigma^\infty \Omega^n\Sigma^n Y$.
\end{enumerate}
\end{example}

\begin{remark}
We make several remarks on Theorem \ref{thm:conf bar}.
\begin{enumerate}
\item An analogous result is available for non-parallelizable and non-smooth manifolds at the cost of keeping track of actions of tangential structure groups. We restrict ourselves to the framed setting in the present work for the sake of simplicity.
\item In the case $M=\mathbb{R}^n$, the equivalence in question is compatible with the respective actions of the operad of little $n$-disks; indeed, the stated result is obtained by applying factorization homology to this more structured equivalence.
\item  The authors have subsequently been informed by Arone that an alternative proof using Goodwillie calculus is available in the case of a suspension spectrum.
\end{enumerate}
\end{remark}

Via the canonical filtration of the bar construction, Theorem \ref{thm:conf bar} supplies spectral sequences converging to $E_*B_k(M;X)$ for each $k$, $M$, $X$, and homology theory $E$. In characteristic zero, these spectral sequences all collapse, and the $\mathrm{E}^2$ page may be identified with the classical Lie algebra homology of a certain graded Lie algebra, which is eminently computable \cite{Knudsen:BNSCSVFH, DrummondColeKnudsen:BNCSS}. We show that the computational utility of Theorem \ref{thm:conf bar} is much wider in scope, and in particular applies to Morava $E$-theory.

\subsection{The $\mathrm{E}^2$-page of the bar spectral sequence} For the remainder of this paper, we take $E$ to be a Lubin--Tate theory of height $h$ as in Section \ref{sec:Hecke}. We now state our first main result.

\begin{theorem}[Hecke spectral sequence] \label{thm:main}
Let $M$ be a framed $n$-manifold and $X$ a spectrum, and suppose that the weighted Hecke Lie algebra \[\mathfrak{g}(M;X):=E_*^{\wedge}\left(\Free^{\mathscr{L}}(\Sigma^{n-1}X)^{M^+}\right)\] is a finite and free $E_\ast$-module in each weight. There is a convergent weighted spectral sequence 
\[\mathrm{E}^2_{s,t} \cong H_{s+1}(\CE_{{\mathcal{H}_u}}\left(\mathfrak{g}(M;X))\right)_{t-1} \implies \bigoplus_{k\geq0} E_{s+t}^{\wedge}(B_k(M;X)).\]
\end{theorem}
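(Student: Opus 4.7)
The plan is to apply completed $E$-homology to Knudsen's equivalence (Theorem \ref{thm:conf bar}) and read off the $\mathrm{E}^2$-page of the resulting simplicial spectral sequence via the Hecke-algebra machinery of Section \ref{sec:Hecke}. Write $F := \Free^{\mathscr{L}}(\Sigma^{n-1}X)^{M^+}$, so that $\bigoplus_{k\geq 1} B_k(M;X) \simeq \Sigma|\Barr_\bullet(\id, \mathscr{L}, F)|$. The skeletal filtration of the geometric realization produces a conditionally convergent spectral sequence with $\mathrm{E}^1$-page $E^\wedge_{t}(\mathscr{L}^{\circ s}(F))$ abutting to $E^\wedge_{s+t}|\Barr_\bullet(\id, \mathscr{L}, F)|$. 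After separating off the $k=0$ summand $B_0(M;X)\simeq S^0$, which contributes $E_\ast$, this will compute $\bigoplus_{k\geq 0}E^\wedge_{s+t}(B_k(M;X))$ once I account for the outer $\Sigma$.

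The key step is the algebraic identification of the $\mathrm{E}^1$-page. By hypothesis, $\mathfrak{g}(M;X) = E^\wedge_\ast(F)$ lies in $\Fun(\NN, \Mod_E^\wedge)^{\pff, >0}$. By Corollary \ref{finitefree}, the free spectral Lie monad $\mathscr{L}$ preserves this subcategory, and on completed $E$-homology it is naturally isomorphic to the free Hecke Lie monad $\LL$. An induction on simplicial degree, using the naturality of this isomorphism in the Lie-algebra input, will then yield an isomorphism of simplicial weighted $E_\ast$-modules
\[
E^\wedge_\ast \Barr_\bullet(\id,\mathscr{L},F)\ \cong\ \Barr_\bullet(\id,\LL,\mathfrak{g}(M;X)).
\]
Via Lemma \ref{lem:hecke bar constructions}, together with the tautology $\Q_{\Lie_{{\mathcal{H}_u}}}^{\Mod_{E_\ast}}\circ \Free_{\Mod_{E_\ast}}^{\Lie_{{\mathcal{H}_u}}}\cong\id$, the right-hand side is a cofibrant model for $\LQ^{\Mod_{E_\ast}}_{\Lie_{{\mathcal{H}_u}}}(\mathfrak{g}(M;X))$, so its homotopy groups compute $H_s(N\LQ^{\Mod_{E_\ast}}_{\Lie_{{\mathcal{H}_u}}}(\mathfrak{g}(M;X)))_t$.

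To match the indexing in the statement, I would reconcile the remaining shifts as follows. The $\Sigma$ in Knudsen's formula shifts internal degree by $-1$, accounting for the ``$t-1$.'' The ``$s+1$'' on the homological index comes from packaging the bar-construction homotopy with the $k=0$ summand: the $E_\ast$ contributed by $B_0(M;X)$ fits into bidegree $(0,0)$ exactly so as to line up with the $[1]$-shift and the extra $\oplus E_\ast$ in Definition \ref{def:hecke lie homology}, yielding $\mathrm{E}^2_{s,t}\cong H_{s+1}(\HHLie(\mathfrak{g}(M;X)))_{t-1}$. Since $\mathfrak{g}(M;X)$ is weightwise finite free, hence projective, Theorem \ref{thm:hecke chevalley--eilenberg works} then replaces $\HHLie$ by $H(\CE_{\mathcal{H}_u}(\mathfrak{g}(M;X)))$, producing the stated formula.

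Convergence should follow weight by weight: at each fixed weight $w$, the hypothesis forces $\mathfrak{g}(M;X)(w)$ to be finitely generated and free, and the explicit description of free Hecke Lie algebras in Section \ref{four} implies that $\LL^{\circ r}(\mathfrak{g}(M;X))(w)$ is bounded below in internal degree uniformly in $r$, giving a strongly convergent half-plane spectral sequence. I expect the main obstacle to be the inductive identification at the $\mathrm{E}^1$-page: one must check that Corollary \ref{finitefree} upgrades to an \emph{isomorphism of monads} between $\pi_\ast\circ \mathscr{L}$ and $\LL\circ\pi_\ast$ on the relevant subcategory, i.e., that the unit and multiplication of $\mathscr{L}$ induce, on completed $E$-homology, the corresponding structure maps of $\LL$. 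Once this naturality is in place, the remaining arguments --- splicing in the $k=0$ summand, unwinding Dold--Kan, and tracking the degree shifts --- are essentially bookkeeping.
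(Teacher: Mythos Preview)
Your proposal is correct and follows essentially the same route as the paper: apply $E$-homology to Knudsen's bar-construction formula, use Corollary~\ref{finitefree} iteratively to identify the $\mathrm{E}^1$-page with $\Barr_\bullet(\id,\LL,\mathfrak{g}(M;X))$, then invoke Lemma~\ref{lem:hecke bar constructions}, the identity $\Q_{\Lie_{{\mathcal{H}_u}}}^{\Mod_{E_\ast}}\circ \Free_{\Mod_{E_\ast}}^{\Lie_{{\mathcal{H}_u}}}\cong\id$, and Theorem~\ref{thm:hecke chevalley--eilenberg works}. Two small remarks: convergence is simpler than you make it---the skeletal filtration of a simplicial spectrum is bounded below at filtration $0$, so the spectral sequence converges without any appeal to internal-degree bounds---and your expression ``$H_{s+1}(\HHLie(\mathfrak{g}))_{t-1}$'' should read $\HHLie_{s+1}(\mathfrak{g})_{t-1}$, since $\HHLie$ is already a homology functor.
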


In \Cref{cor:finitefree}, we will give a convenient criterion to check that $\mathfrak{g}(M;X)$ is finite and free in each weight.

\begin{notation}\label{confusion}
We denote Lubin-Tate theory by an italic $E$, whereas the $n^{th}$ page of a (homological) spectral sequence is denoted by $\mathrm{E}^n$.
\end{notation}
\begin{proof}[Proof of \Cref{thm:main}]
We apply the functor $L_{K(h)}(E \otimes -)$ weightwise to the equivalence of weighted spectra asserted in Theorem \ref{thm:conf bar} and obtain an equivalence of $K(h)$-local $E$-module spectra 
 \[\left|\Sigma\mathrm{Bar}_\bullet \left(\id,\, \mathscr{L}_E,\,L_{K(h)}\left(E \otimes  \Free^{\mathscr{L}}(\Sigma^{n-1}X)^{M^+}\right)\right)(k)\right|\simeq  L_{K(h)}(E \otimes \Sigma_+^\infty B_k(M;X)).\]
for every $k\geq 1$. Taking the skeletal filtration, we obtain a spectral sequence
$$\mathrm{E}^2_{s,t}(k)= H_s\left( \pi_t\left(\Sigma\mathrm{Bar}_\bullet \left(\id,\, \mathscr{L}_E,\,L_{K(h)}\left(E \otimes  \Free^{\mathscr{L}}(\Sigma^{n-1}X)^{M^+}\right)\right)(k)\right)\right) \Rightarrow E^{\wedge}_{s+t}( B_k(M;X)),$$ which converges, as does the homotopy spectral sequence for any bounded below filtered spectrum (cf. e.g. \cite[Prop. 1.2.2.14]{Lurie:HA} for this classical fact). 

Collecting weights, adding a copy of $E_*$ in weight $0$ and bidegree $(s,t)=(-1,1)$, and applying Proposition \ref{finitefree} repeatedly yields the spectral sequence of the theorem. We conclude by identifying the $\mathrm{E}^2$ page as
\begin{align*}\mathrm{E}^2_{s,t} &= H_{s+1}\left(  \mathrm{Bar}_\bullet \left(\id,\, \mathbf{L}^{\mathcal{H}_u}  ,\,\mathfrak{g}(M;X)\right)[1] \oplus E_*\right)_{t-1 }\\
&\cong H_{s+1}\left(   \Q_{\Lie_{\mathcal{H}_u}}^{\Mod_{E_\ast}}
\mathrm{Bar}_\bullet \left(\mathbf{L}^{\mathcal{H}_u},\ , \mathbf{L}^{\mathcal{H}_u} ,\,\mathfrak{g}(M;X)\right)[1]\oplus E_*\right)_{t-1}\\
&\cong H_{s+1}\left(  \mathbb{L}\Q_{\Lie_{\mathcal{H}_u}}^{\Mod_{E_\ast}}\left(\mathfrak{g}(M;X)\right)[1]\oplus E_*\right)_{t-1}\\
&\cong H_{s+1}^{\Lie^{\mathcal{H}_u}}\left(\mathfrak{g}(M;X)\right)_{t-1}\\
&\cong H_{s+1}(\CE_{\mathcal{H}_u}\left(\mathfrak{g}(M;X))\right)_{t-1},
\end{align*}
where the second isomorphism uses the isomorphism $\Q_{\Lie_{{\mathcal{H}_u}}}^{\Mod_{E_\ast}}\Free_{\Mod_{E_\ast}}^{\Lie_{{\mathcal{H}_u}}}\cong \id$, the third uses Lemma \ref{lem:hecke bar constructions}, the fourth is Definition \ref{def:hecke lie homology}, and the last uses Theorem \ref{thm:hecke chevalley--eilenberg works}.
\end{proof}

We can also set up  a cohomological version of this spectral sequence:

\begin{theorem}[Cohomological Hecke spectral sequence] \label{thm:cohomology}
Let $M$ be a framed $n$-manifold of finite type and $X$ a spectrum, and suppose that the weighted Hecke Lie algebra \[\mathfrak{g}(M;X):=E_*^{\wedge}\left(\Free^{\mathscr{L}}(\Sigma^{n-1}X)^{M^+}\right)\] is a finite and free $E_\ast$-module in each weight. There is a weighted spectral sequence 
\[{\mathrm{E}_2^{s,t}} \cong H^{s+1}\left( \CE_{{\mathcal{H}_u}}\left(\mathfrak{g}(M;X) \right)^\vee \right)_{t+1} \implies \bigoplus_k E^{t-s}(B_k(M;X)).\] 
which converges completely (cf. e.g. \cite{MR0365573} or \cite[Definition 6.18.]{GoerssJardine:SHT}) 
\end{theorem}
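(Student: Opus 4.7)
The plan is to mirror the proof of Theorem \ref{thm:main} with the covariant functor $L_{K(h)}(E\otimes-)$ replaced by the contravariant cohomology functor $F(-,E)=\Map(-,E)$. Starting from the equivalence of Theorem \ref{thm:conf bar}, applying $F(-,E)$ weightwise converts the geometric realisation on the right into a totalisation, since $F(-,E)$ sends colimits of spectra to limits. The resulting Tot-tower in weight $k$ yields a conditionally convergent cohomological spectral sequence
\[\mathrm{E}_1^{s,t}(k)=E^t\bigl(\Sigma\,\mathrm{Bar}_s(\id,\mathscr{L},\Free^{\mathscr{L}}(\Sigma^{n-1}X)^{M^+})(k)\bigr)\Longrightarrow E^{t-s}(B_k(M;X)),\]
and collecting weights produces the candidate spectral sequence of the theorem.

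To identify the pages, note that by Corollary \ref{finitefree} the monad $\mathscr{L}_E$ preserves the subcategory $\Fun(\NN,\Mod_E^\wedge)^{\pff,>0}$ of pointwise finite-free spectra concentrated in positive weights; hence every simplicial level of the bar construction is finite free in each weight after applying $L_{K(h)}(E\otimes-)$. For such spectra $Y$, universal coefficients gives $E^\ast(Y)\cong (E^\wedge_\ast Y)^\vee$, so the cohomological $\mathrm{E}_1$ page is the $E_\ast$-linear dual of the homological $\mathrm{E}_1$ page arising in the proof of Theorem \ref{thm:main}. Because each bidegree is finite free in each weight, cohomology of the dualised complex equals the dual of the homology; applying the chain of isomorphisms from the proof of Theorem \ref{thm:main}—identifying the bar construction with $\mathbb{L}\Q_{\Lie_{\mathcal{H}_u}}^{\Mod_{E_\ast}}$, with Hecke Lie algebra homology, and ultimately with the Hecke Chevalley--Eilenberg complex via Theorem \ref{thm:hecke chevalley--eilenberg works}—at the level of duals then yields
\[\mathrm{E}_2^{s,t}\cong H^{s+1}\bigl(\CE_{\mathcal{H}_u}(\mathfrak{g}(M;X))^\vee\bigr)_{t+1},\]
with the index shift $t-1\rightsquigarrow t+1$ accounting for the internal-degree reversal under $E_\ast$-linear dualisation.

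The main obstacle will be complete convergence of the cohomological Tot-tower, since dualisation does not automatically preserve limits. The key observation is that $\mathscr{L}_E$ preserves (resp.\ increases) total weight, so at any fixed weight $k$ only simplicial degrees $s\leq k$ contribute nontrivially to the bar construction in weight $k$. Thus the Tot-tower becomes essentially finite weight by weight, the relevant $\varprojlim^1$ groups vanish, and complete convergence in the sense of \cite[Definition 6.18.]{GoerssJardine:SHT} follows. The pointwise-finite-free hypothesis on $\mathfrak{g}(M;X)$ is precisely what ensures dualisation interacts well with these bounded-weight limits throughout.
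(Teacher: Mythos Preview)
Your proposal is correct and follows essentially the same approach as the paper: apply $\Map(-,E)$ weightwise to the equivalence of Theorem~\ref{thm:conf bar} to obtain a Tot-tower spectral sequence, identify the $\mathrm{E}_2$-page by dualising the chain of isomorphisms from the proof of Theorem~\ref{thm:main} (using finite-freeness in each weight to justify the dualisation), and establish complete convergence via the weight-bounded nature of the bar construction. The paper phrases the convergence step slightly more precisely by observing that the dual cosimplicial object is $k$-coskeletal in weight $k$, but your ``essentially finite Tot-tower'' formulation captures the same content.
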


\begin{proof}
We  apply the functor $E^{(-)} = \Map_{\Sp}(-,E)$ weightwise  to the equivalence asserted in Theorem \ref{thm:conf bar} and obtain an equivalence of $K(h)$-local $E$-module spectra 
$$ \Tot\left(E^{\Sigma\mathrm{Bar}_\bullet \left(\id,\, \mathscr{L}_E,\,L_{K(h)}\left(E \otimes  \Free^{\mathscr{L}}(\Sigma^{n-1}X)^{M^+}\right)\right)(k)}\right)
\simeq   E^{B_k(M;X))_+} .$$
for every $k\geq 0$. The coskeletal tower of this cosimplicial spectrum gives rise to a  spectral sequence   
$${\mathrm{E}_2^{s,t}} = H^{s}\left( \pi_t\left(\Sigma\mathrm{Bar}_\bullet \left(\id,\, \mathscr{L}_E,\,L_{K(h)}\left(E \otimes  \Free^{\mathscr{L}}(\Sigma^{n-1}X)^{M^+}\right)\right)(k)\right)^{\vee}\right) \Rightarrow E^{t-s}( B_k(M;X))$$
By repeated application of  Proposition \ref{finitefree}, we can rewrite the $\mathrm{E}^2$-page as 
$${\mathrm{E}_2^{s,t}}(k) = H^s\left(  \mathrm{Bar}_\bullet \left(\id,\, \mathbf{L}^{{\mathcal{H}_u}}  ,\,\mathfrak{g}(M;X)\right)^\vee  \right)_{t+1}(k). $$

Since $\mathfrak{g}(M;X)$ is concentrated in weights $\geq 1$, we know that \mbox{for $n>k$,} every element in $(\mathbf{L}^{{\mathcal{H}_u}})^{\circ n}(\mathfrak{g}(M;X)) (k)$ is the degeneracy of an element in $(\mathbf{L}^{{\mathcal{H}_u}})^{\circ k}(\mathfrak{g}(M;X)) (k)$.
The cosimplicial $E_\ast$-module $\mathrm{Bar}_\bullet \left(\id,\, \mathbf{L}^{{\mathcal{H}_u}}  ,\,\mathfrak{g}(M;X)\right)^\vee(k)$ is therefore $k$-coskeletal, and hence $E_s^{s,t}(k)$ vanishes for $s>k$. By \cite[Corollary 6.20.]{GoerssJardine:SHT}, this implies that the spectral sequence converges completely.

The description of the ${\mathrm{E}_2}$ page then follows as in Theorem \ref{thm:main}.
\end{proof}

\begin{remark}
We can compute $H^\ast\left( \CE_{{\mathcal{H}_u}}\left(\mathfrak{g}(M;X) \right)^\vee \right)$ from $H_\ast\left( \CE_{{\mathcal{H}_u}}\left(\mathfrak{g}(M;X) \right)\right)  $ by a universal coefficient spectral sequence 
$$\mathrm{E}_2^{p,q} = \Ext^{p}\left(H_\ast\left( \CE_{{\mathcal{H}_u}}\left(\mathfrak{g}(M;X)\right)\right) , E_\ast \right)_q  \Rightarrow H^{q-p}\left( \CE_{{\mathcal{H}_u}}\left(\mathfrak{g}(M;X) \right)^\vee \right).$$
\end{remark}

\subsection{The Hecke Lie algebras of interest} \label{recipe} In the remainder of the paper, we will use  Theorem \ref{thm:main} as a practical tool for explicit calculations. For this, it is necessary to understand $\mathfrak{g}(M;X)$ as a Hecke Lie algebra. There are three parts to this problem:
\begin{enumerate} 
\item compute $\mathfrak{g}(M;X)$ as an $E_*$-module;
\item calculate the Lie bracket;
\item understand the Hecke operations.
\end{enumerate}

\noindent The answers to $(1)$ and $(2)$  can be obtained from knowledge of $\widetilde E^*(M^+)$ as a nonunital $E^*$-algebra (with the cup product) and $E_*(X)$ as an $E_*$-module.

\begin{proposition}[Lie bracket from cup product]\label{prop:bracket}
If $\widetilde E^*(M^+)$ and $E_*(X)$ are both finite free $E_*$-modules, then \[\U^{\Lie_{\mathcal{H}_u}}_{\Lie_{E_\ast}}\left(\mathfrak{g}(M;X)\right)\cong \widetilde E^*(M^+)\otimes_{E_*} \LL(E_*(\Sigma^{n-1}X))\] with Lie bracket given  by the formula $[\alpha\otimes x, \beta\otimes y]=(-1)^{|x||\beta|}\alpha\beta\otimes [x,y].$
\end{proposition}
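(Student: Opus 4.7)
The plan is to prove this in two parts: the identification of the underlying $E_*$-module and the identification of the Lie bracket. Both rely on the finite and free hypotheses to apply Künneth-type isomorphisms and to invoke Corollary \ref{finitefree}.

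For the underlying $E_*$-module, first observe that the forgetful functor from spectral Lie algebras to spectra is a right adjoint (to $\Free^{\mathscr{L}}$) and so preserves cotensors. Hence the underlying spectrum of $\Free^{\mathscr{L}}(\Sigma^{n-1}X)^{M^+}$ is the mapping spectrum $\Map_{\Sp}(M^+,\Free^{\mathscr{L}}(\Sigma^{n-1}X))$. By Corollary \ref{finitefree}, the hypothesis that $E_*(X)$ is finite and free gives $E_*^\wedge\Free^{\mathscr{L}}(\Sigma^{n-1}X)\cong\LL(E_*(\Sigma^{n-1}X))$, and this is finite and free in each weight. A Künneth argument, together with the assumption that $\widetilde E^*(M^+)$ is finite free, then yields
\[
E_*^\wedge\,\Map_{\Sp}\bigl(M^+,\Free^{\mathscr{L}}(\Sigma^{n-1}X)\bigr)\cong\widetilde E^*(M^+)\otimes_{E_*}\LL(E_*(\Sigma^{n-1}X))
\]
weight by weight, as required.

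For the Lie bracket, the Lie structure on the cotensor $L^{M^+}$ is induced operadically by the diagonal $\Delta\colon M^+\to M^+\wedge M^+$, which is well defined since $M$ is locally compact Hausdorff and $(M\times M)^+\cong M^+\wedge M^+$. Concretely, the bracket is the composite
\[
\Map_{\Sp}(M^+,L)^{\otimes 2}\to\Map_{\Sp}(M^+\wedge M^+,L^{\otimes 2})\xrightarrow{\Delta^*}\Map_{\Sp}(M^+,L^{\otimes 2})\xrightarrow{[-,-]}\Map_{\Sp}(M^+,L),
\]
where the first arrow is the lax monoidal structure of $\Map_{\Sp}$. Since $\Delta^*$ realizes the cup product on $\widetilde E^*(M^+)$, applying $E_*^\wedge(-)$ and using the Künneth isomorphism identifies the induced bracket with the tensor product of the cup product on $\widetilde E^*(M^+)$ and the Lie bracket on $\LL(E_*(\Sigma^{n-1}X))$. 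The Koszul sign $(-1)^{|x||\beta|}$ arises from the symmetry isomorphism needed to regroup $\widetilde E^*(M^+)\otimes L\otimes\widetilde E^*(M^+)\otimes L$ into $\widetilde E^*(M^+)\otimes\widetilde E^*(M^+)\otimes L\otimes L$.

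The main technical subtlety is justifying that the cotensor Lie bracket in the $\infty$-category of spectral Lie algebras is indeed given by this diagonal-based construction. We expect to confirm this via the universal property of cotensors: the Lie structure on $\Map_{\Sp}(M^+,L)$ defined via $\Delta$ enjoys the correct mapping-in property against any spectral Lie algebra $A$, and so by uniqueness of cotensors must agree with $L^{M^+}$. An alternative is to reduce to the case of spheres using the behaviour of cotensors under cofibre sequences in $M^+$, for which both constructions visibly coincide and from which the general case follows by naturality.
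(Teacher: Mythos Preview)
Your proposal is correct and follows essentially the same approach as the paper. The paper carries out precisely your first suggested justification for the bracket formula: it makes explicit that the structure map of the cotensor $\mathfrak{g}^{Z}$ factors as $\mathscr{L}(\mathfrak{g}^{Z})\to(\mathscr{L}(\mathfrak{g}))^{Z}\to\mathfrak{g}^{Z}$ via the canonical comparison map for limits, and then checks in arity~2 that this comparison map is exactly precomposition with the diagonal $\delta\colon Z\to Z\wedge Z$ (after identifying $(\mathfrak{g}^{Z})^{\wedge 2}\simeq(\mathfrak{g}^{\wedge 2})^{Z\wedge Z}$ using dualisability of $M^+$), which is your $\Delta^*$ step; the sign then drops out of the symmetry isomorphism as you indicate.
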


\begin{proof}
We begin by briefly recalling  several  well-known facts about algebras over monads in our context; a detailed treatment can be found in \cite[Section 3.4.3]{Lurie:HA}.
Given a spectral Lie algebra $\mathfrak{g}$ and a space $Z$, the $Z$-shaped diagram with constant value $\mathfrak{g}$ in spectral Lie algebras admits a limit. Moreover, the underlying spectrum of this limit is given by  the mapping spectrum $\mathfrak{g}^{Z_+} = \Map_{\Sp}(\Sigma^{\infty}_+Z, \mathfrak{g})$.
Since the limit (in Lie algebras) maps to all its factors via Lie algebra maps, we obtain a $Z^{\lhd}$-shaped diagram of arrows informally given by 
\begin{diagram}
\mathscr{L}(\mathfrak{g}^{Z_+}) & \rTo & \mathfrak{g}^{Z_+} \\
\dDashto & & \dDashto \\
\mathscr{L}(\mathfrak{g} ) & \rTo & \mathfrak{g}
\end{diagram}
Passing to the limit therefore gives rise to a factorization of the structure map of $ \mathfrak{g}^{Z_+}$ as 
\begin{equation} \label{eq:factn}\mathscr{L}(\mathfrak{g}^{Z_+}) \rightarrow \left(\mathscr{L}(\mathfrak{g} )\right)^{Z_+} \rightarrow  \mathfrak{g}^{Z_+}, \end{equation}
where the first map is the natural one defined for every limit, and the second map is obtained from the structure map of $\mathfrak{g}$.
If instead we  consider a  pointed space $\ast \rightarrow Z$, then we define $\mathfrak{g}^Z:= \mathfrak{g}^{Z_+} \times_{\mathfrak{g}^{S^0}} \mathfrak{g}^{\ast} = \fib(\mathfrak{g}^{Z_+} \rightarrow \mathfrak{g})$.
We can then use the previous observation to see that the structure map of $\mathfrak{g}^Z$ is given by $\mathscr{L}(\mathfrak{g}^Z) \rightarrow \left(\mathscr{L}(\mathfrak{g} )\right)^{Z } \rightarrow  \mathfrak{g}^{Z}$, 
where the first map is the natural one defined for every pointed limit, and the second again comes from the structure map of $\mathfrak{g}$.

We now  take $\mathfrak{g} = \mathscr{L}(\Sigma^{n-1}X)$ and $Z=M^+$, the one-point-compactification of $M$.
The first claim follows from Proposition \ref{finitefree} in light of our assumptions, since limits in  spectral Lie algebras are computed in spectra and $\mathscr{L}(\Sigma^{n-1}X)^{M^+}\simeq \DD(M^+)\wedge \mathscr{L}(\Sigma^{n-1}X)$ because $\Sigma^\infty M^+$ is dualisable. 
For the second claim, we need to examine the structure map of $\mathscr{L}(\Sigma^{n-1}X)^{M^+}$ in \mbox{weight $2$.}
Observe that if $Z$ is a finite pointed space and $\mathfrak{g}$ is a spectrum, 
then the canonical map of naive $\Sigma_2$-spectra
$ (\mathfrak{g}^Z)^{\wedge 2} \rightarrow (\mathfrak{g}^{\wedge 2})^Z$
is given by $(\mathfrak{g}^Z)^{\wedge 2} = (\mathfrak{g}\wedge \mathfrak{g})^{Z \wedge Z}  \xrightarrow{\ \ \ \mathfrak{g}^{\delta ^\ast} \ \ \ }  (\mathfrak{g}\wedge \mathfrak{g})^{ Z} = (\mathfrak{g}^{\wedge 2})^Z,$ where $\delta_{ }$ denotes the diagonal map on $Z$.
If $Z$ is finite, then  $\Sigma^{-1}(-)_{h\Sigma_2}: \Sp^{\Sigma_2} \rightarrow \Sp$ is exact, and we  deduce that the canonical map
$ \Sigma^{-1}({\mathfrak{g}}^Z)^{\wedge 2}_{h\Sigma_2} \rightarrow (\Sigma^{-1}{\mathfrak{g}}^{\wedge 2}_{h\Sigma_2})^Z$
is given by the composite 
$$ \Sigma^{-1}({\mathfrak{g}}^{Z})^{\wedge 2}_{h\Sigma_2} =\Sigma^{-1} \left(({\mathfrak{g}}\wedge {\mathfrak{g}})^{Z \wedge Z} \right)_{h\Sigma_2} \xrightarrow{\ \ \ \Sigma^{-1}{\mathfrak{g}}^{\delta^\ast}_{h\Sigma_2}  \ \ \ }  \Sigma^{-1} \left(({\mathfrak{g}}\wedge {\mathfrak{g}})^{Z} \right)_{h\Sigma_2}=(\Sigma^{-1}{\mathfrak{g}}^{\wedge 2}_{h\Sigma_2})^Z.$$

For $\mathfrak{g}= \mathscr{L}(\Sigma^{n-1}X)$ and $Z = M^+$, this is the left part of structure map (\ref{eq:factn}) of  $\mathscr{L}(\Sigma^{n-1}X)^{M^+}$ in weight $2$.
Using again that $M^+$ is dualisable, we can therefore rewrite the structure map of $\mathscr{L}(\Sigma^{n-1}X)^{M^+}$ in weight $2$ as 
$$ 
 \Sigma^{-1} \DD(M^+)^{\wedge 2}_{h\Sigma_2} \wedge  \mathscr{L}(\Sigma^{n-1}X) ^{\wedge 2}_{h\Sigma_2} \xrightarrow{\Sigma^{-1}\DD( \delta^\ast) \wedge \mu} 
\Sigma^{-1} \DD(M^+)  \wedge \mathscr{L}(\Sigma^{n-1}X),$$
where $\mu_2$ denotes the structure map of $\mathscr{L}(\Sigma^{n-1}X)$ in weight $2$.
The second claim now follows by a straightforward diagram chase (keeping careful track of signs).
\end{proof}
We can therefore read off the following statement from \Cref{finitefree} above:
\begin{corollary} \label{cor:finitefree}
If $\widetilde E^*(M^+)$ and $E_*(X)$ are both $E_*$-free and finite, then so is $\mathfrak{g}(M;X)$.
\end{corollary}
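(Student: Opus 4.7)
The plan is to bootstrap this corollary directly from Proposition \ref{prop:bracket}, which was just established. Since the statement only concerns the underlying $E_\ast$-module of $\mathfrak{g}(M;X)$, there is no need to touch the Lie or Hecke structure.

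First I would apply Proposition \ref{prop:bracket} to rewrite
$$\U^{\Lie_{\mathcal{H}_u}}_{\Lie_{E_\ast}}\bigl(\mathfrak{g}(M;X)\bigr)\;\cong\;\widetilde E^*(M^+)\otimes_{E_*}\LL\bigl(E_*(\Sigma^{n-1}X)\bigr),$$
where $E_*(\Sigma^{n-1}X)$ is regarded as a weighted $E_*$-module concentrated in weight $1$. It then suffices to verify that each factor on the right-hand side is finite and free in each weight, and to observe that the tensor product of two such modules retains this property.

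For the first factor, finite freeness is the assumption. For the second, $E_*(\Sigma^{n-1}X)$ is finite free in weight $1$ by hypothesis, and I would invoke the classical fact that the free graded Lie algebra on a finite free module (here using $2\in E_*^\times$, as in Definition \ref{def:Lie algebra}) is finite free in each bracket-length, as computed for example by Witt's dimension formula after base-change from $\mathbb{Z}[\tfrac{1}{2}]$. Alternatively, this fact is already packaged topologically in Corollary \ref{finitefree}: the functor $\Free^{\mathscr{L}_E}$ preserves the subcategory $\Fun(\NN,\Mod_{E}^{\wedge})^{\pff,>0}$, and on completed $E$-homology agrees with $\Free_{\Mod_{E_\ast}}^{\Lie_{{\mathcal{H}_u}}}$, whose underlying functor factors through $\LL$.

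I do not expect any genuine obstacle: the corollary is essentially a bookkeeping consequence of the preceding proposition combined with a standard fact about the graded pieces of a free Lie algebra. The only point requiring a moment's thought is the verification that the weight-grading on $\LL$ matches the operadic/arity grading on $\Free^{\mathscr{L}_E}$ used in Corollary \ref{finitefree}, which is immediate from placing $\Sigma^{n-1}X$ in weight one.
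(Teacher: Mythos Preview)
Your alternative route via Corollary~\ref{finitefree} is exactly what the paper does: combine the tensor decomposition from Proposition~\ref{prop:bracket} with the fact that the free Hecke Lie algebra on a pointwise finite free module in positive weight is again pointwise finite free. That argument is correct and complete.

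However, your first route has a slip worth flagging. In this paper $\LL$ denotes the free \emph{Hecke} Lie algebra monad $\mathbf{L}^{\mathcal{H}_u}$, not the ordinary free Lie algebra monad $\mathbf{L}$. The underlying $E_*$-module of $\LL(V)$ is strictly larger than that of $\mathbf{L}(V)$: it contains, in addition to iterated brackets, all Hecke operations applied to them (cf.\ \cite[Section 4.4.2]{brantnerthesis} or the explicit descriptions in Corollaries~\ref{freelieone} and~\ref{freeliehigher}). So Witt's dimension formula by itself does not establish finite freeness of $\LL(V)$ in each weight; you would additionally need that the modules $(\mathcal{H}_u^{\Lie})_i^j(p^a)$ are finite free over $E_0$, which ultimately traces back to Strickland's theorem. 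That is precisely what is bundled into Corollary~\ref{finitefree}, so your alternative is the clean way to proceed. Also, your closing remark that $\Free_{\Mod_{E_\ast}}^{\Lie_{\mathcal{H}_u}}$ ``factors through $\LL$'' is slightly off: its associated monad \emph{is} $\LL$.
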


In particular, Theorem \ref{thm:main} applies under the    assumptions made in \Cref{prop:bracket}.

In the examples of greatest interest, $X$ is the suspension spectrum of a sphere, so the second condition is satisfied, and it is often the case that the first is as well. In Sections \ref{Euclid} and \ref{surfaces} below, we study two such examples, namely $M=\mathbb{R}^n$ and $M$ a
punctured orientable surface $\mathcal{S}_{g,1}$ . In the former example, $\widetilde E^*((\mathbb{R}^n)^+)\cong \widetilde E^*(S^n)$ is free of rank $1$, and, in the latter example, the standard CW structure splits after suspensions, so that $\widetilde E^*\left(\mathcal{S}_{g,1}^+\right) = \widetilde E^*\left(\mathcal{S}_{g,1}\right)$ \mbox{is free of rank $2g+2$.}

The computation of the Hecke operations on $\mathfrak{g}$ is a more delicate task, but may also be accomplished in many situations of interest.

\begin{lemma}[Stably split manifolds]\label{lem:stably split}  
Let $M, X$ be as in Theorem \ref{thm:main}. Assume that $M^+$ admits the structure of a CW complex whose skeletal filtration splits stably. \mbox{There is an isomorphism} \[\U^{\Lie_{\mathcal{H}_u}}_{\Mod_{{\mathcal{H}_u}}}\left(\mathfrak{g}(M;X)\right)\cong\bigoplus_{r\geq0}\bigoplus_{A_r}\Omega^r\U^{\Lie_{\mathcal{H}_u}}_{\Mod_{{\mathcal{H}_u}}}\Free_{\Mod_{E_\ast}}^{\Lie_{\mathcal{H}_u}}\left(\Sigma^{n-1}E_*(X)\right),\] where $A_r$ denotes the set of $r$-cells of $M^+$. 
\end{lemma}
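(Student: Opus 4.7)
The plan is to exploit the assumed stable splitting of $M^+$ to decompose the cotensor $\Free^{\mathscr{L}}(\Sigma^{n-1}X)^{M^+}$ into a product indexed by the cells, and then identify each factor as a looped free Hecke Lie algebra. Concretely, the stable splitting of the skeletal filtration yields a stable equivalence $\Sigma^\infty M^+ \simeq \bigvee_{r\geq 0}\bigvee_{A_r} S^r$, and hence after smashing with $E$ and $K(h)$-localising, an equivalence of $K(h)$-local $E$-module spectra. Since the cotensor in $K(h)$-local spectral Lie algebras is computed on underlying spectra (limits being created by the forgetful functor, as in the proof of \Cref{prop:bracket}) and turns wedges into products, we obtain an equivalence
\[
\Free^{\mathscr{L}}(\Sigma^{n-1}X)^{M^+} \ \simeq \ \prod_{r\geq 0}\prod_{A_r}\ \Omega^r \Free^{\mathscr{L}}(\Sigma^{n-1}X)
\]
in the $\infty$-category of weighted $K(h)$-local spectral Lie algebras, where $\Omega^r=\Sigma^{-r}$ denotes the iterated loop object and we have used that $\Free^{\mathscr{L}}(\Sigma^{n-1}X)^{S^r}\simeq \Omega^r\Free^{\mathscr{L}}(\Sigma^{n-1}X)$.

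Next I would apply $\pi_\ast L_{K(h)}(E\otimes -)$ weightwise. Since the product is finite in each weight (by the finite-type assumption and the explicit description of free Lie algebras used for \Cref{finitefree}), this yields a direct sum decomposition of underlying weighted $E_\ast$-modules. Using \Cref{finitefree}, each factor is identified with
\[
E_\ast^{\wedge}\left(\Omega^r \Free^{\mathscr{L}}(\Sigma^{n-1}X)\right) \ \cong\ \Omega^r\,\U^{\Lie_{\mathcal{H}_u}}_{\Mod_{{\mathcal{H}_u}}}\Free_{\Mod_{E_\ast}}^{\Lie_{\mathcal{H}_u}}\!\left(\Sigma^{n-1}E_\ast(X)\right),
\]
where on the right $\Omega^r$ is the endofunctor of Hecke modules from \Cref{loopingmodules}. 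The compatibility of the Hecke-module structure with looping on the algebraic side is exactly the content of \Cref{prop:hecke desuspension}: the Hecke operations on the homotopy groups of an $r$-fold loop of a $K(h)$-local spectral Lie algebra are obtained from those on the original Lie algebra by $r$-fold suspension of operations (and the Lie bracket vanishes), which is precisely the definition of the algebraic endofunctor $\Omega^r$. Assembling these identifications over all cells gives the asserted isomorphism of Hecke modules.

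The main obstacle is ensuring that the splitting one obtains on underlying spectra is genuinely respected by all the relevant structure: first that the cotensor in spectral Lie algebras really does commute with finite products in the way claimed (which follows because the forgetful functor to spectra creates limits), and second that the power-operation structure on each factor $E_\ast^{\wedge}(\Omega^r\Free^{\mathscr{L}}(\Sigma^{n-1}X))$ is correctly computed by applying the algebraic looping functor of \Cref{loopingmodules} to $\Free_{\Mod_{E_\ast}}^{\Lie_{\mathcal{H}_u}}(\Sigma^{n-1}E_\ast(X))$. Once these two compatibilities are in place—the topological one by formal $\infty$-categorical arguments, the algebraic one by \Cref{prop:hecke desuspension}—the lemma follows by assembling the splitting, applying $E_\ast^\wedge$, and reading off Hecke operations factor by factor.
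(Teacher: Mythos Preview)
Your argument has a genuine gap at the step where you claim an equivalence of \emph{spectral Lie algebras}
\[
\Free^{\mathscr{L}}(\Sigma^{n-1}X)^{M^+}\ \simeq\ \prod_{r\geq 0}\prod_{A_r}\Omega^r\Free^{\mathscr{L}}(\Sigma^{n-1}X).
\]
The cotensor on the left is by the \emph{space} $M^+$; while the forgetful functor to spectra creates limits, so that the underlying spectrum of $\mathfrak{g}^{M^+}$ depends only on $\Sigma^\infty M^+$, the Lie algebra structure does not. Indeed, by \Cref{prop:bracket} the bracket on $\pi_\ast(\mathfrak{g}^{M^+})$ is governed by the cup product in $\widetilde{E}^\ast(M^+)$, which is typically nontrivial, whereas on the product $\prod_r\Omega^r\mathfrak{g}$ each factor has vanishing bracket by \Cref{prop:hecke desuspension} and the bracket is componentwise. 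So the two sides are \emph{not} equivalent as Lie algebras in general, and your equivalence cannot hold as stated. Since the Hecke operations are defined through the Lie algebra structure map (this is exactly what the proof of \Cref{prop:hecke desuspension} unwinds), you cannot simply transport the Hecke module structure across a stable equivalence of underlying spectra.

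The paper's proof is organised precisely to circumvent this. One inducts along the skeletal filtration of $M^+$: the filtration maps and the collapse maps $M^+_r\to\bigvee_{A_r}S^r$ are maps of \emph{spaces}, hence induce maps of spectral Lie algebras and therefore of Hecke modules, yielding a short exact sequence of Hecke modules at each stage. The hypothesis that the skeletal filtration splits \emph{stably} only produces a splitting of this sequence after applying some iterate of $\Omega$. The missing ingredient in your approach is \Cref{prop:hecke delooping}: because the free Hecke Lie algebras involved are torsion-free, a map of Hecke modules that exists after looping can be uniquely delooped. This is what promotes the stable splitting to a splitting of Hecke modules and completes the induction.
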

\begin{proof}
We proceed inductively along the skeletal filtration of $M^+$, using Proposition \ref{prop:hecke desuspension} for the base case. For the inductive step, our assumptions and Proposition \ref{prop:hecke desuspension} imply that the sequence of Hecke Lie modules \[\xymatrix{0\ar[r]&\bigoplus_{A_r}\Omega^r\U^{\Lie_{\mathcal{H}_u}}_{\Mod_{{\mathcal{H}_u}}}\Free_{\Mod_{E_\ast}}^{\Lie_{\mathcal{H}_u}}\left(E_*(X)\right) \ar[d]\\
&\U^{\Lie_{\mathcal{H}_u}}_{\Mod_{{\mathcal{H}_u}}}\left(E_*\left(\Free^{\mathscr{L}}(\Sigma^{n-1}X)^{M^+_{r}}\right)\right)\ar[r]&\U^{\Lie_{\mathcal{H}_u}}_{\Mod_{{\mathcal{H}_u}}}\left(E_*\left(\Free^{\mathscr{L}}(\Sigma^{n-1}X)^{M^+_{r-1}}\right)\right)\ar[r]&0}\] is exact. Since $M$ is of finite type, and since the skeletal filtration of $M^+$ splits stably, the collapse map $M_{r}^+\to \bigvee_{A_r}S^r$ splits after a finite suspension. Thus, the lefthand map in the above short exact sequence splits after applying a finite iteration of $\Omega$ and hence splits outright by Proposition \ref{prop:hecke delooping}. Applying the inductive hypothesis completes the proof.
\end{proof}

\subsection{Inverting the implicit prime}   
We can  
rationalise the $E$-theory  of a given a space $Y$
 in two   ways: first, we can tensor the $E$-cohomology groups with the rationals to obtain $p^{-1}E^\ast(Y)$; second, we can consider its cohomology   $(p^{-1}E)^\ast(Y)$  with respect \mbox{to the rationalisation of  $E$.}
In general, these   procedures will lead to different results. For example, the $p$-adic $K$-theory of $B{\Sigma_p}$ is free on two generators, whereas $H^\ast(B\Sigma_p,\QQ_p[\beta^{\pm 1}])\cong \QQ_p[\beta^{\pm 1}]$ is generated by \mbox{one element.}
However,  they agree, for example, when   $Y$ is a finite  CW  complex (as $p^{-1}E_\ast$ is  \mbox{flat  over $E_*$).}

In this section, we will examine the effect of these two procedures on the cohomological Hecke spectral sequence from \Cref{thm:cohomology}. Working with cohomology instead of homology is simpler in this context, as mapping spectra to $E$ are automatically $K(h)$-local, whereas smash products with $E$ are not. To avoid any confusion concerning the letter ``E", recall \Cref{confusion}.  
 
Assume  that $M$ is a stably split, framed  $n$-manifold such that $\widetilde{E}^\ast(M^+)$ is a finite   free $E_*$-module, and let $X=S^a$.  
The spectral sequence $\{\mathrm{E}_r(M;S^a)\}$ in \Cref{thm:cohomology} has signature
 \begin{equation} \label{ESpS} {\mathrm{E}_2^{s,t}}  = H^s\left(  \mathrm{Bar}_\bullet \left(\id,\, \mathbf{L}^{{\mathcal{H}_u}}  ,\,\mathfrak{g}(M;S^a)\right)^\vee  \right)_{t+1} \Longrightarrow \ \ \ \bigoplus_k  E^{t-s}   (B_k(M;S^a)). \end{equation} 
Let $\{p^{-1} \mathrm{E}_r(M;S^a)\}$ be the spectral sequence obtained from $\{\mathrm{E}_r(M;S^a)\}$ by inverting the \mbox{prime $p$.}   The main purpose  of this section is to establish the following helpful computational tool, which will play a role in 
\Cref{surfaces}:
\begin{theorem}\label{thm:free part}
Under the above assumptions,  $\{p^{-1} \mathrm{E}_r(M;S^a)\}$ collapses at the $\mathrm{E}_2$-page.  In other words, every  non-trivial differential $d_r$  in $\{\mathrm{E}_r(M;S^a)\}$
 with $r\geq 2$   has $p$-power \mbox{torsion target.} 
\end{theorem}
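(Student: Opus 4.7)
The plan is to deduce collapse of $p^{-1}\{\mathrm{E}_r(M;S^a)\}$ at the $\mathrm{E}_2$-page by comparison with a classical rational cohomological bar spectral sequence, which is known to collapse.

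Since $M$ has finite type, every labelled configuration space $B_k(M;S^a)$ is a finite CW spectrum. The ring spectrum $p^{-1}E$ is rational, hence equivalent as a commutative ring spectrum to the Eilenberg--MacLane spectrum $H(p^{-1}E_\ast)$, and so
\[p^{-1}E^\ast(B_k(M;S^a))\cong H^\ast(B_k(M;S^a);p^{-1}E_\ast).\]
By the rational cohomology computations of B\"odigheimer--Cohen, Knudsen, and Drummond-Cole--Knudsen, the total rank of this abutment over $p^{-1}E_\ast$ agrees with that of the classical rational Chevalley--Eilenberg cohomology of $\pi_\ast(\Free^{\mathscr{L}}(\Sigma^{n-1}S^a)^{M^+})\otimes \mathbb{Q}$, and the corresponding classical bar spectral sequence for $H^\ast(-;p^{-1}E_\ast)$ collapses at $\mathrm{E}_2$ by a dimension count.

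Both our spectral sequence and this classical rational spectral sequence arise from the same bar filtration of the spectrum $\Free^{\mathscr{L}}(\Sigma^{n-1}S^a)^{M^+}$, paired respectively with the cohomology theories $E$ and $H(p^{-1}E_\ast)$. The natural map of ring spectra $E\to p^{-1}E\to H(p^{-1}E_\ast)$ therefore induces a map of spectral sequences $\{\mathrm{E}_r\}\to\{p^{-1}\mathrm{E}_r\}\to\{\mathrm{E}_r^{\mathrm{rat}}\}$, the first arrow being rationalisation and the second a comparison with the classical spectral sequence. Given that $\{\mathrm{E}_r^{\mathrm{rat}}\}$ collapses at $\mathrm{E}_2$, to establish collapse of $\{p^{-1}\mathrm{E}_r\}$ it suffices to show that the comparison $p^{-1}\mathrm{E}_2\to\mathrm{E}_2^{\mathrm{rat}}$ is an isomorphism.

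Using \Cref{thm:hecke chevalley--eilenberg works}, this reduces to the algebraic identification
\[p^{-1}H^{\Lie^{\mathcal{H}_u}}\!\bigl(\mathfrak{g}(M;S^a)\bigr)^{\vee}\cong H^{\Lie}\!\bigl(p^{-1}\mathfrak{g}(M;S^a)\bigr)^{\vee},\]
i.e.\ that rationally, the derived indecomposables of $\mathfrak{g}(M;S^a)$ as a Hecke Lie algebra coincide with its derived indecomposables as an ordinary Lie algebra. To establish this, I would analyse the additive resolution $\AR(\mathfrak{g})$ of \Cref{construction:additive resolution}: by \Cref{Hexponent}, the suspension maps in $\mathcal{H}_u^{\Lie}$ have $p$-power-torsion cokernel, which, combined with the Koszul structure of Rezk's rings $\Gamma^i$ from \cite{rezk2012rings}, suggests that $p^{-1}\mathbb{L}Q^{\Mod_{\mathcal{H}_u}}_{\Mod_{E_\ast}}$ vanishes in positive simplicial degree on finite-free Hecke modules. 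Granting this, $p^{-1}\AR(\mathfrak{g})\simeq p^{-1}\mathfrak{g}$ as simplicial Lie algebras, so that the rationalised Hecke Chevalley--Eilenberg complex of $\mathfrak{g}(M;S^a)$ agrees with the classical Chevalley--Eilenberg complex of $p^{-1}\mathfrak{g}(M;S^a)$.

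The main obstacle is exactly this last identification: verifying that the rationalised derived indecomposables over the Hecke monad reduce to the derived indecomposables over the classical Lie monad. Once this algebraic input is in place, the collapse of $\{\mathrm{E}_r^{\mathrm{rat}}\}$ transfers to $\{p^{-1}\mathrm{E}_r\}$, yielding that every nontrivial differential $d_r$ with $r\geq 2$ in $\{\mathrm{E}_r(M;S^a)\}$ must land in a $p$-power-torsion target.
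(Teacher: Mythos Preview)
Your overall strategy is the paper's: compare with a rational bar spectral sequence known to collapse (this is Lemma~\ref{lem:collapse}, appealing to \cite{Knudsen:BNSCSVFH}), and show the comparison map is an isomorphism from the $\mathrm{E}_2$-page onward (this is Proposition~\ref{prop:ss comparison}). You have also correctly isolated the crux as the algebraic identification of $p^{-1}H^{\Lie^{\mathcal{H}_u}}(\mathfrak{g})$ with ordinary Lie algebra homology, and you are honest that this is where your argument is incomplete.

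The gap is real, and your sketch for closing it is not on target. The groups $(\mathcal{H}_u^{\Lie})_i^j(p^a)$ are \emph{not} $p$-torsion---for instance $(\mathcal{H}_u^{\Lie})_i^{i-1}(p)\cong\Gamma^i(p)^\vee$ is free over $E_0$---so after inverting $p$ the additive monad $\AAA$ is not the identity, and for a general Hecke module that is merely $E_\ast$-free there is no reason to expect $p^{-1}\AR(\mathfrak{g})\simeq p^{-1}\mathfrak{g}$. Proposition~\ref{Hexponent} concerns the \emph{suspension} maps between different $(\mathcal{H}_u^{\Lie})_i^j$, not the size of the Hecke operations themselves; by itself it does not force the additive bar construction to collapse rationally.

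The paper closes the gap by using the stably-split hypothesis in an essential way. By Lemma~\ref{lem:stably split}, the Hecke module underlying $\mathfrak{g}(M;S^a)$ is a direct sum of \emph{looped free} Hecke modules $\Omega^r\U\Free^{\Lie_{\mathcal{H}_u}}_{\Mod_{E_\ast}}(\Sigma^{n-1}E_\ast(S^a))$, reducing the comparison to $M=\RR^n$. For each such summand one writes down an explicit map of simplicial modules
\[
\mathrm{Bar}_\bullet(\id,\AAA,\Omega^n\mathbf{L}^{\mathcal{H}_u}(x))\longrightarrow\mathrm{Bar}_\bullet(\id,\AAA,\mathbf{L}^{\mathcal{H}_u}(x))[-n],\qquad [\alpha_1|\cdots|\alpha_r|y]\mapsto[\Susp^n(\alpha_1)|\cdots|\Susp^n(\alpha_r)|y].
\]
Now Proposition~\ref{Hexponent} (together with injectivity of $\Susp$) enters in exactly the right form: the suspension maps are rational isomorphisms, so this comparison is a rational equivalence. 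The target involves the genuinely free Hecke module $\mathbf{L}^{\mathcal{H}_u}(x)\cong\AAA(\mathbf{L}^{E_\ast}(x))$, whose bar construction collapses by an extra degeneracy onto $\mathbf{L}^{E_\ast}(x)[-n]$, i.e.\ onto the integral Lie algebra $\mathfrak{h}(M;S^a)$. So the stable splitting is not incidental: it is what lets you replace an arbitrary Hecke module by one for which the rational collapse is visible.
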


To prove this theorem, we will consider a variant of   \eqref{ESpS}  for the cohomology theory $(p^{-1}E)^\ast(-)$.
Indeed, write $\mathbf{L}^{p^{-1}E_*}$ for  the monad on 
$\Mod_{p^{-1}E_*}^{\NN}$ parametrising Lie algebras in the sense of \Cref{def:Lie algebra}. 
Since
$p^{-1}E\simeq  p^{-1}(\mathrm{W}(k)[[u_1,\ldots,u_{n-1}]])[u_n^{\pm 1}]$  is a form of rational cohomology, 
 a much   easier variant of \cite[Theorem 4.4.4]{brantnerthesis} asserts the existence of a  natural isomorphism
$$\mathbf{L}^{p^{-1}E_*} (\pi_\ast(Y)) \rightarrow \pi_\ast\left(\Free^{\mathscr{L}_{p^{-1}E}} Y\right)  $$
  for any $p^{-1}E$-module spectrum $Y$.
The same argument as in 
\Cref{thm:cohomology}  then shows that there is a completely convergent  weighted spectral sequence $\widehat{\mathrm{E}} = \{\widehat{\mathrm{E}}_r(M;S^a)\}$ with signature
$$\widehat{\mathrm{E}}_2^{s,t} = H^s\left(  \mathrm{Bar}_\bullet (\id,\, \mathbf{L}^{p^{-1}E_*} ,\,\mathfrak{h}(M;S^a)_{p^{-1}})^\vee  \right)_{t+1}  \Longrightarrow \ \ \ \bigoplus_k \  (\small{p^{-1}}E)^{t-s}   (B_k(M;S^a))$$ 
Here 
$\mathfrak{h}(M;S^a)_{p^{-1}}$ denotes the Lie algebra $\left(p^{-1}E\right)_* (\Free^{\mathscr{L}}(\Sigma^{n-1}S^a)^{M^+} )$.
A rational variant of the argument for \Cref{prop:bracket} allows us to identify this Lie algebra as 
$$\mathfrak{h}(M;S^a)_{p^{-1}}\cong \widetilde{(p^{-1} E)}^\ast(M^+) \myotimes{p^{-1} E_*} \mathbf{L}^{p^{-1} E_*}\left((p^{-1} E)_*(S^a)\right),$$
where the Lie bracket on the right hand side is again defined in terms of the cup product.
Since $M^+$ is a finite CW complex,  the canonical map 
$p^{-1} (\widetilde{E}^\ast(M^+) \myotimes{ E_*} \mathbf{L}^{E_*}\left( E_*(S^a)\right) )\rightarrow \mathfrak{h}(M;S^a)_{p^{-1}}$ is an isomorphism. Hence, we  have   identified an integral form $\mathfrak{h}(M;S^a)\in \Lie_{E_*}$ of $\mathfrak{h}(M;S^a)_{p^{-1}}$.

Sending the simplicial weighted spectrum $\mathrm{Bar}_\bullet (\id,\, \mathscr{L},\,  \Free^{\mathscr{L}}(\Sigma^{n-1}S^a)^{M^+}  )  )$  into the map   $E\rightarrow  p^{-1}E$, we obtain a canonical map of spectral sequences $$   \phi_r:  p^{-1}\mathrm{E}_r(M;S^a)\to \widehat{\mathrm{E}}_r(M;S^a).$$
A key step in the proof of \Cref{thm:free part} will be to show that these two rationalisations agree:
\begin{proposition}\label{prop:ss comparison}
Under the above assumptions, the map $\phi_r$    \mbox{ is an isomorphism for all $r\geq2$.}  
\end{proposition}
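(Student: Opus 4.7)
The plan is to realise $\phi_\bullet$ as the map of coskeletal spectral sequences induced by a natural map of cosimplicial spectra, thereby reducing the claim to the single case $r=2$. Write $Y := L_{K(h)}(E \otimes \Free^{\mathscr{L}}(\Sigma^{n-1}S^a)^{M^+})$. Both $\{\mathrm{E}_r\}$ and $\{\widehat{\mathrm{E}}_r\}$ arise, following the proof of \Cref{thm:cohomology} and its rational analog, from the Tot-towers of the cosimplicial spectra $E^{\Sigma\mathrm{Bar}_\bullet(\id,\mathscr{L}_E,Y)(k)}$ and $(p^{-1}E)^{\Sigma\mathrm{Bar}_\bullet(\id,\mathscr{L}_{p^{-1}E},p^{-1}Y)(k)}$ respectively. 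The unit $E \to p^{-1}E$ combined with the operadic comparison $\mathscr{L}_E \to \mathscr{L}_{p^{-1}E}$ (applied after base change along $E \to p^{-1}E$) induces a map of cosimplicial spectra, hence a map of spectral sequences, which after inverting $p$ on the source is precisely $\phi_\bullet$. Since \Cref{thm:cohomology} guarantees complete convergence of both sequences, an isomorphism on $\mathrm{E}_2$ suffices to give one on all $\mathrm{E}_r$ with $r \geq 2$.

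On $\mathrm{E}_2$-pages, exactness of $p^{-1}(-)$ identifies the left hand side with $H^s$ of $p^{-1}\mathrm{Bar}_\bullet(\id,\mathbf{L}^{{\mathcal{H}_u}},\mathfrak{g}(M;S^a))^\vee$. I would identify this, in each weight $k$ and simplicial level $n$, with $\pi_\ast(p^{-1}\mathscr{L}_E^{\circ(n+1)}(Y)(k))^\vee$ via iterated application of Brantner's identification (\Cref{finitefree}) together with the finiteness and freeness over $E$ of $\mathscr{L}_E^{\circ(n+1)}(Y)(k)$, which ensures that $p^{-1}(-)$ commutes both with $\pi_\ast$ and with $E_\ast$-duality. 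The right hand side is similarly identified with $\pi_\ast(\mathscr{L}_{p^{-1}E}^{\circ(n+1)}(p^{-1}Y)(k))^\vee$ via the rational analog of \Cref{finitefree}, which is direct since rational spectral Lie algebras carry no operations beyond their bracket. Thus the isomorphism on $\mathrm{E}_2$ reduces to showing that
\[
p^{-1}\mathscr{L}_E^{\circ n}(Y)(k) \;\overset{\sim}{\longrightarrow}\; \mathscr{L}_{p^{-1}E}^{\circ n}(p^{-1}Y)(k)
\]
is an equivalence of $p^{-1}E$-module spectra for every $n$ and $k$.

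This equivalence I would establish by induction on $n$, using that $p^{-1}(-)$ is a symmetric monoidal functor from $K(h)$-local $E$-modules to $p^{-1}E$-modules (with $K(h)$-localisation becoming trivial once $p$ is invertible), and that $\mathscr{L}_E$ is built operadically from smash products, wedges, and symmetric group homotopy orbits with spectral Lie operad pieces, all of which commute with $p^{-1}(-)$ on finite free inputs. The main obstacle I anticipate lies precisely in this geometric step: coupling the $K(h)$-localisation carefully with $p$-inversion at the operadic level, and verifying the displayed equivalence on the specific finite free $E$-module inputs at hand rather than at a more abstract $\infty$-categorical level. Once this is granted, the algebraic identification of the $\mathrm{E}_2$-pages---and hence the isomorphism $\phi_2$---is formal, and the claim for all $r \geq 2$ follows.
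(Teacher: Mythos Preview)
Your reduction to the $\mathrm{E}_2$-page is fine, but the displayed equivalence
\[
p^{-1}\mathscr{L}_E^{\circ n}(Y)(k) \;\overset{\sim}{\longrightarrow}\; \mathscr{L}_{p^{-1}E}^{\circ n}(p^{-1}Y)(k)
\]
is \emph{false}, already for $n=1$. Take $Y$ to be a single copy of $E$ in weight~$1$ and degree~$0$, and look at weight~$p$. By \Cref{finitefree} and \Cref{freeliehigher}, $\pi_\ast\mathscr{L}_E(Y)(p)\cong(\mathcal{H}_u^{\Lie})_0^{\ast}(p)$ is free over $E_\ast$ of rank $\tfrac{p^h-1}{p-1}$ (cf.\ \Cref{explicit!}), so $p^{-1}\mathscr{L}_E(Y)(p)$ is free over $p^{-1}E_\ast$ of that same positive rank. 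On the other hand, the free \emph{ordinary} Lie algebra over $p^{-1}E_\ast$ on one even class has nothing in weight~$p$ for $p>2$, so $\mathscr{L}_{p^{-1}E}(p^{-1}Y)(p)=0$. The point is that the $K(h)$-local monad $\mathscr{L}_E$ builds in completed homotopy orbits, and $p^{-1}L_{K(h)}(E\otimes B\Sigma_{p+})$ is genuinely larger than $p^{-1}E\otimes B\Sigma_{p+}$; the Hecke operations do not disappear upon inverting $p$ on the module.

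What \emph{is} true is that the Hecke operations are killed after passing to the bar construction, i.e.\ after taking derived indecomposables. This is exactly where the work lies, and it cannot be done levelwise on $\mathrm{E}_1$. The paper's proof makes this precise: it identifies $\phi_1$ as dual to the inclusion $\mathrm{Bar}_\bullet(\id,\mathbf{L}^{E_\ast},\mathfrak{h})\hookrightarrow\mathrm{Bar}_\bullet(\id,\mathbf{L}^{\mathcal{H}_u},\mathfrak{g})$, reduces via $Q^{\Lie_{E_\ast}}_{\Lie_{\mathcal{H}_u}}$ and the additive resolution to the map $\mathfrak{h}(M;S^a)\to\Barr_\bullet(\id,\AAA,\mathfrak{g}(M;S^a))$, then uses the stably split hypothesis to reduce to $M=\RR^n$, and finally invokes the fact that the suspension maps $(\mathcal{H}_u^{\Lie})_i^j(p^s)\to(\mathcal{H}_u^{\Lie})_{i+a}^{j+a}(p^s)$ are rational isomorphisms (their cokernels are $p$-power torsion by \Cref{Hexponent}). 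The rational acyclicity of the Hecke part of the bar complex is the actual content here, and your argument bypasses it entirely.
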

\begin{proof} In this proof, we will often omit forgetful functors from \vspace{2pt} the \mbox{notation to increase readability.}
We   first   need to explicitly identify the map $\phi_1$   on    $\mathrm{E}_1$-pages.
To this end, consider the following composite of maps of cosimplicial modules: 
\begin{align*}
\hspace{-10pt} p^{-1}  \mathrm{Bar}_\bullet \left(\id,\, \mathbf{L}^{{\mathcal{H}_u}}  ,\,\mathfrak{g}(M;S^a)\right)^\vee  &\xrightarrow{\phi_1} (\mathrm{Bar}_\bullet (\id,\, \mathbf{L}^{p^{-1}E_*} ,\,\mathfrak{h}(M;S^a)_{p^{-1}}) )^\vee\\
 & \xrightarrow{\cong} \left(p^{-1}\mathrm{Bar}_\bullet (\id,\, \mathbf{L}^{ E_*} ,\,\mathfrak{h}(M;S^a))\right)^\vee.
\end{align*}
The second map identifies the Bar construction of $\mathfrak{h}(M;S^a)_{p^{-1}}$ with the rationalised Bar construction of its integral form $\mathfrak{h}(M;S^a)$.

Unravelling the computation of the $E$-cohomology of free spectral Lie algebras, we see that 
the map $\phi_1$ kills all Hecke operations of higher weight. We deduce   that the above composite is obtained by applying the functor  $(p^{-1}(-))^\vee$
to the following natural inclusion:
\begin{equation} \label{bartrans}\mathrm{Bar}_\bullet (\id,\, \mathbf{L}^{ E_*} ,\,\mathfrak{h}(M;S^a)) \rightarrow 
  \mathrm{Bar}_\bullet \left(\id,\, \mathbf{L}^{{\mathcal{H}_u}}  ,\,\mathfrak{g}(M;S^a)\right)\end{equation}
To prove \Cref{prop:ss comparison}, it   suffices to show that \eqref{bartrans} is a weak equivalence after \mbox{inverting $p$.} 
The map \eqref{bartrans} is obtained by applying the functor  $Q_{\Lie_{E_*}}^{\Mod_{E_*}}$ from \Cref{indec}
to the natural inclusion 
\begin{equation} \label{six}\mathfrak{h}(M;S^a) \xrightarrow{\simeq}  \mathrm{Bar}_\bullet (\Free_{\Mod_{E_*}}^{\Lie_{E_*}},\, \mathbf{L}^{ E_*} ,\,\mathfrak{h}(M;S^a)) \rightarrow 
  Q_{\Lie_{{\mathcal{H}_u}}}^{\Lie_{E_*}} \mathrm{Bar}_\bullet \left(\Free_{\Mod_{E_*}}^{\Lie_{\mathcal{H}_u}}
,\, \mathbf{L}^{\mathcal{H}_u} ,\,\mathfrak{g}(M;S^a) \right)\end{equation}
Since  
$p^{-1} (Q_{\Lie_{E_*}}^{\Mod_{E_*}}(\mathfrak{g})) \cong Q_{\Lie_{p^{-1}E_*}}^{\Mod_{p^{-1}E_*}}(p^{-1}\mathfrak{g}) $ (as the corresponding square of right adjoints evidently commutes),  
 it suffices to check that \eqref{six}   is a weak equivalence after inverting $p$. It suffices to verify this on modules, and since  \mbox{$U_{\Lie_{E_*}}^{\Mod_{E_*}} \circ Q_{\Lie_{{\mathcal{H}_u}}}^{\Lie_{E_*}} \cong  Q_{\Mod_{{\mathcal{H}_u}}}^{\Mod_{E_*}}\circ U_{\Lie_{\mathcal{H}_u}}^{\Mod_{\mathcal{H}_u}}$}, proceeding as in the proof of \Cref{prop:additive resolution works}
allows us to further reduce to the claim that the following map is a weak equivalence after inverting $p$:
\begin{equation} \label{seven}\mathfrak{h}(M;S^a) \rightarrow \Barr_\bullet (\id,\AAA,\U_{\Mod^{\mathcal{H}_u}}^{\Lie_{{\mathcal{H}_u}}}( \mathfrak{g}(M;S^a))).\end{equation}

Here, $\AAA$ is the monad introduced in \Cref{AdditiveMonadAAA}. Since $M$ is stably split, we may by \Cref{lem:stably split} assume without restriction that $M=\RR^n$. In this case, the above map of modules  is given by
\begin{equation}\Omega^n \mathbf{L}^{ E_*} (x_{n+a-1}) \rightarrow \Barr_\bullet (\id,\AAA,\Omega^n \mathbf{L}^{\mathcal{H}_u} (x_{n+a-1})),\end{equation} 
On the left, $\Omega^n \mathbf{L}^{ E_*} (x_{n+a-1})$ is simply the $(-n)^{th}$ shift of $\mathbf{L}^{ E_*} (x_{n+a-1})$. On the right, we have used the looping functor $\Omega^n$ on Hecke modules from \Cref{loopingmodules}.
We now observe a commuting diagram  
\begin{diagram} 
\Omega^n \mathbf{L}^{ E_*} (x_{n+a-1}) & \rTo  & \Barr_\bullet (\id,\AAA,\Omega^n \mathbf{L}^{\mathcal{H}_u} (x_{n+a-1}))& \rTo  & \Omega^n \mathbf{L}^{\mathcal{H}_u} (x_{n+a-1})\\
\dTo^\cong & & \dTo^{\Barr(\id, \Susp^a, - )}& & \dTo^\cong \\
\mathbf{L}^{ E_*} (x_{n+a-1})[-n] & \rTo  & \Barr_\bullet (\id,\AAA, \mathbf{L}^{\mathcal{H}_u} (x_{n+a-1}))[-n]& \rTo  &  \mathbf{L}^{\mathcal{H}_u} (x_{n+a-1})[-n]
\end{diagram}
The middle  map sends a typical element $[\alpha_1 | \ldots |  \alpha_r| \ y \ ]$ to $[\Susp^a(\alpha_1) | \ldots |  \Susp^a(\alpha_r)|\  y\ ]$, and the lower composite is the identity.

Since the suspension morphisms $ ({\mathcal{H}_u^{\Lie}})_i^j(p^s)    \xrightarrow{\Susp^a}   ({\mathcal{H}_u^{\Lie}})_{i+a}^{j+a}(p^s) $ are rational isomorphisms for all $i,j$,   it suffices to prove that the map 
$$ \mathbf{L}^{E_\ast} (x_{n+a-1})   \rightarrow \Barr_\bullet (\id,\AAA, \mathbf{L}^{\mathcal{H}_u} (x_{n+a-1})) $$ 
is a rational equivalence,  which holds as  $\mathbf{L}^{\mathcal{H}_u} (x_{n+a-1}) \cong \AAA \circ \mathbf{L}^{E_\ast} (x_{n+a-1})$.\end{proof}

In order to conclude \Cref{thm:free part}, we require one further result.

\begin{lemma}\label{lem:collapse}  
Under the above assumptions,  $\{\widehat{\mathrm{E}}_r(M;S^a)\}$ degenerates at the $\widehat{\mathrm{E}}_2$-page.
\end{lemma}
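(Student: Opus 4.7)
The plan is to exploit the fact that in characteristic zero, the rational cohomology of labelled configuration spaces has already been computed directly, and that this known answer coincides with the $\widehat{\mathrm{E}}_2$-page -- forcing collapse by dimension count.

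First, I would make the $\widehat{\mathrm{E}}_2$-page explicit. A rational variant of \Cref{prop:bracket} identifies $\mathfrak{h}(M;S^a)_{p^{-1}}$ as the graded Lie algebra $\widetilde{(p^{-1}E)}^\ast(M^+)\otimes_{p^{-1}E_*}\mathbf{L}^{p^{-1}E_*}(\Sigma^{n-1}(p^{-1}E)_*(S^a))$ with bracket determined by the cup product on $\widetilde{(p^{-1}E)}^\ast(M^+)$. Over the rational base ring $p^{-1}E_*$, the Hecke power operations of weight $>1$ are trivial, so the analogue of \Cref{thm:hecke chevalley--eilenberg works} reduces to the classical \Cref{thm:chevalley--eilenberg works}: the $\widehat{\mathrm{E}}_2$-page is the $p^{-1}E_*$-linear dual of the ordinary Chevalley--Eilenberg complex of $\mathfrak{h}(M;S^a)_{p^{-1}}$ in the appropriate total grading.

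Next, I would match this $\widehat{\mathrm{E}}_2$-page with the abutment using the known rational computation of configuration space cohomology. The third author's theorem from \cite{knudsen2016higher} (and, for the surface case, \cite{Knudsen:BNSCSVFH,DrummondColeKnudsen:BNCSS}) identifies the rational cohomology of $\bigoplus_k B_k(M;S^a)$ precisely with the Chevalley--Eilenberg cohomology of $\mathfrak{h}(M;S^a)_{p^{-1}}$, weight by weight and bidegree by bidegree. Since our spectral sequence $\widehat{\mathrm{E}}_r$ converges completely to $\bigoplus_k(p^{-1}E)^{t-s}(B_k(M;S^a))$, and its $\mathrm{E}_2$-page is already this Lie algebra cohomology, the convergent filtration on the abutment must be trivial: every $d_r$ for $r\geq 2$ lands in a bidegree whose $E_\infty$-dimension equals its $\mathrm{E}_2$-dimension, so all higher differentials vanish.

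The main obstacle is the comparison step in the second paragraph, which packages a substantive rational computation. One can phrase it either as an external input (Knudsen's formula, which rests on the rational formality of the little $n$-disks operad combined with factorisation homology), or as an operadic/Koszul-duality argument showing directly that the bar spectral sequence for the Koszul-dual pair $(\mathscr{L}_\mathbb{Q},\mathrm{Com}_\mathbb{Q})$ degenerates. Either way, the essential point is that in characteristic zero the PBW decomposition $\U(\mathfrak{h})\cong \Sym(\mathfrak{h})$ makes the Chevalley--Eilenberg computation exact, with no room for further differentials compatible with the internal, weight, and homological gradings recorded in \Cref{quadruple}.
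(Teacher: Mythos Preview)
Your proposal is correct and follows essentially the same strategy as the paper: both arguments invoke the rational computation from \cite{Knudsen:BNSCSVFH} to see that the abutment already matches the $\mathrm{E}_2$-page, so a dimension count forces all higher differentials to vanish. The only cosmetic difference is that the paper first reduces $\widehat{\mathrm{E}}_r$ to a purely $\QQ$-based spectral sequence $\overline{\mathrm{E}}_r$ (using that $p^{-1}E$ is rational) before citing the rational result, whereas you work directly over $p^{-1}E_*$; either route is fine.
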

\begin{proof}
Write $\{\overline{\mathrm{E}}_r(M;S^a)\}$ for the spectral sequence obtained by mapping the weighted simplicial spectrum
$\mathrm{Bar}_\bullet (\id,\, \mathscr{L},\,  \Free^{\mathscr{L}}(\Sigma^{n-1}S^a)^{M^+}  )  )$ into the rational
Eilenberg--MacLane spectrum  of  $\QQ$.
There is an evident map of spectral sequences
$$(p^{-1}E_\ast) \otimes_{\QQ} \overline{\mathrm{E}}_r(M;S^a) \rightarrow \widehat{\mathrm{E}}_r(M;S^a),$$
and this map is an isomorphism as $p^{-1}E $ is a rational spectrum.

It therefore suffices to prove that the $ \QQ$-based spectral sequence $ \overline{\mathrm{E}}_r(M;S^a) $ degenerates. This follows from  the main result of \cite{Knudsen:BNSCSVFH}, which shows that the rational cohomology of the weighted spectrum $\bigoplus_k B_k(M;S^a)$ agrees with the  $ \overline{\mathrm{E}}_2 $-page of this spectral sequence. Thus, no further differentials can occur.
\end{proof}

\begin{remark}
For general $X$, higher differentials do occur. These differentials are ``tensored up'' from differentials that occur rationally, which are related to Massey products in $\widetilde H^*(M^+;\mathbb{Q})$. In particular, the conclusion of Lemma \ref{lem:collapse} holds for  general $X$ under the assumption that $M^+$ is rationally formal.
\end{remark}

\section{Configurations of $p$ points in $\RR^n$}\label{Euclid}
Given a  Euclidean space $\RR^n$ and an \mbox{integer $k$,} we consider the following spectrum \label{removed "fix E-theory" comment as done again below}
$$B_p(\RR^n; S^k)= \Sigma_+^{\infty} \Conf_p(\RR^n) \otimes_{h\Sigma_p} (S^k)^{\otimes p},$$ which is the weight $p$ component in the  free $\EE_n$-algebra ${\EE_n}(S^k) = B(\RR^n;S^k)$ on a  single generator in degree $k$. If $k$ is nonnegative, then Snaith's theorem shows that ${\EE_n}(S^k)$ is  equivalent to
$\Sigma^\infty_+ \Omega^n  S^{n+k}$.
In this section, we will use the methods introduced above to compute the $E$-theory of these spaces for all heights $h$ and all dimensions $n$.
\subsection{Warmup: The $K$-theory of $p$ points in $\RR^n$.}\label{warmupsection}
We shall begin at height $1$, where calculations are notationally simpler, geometrically most  significant, and an illustrative template for the general height calculations in Section \ref{section:higher heights} below. 

We fix a specific height one Morava $E$-theory $K$ with $E_*=K_{*}=\mathbb{Z}_p[u^{\pm1}]$, namely  $p$-completed complex  $K$-theory. In this section, we will prove:

\begin{theorem}[$K$-theory of configurations in Euclidean space] \label{warmup}
Given $n\geq 1$ and $k\in \ZZ$, there are isomorphisms of  $K_*$-modules  
$$ K_*^{\wedge} \left(\Conf_p(\RR^n)_+ \otimes_{h\Sigma_p}(S^k)^{\otimes p} \right) \ \  \cong \ \    \begin{cases}
       \Sigma^{kp}K_\ast     \oplus \Sigma^{pk+n-1} K_*    \oplus \Sigma^{k-1} K_\ast/p^{ \frac{n }{2} -1 }     & \mbox{for } n \mbox{   even, } k \mbox{   even}\\
 \Sigma^{k-1}K_\ast/ p^{ \frac{n}{2} } & \mbox{for } n \mbox{ even, } k \mbox{  odd}\\
 \Sigma^{kp}K_\ast \oplus \Sigma^{k-1} K_\ast/p^{ \frac{n-1}{2}  } & \mbox{for } n \mbox{  odd, } k \mbox{  even}\\
 \Sigma^{k+ (2k+n-1)(\frac{p-1}{2})} K_\ast \oplus \Sigma^{k-1} K_*/p^{  \frac{n-1}{2} }& \mbox{for } n  \mbox{  odd, } k \mbox{  odd}
\end{cases} $$ 
  
\end{theorem}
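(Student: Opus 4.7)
The plan is to apply the Hecke spectral sequence of \Cref{thm:main} with $M=\RR^n$ and $X=S^k$, reducing the computation to the Hecke Lie algebra homology of $\mathfrak{g}:=\mathfrak{g}(\RR^n;S^k)$ at weight $p$, followed by analysis of the higher differentials.

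First, I will identify $\mathfrak{g}$ as a Hecke Lie algebra. By \Cref{prop:bracket} and the identification $\widetilde K^{\ast}(S^n)\cong \Sigma^{-n}K_\ast$, the underlying Lie bracket factors through the cup product on $\widetilde K^{\ast}(S^n)$, which vanishes identically on the reduced cohomology of a sphere; hence $\mathfrak{g}$ is abelian as a Lie algebra. \Cref{lem:stably split}, combined with the standard cellular decomposition of $(\RR^n)^+ = S^n$, identifies the underlying Hecke module as $\Omega^n\U_{\Mod_{\mathcal{H}_u}}^{\Lie_{\mathcal{H}_u}}\Free_{\Mod_{K_\ast}}^{\Lie_{\mathcal{H}_u}}(K_\ast\langle x_{n+k-1}\rangle)$. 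In weights at most $p$, this generates a class $y$ (weight $1$, internal degree $k-1$); a class $z$ (weight $2$, degree $n+2k-2$, present precisely when $n+k-1$ is odd); and $\alpha y$ (weight $p$, degree $k-2$). The Hecke action of $\alpha$ on $y$ is dictated by \Cref{prop:hecke desuspension} and \Cref{cor:loop operations}, which at height one (via the identification $e=-p$ from \Cref{explicit!}) implement the $n$-fold suspension as multiplication by an explicit power of $p$.

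Next, I will compute the Hecke Lie algebra homology at weight $p$ via the Hecke Chevalley--Eilenberg complex of \Cref{def:hecke chevalley--eilenberg}. Since $\mathfrak{g}$ is abelian, every levelwise CE differential vanishes and $\CE_{\mathcal{H}_u}(\mathfrak{g})$ reduces to divided powers $\Gamma$ applied to the shift of the additive resolution $\AR(\mathfrak{g})$, with nontrivial structure coming only from the simplicial direction. Using \Cref{exponential}, the candidate weight-$p$ generators, organised by parity, are $\gamma_p(\sigma y)$ (existing when $|\sigma y|=k$ is even), $\langle\sigma z\rangle\,\gamma_{p-2}(\sigma y)$ (when $n$ and $k$ are both even, so both $z$ and $\gamma_{p-2}(\sigma y)$ exist), $\langle\sigma y\rangle\,\gamma_{(p-1)/2}(\sigma z)$ (when $n$ and $k$ are both odd, so $|\sigma z|$ is even, matching the $\Sigma^{k+(2k+n-1)(p-1)/2}$ summand), and $\sigma\alpha y$. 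These generate the $\mathrm{E}^2$-page in each case.

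Finally, I will analyse the higher differentials. Following the pattern flagged in the remark after \Cref{prop:general calculations}, a nontrivial $d^{p-1}$-differential relates $\gamma_p(\sigma y)$ (or its analogue when $k$ is odd) to a multiple $p^N\sigma\alpha y$, producing the torsion summand $\Sigma^{k-1}K_\ast/p^N$. The exponent $N$ is controlled by iterating \Cref{suspensiondiagram} $n$ times with Euler class $e=-p$ at height one; careful bookkeeping yields the four case-dependent values $\tfrac{n}{2}-1$, $\tfrac{n}{2}$, $\tfrac{n-1}{2}$, $\tfrac{n-1}{2}$. The free summands arise as the classes surviving $d^{p-1}$ to $\mathrm{E}^\infty$; reading these off in each of the four parity combinations yields the stated formulas. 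The main obstacle will be pinning down the exact $p$-exponent of $d^{p-1}$ in each case, for which I anticipate needing a careful combination of \Cref{prop:hecke desuspension} with the Koszul-style presentation of Hecke operations at height one recorded in \Cref{Lieheightone}.
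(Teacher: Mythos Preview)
Your setup through the $\mathrm{E}^2$-page is essentially correct and matches the paper's approach: the Hecke Lie algebra $\mathfrak{g}(\RR^n;S^k)$ is abelian, its Hecke module structure is governed by the $n$-fold suspension of the height-one operation $\alpha$, and the weight-$p$ part of $\CE_{\mathcal{H}_u}$ is exactly the additive resolution you describe together with the divided-power classes.  However, there is a genuine gap in your treatment of the $d^{p-1}$ differential, and a second missing ingredient for two of the four parity cases.

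The torsion on the $\mathrm{E}^2$-page is $\Sigma^{k-1}K_\ast/p^{\lceil n/2\rceil}$ (or $p^{\lfloor n/2\rfloor}$), already determined by the Hecke action you compute; the role of $d^{p-1}$ is to \emph{reduce} this exponent by one, with $\gamma_p([x])$ mapping to a generator of the torsion and $[x]^p=p\,\gamma_p([x])$ surviving.  You propose to pin down $d^{p-1}$ via \Cref{prop:hecke desuspension} and \Cref{Lieheightone}, but those results only control the $\mathrm{E}^2$-page; they say nothing about higher differentials in the bar spectral sequence.  The paper's mechanism is different and essential: for $n=1$ the James splitting forces $K^\wedge_\ast(\Omega S^{k+1})$ to be torsion-free, so $d^{p-1}$ must kill the $p$-torsion class there; one then constructs a map of spectral sequences $\mathrm{E}^r(p,1,k)\to\mathrm{E}^r(p,n,k)$ from the Lie algebra map $\Omega\Free^{\mathscr{L}}(S^k)\to\Omega^n\Free^{\mathscr{L}}(S^{n+k-1})$ and reads off that $d^{p-1}$ hits $p^{\lfloor n/2\rfloor}$ times a generator for general odd $n$.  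Without an anchor of this kind your proposal cannot determine whether $d^{p-1}$ is zero, a unit, or something in between.

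Second, the paper does \emph{not} run the spectral sequence directly in the two cases where $n$ and $k$ have the same parity.  Instead it invokes Serre's $p$-local splitting $\Omega S^{2m}_{(p)}\simeq S^{2m-1}_{(p)}\times\Omega S^{4m-1}_{(p)}$, loops it $n-1$ more times, and reduces those cases to the already-established ($n$ odd, $k$ even) case.  Your proposal attempts all four cases uniformly via the spectral sequence; this would require establishing the differential in configurations (e.g.\ $n$ even, $k$ even) where the $\mathrm{E}^2$-page has three nonzero filtrations rather than two, and the paper sidesteps that analysis entirely.
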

\begin{remark} 
As a sanity check,  observe that the torsion-free components take the expected form. Indeed, $ K_*\left({\EE_n}(S^k)\right)$ forms a Poisson algebra with commutative product $\cdot$ and Lie bracket $[-,-]$ of degree $n-1$ satisfying $[x,x] = (-1)^{|x|+n}[x,x]$.

Heuristically, we therefore expect to see the following torsion-free classes:\vspace{-10pt}
\end{remark}
 
\begin{figure}[H]
\begin{center}\def\arraystretch{1.5}
\begin{small}
\begin{tabular}{c|cccc} 
$k\backslash n$ & \ \ even & odd & \\
\hline 
even \ \ \ \  &$  \ \  \ \   [x_k,x_k]\cdot x_k^{p-2} \ , \   x_k^p$  \ \ \ \ \ &$x_k^p\vspace{-15pt}  $  
\\\\
odd \ \ \ \   &$   $ &  $ [x_k,x_k]^{\frac{p-1}{2}} \cdot x_k$
\end{tabular}
\end{small}
\end{center}  
\end{figure} \vspace{-10pt}

To prove \Cref{warmup}, we introduce the following very simple Hecke Lie algebras: 
\begin{definition}[Atomic Hecke Lie algebras at height $1$]\label{atomicalgebra}
Given positive integers $n, w$ and an integer $a$, the atomic Hecke Lie algebra $\mathfrak{g}_{a,n}^{(w)}$ is defined as follows:
\begin{enumerate} 
\item The underlying $K_*$-module of $\mathfrak{g}$ is free on two generators:
\begin{itemize} \item a generator $x$ in internal degree $a$ and weight $w$ 
\item a generator $y_{a-1}$ in internal degree $a-1$ and weight $pw$. \end{itemize}
\item The  Hecke operations are determined  by    (cf. \Cref{Lieheightone}):
$$\alpha x =   \alpha_a x  = \begin{cases} 
p^{ \lfloor \frac{n}{2} \rfloor} y  & \mbox{ if } a \mbox{ is even}\\
   p^{ \lceil \frac{n}{2} \rceil} y     & \mbox{ if } a \mbox{ is odd}    \end{cases}$$
\item All Lie brackets vanish.
\end{enumerate} 
\end{definition}

In the following sections, we will use small subscripts $(-)_{(i,j,r,w)}$ to  indicate the quadruple grading\footnote{We grade the Hecke Chevalley--Eilenberg complex by  $\mbox{(internal degree, homological degree, simplicial degree, weight).}$}  on the Hecke Chevalley--Eilenberg  complex, which \mbox{was defined in \Cref{quadruple} above.}
Our computation will rely on knowing the homology of the  atomic algebras from \Cref{atomicalgebra}:

\begin{lemma}[Homology of atomic Hecke Lie algebras]\label{atomiclemma}
In weights $k\leq pw$,  
\mbox{we have  isomorphisms:}  
$$\mbox{For $a$ even: }\HH^{\Lie^{\mathcal{H}_u}}(\mathfrak{g}_{a,n}^{(w)})(k) 
 \cong ( \Lambda_{K_\ast}[x_{(a,1,0,w)}] \oplus K_\ast/p^{\lfloor \frac{n}{2} \rfloor} [  y_{(a-1,1,0,pw)} ]) (k).$$
$$\mbox{For $a$ odd: } \    \HH^{\Lie^{\mathcal{H}_u}}(\mathfrak{g}_{a,n}^{(w)})(k) \cong (\Gamma_{K_\ast}[x_{(a,1,0,w)}]  \oplus  K_\ast/p^{\lceil \frac{n}{2} \rceil} [y_{(a-1,1,0,pw)} ] ) (k) .\vspace{5pt}$$
Here $\Lambda_{K_\ast}$ constructs the free exterior    and $\Gamma_{K_\ast}$ the free divided $K_\ast$-algebra. \vspace{-3pt}
\end{lemma}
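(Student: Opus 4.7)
The plan is to exploit the triviality of the Lie bracket in $\mathfrak{g}=\mathfrak{g}_{a,n}^{(w)}$ in order to reduce the Hecke Chevalley--Eilenberg complex to a pure divided-power computation on a small flat model of the additive resolution. Since every Lie bracket in $\mathfrak{g}$ vanishes by construction, the formula in \Cref{construction:additive resolution} forces the Lie bracket of $\AR(\mathfrak{g})$ to vanish identically; consequently the Chevalley--Eilenberg differential is zero, and
\[
\CE_{\mathcal{H}_u}(\mathfrak{g}) \;=\; \Gamma_{E_*}\!\bigl(\AR(\mathfrak{g})[1]\bigr)
\]
as a simplicial chain complex of weighted $E_*$-modules.

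Next I would identify $\pi_*\AR(\mathfrak{g})$ in the range $k\leq pw$. By \Cref{prop:additive resolution works} we have $\AR(\mathfrak{g})\simeq \LQ_{\Lie_{\mathcal{H}_u}}^{\Lie_{E_*}}(\mathfrak{g})$, and on underlying simplicial $E_*$-modules this computes $\LQ_{\Mod_{\mathcal{H}_u}}^{\Mod_{E_*}}(U(\mathfrak{g}))$. Using the explicit height-one Hecke ring of \Cref{Lieheightone}, resolve $U(\mathfrak{g})$ as a Hecke module by the two-step complex
\[
\AAA\langle z\rangle \;\xrightarrow{\,z\mapsto \alpha_a x-p^c y\,}\;\AAA\langle x,y\rangle\;\twoheadrightarrow\; U(\mathfrak{g}),
\]
where $c=\lfloor n/2\rfloor$ or $\lceil n/2\rceil$ according to the parity of $a$; the height-one vanishing $\alpha_{a-1}\alpha_a=0$ cuts off further syzygies in this range. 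Applying $Q_{\Mod_{\mathcal{H}_u}}^{\Mod_{E_*}}$ annihilates the weight-$p$ operations and reduces the above to the trivial complex $E_*$ in weight $w$ and to $E_*\xrightarrow{-p^c} E_*$ in weight $pw$; taking homology gives $\pi_0=E_*\cdot x$ at weight $w$ and $\pi_0=(E_*/p^c)\cdot y$ at weight $pw$, with higher homotopy vanishing.

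The final step is to compute $\pi_*\Gamma_{E_*}(\AR(\mathfrak{g})[1])$ against a convenient flat replacement. Build a levelwise-flat simplicial $E_*$-module $N$ with the same homotopy: let $N(w)$ be the constant simplicial $E_*$ in internal degree $a$, and $N(pw)$ the Dold--Kan image of the two-term flat complex $E_*\xrightarrow{p^c} E_*$ placed in chain degrees $1,0$ and internal degree $a-1$. \Cref{cor:flatgamma invariance} then gives $\Gamma_{E_*}(N[1])\simeq\Gamma_{E_*}(\AR(\mathfrak{g})[1])$, and the exponential property (\Cref{exponential}) decomposes this as $\Gamma(N[1](w))\otimes \Gamma(N[1](pw))$. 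In weights $k\leq pw$ the only nontrivial contributions come from $\Gamma^{k/w}(N[1](w))$ for $k\leq(p-1)w$, together with $\Gamma^p(N[1](w))\oplus \Gamma^1(N[1](pw))$ at $k=pw$. The factor $\Gamma^1(N[1](pw))=N[1](pw)$ contributes $\pi_1=E_*/p^c$ at position $(a-1,1,0,pw)$---this is the torsion class. A Dold--Puppe-style identification $\pi_*\Gamma^j(N[1](w))\cong\Gamma^j_{\mathrm{graded}}\bigl(\pi_1 N[1](w)\bigr)$ on the ``shifted constant'' simplicial module $N[1](w)$---whose generator $\sigma x$ has total degree $a+1$---gives an exterior structure (vanishing for $j\geq 2$) when $a$ is even, and a free divided-power structure $E_*\cdot\gamma_j(\sigma x)$ at position $(ja,j,0,jw)$ when $a$ is odd. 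Cross terms such as $(\sigma x)^{p-1}\sigma y$ all lie at weights $>pw$ and so drop out.

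The main technical obstacle will be the Dold--Puppe step: justifying the transfer $\pi_*\Gamma^j(-)=\Gamma^j_{\mathrm{graded}}(\pi_*-)$ on these circle-like simplicial modules, with careful bookkeeping of Koszul signs via the total (internal plus chain) degree and using invariance from \Cref{divpow}. Once the parity dichotomy is in place, summing the above pieces exactly reproduces the claimed formulas, and the rest is routine bookkeeping against \Cref{exponential} and the explicit height-one data of \Cref{Lieheightone}.
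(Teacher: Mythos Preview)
Your argument is correct in outline and its core is the same parity computation the paper makes, but you have wrapped it in machinery that is not needed here, and the obstacle you flag at the end does not actually arise.

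The paper's proof is entirely concrete: it writes down the normalised bicomplex of $\CE(\AR(\mathfrak{g}))$ in weights $\leq pw$. Because the bracket vanishes the Chevalley--Eilenberg differential is zero, and because the height-one Hecke ring is so small the normalised complex of $\AR(\mathfrak{g})$ has at most two terms in each relevant weight---constant $E_*\langle x\rangle$ in weight $w$, and $E_*\langle[\alpha\,|\,x]\rangle \xrightarrow{p^c} E_*\langle[y]\rangle$ in weight $pw$. The parity of the total degree $a+1$ of $\sigma x$ then decides, via \Cref{exponential}, whether $\gamma_j(\sigma x)$ vanishes for $j\geq 2$, and the homology is read off.

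Two remarks on your route. First, the flat model $N$ is not merely weakly equivalent to $\AR(\mathfrak{g})$ in weights $\leq pw$: by Dold--Kan it is \emph{isomorphic} to it, since the normalised chain complex of $\AR(\mathfrak{g})(pw)$ is already the two-term complex you describe. So the replacement step and the appeal to \Cref{cor:flatgamma invariance} are superfluous. Second, the ``Dold--Puppe step'' is a red herring. The shift $[1]$ in $\CE(\mathfrak{h})=\Gamma(\mathfrak{h}[1])$ lives in the \emph{chain} direction, which stays separate from the simplicial direction of $\AR$: at each simplicial level $r$ one applies $\Gamma$ to the one-term chain complex $\AR_r(\mathfrak{g})$ placed in chain degree $1$, where the parity argument is the elementary one from \Cref{exponential}. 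Since $N(w)$ is constant in the simplicial direction, $\Gamma^j(N(w)[1])$ is constant there as well, and its total homology is just its value at level zero---no d\'ecalage identity is required.
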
 
\begin{proof}[Proof of \Cref{atomiclemma}]
We depict the additive resolution 
  $\AR(\mathfrak{g}_{a,n}^{(w)})$ of \Cref{construction:additive resolution} in the following diagram:\vspace{-10pt}
\begin{figure}[H]
\begin{small}
\begin{center}\def\arraystretch{1.5}
\begin{tabular}{c|cccc}
$r\backslash weight$ &$1w$ & $pw$ & $p^2w$&$\cdots$\\
\hline
$0$&$\substack{\left[x\right]_{_{(a,1,0,1w)}}}$ & \ \ \ \ \ $\substack{\left[y\right]_{_{(a-1,1,0,pw)}}}$ & $0$  \ \ \ \ \ &$\cdots$\ \vspace{-9pt}
\\\\
$1$&$\substack{\left[1|x\right]_{_{(a,1,1,1w)}}}$ & \ \ \ \ \  $\substack{\left[ \alpha | x \right]_{_{(a-1,1,1,pw)}}\\ \left[ 1 | y \right]_{_{(a-1,1,1,pw)}}}$  &$\substack{ \ \ \ \ \ \left[\alpha|y\right]_{_{(a-2,1,1,p^2w)}}}$&$\cdots$ \vspace{-5pt}
\\\\
$2$&$\substack{\left[1|1|x\right]_{_{(a,1,2,1w)}}}$ &  \ \ \ \ \    $\substack{  \left[ \alpha | 1 | x \right]_{_{(a-1,1,2,pw)}} \\\left[1 | \alpha | x \right]_{_{(a-1,1,2,pw)}} \\ \left[ 1 | 1 | y \right]_{_{(a-1,1,2,pw)}}}$  &$\cdots$&$\cdots$\\
\vdots&\vdots&\vdots&\vdots
\end{tabular}\vspace{-10pt}
\end{center} 
\end{small}
\end{figure} \ \vspace{-5pt}
 
The second index records 
 the homological degree 
 in the simplicial chain complex $\CE(\AR(\mathfrak{g}_{a,n}^{(w)}))$. The shift by $+1$ 
  in the Chevalley--Eilenberg complex is reflected by the second entries  
being $1$ rather than $0$.  
By \Cref{thm:hecke chevalley--eilenberg works},  $\HH^{\Lie^{\mathcal{H}_u}}(\mathfrak{g}_{a,n}^{(w)})$ is the homology \mbox{of the     complex}
$$\CE(\AR(\mathfrak{g}_{a,n}^{(w)})) =  \Gamma_{K_\ast}(\AR(\mathfrak{g}_{a,n}^{(w)})[1]).$$

For $a$  even,   $[x]$ has odd total degree in the chain complex  $\AR_0(\mathfrak{g}_{a,n}^{(w)})[1]$. Since the tensor product of chain complexes of $K_\ast$-modules incorporates the Koszul sign rule for both internal and chain  degree, this means that $[x]^i$ vanishes in $\CE(\AR(\mathfrak{g}_{a,n}^{(w)})) $ when $i>1$. The normalised chain complex of $\CE(\AR(\mathfrak{g}_{a,n}^{(w)}))$ is  therefore given by $$ \ldots \rightarrow 0\rightarrow 0\rightarrow 0 \rightarrow 0 \rightarrow [1] \ \mbox{ in weight } 0,$$
$$ \ldots  \rightarrow 0 \rightarrow 0\rightarrow 0 \rightarrow [x]\rightarrow 0  \ \mbox{ in weight } w,$$
$$ \ \  \ \ \ \     \ldots  \rightarrow 0 \rightarrow [\alpha|x] \xrightarrow{p^{\lfloor\frac{n}{2} \rfloor}} [y]\rightarrow 0 \ \mbox{ in weight } pw, \ \ \ \ \  $$
and vanishes for all other smaller weights.

For $a$ odd, a similar argument shows that the normalised chain complex of $\CE(\AR(\mathfrak{g}_{a,n}^{(w)}))$  
is  
$$\ \  \ \  \ \   \ \  \ \ \ \   \ldots \rightarrow 0\rightarrow 0\rightarrow 0\rightarrow 0 \rightarrow   0    \xrightarrow{  }  \ [x]^i   \ \rightarrow 0 \ \rightarrow  \ldots \ \rightarrow 0 \mbox{ \ \  \ in weight } iw  \mbox{ for } i=0, 1, \ldots,  (p-1), \mbox{\    \ \  \ \ \  } \ \ \ \ \ \vspace{-2pt}  $$ 
$$   \ \   \ \ \ \  \ \ \ldots  \rightarrow \gamma_p([x]) \rightarrow  0 \rightarrow \ldots  \rightarrow 0 \rightarrow [\alpha|x] \xrightarrow{p^{\lceil \frac{n}{2} \rceil}}  [y] \rightarrow 0\ \   \ \mbox{ in weight }pw,   \ \ \ \  \ \  \ \  \ \ \ \   \   \ \  \ \ \ \ \ \ \ \  \ \ \  \ \ \ \ \ \vspace{2pt}  $$
and vanishes in all other weights below $pw$. Here $\gamma_p([x])$ denotes the $p^{th}$ divided power of $[x]$.
\end{proof}

We recall  the explicit structure of some free Hecke Lie algebras from \cite[Section 4.4.2]{brantnerthesis}:
\begin{corollary}[Free Hecke Lie algebras on one generator at height $1$] \label{freelieone}
If $x_a$ is a generator in even degree $a$, then $\LL(x_a)$  is generated as a $K_\ast$-module by  $x\in \LL(x_a)_{a}$ and $y\in \LL(x_a)_{a-1}.$
The Lie bracket vanishes, and the Hecke operations are determined by  $\alpha_a(x)=y$ and $\alpha(y)=0$.

If $x_a$ is a generator in odd degree $a$, then  $\LL(x_a)$  is generated as $K_\ast$-module by four classes
$$x\in \LL(x_a)_{a} \ , \ \ y\in \LL(x_a)_{a-1} \ , \ \
\widetilde{x}\in \LL(x_i)_{2a} \ , \ \ \widetilde{y}\in \LL(x_i)_{2a-1}$$
The Lie bracket satisfies $[x,x]=\widetilde{x}$ and vanishes otherwise; the Hecke operations \mbox{are determined by} $$\alpha_a(x)=y \ , \ \  \alpha_{2a}(\widetilde{x})=\widetilde{y}\ , \ \ \alpha_{a-1}(y)=\alpha_{2a-1}({\widetilde{y}})=0.$$

\end{corollary}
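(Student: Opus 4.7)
The plan is to compute $\LL$ applied to the free $K_\ast$-module on $x_a$ by unpacking the definition of the free Hecke Lie algebra monad and invoking the explicit description of the height-one Hecke power ring ${\mathcal{H}_u^{\Lie}}$ from Example \ref{Lieheightone}. The first task is to determine the underlying free Lie algebra $\Free_{\Mod_{K_\ast}}^{\Lie_{K_\ast}}(x_a)$ in the sense of Definition \ref{def:Lie algebra}.

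Since $2 \in K_\ast^\times$, the antisymmetry axiom forces $[u,u]=0$ for every $u$ of even internal degree. When $a$ is even, this yields $[x,x]=0$, so $\Free_{\Mod_{K_\ast}}^{\Lie_{K_\ast}}(x)=K_\ast\cdot x$ is concentrated in weight $1$. When $a$ is odd, we obtain one additional class $\widetilde{x}:=[x,x]$ in degree $2a$ and weight $2$; the third axiom $[u,[u,u]]=0$ applied to $x$ then gives $[x,\widetilde{x}]=0$, and all longer iterated brackets vanish by antisymmetry and Jacobi, yielding $\Free_{\Mod_{K_\ast}}^{\Lie_{K_\ast}}(x)=K_\ast\cdot x\oplus K_\ast\cdot \widetilde{x}$.

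Next, I would build $\LL(x_a)$ by freely adjoining Hecke operations subject to the Hecke Lie axiom $[u,\alpha(v)]=0$ of Definition \ref{HLA} for $\alpha$ of weight $>1$. Combined with bilinearity of the bracket, this relation kills every bracket involving an output of a positive-weight Hecke operation, so $\LL(x_a)$ is generated as a $K_\ast$-module by iterated Hecke-operation expressions applied to the free-Lie generators. At height one, Example \ref{Lieheightone} records that the only nonscalar Hecke operation is a single class $\alpha_i$ in each internal degree $i$, with vanishing composite $\alpha_{i-1}\circ\alpha_i=0$. Hence only a single application of $\alpha$ to each free-Lie generator produces a new class: we obtain $y:=\alpha_a(x)$ in degree $a-1$ and weight $p$ with $\alpha_{a-1}(y)=0$ in both cases, and, when $a$ is odd, also $\widetilde{y}:=\alpha_{2a}(\widetilde{x})$ in degree $2a-1$ and weight $2p$ with $\alpha_{2a-1}(\widetilde{y})=0$.

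The main point to justify carefully is that the above presentation is genuinely universal, i.e.\ that no further relations are hidden in the monad composition $\LL = \AAA \circ \Free_{\Mod}^{\Lie}$ beyond those we have used. This can be verified by a direct check of the universal property: any $K_\ast$-module map $K_\ast\langle x_a\rangle \to \mathfrak{h}$ into a Hecke Lie algebra $\mathfrak{h}$ extends uniquely through the explicit module described above, because the three Lie axioms determine the Lie part completely on a single generator, the axiom $[u,\alpha(v)]=0$ prevents new brackets from appearing after Hecke operations are applied, and the height-one relation $\alpha\circ\alpha=0$ controls how iterated Hecke operations collapse.
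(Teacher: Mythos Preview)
Your approach is correct and in fact more self-contained than what the paper does: the paper gives no proof of this corollary at all, but simply recalls the result from \cite[Section 4.4.2]{brantnerthesis}. Your strategy---first computing the underlying free Lie algebra using the axioms of Definition~\ref{def:Lie algebra} (in particular $[x,[x,x]]=0$ and the consequence $[u,u]=0$ for $|u|$ even), then adjoining Hecke operations using the height-one description of ${\mathcal{H}_u^{\Lie}}$ and the vanishing $\alpha\circ\alpha=0$ from Example~\ref{Lieheightone}, and finally invoking the Hecke Lie axiom $[u,\alpha(v)]=0$ to kill all remaining brackets---gives an honest direct computation.

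One small remark: your aside that ``$\LL=\AAA\circ\Free_{\Mod}^{\Lie}$'' as monads is not quite an identity of monads (the monad structure on the composite is nontrivial), though the underlying functors do agree. You do not actually need this statement: your final paragraph bypasses it entirely by checking the universal property directly, which is the cleaner route. That verification is straightforward---the proposed object is manifestly a Hecke Lie algebra, and any map $x_a\mapsto h$ into a Hecke Lie algebra $\mathfrak{h}$ extends uniquely by $y\mapsto\alpha_a(h)$, $\widetilde{x}\mapsto[h,h]$, $\widetilde{y}\mapsto\alpha_{2a}([h,h])$, with compatibility of brackets forced by the axiom $[u,\alpha(v)]=0$ in $\mathfrak{h}$.
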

 
With the homology of atomic Lie algebras at hand, we can carry out the desired computation.
Our proof will proceed in three steps: 
\begin{enumerate}
\item   Identify the Hecke Lie algebra $\mathfrak{g}(\RR^n, S^k) 
:=K_*^{\wedge} (\Omega^n\Free_{\Lie}(\Sigma^{n+k-1})).$
\item Compute the Hecke   Lie algebra homology of $\mathfrak{g}(\RR^n, S^k) 
:=K_*^{\wedge} (\Omega^n\Free_{\Lie}(\Sigma^{n+k-1})).$
\item Compute the  $\mathrm{E}^{\infty}$-page of the spectral sequence  
and solve extension problems.
\end{enumerate}

In order to apply \Cref{thm:main}, we start with the following observation:
\begin{proposition}\label{step1}
The Hecke Lie algebra  $\mathfrak{g}(\RR^n, S^k)$ is determined as follows in terms of the atomic Hecke Lie algebras from \Cref{atomicalgebra}:
\begin{figure}[H]
\begin{small}
\begin{center}\def\arraystretch{1.5}
\begin{tabular}{c|cccc} 
$k\backslash n$ & \ \ even & odd & \\
\hline 
even \ \ \ \  &$ \ \ \ \  \mathfrak{g}^{(1)}_{k-1,n} \oplus  \mathfrak{g}^{(2)}_{n+2k-2,\lfloor \frac{n}{2}\rfloor} $ & $\mathfrak{g}^{(1)}_{k-1, n} $  
\\
odd \ \ \ \   &$ \ \ \ \  \mathfrak{g}^{(1)}_{k-1, n}$ &  $\mathfrak{g}^{(1)}_{k-1,n} \oplus  \mathfrak{g}_{n+2k-2,n}^{(2)}  $
\end{tabular}
\end{center}
\end{small} 
\end{figure}
\end{proposition}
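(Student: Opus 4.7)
The plan is to identify $\mathfrak{g}(\mathbb{R}^n, S^k)$ with $\Omega^n$ applied to the free Hecke Lie algebra on a single generator in internal degree $n+k-1$, and then to translate this, via a direct suspension count at height $1$, into the atomic Hecke Lie algebras of \Cref{atomicalgebra}.

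Since $(\mathbb{R}^n)^+ \simeq S^n$ as pointed spaces, the cotensor appearing in the definition of $\mathfrak{g}(\mathbb{R}^n;X)$ simplifies: $\Free^{\mathscr{L}}(\Sigma^{n-1}S^k)^{S^n} \simeq \Omega^n \Free^{\mathscr{L}}(\Sigma^{n+k-1})$. \Cref{finitefree} then identifies $\mathfrak{g}(\mathbb{R}^n;S^k)$, as a Hecke Lie algebra, with $\Omega^n \LL(x_{n+k-1})$. Next, I would apply \Cref{freelieone} to make $\LL(x_{n+k-1})$ explicit: when $n+k-1$ is even (that is, $n,k$ of opposite parity), only weight $1$ and weight $p$ appear, with $K_*$-module generators $x$ in degree $n+k-1$ and $y=\alpha_{n+k-1}(x)$ in degree $n+k-2$; when $n+k-1$ is odd (same-parity case), one additionally has $\widetilde{x}=[x,x]$ in degree $2(n+k-1)$, weight $2$, and $\widetilde{y}=\alpha_{2(n+k-1)}(\widetilde{x})$ in degree $2(n+k-1)-1$, weight $2p$. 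After applying $\Omega^n$ all internal degrees shift down by $n$, and by \Cref{prop:hecke desuspension} every Lie bracket becomes zero, so the resulting Hecke Lie algebra decomposes as a direct sum matching the proposed $\mathfrak{g}^{(1)}_{k-1,n}$ summand and, in the same-parity case, an additional $\mathfrak{g}^{(2)}_{n+2k-2,?}$ summand on the level of underlying $K_*$-modules, with the correct weights and internal degrees.

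It remains to pin down the coefficients appearing in the looped Hecke operations. By \Cref{prop:hecke desuspension}, if $\alpha\cdot u = v$ holds for a generator $u$ in degree $a$ of $\LL(x_{n+k-1})$, then the corresponding relation in $\Omega^n\LL(x_{n+k-1})$ is $\alpha\cdot u = (\Susp^n\alpha_{a-n})\cdot u|_{\LL}$. \Cref{Gamma0}, together with the height-$1$ fact $e\equiv -p$ (cf.\ \Cref{explicit!}), tells us that each elementary suspension $({\mathcal{H}_u^{\Lie}})_i^{i-1}(p)\to ({\mathcal{H}_u^{\Lie}})_{i+1}^{i}(p)$ is an isomorphism when $i$ is even and multiplication by $p$ up to a unit when $i$ is odd. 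Hence the coefficient of $v$ in the looped formula is $p^r$, where $r$ is the number of odd integers in $[a-n,a-1]$.

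The proof then concludes by tabulating $r$ across the four parity cases of $(n,k)$, separately for the $(x,y)$ block (giving $\mathfrak{g}^{(1)}_{k-1,n}$) and the $(\widetilde{x},\widetilde{y})$ block (giving the stated $\mathfrak{g}^{(2)}$), and comparing with the $\lfloor n/2\rfloor$ / $\lceil n/2\rceil$ prescription of \Cref{atomicalgebra}. The main obstacle I anticipate is purely organisational: one must keep the parity bookkeeping tight so that the suspension counts land on the correct floor/ceiling formulas with the advertised parameters. No deeper ingredient is required, since bracket vanishing after looping removes any Jacobi-style complications, and everything else reduces to arithmetic in $E_0 \cong \mathbb{Z}_p$.
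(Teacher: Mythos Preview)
Your proposal is correct and follows essentially the same approach as the paper: identify $\mathfrak{g}(\mathbb{R}^n,S^k)$ with $\Omega^n\LL(x_{n+k-1})$ via \Cref{prop:hecke desuspension} and \Cref{finitefree}, invoke \Cref{freelieone} for the structure of the free Hecke Lie algebra, and then count how many of the $n$ elementary suspensions contribute a factor of $p$ using \Cref{Gamma0} and the height-one identification of the Euler class with $p$ (up to a unit). The paper carries out exactly this parity bookkeeping case by case; your formulation in terms of ``number of odd integers in $[a-n,a-1]$'' is the same count, just packaged slightly more uniformly. Note that when you execute the arithmetic for the $(\widetilde{x},\widetilde{y})$ block you will obtain $\mathfrak{g}^{(2)}_{n+2k-2,n}$ in both same-parity cases, which is also what the paper's proof concludes---the $\lfloor n/2\rfloor$ appearing in the statement for the ($n$ even, $k$ even) entry is a typo.
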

\begin{proof}
By \Cref{prop:hecke desuspension}, $\mathfrak{g}(\RR^n, S^k) $ is obtained by 
first taking the  free Hecke Lie algebra $\LL(x_{n+k-1})$ on a class in degree ${n+k-1}$,
 then desuspending $n$ times in internal degree (which kills the Lie bracket), 
and then acting through $n$-fold suspended Hecke operations. We will freely use \Cref{freelieone} in our arguments below.\\
 \\
If  $n+k-1$ is even, then $\LL(x_{n+k-1})$ is generated (as a $K_\ast$-module)  by a class $u$ in internal degree $n+k-1$ and a class  $v=\alpha_{n+k-1}(u)$ in internal degree $n+k-2$. 
Hence $\mathfrak{g}(\RR^n, S^k)= \Sigma^{-n} \LL(x_{n+k-1})$ is generated by classes $x=\Sigma^{-n}u$ and $y=\Sigma^{-n}v$ in internal degree $k-1$ and $k-2$, respectively.
To act with $\alpha_{k-1}$ on $x$, we can (by \Cref{prop:hecke desuspension}) instead act by $\Sigma^n(\alpha_{k-1})$
on $u$ and then apply $\Sigma^{-n}$ the resulting class.

If $k-1$ is even, then \Cref{Gamma0} and the identification of the Euler class $e$ with  $p$ implies:
$$ \alpha_{k-1}(x) = \Sigma^{-n} (\Susp^n(\alpha_{k-1}) (u))  = \Sigma^{-n} (p^{\lfloor \frac{n}{2}\rfloor}\alpha_{n+k-1} (u))  = p^{\lfloor \frac{n}{2}\rfloor} y$$

 \ \ As the action on $y$ vanishes,   $\mathfrak{g}(\RR^n, S^k) $ is equal to the atomic \mbox{Lie algebra $\mathfrak{g}^{(1)}_{k-1, n}$.}\vspace{3pt} 

If $k-1$ is odd, a similar argument shows that $$ \alpha_{k-1}(x) = \Sigma^{-n} (\Susp^n(\alpha_{k-1}) (u))  = \Sigma^{-n} (p^{\lceil \frac{n}{2}\rceil }\alpha_{n+k-1} (u))  = p^{\lceil\frac{n}{2}\rceil} y$$

\ \ which implies that $\mathfrak{g}(\RR^n, S^k) $ is equal to the atomic Lie algebra $\mathfrak{g}^{(1)}_{k-1, n}$.\vspace{-3pt} 
 \\
\\
If $n+k-1$ is odd, then $\LL(x_{n+k-1})$ is generated (as a  $K_\ast$-module)  by four classes:
\begin{itemize}
\item  $u$ in internal degree $n+k-1$ and weight $1$;
\item $v=\alpha_{n+k-1}(u)$ in internal degree $n+k-2$ and weight $p$;
\item $u' = [u,u]$ in internal degree $2n+2k-2$ and weight $2$;
\item $v' = \alpha_{2n+2k-2}[u,u]$ in internal degree $2n+2k-3$ and weight $2p$.
\end{itemize}
Hence $\mathfrak{g}(\RR^n, S^k)$ is generated by the four classes $x = \Sigma^{-n}u $, $ y= \Sigma^{-n}v$, $x' = \Sigma^{-n}u' $, and $y' = \Sigma^{-n}v'$
in internal degrees $k-1$, \ $k-2$, \ $n+2k-2$, \  and $n+2k-3$,\ respectively.

If  $k-1$ is even, then $n+2k-2$ is odd, and so a similar argument as before shows that we \\ \ $_{}$\ \ \ \ \  \  have 
$ \alpha_{k-1}(x) =  p^{\lfloor \frac{n}{2}\rfloor} y \mbox{ and } \alpha_{n+2k-2}(x) =  p^{\lceil  \frac{n}{2}\rceil} y'$.
Hence $\mathfrak{g}(\RR^n, S^k) = \mathfrak{g}^{(1)}_{k-1,n} \oplus  \mathfrak{g}^{(2)}_{n+2k-2,n} $.\vspace{3pt} 

If  $k-1$ is odd,  
$ \alpha_{k-1}(x) =  p^{\lceil \frac{n}{2}\rceil} y$ and $\alpha_{n+2k-2}(x) =  p^{\lfloor  \frac{n}{2}\rfloor} y'$, so
 {$\mathfrak{g}(\RR^n, S^k) = \mathfrak{g}^{(1)}_{k-1,n} \oplus  \mathfrak{g}^{(2)}_{n+2k-2,n} $}.
\end{proof}
Combining this with  our previous work, we can immediately deduce:
\begin{proposition} \label{step2}
The homology of  $\mathfrak{g}(\RR^n, S^k)$ in weight $p$  is generated by the following elements:
\begin{figure}[H]
\begin{small}
\begin{center}\def\arraystretch{1.5}
\begin{tabular}{c|cccc} 
$k\backslash n$ & \ \ even & odd & \\
\hline
even \ \ \ \  &$ \ \ \ \ \substack{ \  \\ \ \\ \gamma_p([x_{(k-1,1,0,1)}])\\  \\  [y_{(k-2,1,0,p)}]/p^{\lceil \frac{n}{2}\rceil}\ \vspace{2pt} \\ \\
 [x_{(k-1,1,0,1)}]^{p-2}\cdot [x'_{(n+2k-2, 1, 0 ,2)}]
} $  \ \ \ \ \ &$\substack{ \gamma_p([x_{(k-1,1,0,1)}]) \\ \\  [y_{(k-2,1,0,p)}]/p^{ \lceil \frac{n}{2}\rceil} }$  
\\\\
odd \ \ \ \   &$ \ \ \ \  \substack{ [y_{(k-2,1,0,p)}]/p^{ \lfloor \frac{n}{2}\rfloor}}$ &  $\substack{[x_{(k-1,1,0,1)}] \cdot [x'_{(n+2k-2,1,0,2)}]^{\frac{p-1}{2}}\\ \\
[y_{(k-2,1,0,p)}]/p^{ \lfloor \frac{n}{2}\rfloor}
}$
\end{tabular}
\end{center}
\end{small} 
\end{figure} 
\end{proposition}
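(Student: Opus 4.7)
The plan is to reduce the computation to the atomic building blocks from Proposition \ref{step1} and Lemma \ref{atomiclemma} via a Künneth-style splitting of the Hecke Chevalley--Eilenberg complex for direct sums.

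First, by Proposition \ref{step1}, in each parity case the Hecke Lie algebra $\mathfrak{g}(\RR^n, S^k)$ decomposes as a direct sum of one or two atomic pieces $\mathfrak{g}^{(w)}_{a,n}$. Since the cross-brackets in a direct sum vanish, the additive resolution $\AR(-)$ of Construction \ref{construction:additive resolution} preserves the direct sum, and the exponential property of divided powers recorded in Proposition \ref{exponential} yields a splitting of simplicial chain complexes
\[\CE_{\mathcal{H}_u}(\mathfrak{h}_1 \oplus \mathfrak{h}_2) \;\cong\; \CE_{\mathcal{H}_u}(\mathfrak{h}_1) \otimes_{K_*} \CE_{\mathcal{H}_u}(\mathfrak{h}_2).\]
Combined with Theorem \ref{thm:hecke chevalley--eilenberg works}, this reduces the problem to knowing the homology of each atomic summand, which is supplied by Lemma \ref{atomiclemma}.

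Next, I would apply a Künneth argument to pass from $H(\CE_{\mathcal{H}_u}(\mathfrak{h}_1) \otimes \CE_{\mathcal{H}_u}(\mathfrak{h}_2))$ to $H(\CE_{\mathcal{H}_u}(\mathfrak{h}_1)) \otimes H(\CE_{\mathcal{H}_u}(\mathfrak{h}_2))$ in weight $p$. The homology of each atomic factor from Lemma \ref{atomiclemma} is a sum of a free $K_*$-module (the $\Lambda$ or $\Gamma$-piece on $[x]$) and a single torsion class $[y]/p^{\bullet}$ living in weight a multiple of $p$. A weight-$p$ class in the tensor product therefore pairs either two free contributions or one torsion contribution with a free weight-$0$ contribution, and in both cases the relevant $\Tor_{K_*}$ vanishes, so no correction term appears.

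Finally, I would read off the weight-$p$ generators case by case using the parity dichotomy in Lemma \ref{atomiclemma}. For $n$ odd, $k$ even, only $\mathfrak{g}^{(1)}_{k-1,n}$ contributes (with $k-1$ odd), giving the torsion class $[y_{(k-2,1,0,p)}]/p^{\lfloor n/2\rfloor}$, and the divided power $\gamma_p([x])$ lies in weight $p$ but is identified with $[x]^p$; but since the table in the $(n,k)=(\text{odd},\text{even})$ slot lists only the torsion class, I would separately verify using Proposition \ref{exponential} that $[x]^p$ does not survive in the normalized complex for this parity choice. The other three cases proceed similarly: the divided power $\gamma_p([x])$ appears precisely when $[x]$ has even total parity in $\CE_{\mathcal{H}_u}$, and a mixed term of the form $[x]^{p-2}[x']$ or $[x] \cdot \gamma_{(p-1)/2}([x'])$ arises whenever both atomic pieces are nontrivial and the weights add to $p$.

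The main obstacle I expect is the weight-$p$ bookkeeping and sign tracking inside $\Gamma_{K_*}$: one must carefully record when $[x']$ contributes via an honest power versus a divided power (determined by the parity of $n+2k-2$), and check that the divided powers of $[x']$ with even weight $2i<p$ do not spuriously combine with divided powers of $[x]$ to produce additional weight-$p$ generators beyond those listed. Once this combinatorics is pinned down, the proposition is an immediate translation of Lemma \ref{atomiclemma} through the Künneth splitting.
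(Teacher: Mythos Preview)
Your approach is essentially the same as the paper's: decompose via Proposition~\ref{step1}, invoke Lemma~\ref{atomiclemma} on each atomic summand, and combine using a K\"unneth isomorphism $\HH^{\Lie^{\mathcal{H}_u}}(\mathfrak{g}\oplus\mathfrak{g}')(w)\cong\bigl(\HH^{\Lie^{\mathcal{H}_u}}(\mathfrak{g})\otimes\HH^{\Lie^{\mathcal{H}_u}}(\mathfrak{g}')\bigr)(w)$ for $w\le p$. The paper states this isomorphism directly; your derivation of it from the exponential property of $\Gamma$ and the vanishing of the relevant $\Tor$ in weight $p$ is a correct elaboration.

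However, you have misread the table in the case $n$ odd, $k$ even. That entry (row ``even'', column ``odd'') lists \emph{both} $\gamma_p([x_{(k-1,1,0,1)}])$ and the torsion class $[y_{(k-2,1,0,p)}]/p^{\lceil n/2\rceil}$, not just the torsion class, and the exponent is $\lceil n/2\rceil$, not $\lfloor n/2\rfloor$. This is exactly what Lemma~\ref{atomiclemma} predicts for $a=k-1$ odd: the divided power $\gamma_p([x])$ genuinely survives in homology. There is nothing to ``separately verify'' here, and your worry that $[x]^p$ should somehow vanish is unfounded --- the class $\gamma_p([x])$ is a nonzero homology class in this parity case (it is only later, in the spectral sequence analysis, that it supports a differential). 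The case you seem to have in mind, where only the torsion class appears, is $n$ even, $k$ odd, where $a=k-1$ is even and $[x]$ has odd total degree so $\gamma_p([x])=0$ in $\Gamma_{K_*}$.
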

\begin{proof}
This follows from \Cref{step1}, \Cref{atomiclemma} since
$\HH^{\Lie^{\mathcal{H}_u}}(\mathfrak{g} \oplus \mathfrak{g}')(w)$ is isomorphic to $  (\HH^{\Lie^{\mathcal{H}_u}}(\mathfrak{g}) \otimes \HH^{\Lie^{\mathcal{H}_u}}(  \mathfrak{g}') )(w)$ for all involved Hecke Lie algebras $\mathfrak{g}, \mathfrak{g}'$ and all weights $w\leq p$.
\end{proof}

\begin{remark}Before proceeding to the proof \Cref{warmup},  
we note that elements $[z_{(i,j,r,w)}]$ in internal degree $i$, homological degree $j$, simplicial degree $r$, and weight $w$ (cf. \Cref{quadruple})
contribute to the 
$\mathrm{E}^2_{j+r-1, i+1} = \HH^{\Lie^{\mathcal{H}_u}}_{j+r}( \mathfrak{g}(\RR^n, S^k))_{i}$ \ term of the weight $w$ component of our spectral sequence in \Cref{thm:main}. 
\end{remark}

\begin{proof}[Proof of \Cref{warmup}] 
 By  \Cref{step2}, the $\mathrm{E}^2$-term  $\mathrm{E}^2_{s, t} = \HH^{\Lie^{\mathcal{H}_u}}_{s+1}( \mathfrak{g}(\RR^n, S^k))_{t-1}$ 
of the spectral sequence  \Cref{thm:main} at weight $p$ \mbox{is  given by:}   
 $$\ \ \ \ \ \mathrm{E}^2_{0, \ast} (p) \  \ \ \ = \ \  \ \ \  \HH^{\Lie^{\mathcal{H}_u}}_{1}( \mathfrak{g}(\RR^n, S^k))(p))_{\ast-1}  \    \ \ \ = \ \ \ \ \  \begin{cases}
  \Sigma^{k-1}K_\ast/p^{\lceil \frac{n}{2}\rceil}& \mbox{for } n \mbox{ even, } k \mbox{ even}\\
 \Sigma^{k-1}K_\ast/ p^{ \lfloor \frac{n}{2}\rfloor} & \mbox{for } n \mbox{  even, } k \mbox{  odd}\\
   \Sigma^{k-1} K_{\ast}/ p^{ \lceil \frac{n}{2}\rceil}& \mbox{for } n \mbox{  odd, } k \mbox{  even}\\
   \Sigma^{k-1} K_\ast/p^{\lfloor \frac{n}{2} \rfloor}& \mbox{for } n \mbox{ odd, } k \mbox{   odd}
\end{cases} \ \ \ \ \ \  \ \ \ \ \ \  \ \ \ \ \ \  \ \ \ \ \ \ $$
  \vspace{-10pt}
 
$$\mathrm{E}^2_{\frac{p-1}{2}, \ast-\frac{p-1}{2}} (p)= \HH^{\Lie^{\mathcal{H}_u}}_{\frac{p+1}{2}}( \mathfrak{g}(\RR^n, S^k))(p))_{\ast-\frac{p+1}{2}}  =  \begin{cases}
0 & \mbox{for } n \mbox{   even, } k \mbox{   even}\\
  0 & \mbox{for } n \mbox{ even, } k \mbox{  odd}\\
0  & \mbox{for } n \mbox{  odd, } k \mbox{  even}\\
 \Sigma^{(2k+n-1)(\frac{p-1}{2})+k}K_\ast   & \mbox{for } n \mbox{  odd, } k \mbox{  odd}
\end{cases} $$
  \ \vspace{-10pt} 

$$\ \mathrm{E}^2_{p-2, \ast-p+2} (p)= \HH^{\Lie^{\mathcal{H}_u}}_{p-1}( \mathfrak{g}(\RR^n, S^k))(p))_{\ast-p+1} \  =  \begin{cases}
 \Sigma^{kp+n-1} K_\ast  & \mbox{for } n \mbox{   even, } k \mbox{   even}\\ 
  0 & \mbox{for } n \mbox{ even, } k \mbox{  odd}\\
0  & \mbox{for } n \mbox{  odd, } k \mbox{  even}\\
0 & \mbox{for } n \mbox{  odd, } k \mbox{  odd}
\end{cases} \ \ \ \ \ \ \ \  \ \ \ \ \  $$
 \ \vspace{-10pt}

$$\mathrm{E}^2_{p-1, \ast-p+1} (p) \   =  \    \HH^{\Lie^{\mathcal{H}_u}}_{p}( \mathfrak{g}(\RR^n, S^k))(p))_{\ast-p}   \    =  \    \begin{cases}
 \Sigma^{kp}K_\ast   & \mbox{for } n \mbox{   even, } k \mbox{   even}\\
  0 & \mbox{for } n \mbox{ even, } k \mbox{  odd}\\
\Sigma^{kp}K_\ast  & \mbox{for } n \mbox{  odd, } k \mbox{  even}\\
0 & \mbox{for } n \mbox{  odd, } k \mbox{  odd}
\end{cases}  \ \ \  \ \ \  \ \ \  \ \ \  \ \ \  \ \  $$
 \ \vspace{5pt} \\
The modules $\mathrm{E}^2_{s,t} (p)$ in our spectral sequence vanish for all other values of $s$ and $t$.
\\
\\
\textit{Step 3)}.  We will now compute the differentials $d_r : \mathrm{E}^r_{s,t}(p) \rightarrow \mathrm{E}^r_{s-r,t+r-1}(p)  $.\vspace{5pt}
\\
Case 1: $n$ even, $k$ odd. In this simplest case, the spectral sequence is concentrated on the $(s=0)$-line, which allows us to read off the result.\vspace{5pt}
\\
Case 2: $n$ odd, $k>0$ even. The spectral sequence is concentrated on the lines $(s=0)$ and $(s=p-1)$.  Hence $E^2_{s,t}(p)  \cong   \ldots \cong E^{p-1}_{s,t}(p) $.
The only possible non-vanishing differential is   $$d_{p-1}:  \mathrm{E}^{p-1}_{p-1,t}(p)  \rightarrow \mathrm{E}^{p-1}_{0,t+p-2}(p) .$$
Since $\mathrm{E}^{p-1}_{0,\ast}(p) $ is concentrated in odd and 
$\mathrm{E}^{p-1}_{p-1,\ast}(p) $   in even degrees, we cannot conclude  that $d_{r-1}$ is zero.
Indeed, a second thought reveals that it in fact should not vanish. \vspace{5pt}

Already for $n=1$, the James splitting shows that $K^\wedge_\ast(\Omega S^{k+1})$ is torsion-free, and hence the 
$p$-torsion generator $[y_1]\in \mathrm{E}^2_{0, \ast} (p,1,n)=   \Sigma^{k-1} K_\ast/p$ on the zero-line must be hit under $d_{p-1}$ (up to a unit) by the class $\gamma_p([x])$. Hence $\gamma_p(x)$ dies, whereas $x^p$ survives.

This observation propagates to higher loop spaces, and we can use it
to compute $d_{p-1}$ for all higher odd $n$.
 For this, write $\mathrm{E}^{p-1}_{s,t}(p,n,k)$ for the spectral sequence converging to $K_\ast^{\wedge}(\Sigma^\infty\Conf_p(\RR^n)_+ \otimes_{h\Sigma_p} (S^k)^{\otimes p})$.  
The map  $\Sigma^\infty_+\Omega S^{1+k} \rightarrow \Sigma^\infty_+ \Omega^n S^{n+k}$ corresponds to the canonical  map of spectral Lie algebras $$\Omega  \Free^{\mathscr{L}}(S^{k} ) \rightarrow  \Omega^n  \Free^{\mathscr{L}}(S^{n+k-1} ).$$ 
We obtain a map of spectral sequences $\phi: \mathrm{E}^r_{s,t}(p,1,k)\rightarrow\mathrm{E}^r_{s,t}(p,n,k) $.
On  $\mathrm{E}^1$, it is obtained by applying the Hecke--Chevalley--Eilenberg complex to 
the induced map of \mbox{Hecke Lie algebras} $$\mathfrak{g}_{{k-1}, 1}^{(1)} = \mathfrak{g}(\RR^1, S^k) \ \xrightarrow{ \ \ \ \phi \ \  \ } \ \mathfrak{g}(\RR^n, S^k) = \mathfrak{g}_{{k-1},n}^{(1)}.$$
We can describe this map entirely explicitly. To this end, denote the natural generators by
$$x_1 \in \mathfrak{g}(\RR^1, S^k)_{k-1}\  , \  y_1 \in \mathfrak{g}(\RR^1, S^k)_{k-2}, \ \ \mbox{ and } \ \  x_n \in \mathfrak{g}(\RR^n, S^k)_{k-1} \  , \  y_n \in \mathfrak{g}(\RR^n, S^k)_{k-2}.$$
As $\Omega  \Free^{\mathscr{L}}(S^{k} ) \rightarrow  \Omega^n  \Free^{\mathscr{L}}(S^{n+k-1} )$ is obtained by looping a free map, we find
$$\phi(x_1) = x_n \ , \ \ \  \phi(y_1) = p^{\lfloor \frac{n}{2}\rfloor} y_n .$$
Hence the induced map on the $(s=0)$--line of $\mathrm{E}_2$-pages is given by  
$$  \mathrm{E}^2_{0,\ast}(p,1,k) = \Sigma^{k-1}K^\ast/p\xrightarrow{ \ \  \ p^{\frac{n}{2}} \ \ \ \ }  \Sigma^{k-1}K_\ast/{ p^{  \frac{n+1}{2}  }}  = \mathrm{E}^2_{0,\ast}(p,n,k)$$
Since $ \mathrm{E}^{p-1}_{0,\ast}(p,1,k) \cong \mathrm{E}^{2}_{0,\ast}(p,1,k) $ is killed by $d^{p-1}$ and 
$\mathrm{E}^{p-1}_{0,\ast}(p,1,k) \rightarrow \mathrm{E}^{p-1}_{0,\ast}(p,n,k)$ respects the differentials, we deduce
$\mathrm{E}^{p}_{0,\ast}(p,n,k) = \Sigma^{k-1}K_\ast/p^{  \frac{n-1}{2}  }$ and \mbox{$\mathrm{E}^{p}_{p-1,\ast-p-1}(p,n,k) = \langle p \gamma_p([x]) \rangle =\langle [x]^p\rangle$.} 

As no further differentials can occur, we have computed the $ \mathrm{E}^\infty$-page. But $\mathrm{E}^{\infty}_{p-1,\ast-p-1}$ is a free $K_\ast$-module, and the only other nontrivial line is at $(s=0)$, so there are no extension problems. \mbox{This finishes the proof for $n$ odd, $k>0$ even.}

If $n$ and $k$ have the same parity and $k>0$,  we can avoid further hard work by remembering that $p>2$.  In particular, setting $2m=n+k$, we may apply Serre's $p$-local equivalence  \cite{serre1953groupes}:
$$\Omega S^{2m}_{(p)} \simeq S^{2m-1}_{(p)} \times \Omega S^{4m-1}_{(p)}.$$
Taking $n-1$ additional loops yields the $p$-local equivalence
$$\Omega^n S^{n+k}_{(p)} \simeq \Omega^{n-1} S^{n+k-1}_{(p)} \times \Omega^{n} S^{2n+2k-1}_{(p)},$$
which is given \cite[\S 2.1]{Grbic} by the product of the suspension map $E:\Omega^{n-1} S^{n+k-1} \to \Omega^{n} S^{n+k}$ with the $n$-fold loops of the Whitehead square $[\iota,\iota]:S^{2n+2k-1} \to S^{n+k}$.  After stabilising, the map $\Sigma^{\infty} \Omega^n [\iota,\iota]$ has domain a free $\mathbb{E}_n$-algebra with generator mapping to a weight $2$ class.
Adding basepoints, stabilising, and passing to weight $p$, we obtain a $p$-local decomposition
$$\Sigma_+^\infty\Omega^n S^{n+k}(p) \simeq_{(p)} \bigoplus_{i+2j=p} (\Sigma^\infty_+ \Omega^{n-1} S^{n+k-1})(i) \otimes ( \Sigma^\infty_+ \Omega^{n} S^{2n+2k-1})(j).$$

As  the reduced $p$-adic $K$-theory of symmetric groups $B\Sigma_i$ vanishes whenever $i<p$, 
the summand $K^\wedge_\ast(\EE_n(S^a)(w)) = K^\wedge_\ast(B_w(\RR^n; S^a))$ is isomorphic to the weight $w$ component  in the free Poisson algebra in \mbox{$K_*$-modules} on a class in degree $a$, for all $w<p$.  
In particular, they are finite free $K_*$-modules in this range. This can also be  proven directly from our spectral sequence. 
We obtain  \mbox{an isomorphism}
$$K_\ast^\wedge (\Sigma_+^\infty\Omega^n S^{n+k}(p))\cong\bigoplus_{i+2j=p} K_\ast^\wedge (\EE_{n-1}(S^k)(i))
\otimes_{K_\ast} K_\ast^\wedge (\EE_{n}(S^{n+2k-1})(j)).$$

If $k$ and $n$ are even, then $n+2k-1$ is odd, which implies that the right hand side vanishes for degree reasons unless $j=0$ (which gives a copy of $ \Sigma^{kp}K_\ast \oplus \Sigma^{k-1} K_\ast/p^{\lfloor\frac{n-1}{2} \rfloor}$ ) or $j=1$ (which gives a copy of $\Sigma^{pk+n-1} K_*$).

 If $k$ and $n$ are odd, then the right hand side vanishes unless   $i=1$ (in which case $j=\frac{p-1}{2}$ and we obtain a copy of $K_*$), or $i=p$ (which contributes a copy of $\Sigma^{k-1}K_*/p^{\lfloor \frac{n-1}{2}\rfloor }$. This finishes the proof whenever $k>0$.

Finally, the computation extends to general $k$ by using that over a complex oriented homology theory, we have an equivariant equivalence  of (na\"{i}ve) $\Sigma_n$-spectra $K \otimes ({S^2})^{\otimes n} \simeq \Sigma^{2n}K $.

\end{proof}

\subsection{Interpretation in terms of power operations} 
Recall that the free $K(1)$-local  $\mathbb{E}_\infty$-$K^{\wedge}_p$-algebra on a class $x$ in degree $2$ is given by
$$\left(K \wedge \Sigma^{\infty}_+ \Omega^{\infty} S^2 \right)^{\wedge}_p.$$
By a classical computation (cf. e.g. \cite{wilkerson1982lambda} \cite{hopkins1998k}), its homotopy groups are isomorphic to the completion of the  algebra $\mathbb{Z}_p[u^{\pm1}][x,\theta x,\theta \theta x, \cdots].$
Here, the class $x$ is in weight $1$, while the class $\theta x$ is in weight $p$, the class $\theta \theta x$ is in weight $p^2$, and so on.
One sees that the entire algebra is generated by the class $x$ together with a single {Dyer--Lashof} operation $\theta$.
The weight  $p$ component is given by a  free $K_*$-module on $x^p$ \mbox{and $\theta x$.}

The above calculations allow us to understand this component not only for free $\mathbb{E}_\infty$-algebras, but also for free $\mathbb{E}_n$-algebras with $n<\infty$.  Indeed,  the  free $K(1)$-local $\mathbb{E}_n$-$K^{\wedge}_p$-algebra on a \mbox{class $x$} in degree $2$ is given by
$$\left(K \wedge \Sigma^{\infty}_+ \Omega^{n} S^{n+2} \right)^{\wedge}_p.$$
By examining the weight $p$ component of the above algebra, we can determine what vestige of the $\theta$ operation remains present in $\mathbb{E}_n$-algebras.  According to the above calculations, the nature of this weight $p$ component varies depending upon the parity of $n$.

When $n$ is odd, we see from \Cref{warmup} that the weight $p$-component of the free $K(1)$-local $\mathbb{E}_n$-$K^{\wedge}_p$-algebra on a degree $2$ class is equivalent to
$$\Sigma^{2p} K^{\wedge}_{p} \oplus \Sigma K/p^{\frac{n-1}{2}}.$$
 The copy of $\Sigma^{2p} K^{\wedge}_p$ corresponds to the element $x^p$ in the free $\mathbb{E}_\infty$-algebra, while the torsion module $\Sigma K/p^{\frac{n-1}{2}}$ is related to $\theta x$.  Indeed, there is a diagram  
$$
\begin{tikzcd}
\left(K \wedge \Sigma^{\infty}_+ \Omega^{n} S^{n+2} \right)^{\wedge}_p \arrow{r} & \left(K \wedge \Sigma^{\infty}_+ \Omega^{\infty} S^{2} \right)^{\wedge}_p \\
\Sigma K/p^{\frac{n-1}{2}} \arrow{u} \arrow{r}{\text{Bockstein}} & \Sigma^2 K^{\wedge}_p. \arrow{u}{\theta x}
\end{tikzcd}
$$

The sequence of suspension maps
$\Omega S^{3} \longrightarrow \Omega^3 S^5 \longrightarrow \Omega^5 S^7 \longrightarrow \cdots \longrightarrow \Omega^{\infty} S^2$  
induces a sequence of $K^{\wedge}_p$-module Bocksteins
$$0 \longrightarrow \Sigma K/p \longrightarrow \Sigma K/p^2 \longrightarrow \Sigma K/p^3 \longrightarrow \cdots \longrightarrow \Sigma^2 K^{\wedge}_p,$$
exhibiting the   relation $\text{hocolim}_n (\Sigma K/p^n) \simeq \Sigma^2 K^{\wedge}_p$ in $K(1)$-local $K$-modules. 

When $n$ is even, we read off from \Cref{warmup} that the weight $p$-component of the free $\mathbb{E}_n$-$K^{\wedge}_p$-algebra on a degree $2$ class is equivalent to
$$\Sigma^{2p} K^{\wedge}_{p} \oplus \Sigma^{2p+n-1} K^{\wedge}_p \oplus \Sigma K/p^{\frac{n}{2}}.$$
The copy of $\Sigma^{2p} K^{\wedge}_p$ is related to $x^p$ in the free $\mathbb{E}_\infty$-algebra, and $\Sigma K/p^{\frac{n}{2}}$ is a shadow of $\theta x$.  

The copy of $\Sigma^{2p+n-1} K^{\wedge}_p$ represents a
phenomenon specific to $\EE_n$-algebras: their homotopy groups form Poisson algebras in $K_*$-modules, with an $(n-1)$-shifted Lie bracket $[-,-]$.  The class in degree $(2p+n-1)$ then corresponds to the Poisson word $[x,x]x^{p-2}$.

\begin{remark}
Throughout the above discussion, we have focused on the situation of a free $K(1)$-local $\mathbb{E}_n$-$K^{\wedge}_p$-algebra generated by a degree $2$ class $x$.  We could similarly use  \Cref{warmup}  to understand the free $\mathbb{E}_n$-algebra on a class in any other degree.  
Four  cases will arise, depending 
on what the parities of $n$ and $k$ imply about the vanishing of \mbox{$x^2$, $[x,x]$ based on Koszul sign rules.} 
\end{remark}  
\subsection{The $E$-theory of $p$ points in $\RR^n$} \label{section:higher heights} 
Fix a form of $E$-theory at height $h$ and prime $p>2$. We will carry out the following computation, which, to the best of our knowledge, is new:

\begin{theorem}[$E$-theory, Euclidean case]\label{prop:general calculations}
For any positive integer $n$ and integer $k$, the $E_*$-module $E_*^\wedge \left(\Conf_p(\RR^n)_+ \otimes_{h\Sigma_p} (S^k)^{\otimes p} \right)$ is given by 
$$ E^\wedge_\ast(B_p(\RR^n; S^k))=     \begin{cases}
  \ \ \ \      \Sigma^{kp}E_\ast   \oplus \Sigma^{pk+n-1} E_*  \oplus \Sigma^{k-1} E^\ast(B\Sigma_p)/( \tr, e^{\frac{n}{2}-1} )    & \mbox{for } n \mbox{   even, } k \mbox{   even}\\
  \ \ \ \  \Sigma^{k-1} E^\ast(B\Sigma_p)/( \tr, e^{ \frac{n}{2}  }  )& \mbox{for } n \mbox{ even, } k \mbox{  odd}\\
  \ \ \ \  \Sigma^{kp}E_\ast  \oplus \Sigma^{k-1} E^\ast(B\Sigma_p)/(\tr, e^{ \frac{n-1}{2} }) & \mbox{for } n \mbox{  odd, } k \mbox{  even}\\
  \ \ \ \  \Sigma^{k+ (2k+n-1)(\frac{p-1}{2})} E_\ast  \oplus  \Sigma^{k-1} E^\ast(B\Sigma_p)/(\tr, e^{  \frac{n-1}{2} })& \mbox{for } n \mbox{  odd, } k \mbox{  odd}
\end{cases} $$  Here $(\tr)$ denotes the transfer ideal associated to the inclusion of the trivial group, whereas $e\in E^0(B\Sigma_p)$ is the Euler class of the reduced complex standard representation.
\end{theorem}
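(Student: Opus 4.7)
The plan is to follow the three-step strategy from the warm-up proof of \Cref{warmup}, now working at general height. Namely, I would (i) identify the Hecke Lie algebra $\mathfrak{g}(\RR^n; S^k) = E_\ast^\wedge(\Omega^n \Free^{\mathscr{L}}(\Sigma^{n+k-1}E))$, (ii) compute its Hecke Lie algebra homology via the Hecke Chevalley--Eilenberg complex from \Cref{def:hecke chevalley--eilenberg}, and (iii) read off the spectral sequence of \Cref{thm:main} and resolve extensions. The crucial new ingredient compared to height $1$ is \Cref{Gamma0}, which identifies the suspension on weight $p$ Hecke operations with multiplication by the Euler class $e \in E^0(B\Sigma_p)/(\tr) \cong \Gamma^0(p)^\vee$. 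All occurrences of ``$p$-torsion of order $p^{\lfloor n/2 \rfloor}$'' in the warm-up will be systematically replaced by ``$e$-torsion killed by $e^{\lfloor n/2 \rfloor}$'' living in $E^0(B\Sigma_p)/(\tr)$.

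For step (i), I would introduce atomic Hecke Lie algebras $\mathfrak{g}_{a,n}^{(w)}$ exactly as in \Cref{atomicalgebra}, but with the relation $\alpha_a(x) = e^{\lfloor n/2 \rfloor} y$ or $e^{\lceil n/2 \rceil} y$ according to the parity of $a$, as forced by \Cref{prop:hecke desuspension} and the explicit description of the Euler class suspension in \Cref{Gamma0}. Combined with \Cref{freelieone} (which at general height needs to be supplemented by the observation that $\LL(x_a)$ is still generated as a Hecke module by $x$ in weight $1$ and one looped operation class in weight $p$, because weight-$p$ operations form the module $\Gamma^0(p)^\vee$), this yields a decomposition of $\mathfrak{g}(\RR^n; S^k)$ as in \Cref{step1}, but with the coefficient ring for torsion pieces replaced by $\Gamma^0(p)^\vee$.

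For step (ii), I would run through the same tableau analysis of the additive resolution $\AR(\mathfrak{g}_{a,n}^{(w)})$ as in the proof of \Cref{atomiclemma}. The crucial point is that, up to weight $p$, the only non-trivial differential in the normalised Hecke Chevalley--Eilenberg complex is the map $[\alpha | x] \xrightarrow{e^{\lceil n/2 \rceil}} [y]$ (or with $\lfloor n/2 \rfloor$ in the other parity). This yields homology groups whose torsion pieces have the form $E^0(B\Sigma_p)/(\tr, e^m)$ for the appropriate $m$, together with the expected free classes coming from $x^p$, $[x,x] x^{p-2}$, and $x[x,x]^{(p-1)/2}$ --- these being the Poisson monomials visible in the four parity cases.

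Step (iii) is the main obstacle. At height $1$ I can argue, as in the warm-up, that in the parity case where the $\gamma_p([x])$ class and the torsion class $[y]$ both appear, the differential $d^{p-1}$ must be non-trivial (by James splitting for $n=1$, then propagating via the map of spectral sequences induced by the evident map $\Omega S^{k+1} \to \Omega^n S^{n+k}$, and the compatibility of $d^{p-1}$ with this map, using that at height $1$ the target of $\phi$ on torsion pieces is multiplication by $p^{\lfloor n/2 \rfloor}$). At general height I would run the identical argument, but with the key input that the composite $\mathrm{E}^2_{0,\ast}(p,1,k) \to \mathrm{E}^2_{0,\ast}(p,n,k)$ is multiplication by $e^{\lfloor n/2 \rfloor}$ in $E^0(B\Sigma_p)/(\tr)$, so that the surviving quotient is $\Sigma^{k-1}E^0(B\Sigma_p)/(\tr, e^{(n-1)/2})$. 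For the remaining parity cases I would avoid running further differentials by invoking Serre's $p$-local splitting $\Omega^n S^{n+k}_{(p)} \simeq \Omega^{n-1} S^{n+k-1}_{(p)} \times \Omega^n S^{2n+2k-1}_{(p)}$ exactly as in the warm-up, decomposing the weight $p$ component and using that for $w < p$ the modules $E_\ast^\wedge(\EE_n(S^a)(w))$ are finite free (a fact which again follows from our spectral sequence, since $\widetilde{E}_\ast^\wedge(B\Sigma_j) = 0$ for $1 < j < p$). Finally the case of general $k$ reduces to $k$ positive by the complex orientation $E \otimes (S^2)^{\otimes n} \simeq \Sigma^{2n}E$ as $\Sigma_n$-spectra. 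I expect the most delicate point to be verifying that no exotic extensions occur between the torsion summand $E^0(B\Sigma_p)/(\tr, e^m)$ and the free summands, which I would handle by matching the answer against the Serre-split computation whenever $k > 0$ and both computations apply.
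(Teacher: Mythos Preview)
Your strategy is the same as the paper's and the argument is essentially correct, but your description of the ``atomic Hecke Lie algebras'' in step~(i) is too na\"ive at height $h>1$ and needs correction before the rest goes through.

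At height $1$ the weight-$p$ piece of $\LL(x_a)$ is a single class $y$, so the looped atomic algebra has two generators with the single relation $\alpha_a(x)=p^{\lfloor n/2\rfloor}y$. At general height this is no longer true: the weight-$p$ part of $\LL(x_{n+k-1})$ is the entire $E_0$-module $(\mathcal{H}_u^{\Lie})_{n+k-1}^{n+k-2}(p)\cong\Gamma^0(p)^\vee$, which is free of rank $\tfrac{p^h-1}{p-1}$. The paper therefore defines the higher atomic algebra $\mathfrak{h}_{a,n}^{(w)}$ (\Cref{higheratomicalgebra}) to have underlying module $\bigoplus_{0\le\ell\le h}(\mathcal{H}_u^{\Lie})_{n+a}^{n+\ast}(p^\ell)$, with the weight-$p$ Hecke action on the generator $x$ given by the $n$-fold suspension map $\Susp^n:(\mathcal{H}_u^{\Lie})_{a}^{\ast}(p)\to(\mathcal{H}_u^{\Lie})_{n+a}^{n+\ast}(p)$ between modules, not by a single relation. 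Your formula ``$\alpha_a(x)=e^{\lfloor n/2\rfloor}y$'' is not literally meaningful in this setting.

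Once this is fixed, your step~(ii) is exactly right: the only nontrivial differential in the weight-$p$ normalised complex is the module map $\Susp^n$, which by \Cref{Gamma0} is identified with multiplication by $e^{\lfloor n/2\rfloor}$ (resp.\ $e^{\lceil n/2\rceil}$) on $\Gamma^0(p)^\vee$, and the cokernel is precisely $E^\ast(B\Sigma_p)/(\tr,e^m)$ as you claim (cf.\ \Cref{atomiclemmahigher}). Your step~(iii)---the $d^{p-1}$ via comparison with $n=1$, the Serre splitting for the same-parity cases, and the complex-orientation shift for general $k$---is exactly what the paper does. The extension problems are in fact simpler than you anticipate: in the only case where both a free and a torsion line survive, the free line sits in higher filtration, so the filtration splits automatically.
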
 
\begin{remark}\label{2iseasy} 
The $p=2$ analogue of the above theorem is much more elementary.  Here, one may rely on the equivalence
$$\Conf_2(\RR^n) \otimes_{\Sigma_2} (S^{k})^{\otimes 2} \simeq \Sigma^{k} \mathbb{RP}_{k}^{n+k-1},$$
where $\mathbb{RP}_{k}^{n+k-1}$ denotes the cofiber of the inclusion $\mathbb{RP}^{k-1} \to \mathbb{RP}^{n+k-1}$.  This reduces the question to the calculation of the $E$-theory of real projective spaces, which is classically accomplished via the Gysin sequences associated to the fibrations 
\mbox{$S^1 \to \mathbb{RP}^{2n+1} \to \mathbb{CP}^{n}.$}

As pointed out by an anonymous referee, one can also prove at least the second part of the above theorem by an analogous observation at an odd prime, using the cofiber sequence
\[\Sigma^n B_p(\mathbb{R}^n;S^1) \to \Sigma^n B_p(\mathbb{R}^{\infty};S^1) \to B_p(\mathbb{R}^{\infty};S^{n+1})\]
that exists for even integers $n$ (cf.\ \cite[Proposition 1.3]{kuhn1982geometry}). Such arguments are special to weight $p$,  but  the methods of this paper can also be applied to large weights and configurations in more general manifolds.  As evidence of this, we present the more delicate \Cref{thm:open surfaces}; the reader may view \Cref{prop:general calculations} as a warm-up calculation, though the result was not previously recorded in the literature.
\end{remark}

While our computation will be notationally more cumbersome, it relies on the same strategy as   the preceding section \ref{warmupsection}. First, we need a higher height variant of the atomic Lie algebras from \Cref{atomicalgebra}, which will make use of the Hecke power ring introduced in  \Cref{Heckepowerring}: 
\begin{definition}[Atomic Hecke Lie algebras at height $h$]\label{higheratomicalgebra}
Given positive integers $n, w$ and an integer $a$, we define  the atomic Hecke Lie algebra $\mathfrak{h} = \mathfrak{h}_{a,s}^{(w)}$ as follows:
\begin{enumerate} 
\item The underlying  graded abelian group of  $\mathfrak{h}$ is given by 
$$\mathfrak{h}_\ast = \bigoplus_{0\leq \ell \leq h} (\mathcal{H}_u^{\Lie})_{n+a}^{n+\ast}(p^\ell) .$$
We shall denote the element corresponding to some $\phi \in (\mathcal{H}_u^{\Lie})_{n+a}^{n+i}(p^\ell)$ by the \mbox{symbol $x_{\phi}$;} it will live  in {homological degree $i$} and {weight $p^\ell w$.} 

\item Hecke operations in  $  (\mathcal{H}_u^{\Lie})_i^j(w)$ act on  $(\mathcal{H}_u^{\Lie})_{n+a}^{n+i }(p^k) \subset \mathfrak{h}_i$ 
via $$ (\mathcal{H}_u^{\Lie})_i^j(w) \times  (\mathcal{H}_u^{\Lie})_{n+a}^{n+i}(p^k)  \xrightarrow{\Susp^n \times \id}   (\mathcal{H}_u^{\Lie})_{n+i}^{n+j}(w) \times  (\mathcal{H}_u^{\Lie})_{n+a}^{n+i}(p^k) \rightarrow  (\mathcal{H}_u^{\Lie})_{n+a}^{n+j}(wp^k) \subset \mathfrak{h}_j $$ 
\item All Lie brackets vanish.
\end{enumerate}
\end{definition}
We generalise \Cref{atomiclemma} to higher heights: 
\begin{lemma}[Homology of atomic Hecke Lie algebras at height $h$]\label{atomiclemmahigher}
\mbox{In weights $k\leq pw$,  we have:}  
$$\mbox{For $a$ even: }\HH^{\Lie^{\mathcal{H}_u}}(\mathfrak{h}_{a,n}^{(w)})(k)  
 \cong (\Lambda_{E_\ast}[x_{(a ,1,0,w)}] \oplus  (\mathcal{H}_u^{\Lie})_{n+a}^{n+ \ast}(p)/_{\Susp^n})(k)  $$
$$ \ \  \ \ \ \  \ \ \  \ \ \  \ \ \ \ \ \   \ \   \ \  \ \  \ \ \ \   \ \  \ \ \ \    \ \  \ \   \vspace{-10pt}  \ \   \ \ \  \     \cong  (\Lambda_{E_\ast}[x_{(a ,1,0,w)}] \oplus  \Sigma^{a-1}E^{-\ast} (B\Sigma_{p})/( \tr, e^{\lfloor \frac{n}{2} \rfloor }))(k)$$ 
 
$$ \ \ \ \ \ \ \ \   \mbox{For $a$ odd: }   \   \HH^{\Lie^{\mathcal{H}_u}}(\mathfrak{h}_{a,n}^{(w)})(k) \cong (\Gamma_{E_\ast}[x_{(a,1,0,w)}] \oplus (\mathcal{H}_u^{\Lie})_{n+a}^{n+ \ast}(p)/_{\Susp^n} )(k)  \ \ \ \  \ \   \ \ \vspace{5pt}$$
$$ \ \  \ \ \ \    \  \ \ \ \   \ \  \ \ \ \  \ \ \  \ \ \ \ \ \ \   \ \   \ \  \  \ \  \   \  \  \  \ \  \  \  \  \ \  \cong  (\Gamma_{E_\ast}[x_{(a ,1,0,w)}] \oplus   \Sigma^{a-1}E^{-\ast}(B\Sigma_{p})/(\tr, e^{\lceil \frac{n}{2} \rceil } ))(k).\ $$ 
As above, $(\tr)$ denotes the transfer ideal associated to the trivial subgroup, and $e\in E^0(B\Sigma_p)$ is the Euler class of the reduced complex standard representation. 
\vspace{-3pt} 
\end{lemma}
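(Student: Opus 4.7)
The proof mirrors the structural template of the height-one computation (\Cref{atomiclemma}). Since $\mathfrak{h}_{a,n}^{(w)}$ has vanishing Lie bracket by construction, the Chevalley--Eilenberg differential in $\CE_{{\mathcal{H}_u}}(\mathfrak{h}) = \CE(\AR(\mathfrak{h}))$ vanishes identically, leaving the simplicial bigraded object $\Gamma_{E_*}(\AR(\mathfrak{h})[1])$ equipped only with its simplicial structure. Since $\mathfrak{h}$ is $E_*$-free (by \cite{strickland1998morava}), \Cref{thm:hecke chevalley--eilenberg works} applies, and the Hecke Lie homology is the total homology of this simplicial bigraded object with vanishing chain differential; by Dold--Kan it decomposes as $\bigoplus_{j,r} \pi_r(\Gamma^j_{E_*}(\AR(\mathfrak{h})[1])_\bullet)$ in appropriate bidegrees.

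A combinatorial reduction restricts attention to $\Gamma^1$ and $\Gamma^p$ in weights $k \leq pw$. A weight-$k$ element of $\Gamma^j$ corresponds to a multiset of $j$ tensor factors whose weights $p^{\ell_i}w$ sum to $k$; writing $k = mw$ with $m \leq p$ and using that $p$ is prime, the only solutions are a single factor ($j=1$) of weight $w$ or $pw$, or $m$ factors all of weight $w$ (so $j = m \leq p$). The contribution from weight-$w$ factors is $\Gamma^j(E_*\cdot x[1])$: since $x[1]$ has total degree $a+1$, for $a$ even we obtain $\Lambda^j[x]$ (vanishing for $j\geq 2$), and for $a$ odd we obtain $\gamma_j(x)$ (nonzero for all $j$), producing the $\Lambda_{E_*}[x]$ and $\Gamma_{E_*}[x]$ summands of the stated formula.

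It remains to compute $\pi_*(\AR(\mathfrak{h})(pw)_\bullet)$, which we do by tabulating generators exactly as in \Cref{atomiclemma}. The normalised complex is concentrated in simplicial degrees zero and one: at level zero we have the $E_*$-free module spanned by $\{[y_\phi] : \phi \in (\mathcal{H}_u^{\Lie})_{n+a}^{n+*}(p)\}$, and at level one the normalised generators are $\{[\beta|x] : \beta \in (\mathcal{H}_u^{\Lie})_a^*(p)\}$, with all other bar symbols of weight $pw$ being degenerate via the scalar-sliding identities $[\lambda|z] = \lambda[1|z]$ combined with $[1|z] = s_0[z]$. The unique face map $d_1$ sends $[\beta|x]$ to the Hecke action $\beta \cdot x = x_{\Susp^n(\beta)}$, so $\pi_0 = (\mathcal{H}_u^{\Lie})_{n+a}^{n+*}(p)/\Susp^n$ and $\pi_1 = \ker(\Susp^n) = 0$ by injectivity of the $n$-fold suspension on $\Gamma^\bullet(p)$ (\Cref{suspensiondiagram}, \Cref{Gamma0}).

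Assembling these pieces yields the first form of the stated isomorphism. The reformulation as $\Sigma^{a-1}E^{-*}(B\Sigma_p)/(\tr, e^{\lfloor n/2\rfloor})$ (respectively $e^{\lceil n/2\rceil}$) follows by unpacking the identifications of \Cref{Gamma0}: Koszulness of Rezk's ring \cite{rezk2012rings} identifies $(\mathcal{H}_u^{\Lie})_i^{i-1}(p)$ with $\Gamma^i(p)^\vee$, the Thom isomorphism identifies the latter with a suspended copy of $E^0(B\Sigma_p)/(\tr)$, and under these identifications the $n$-fold suspension $\Susp^n$ becomes multiplication by $e^{\lfloor n/2\rfloor}$ or $e^{\lceil n/2\rceil}$ according to the parity of $a$. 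The main technical obstacle is bookkeeping the quadruple grading (\Cref{quadruple}) together with its Koszul signs consistently through the $\Gamma$-construction, while verifying that higher simplicial homotopy vanishes at weight $pw$ — a statement that reduces to the aforementioned injectivity of $\Susp^n$, which at height one collapses to the transparent fact that multiplication by $p^{\lfloor n/2\rfloor}$ is injective on $K_*$.
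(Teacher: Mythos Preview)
Your proof is correct and follows essentially the same approach as the paper: both identify the normalised complex of $\CE(\AR(\mathfrak{h}_{a,n}^{(w)}))$ in weight $pw$ as the two-term complex $(\mathcal{H}_u^{\Lie})_a^{*}(p) \xrightarrow{\Susp^n} (\mathcal{H}_u^{\Lie})_{n+a}^{n+*}(p)$ together with the exterior/divided-power contribution from the class $[x]$. Your organisation is slightly more explicit than the paper's in two respects---you note at the outset that the Chevalley--Eilenberg differential vanishes (since the bracket is zero), and you isolate the injectivity of $\Susp^n$ needed for $\pi_1 = 0$, which the paper leaves implicit in its invocation of \Cref{Gamma0}---but the substance is the same.
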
 

\begin{proof}
The symbols $x_{\phi} $ introduced in \Cref{higheratomicalgebra} are $E_\ast$-linear, i.e. satisfy $\lambda \cdot x_{\phi} = x_{\lambda \cdot\phi}$ for all  
$\lambda \in E_\ast$. Write 
  $x_1 = x$ for the canonical generator in degree $a$ and weight $1$. 
We draw $\AR(\mathfrak{h}_{a,n}^{(w)})$, as defined in \Cref{construction:additive resolution}: \vspace{-10pt}

\begin{figure}[H]
\begin{small}
\begin{center}\def\arraystretch{1.5}
\begin{tabular}{c|cccc}
$r\backslash weight$ &$1w$ & $pw$ & $p^2w$&$\cdots$\\
\hline
$0$&$\substack{\left[x\right]_{_{(a,1,0,1w)}}}$ & \ \ \ \ \ $\substack{\left[x_{\alpha }\right]_{_{(a-1,1,0,pw)}}}$ & \ \ \ \ \ $\substack{\left[x_{\beta }\right]_{_{(a-2,1,0,p^2 w)}}}$   &$\cdots$\ \vspace{-9pt}
\\\\

$1$&$\substack{\left[1|x\right]_{_{(a,1,1,1w)}}}$ & \ \ \ \ \  $\substack{\left[ \alpha' | x \right]_{_{(a-1,1,1,pw)}}\\ \left[ 1 | x_\alpha  \right]_{_{(a-1,1,1,pw)}}}$  &
$ \ \ \ \ \ \
\substack{  
\ \ \left[ 1| x_{\beta} \right]_{_{(a-2,1,1,p^2w)}}\\ 
 \ \ \   \left[ \alpha'  | x_{\alpha}\right]_{_{(a-2,1,1,p^2w)}} \\ 
\ \left[ \beta' | x \right]_{_{(a-2,1,1,p^2w)}}}  $  \vspace{-5pt}
\\\\
$2$&$\substack{\left[1|1|x\right]_{_{(a,1,2,1w)}}}$ &  \ \ \ \ \    $\substack{  \left[ \alpha' | 1 | x \right]_{_{(a-1,1,2,pw)}} \\\left[1 | \alpha'  | x \right]_{_{(a-1,1,2,pw)}} \\ \left[ 1 | 1 | x_\alpha \right]_{_{(a-1,1,2,pw)}}}$  &
$\substack{
 \ \ \ \ \   \ \ \ \  \left[ 1|1 | x_{\beta} \right]_{_{(a-2,1,2,p^2w)}}     \\ 
  \ \ \ \ \   \ \ \  \left[ 1| \beta' | x  \right]_{_{(a-2,1,2,p^2w)}} \\ 
 \ \ \ \ \   \ \ \   \left[ \beta' | 1| x\right]_{_{(a-2,1,2,p^2w)}}\\ 
 \ \ \ \ \   \ \ \ \  \  \left[ 1| \alpha''  | x_{\alpha } \right]_{_{(a-2,1,2,p^2w)}}     \\ 
 \ \ \ \ \   \ \ \  \ \ \left[ \alpha'' | 1 | x_{\alpha}  \right]_{_{(a-2,1,2,p^2w)}} \\ 
 \ \ \ \ \   \ \ \ \ \ \left[ \alpha'' | \alpha'  |x\right]_{_{(a-2,1,2,p^2w)}}}$
&$\cdots$\\
\vdots&\vdots&\vdots&\vdots
\end{tabular}\vspace{-20pt}
\end{center} 
\end{small}
\end{figure} 
 Here $\alpha$ runs over some fixed $E_0$-module basis  for $(\mathcal{H}_u^{\Lie})_{n+a}^{n+a-1}(p)$, the element $\alpha'$ runs over some fixed  basis  for $({\mathcal{H}_u^{\Lie}})_{a}^{a-1}(p)$, the element
 $\alpha''$ runs over some fixed   basis  for \mbox{$({\mathcal{H}_u^{\Lie}})_{a-1}^{a-2}(p)$, etc.}
Similarly, $\beta$ runs over some fixed   basis  for $({\mathcal{H}_u^{\Lie}})_{n+a}^{n+a-2}(p^2)$, $\beta' $ runs over some fixed   basis  for $({\mathcal{H}_u^{\Lie}})_{a}^{a-2}(p^2)$,  
$\beta'' $ runs over some fixed  basis  for $({\mathcal{H}_u^{\Lie}})_{a}^{a-2}(p^2)$, etc.
The subscripts again reflect the quadruple grading from \Cref{quadruple}. 
We will again use \Cref{thm:hecke chevalley--eilenberg works} to compute $\HH^{\Lie^{\mathcal{H}_u}}(\mathfrak{h}_{a,n}^{(w)})$ as the homology of the   complex
$\CE(\AR(\mathfrak{h}_{a,n}^{(w)})) =  \Gamma_{K_\ast}(\AR(\mathfrak{h}_{a,n}^{(w)})[1]).$

For $a$  even,   $[x]$ has odd total degree in the chain complex of graded $E_\ast$-modules $\AR_0(\mathfrak{h}_{a,n}^{(w)})[1]$ which implies  that $[x]^i$ vanishes when $i>1$. 
Using the structure maps in \Cref{higheratomicalgebra}, we see that the normalised chain complex of $\CE(\AR(\mathfrak{h}_{a,n}^{(w)}))$ is  therefore given by the complexes 
$$ \ \ \ \ \ \ \ \  \ldots \rightarrow 0\rightarrow 0\rightarrow 0 \rightarrow 0 \rightarrow [1] \ \mbox{ in weight } 0,$$
$$ \ \ \ \ \ \  \ \ \ldots  \rightarrow 0 \rightarrow 0\rightarrow 0 \rightarrow [x]\rightarrow 0  \ \mbox{ in weight } w,$$
$$  \ldots \rightarrow 0 \rightarrow ({\mathcal{H}_u^{\Lie}})_{a}^{\ast}(p) \xrightarrow{\Susp^n} (\mathcal{H}_u^{\Lie})_{n+a}^{n+\ast}(p) \rightarrow 0 \ \mbox{ in weight } pw, \ \ \ \ \ \ \ \ \ \ \ \ \ \ \ \ \ \ $$
and vanishes for all other smaller weights.

For $a$ odd,   $[x]$ has even total degree in the chain complex of graded $E_\ast$-modules $\AR_0(\mathfrak{h}_{a,n}^{(w)})[1]$. We deduce that the normalised chain complex of $\CE(\AR(\mathfrak{h}_{a,n}^{(w)}))$ is  given by
$$\ \  \ \  \ \   \ \  \ \ \ \   \ldots \rightarrow 0\rightarrow 0\rightarrow 0\rightarrow 0 \rightarrow   0    \xrightarrow{}  [x]^i    \rightarrow 0 \rightarrow  \ldots \rightarrow 0 \mbox{ \ \  \ \   in weight } iw  \mbox{ for } i=0, 1, \ldots,  (p-1), \mbox{\    \ \  \ \ \  } \ \ \ \ \   
 \vspace{-3pt}$$
$$   \ \   \ \ \ \  \ \ \ldots  \rightarrow \gamma_p([x]) \rightarrow  0 \rightarrow \ldots  \rightarrow 0 \rightarrow ({\mathcal{H}_u^{\Lie}})_{a}^{\ast}(p) \xrightarrow{\Susp^n} (\mathcal{H}_u^{\Lie})_{n+a}^{n+\ast}(p) \rightarrow 0\ \ \ \ \mbox{ in weight }pw,   \ \ \ \  \ \  \ \  \ \ \ \   \   \ \  \ \ \ \ \ \ \ \  \ \ \  \ \ \ \ $$
and vanishes in all other weights below $pw$. 
This proves the first part of both cases; the remaining claims follow from \Cref{Gamma0}. 
\end{proof} 
With this computation at hand, we can  prove \Cref{prop:general calculations} along the same lines \mbox{as \Cref{warmup}.}
We  recall  the  structure of   free Hecke Lie algebras on a single class from \cite[Section 4.4.2]{brantnerthesis}:
\begin{corollary}[Free Hecke Lie algebras on one generator at height $h$]\label{freeliehigher}\  

If $x_a$ is a generator in even degree $a$, then   
$$\LL(x_a)_\ast = \bigoplus_{0\leq \ell \leq h} (\mathcal{H}_u^{\Lie})_{a}^{\ast}(p^\ell) .$$
Write elements corresponding to $\phi\in (\mathcal{H}_u^{\Lie})_{a}^{\ast}(p^\ell)$ as $x_\phi$. The Lie bracket vanishes, and the Hecke operations are determined by  $\alpha(x_{\phi})=x_{\alpha \phi}$.

If $x_a$ is a generator in odd degree $a$, then  $\LL(x_a)$  is given by
$$\LL(x_a)_\ast = \bigoplus_{0\leq \ell \leq h} (\mathcal{H}_u^{\Lie})_{a}^{\ast}(p^\ell)  \oplus
\bigoplus_{0\leq \ell \leq h} (\mathcal{H}_u^{\Lie})_{2a}^{\ast}(p^\ell).$$
Write elements corresponding to $\phi\in (\mathcal{H}_u^{\Lie})_{a}^{\ast}(p^\ell)$ on the left as $x_\phi$ and to 
$\phi\in (\mathcal{H}_u^{\Lie})_{2a}^{\ast}(p^\ell)$
on the right as $\widetilde{x}_{\phi}$.
The Lie bracket satisfies $[x_\lambda,x_\mu] = \widetilde{x}_{\lambda \mu}$ for all scalars $\lambda, \mu \in E_\ast \cong  (\mathcal{H}_u^{\Lie})_{a}^{\ast}(1)$, and vanishes otherwise.
The Hecke operations \mbox{are determined by} $\alpha(x_\phi)=x_{\alpha \phi }$ and  $\alpha(\widetilde{x}_\phi)=\widetilde{x}_{\alpha \phi }$.
\end{corollary}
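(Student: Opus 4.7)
The strategy is to verify the claim purely algebraically, by checking that the proposed modules satisfy all axioms of a Hecke Lie algebra and by establishing the universal property directly. As an alternative (and sanity check), one could invoke Theorem \ref{thm:brantner main} together with the operadic weight decomposition of $\Free^{\mathscr{L}_E}(\Sigma^a E)$: the weight-$1$ summand is $\Sigma^a E$ (giving the free Hecke module on $x_a$), while higher weight summands arise either from operadic brackets or from Hecke operations applied to existing classes.

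First I would analyse which Lie words can survive. For $a$ even, $[x,x]=0$ since $2$ is invertible (antisymmetry of the Lie bracket, Definition \ref{def:Lie algebra}(1)), and there are no further Lie words in a single generator; hence the only contributions are Hecke operations applied to $x$, yielding $\bigoplus_{\ell} (\mathcal{H}_u^{\Lie})_a^\ast(p^\ell)$. For $a$ odd, the element $\widetilde{x}:=[x,x]$ is nonzero in weight $2$, degree $2a$. The third Lie axiom then forces $[x,\widetilde{x}]=[x,[x,x]]=0$, and $[\widetilde{x},\widetilde{x}]=0$ since $\widetilde{x}$ has even degree. All remaining brackets involve a class of the form $\phi(x)$ or $\phi(\widetilde{x})$ with $\phi$ of weight $>1$, and such brackets are zero by the Hecke Lie relation $[z,\alpha(w)]=0$ of Definition \ref{HLA}. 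Hence no further Lie monomials appear, and the only remaining generators come from Hecke operations applied to $x$ and (in the odd case) to $\widetilde{x}$.

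For the universal property, given a Hecke Lie algebra $\mathfrak{h}$ and an element $y\in \mathfrak{h}_a$, I would define a candidate map $\varphi:\LL(x_a)\to \mathfrak{h}$ by $x_\phi\mapsto \phi\cdot y$ and (in the odd case) $\widetilde{x}_\phi\mapsto \phi\cdot [y,y]$. Well-definedness on the underlying Hecke module follows because the symbols $x_\phi$ and $\widetilde{x}_\phi$ are indexed by a basis of the power ring, and the power-ring relations (composition via suspension, twisting, and Yoneda product in Definition \ref{Heckepowerring}) are respected by the action on $\mathfrak{h}$ by definition of a Hecke module. Preservation of the Lie bracket is automatic except in the single nontrivial case $[x_\lambda,x_\mu]=\widetilde{x}_{\lambda\mu}$, which maps to $[\lambda y,\mu y]=\lambda\mu[y,y]$ in $\mathfrak{h}$, agreeing with $(\lambda\mu)\cdot[y,y]=\varphi(\widetilde{x}_{\lambda\mu})$. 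Uniqueness is clear because the symbols $x_\phi,\widetilde{x}_\phi$ are generated under Hecke operations and one bracket from $x=x_1$.

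The main obstacle is the verification that all brackets into higher-weight components truly vanish, since on the face of it one must rule out elements such as $[\widetilde{x},\phi(x)]$ or $[\phi(\widetilde{x}),\psi(x)]$ a priori. These are all controlled by the Hecke Lie axiom $[z,\alpha(w)]=0$ for $\alpha$ of weight $>1$ together with the third Lie axiom $[a,[a,a]]=0$; the bookkeeping is delicate because one must carefully separate weight-$1$ Hecke operations (which are scalars in $E_\ast$) from the genuinely new weight-$p^\ell$ operations, and ensure that the generating set we propose is truly closed under brackets without imposing additional hidden relations. Once this is verified, the weight decomposition together with the composition rules in $\mathcal{H}_u^{\Lie}$ yield the asserted isomorphism of Hecke Lie algebras.
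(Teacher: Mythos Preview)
Your approach is sound. The paper does not prove this corollary in the text; both this statement and its height-one analogue (Corollary~\ref{freelieone}) are simply recalled from \cite[Section 4.4.2]{brantnerthesis} without argument, so there is no in-paper proof to compare against. Your direct verification of the universal property is the natural route: the Hecke Lie relation $[z,\alpha(w)]=0$ of Definition~\ref{HLA} forces the underlying Hecke module of $\LL(M)$ to be the free Hecke module $\AAA$ applied to the ordinary free Lie algebra $\LAA(M)$ (this is essentially the observation used in Lemma~\ref{lem:hecke bar constructions}(2)); for $M=E_*\langle x_a\rangle$ the latter has rank one ($a$ even) or rank two with basis $\{x,[x,x]\}$ ($a$ odd), and applying $\AAA$ gives the stated sums. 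Your construction of $\varphi$ and the check that it is a Hecke Lie algebra map are correct; the potential sign issues in verifying $[\lambda y,\mu y]=\lambda\mu[y,y]$ disappear because $E_*$ is concentrated in even degrees.

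One point you leave implicit: the truncation $0\leq\ell\leq h$ is not a consequence of your free-algebra argument but a property of the power ring itself, encoding the vanishing of $(\mathcal{H}_u^{\Lie})_a^*(p^\ell)=\Ext^\ell_{\Gamma^a}(E_0,E_{*})$ for $\ell>h$ at height $h$. This is input from \cite{brantnerthesis} (ultimately via Koszulness of $\Gamma$, cf.\ \cite{rezk2012rings}), not something your universal-property argument needs to establish separately.
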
 

\vspace{-5pt}
 
\begin{proof}[Proof of \Cref{prop:general calculations}] We will follow the same strategy as in the proof of \Cref{warmup}.\vspace{5pt}\\ 
 \Cref{prop:hecke desuspension} shows that  $\mathfrak{g}(\RR^n, S^k) $ is obtained by desuspending
 $\LL(x_{n+k-1})$  exactly $n$ times in internal degree,  
and then acting through $n$-fold suspended Hecke operations.
As in the proof of \Cref{warmup},  \Cref{freeliehigher} then   implies that  $\mathfrak{g}(\RR^n, S^k)$ is given as indicated below:
\begin{figure}[H]
\begin{small}
\begin{center}\def\arraystretch{1.5}
\begin{tabular}{c|cccc} 
$k\backslash n$ & \ \ even & odd & \\
\hline 
even \ \ \ \  &$ \ \ \ \  \mathfrak{h}^{(1)}_{k-1,n} \oplus  \mathfrak{h}^{(2)}_{n+2k-2,\lfloor \frac{n}{2}\rfloor} $ & $\mathfrak{h}^{(1)}_{k-1, n} $  
\\
odd \ \ \ \   &$ \ \ \ \  \mathfrak{h}^{(1)}_{k-1, n}$ &  $\mathfrak{h}^{(1)}_{k-1,n} \oplus  \mathfrak{g}_{n+2k-2,n}^{(2)}  $
\end{tabular}\vspace{-5pt}
\end{center}
\end{small} 
\end{figure}\ 
We read off $\HH^{\Lie^{\mathcal{H}_u}}( \mathfrak{g}(\RR^n, S^k))(p) $ from 
  \Cref{atomiclemma} and   the isomorphism $\HH^{\Lie^{\mathcal{H}_u}}(\mathfrak{h} \oplus \mathfrak{h}')(w) \cong (\HH^{\Lie^{\mathcal{H}_u}}(\mathfrak{h}) \otimes \HH^{\Lie^{\mathcal{H}_u}}(  \mathfrak{h}'))(w)$, which holds for all  involved Hecke Lie algebras $\mathfrak{h}, \mathfrak{h}'$ \mbox{in weights $w\leq p$.}

\begin{figure}[H]
\begin{small}
\begin{center}\def\arraystretch{1.5}
\begin{tabular}{c|cccc} 
$k\backslash n$ & \ \ even & odd & \\
\hline
even \ \ \ \  &
$  \ \ \ \ \substack{ \  \\ \ \\ \Sigma^{p(k-1)}E_\ast \\  \\  
\Sigma^{k-2}E^{-\ast}(B\Sigma_{p})/(\tr, e^{\lceil \frac{n}{2} \rceil }) \\ \\
\Sigma^{p(k-1) + n}E_\ast}
$  \ \ \ \ \ &  $\substack{ \Sigma^{p(k-1)}E_\ast \\ \\  \Sigma^{k-2}E^{-\ast}(B\Sigma_{p})/(\tr, e^{\lceil \frac{n}{2} \rceil })} $  
\\\\
odd \ \ \ \   &$ \ \ \ \  \substack{ \Sigma^{k-2}E^{-\ast}(B\Sigma_{p})/(\tr, e^{\lfloor \frac{n}{2} \rfloor }) }$ &  $\substack{
\Sigma^{(k-1)+ \frac{p-1}{2} (n+2k-2)}E_\ast \\ \\
 \Sigma^{k-2}E^{-\ast}(B\Sigma_{p})/( \tr, e^{\lfloor \frac{n}{2} \rfloor })
}$
\end{tabular}
\end{center}
\end{small}
\caption{The Hecke Lie algebra homology $\HH^{\Lie^{\mathcal{H}_u}}( \mathfrak{g}(\RR^n, S^k))(p) $.}
\end{figure}

Hence, the $\mathrm{E}_2$-term $\mathrm{E}^2_{s, t} = \HH^{\Lie^{\mathcal{H}_u}}_{s+1}( \mathfrak{g}(\RR^n, S^k))_{t-1}$ of the spectral sequence at weight $p$ \mbox{is  given by:}   
 $$\ \ \ \ \ \mathrm{E}^2_{0, \ast} (p) \  \ \ \ = \ \  \ \ \  \HH^{\Lie^{\mathcal{H}_u}}_{1}( \mathfrak{g}(\RR^n, S^k))(p))_{\ast-1}  \    \ \ \ = \ \ \ \ \  \begin{cases}
  \Sigma^{k-1}E^{-\ast}(B\Sigma_{p})/( \tr, e^{\lceil \frac{n}{2} \rceil})& \mbox{for } n \mbox{ even, } k \mbox{ even}\\
\Sigma^{k-1}E^{-\ast}(B\Sigma_{p})/( \tr, e^{\lfloor \frac{n}{2} \rfloor })& \mbox{for } n \mbox{  even, } k \mbox{  odd}\\
 \Sigma^{k-1}E^{-\ast}(B\Sigma_{p})/( \tr, e^{\lceil \frac{n}{2} \rceil})& \mbox{for } n \mbox{  odd, } k \mbox{  even}\\
\Sigma^{k-1}E^{-\ast}(B\Sigma_{p})/( \tr, e^{\lfloor \frac{n}{2} \rfloor })& \mbox{for } n \mbox{ odd, } k \mbox{   odd}
\end{cases} \ \ \ \ \ \  \ \ \ \ \ \  \ \ \ \ \ \  \ \ \ \ \ \ $$
  \vspace{-10pt}
 
$$\mathrm{E}^2_{\frac{p-1}{2}, \ast-\frac{p-1}{2}} (p)= \HH^{\Lie^{\mathcal{H}_u}}_{\frac{p+1}{2}}( \mathfrak{g}(\RR^n, S^k))(p))_{\ast-\frac{p+1}{2}}  =  \begin{cases}
0 & \mbox{for } n \mbox{   even, } k \mbox{   even}\\
  0 & \mbox{for } n \mbox{ even, } k \mbox{  odd}\\
0  & \mbox{for } n \mbox{  odd, } k \mbox{  even}\\
 \Sigma^{(2k+n-1)(\frac{p-1}{2})+k}E_\ast     & \mbox{for } n \mbox{  odd, } k \mbox{  odd}
\end{cases} $$
  \ \vspace{-10pt} 

$$\ \ \mathrm{E}^2_{p-2, \ast-p+2} (p)= \HH^{\Lie^{\mathcal{H}_u}}_{p-1}( \mathfrak{g}(\RR^n, S^k))(p))_{\ast-p+1}   \ =  \begin{cases}
 \Sigma^{kp +n -1} E_\ast     & \mbox{for } n \mbox{   even, } k \mbox{   even}\\ 
  0 & \mbox{for } n \mbox{ even, } k \mbox{  odd}\\
0  & \mbox{for } n \mbox{  odd, } k \mbox{  even}\\
0 & \mbox{for } n \mbox{  odd, } k \mbox{  odd}
\end{cases} \ \ \ \ \ \ \ \  \ \ \ \ \  $$
 \ \vspace{-10pt}

$$\mathrm{E}^2_{p-1, \ast-p+1} (p) \   =  \    \HH^{\Lie^{\mathcal{H}_u}}_{p}( \mathfrak{g}(\RR^n, S^k))(p))_{\ast-p}   \    =  \    \begin{cases}
 \Sigma^{kp}E_\ast   & \mbox{for } n \mbox{   even, } k \mbox{   even}\\
  0 & \mbox{for } n \mbox{ even, } k \mbox{  odd}\\
\Sigma^{kp}E_\ast  & \mbox{for } n \mbox{  odd, } k \mbox{  even}\\
0 & \mbox{for } n \mbox{  odd, } k \mbox{  odd}
\end{cases}  \ \ \  \ \ \  \ \ \  \ \ \  \ \ \  \ \  $$
 \ \vspace{5pt} \\
The modules $\mathrm{E}^2_{s,t} (p)$ in our spectral sequence vanish for all other values of $s$ and $t$.\\ \\
We will now compute the  $\mathrm{E}^{\infty}$-page of the spectral sequence  
and solve extension problems.

Case 1: $n$ even, $k$ odd. The spectral sequence is concentrated on the $(s=0)$-line.

Case 2: $n$ odd, $k>0$ even. The spectral sequence is concentrated at $(s=0)$ and $(s=p-1)$, so $E^2_{s,t}(p)  \cong E^3_{s,t}(p)  \cong   \ldots \cong E^{p-1}_{s,t}(p) $. As before, the next differential  $d_{p-1}$ is again nonzero.
Indeed, let $\mathrm{E}^{r}_{s,t}(p,n,k)$ denote  the spectral sequence associated to associated to the pair $(n,k)$. For   $n=1$, the James splitting shows that $E^\wedge_\ast(\Omega S^{k+1})$ is torsion-free.  Hence 
the generator $[y_1]$ of $\mathrm{E}^2_{0, \ast} (p,1,n)=  \Sigma^{k-1}E^{-\ast}(B\Sigma_{p})/(\tr, e) = \Sigma^{k-1} E_\ast/p$ must lie in the image of $d^{p-1}$.

For higher  $n$, we  obtain a map of spectral sequences $\phi: \mathrm{E}^r_{s,t}(p,1,k)\rightarrow\mathrm{E}^r_{s,t}(p,n,k) $ 
from the weight $p$ sequence attached to $(1,k)$ to the sequence attached  to $(n,k)$. 
For $\mathrm{E}^1$, this map is determined by applying the Hecke Chevalley--Eilenberg  complex to 
the  map   $$\mathfrak{h}_{{k-1}, 1}^{(1)} = \mathfrak{h}(\RR^1, S^k) \ \xrightarrow{ \ \ \ \phi \ \  \ } \ \mathfrak{g}(\RR^n, S^k) = \mathfrak{h}_{{k-1},n}^{(1)}.$$

We can describe this map explicitly. Writing 
$x_1 \in \mathfrak{g}(\RR^1, S^k)_{k-1}$ and $x_n \in \mathfrak{g}(\RR^n, S^k)_{k-1}$
for the canonical generators, we note that $\phi(x_1) = x_2$.
More generally, if $x_{1,\alpha}\in   \mathfrak{g}(\RR^1, S^k)_{i}  = (\mathfrak{h}^{(1)}_{k-1,n})_i$ corresponds to some  class $\alpha \in (\mathcal{H}_u^{\Lie})_{1+(k-1)}^{1+i}(p)$, then $\phi (x_{1,\alpha}) = x_{2,\Susp^{n-1}(\alpha)} $ corresponds to $\Susp^{n-1}(\alpha) \in (\mathcal{H}_u^{\Lie})_{n+(k-1)}^{n+i}(p)$.
This shows that the induced map on the $(s=0)$--line of $\mathrm{E}^2$-pages is given by 
the natural  map 
$$  \mathrm{E}^2_{0,\ast}(p,1,k) = \Sigma^{k-1}E^{-\ast}(B\Sigma_{p})/( \tr, e^{1})\xrightarrow{ \ \  \ e^{\frac{n}{2}} \ \ \ \ }  \Sigma^{k-1}E^{-\ast}(B\Sigma_{p})/( \tr, e^{  \frac{n+1}{2}  }) = \mathrm{E}^2_{0,\ast}(p,n,k)$$

Since $ \mathrm{E}^{p-1}_{0,\ast}(p,1,k) \cong \mathrm{E}^{2}_{0,\ast}(p,1,k) $ is killed by $d^{p-1}$ and the map 
$\mathrm{E}^{p-1}_{0,\ast}(p,1,k) \rightarrow \mathrm{E}^{p-1}_{0,\ast}(p,n,k)$ respects the differentials, we deduce that
$\mathrm{E}^{p}_{0,\ast}(p,n,k)$ is given by $\Sigma^{k-1}E^{-\ast}(B\Sigma_{p})/(\tr, e^{  \frac{n-1}{2}  })$. The kernel of $d^{p-1}$ on the summand  $\langle \gamma_p([x])\rangle $ is spanned by $[x]^p$. As there is no space for   further differentials and all extension problems are trivial, this finishes the proof for $n$ odd, $k>0$ even.
We then extend to all $n$ and $k$ using the Serre fibration \cite{Serre:HSEF} and complex orientability of $E$-theory following the same strategy as in the proof of \Cref{warmup}.

\end{proof}

\subsection{A height 2 example} \label{sec:height 2}
We end by emphasising that the above formulas are explicit and workable in examples beyond $p$-adic $K$-theory.  To this end, consider the height $2$ Morava $E$-theory at $p=3$ investigated by Yifei Zhu \cite{zhu20power} arising from the moduli of elliptic curves with a choice of a point of exact order $4$.

This cohomology theory $E$ has coefficient ring $E_*=\mathbb{Z}_9[[h]][u^{\pm1}],$ where $\mathbb{Z}_9=W(\mathbb{F}_9)$, $h$ is a class in degree $0$, and $u$ is a class in degree $2$.  Zhu calculates \cite[Corollary 2.6]{zhu20power} the ring $E^0(B\Sigma_3)/\text{tr}$ to be given by
$$\mathbb{Z}_9[[h]][\alpha]/(\alpha^4-6\alpha^2+(h-9)\alpha-3).$$
The element $\alpha$ is closely related to the Euler class $e$. Rather than spelling out this relationship in detail, we simply note that there is an isomorphism of $E_0$-modules  
$$E_0[e]/(f(e),e^n) \cong E_0[\alpha]/(\alpha^4-6\alpha^2+(h-9)\alpha-3,\alpha^n),$$  which can be constructed by  observing that both sides compute the cokernel of the dual of the $2n$-fold suspension map
$$\Gamma^0(p) \longrightarrow \Gamma^{-2n}(p).$$
Indeed, Propositions \ref{suspensiondiagram} and \ref{explicit!} imply that $E_0[e]/(f(e),e^n)$ computes this cokernel, whereas one can read off the formula for the double suspension in terms of $\alpha$ from Zhu's Cartan formula \cite[Proposition 3.6(v)]{zhu20power} applied to \cite[Example 3.7]{zhu20power}.

We  
 suspect it will often be the case, when working with explicit $E$-theories of algebro-geometric origin, that the double suspension operation is presented more easily in a basis for $E^0(B\Sigma_p)/(\tr)$ other than that given by the Euler class $e$.  In any case, it is the cokernel of the double suspension operation that we are really after.
For example, \mbox{we have the following calculations.}
\begin{itemize}
\item Consider the free $\EE_1$-$E$-algebra on a generator $x$ in degree $0$.  The weight $3$ component of this algebra is a free $E_*$-module, generated by  $x^3$.  This is reflected in the calculation
\begin{eqnarray*}
E_*^\wedge(B_3(\mathbb{R}^1)) &\cong& E_* \oplus \Sigma^{-1} E_*[\alpha]/(\alpha^4-6\alpha^2+(h-9)\alpha-3,\alpha^0) \cong E_* 
\end{eqnarray*}  
which is of course clear since $B_3(\mathbb{R}^1)$ is contractible.\vspace{3pt}
\item On the other hand, we calculate
 \begin{eqnarray*}\ \ \ \ \ \ \ \ \ \ \ \ \ \ \ 
E_*^\wedge(B_3(\mathbb{R}^3)) &\cong& E_* \oplus \Sigma^{-1} E_*[\alpha]/(\alpha^4-6\alpha^2+(h-9)\alpha-3,\alpha^1)\cong  E_* \oplus \Sigma^{-1}E_*/3  
\end{eqnarray*}  
We can thus see that, in the free $\mathbb{E}_3$-$E$-algebra on a generator $x$ in degree $0$, there is more than the $E_*$-multiples of $x^p$ in weight $3$.  There is additionally a copy of $\Sigma^{-1}E_*/3$ that constitutes a kind of Dyer--Lashof operation on $x$.
\item We calculate
\begin{eqnarray*}
E_*^\wedge(B_3(\mathbb{R}^5)) &\cong& E_* \oplus \Sigma^{-1} E_*[\alpha]/(\alpha^4-6\alpha^2+(h-9)\alpha-3,\alpha^2) \\
&\cong&  E_* \oplus \Sigma^{-1} E_*\langle 1, \alpha  \rangle/{\langle {  3-(h-9)\alpha, 3\alpha\rangle}}
\end{eqnarray*}
In an $\mathbb{E}_5$-algebra, the weight $3$ Dyer--Lashof operations on $x$ now consist of an $E_*$-module on two generators with two relations.

\item We can easily continue with such calculations; for example
\begin{eqnarray*}
E_*^\wedge(B_3(\mathbb{R}^{11})) &\cong& E_* \oplus \Sigma^{-1} E_*[\alpha]/(\alpha^4-6\alpha^2+(h-9)\alpha-3,\alpha^5) 
\end{eqnarray*}
We obtain  the weight $3$ Dyer--Lashof operations in the free $\mathbb{E}_{11}$-$E$-algebra on a \mbox{degree $0$ class.} 
\end{itemize}

\subsection{Morava K-theory}\label{K-theory section}
One of the main advantages of our approach to iterated loop spaces of spheres, in contrast with the approaches of Yamaguchi \cite{yamaguchi1988moravak} (for $2$-fold loop spaces) and Tamaki \cite{tamaki2002fiber} (for $3$-fold and $4$-fold loop spaces),  is that we do not require a K\"unneth formula, and that our computation does not increase in difficulty with the number $n$ of loops. Thus, we are able to compute Morava $E$-theory for all $n$ simultaneously, rather than computing Morava $K$-theory by induction on $n$. Nevertheless, since $E$-theory is a refinement of  $K$-theory, it is to be expected that our $E$-theory calculations yield Morava $K$-theory calculations as byproducts. 

  In this section we will explain how to conclude facts about Morava $K$-theory from our work, which may be of interest as it settles the computational challenge raised by  Ravenel's conjecture \cite[Conjecture 3]{ravenel1998we} at weight $p$. 
 As usual, the height one situation provides valuable intuition:  
 
\begin{example}
For  $K=K_p^{\wedge}$  the $p$-adic complex $K$-theory spectrum, we have computed that, for $n>0$ even and $k$ odd, there is an isomorphism
$$K^{\wedge}_* \left(\Conf_p(\RR^n)_+ \otimes_{\Sigma_p} (S^k)^{\otimes p} \right) \ \  \cong \ \ \Sigma^{k-1}K_\ast/ p^{ \frac{n}{2}}.
$$
Since  $\Conf_p(\RR^n)_+ \otimes_{\Sigma_p} (S^k)^{\otimes p}$ is a  finite spectrum, \mbox{we in fact did not have to $K(1)$-localise; hence}  
$$K_* \left(\Conf_p(\RR^n)_+ \otimes_{\Sigma_p} (S^k)^{\otimes p} \right) \ \  \cong \ \ \Sigma^{k-1}K_\ast/ p^{ \frac{n}{2}}.
$$ 
This implies that there exists a cofibre sequence of $K$-module spectra
$$
\Sigma^{k-1}K \stackrel{p^{\frac{n}{2}}}{\longrightarrow} \Sigma^{k-1}K \longrightarrow K \otimes \Sigma^{\infty} \left(\Conf_p(\RR^n)_+ \otimes_{\Sigma_p} (S^k)^{\otimes p} \right).
$$
\begin{notation}  The Morava $K$-theory spectrum $K(1)$  is defined as the mod $p$ \mbox{$K$-theory} spectrum $K/p$ .  Note in particular our convention that $K(1)$ is a $2$-periodic theory, rather than the $(2p-2)$-periodic Adams summand. 
\end{notation}
  The map $p^{\frac{n}{2}}$  becomes $0$ after tensoring down to $K/p$, and so we obtain a cofibre sequence
$$
\Sigma^{k-1}K(1) \stackrel{0}{\longrightarrow} \Sigma^{k-1}K(1) \longrightarrow K(1) \otimes  \left(\Sigma^{\infty} \Conf_p(\RR^n)_+ \otimes_{\Sigma_p} (S^k)^{\otimes p} \right).
$$
We have therefore established the following special case of \cite[Theorem 4a]{langsetmo1993k}:
\begin{corollary} For $n$ even and $p$ odd, there are equivalences
$$K(1) \otimes  \left(\Sigma^{\infty} \Conf_p(\RR^n)_+ \otimes_{\Sigma_p} (S^k)^{\otimes p} \right) \simeq \Sigma^{k-1} K(1) \vee \Sigma^k K(1) \simeq K(1) \vee \Sigma K(1).$$\end{corollary}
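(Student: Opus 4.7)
The plan is essentially to finish the argument already set up in the preceding paragraphs of the excerpt. The previous discussion produced, in the case $n$ even and $k$ odd, a cofibre sequence of $K$-module spectra
$$\Sigma^{k-1}K \xrightarrow{p^{n/2}} \Sigma^{k-1}K \longrightarrow K\otimes \left(\Sigma^{\infty}\Conf_p(\RR^n)_+ \otimes_{\Sigma_p}(S^k)^{\otimes p}\right),$$
obtained from the computation $K_*(\Conf_p(\RR^n)_+\otimes_{\Sigma_p}(S^k)^{\otimes p}) \cong \Sigma^{k-1}K_*/p^{n/2}$ of \Cref{warmup} (where the completion is unnecessary because the spectrum is finite). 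Smashing this cofibre sequence with $K(1) = K/p$ over $K$, the multiplication map $p^{n/2}$ becomes null because $n$ is even and positive, so $n/2 \geq 1$ and multiplication by $p$ already vanishes on $K(1)$.

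The first step of the proof proper is to observe that any cofibre sequence $A \xrightarrow{0} A \to C$ with null connecting map splits as $C \simeq A \vee \Sigma A$. Applied to our situation with $A = \Sigma^{k-1}K(1)$, this yields
$$K(1)\otimes \left(\Sigma^{\infty}\Conf_p(\RR^n)_+\otimes_{\Sigma_p}(S^k)^{\otimes p}\right) \simeq \Sigma^{k-1}K(1) \vee \Sigma^k K(1),$$
which is the first claimed equivalence.

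For the second equivalence, invoke the $2$-periodicity of $K(1)$, i.e.\ the equivalence $\Sigma^2 K(1) \simeq K(1)$ induced by the Bott element $u \in \pi_2(K)$ (which remains a unit modulo $p$). Since $k$ is odd, $k-1$ is even, hence $\Sigma^{k-1}K(1) \simeq K(1)$ and $\Sigma^{k}K(1)\simeq \Sigma K(1)$, finishing the proof.

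No step here is a genuine obstacle: the two ingredients (splitting of null-connective cofibre sequences and $2$-periodicity of $K(1)$) are standard, and all of the computational content was already carried out in \Cref{warmup} and the preceding example. The only mild care required is to verify that the $K$-module cofibre sequence one writes down really does realise the $K$-module $\Sigma^{k-1}K_\ast/p^{n/2}$; this is automatic because over the $E_\infty$-ring $K$ a module with cyclic $p$-primary homotopy is determined up to equivalence by its homotopy groups (it is the cone of $p^{n/2}:\Sigma^{k-1}K\to \Sigma^{k-1}K$).
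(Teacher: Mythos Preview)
Your proof is correct and follows exactly the approach sketched in the paper: tensor the cofibre sequence $\Sigma^{k-1}K \xrightarrow{p^{n/2}} \Sigma^{k-1}K \to K\otimes(\cdots)$ down to $K(1)$, observe that the map becomes null, and split the resulting cofibre sequence. One tiny remark: for the second equivalence you do not actually need to know the parity of $k$, since for any integer $k$ exactly one of $k-1,k$ is even and the other odd, so $2$-periodicity already gives $\Sigma^{k-1}K(1)\vee\Sigma^{k}K(1)\simeq K(1)\vee\Sigma K(1)$.
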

\begin{remark}  
The $K(1)$-homology contains far less information that the \mbox{$p$-complete} $K$-theory. In particular, it is unable to distinguish 
 the spectra $\Conf_p(\RR^n)_+ \otimes_{\Sigma_p} (S^k)^{\otimes p}$ \mbox{for different even  $n$.}\end{remark}
\end{example}

We now turn to general heights $h$, as well as general parities of $n$ and $k$:

\begin{theorem}\label{Ktheoryatheightp}
Let $K(h)$ be the $2$-periodic Morava $K$-theory associated to a  Morava $E$-theory of height $h$ 
over a perfect field $k$ of characteristic $p>2$.    

The  $K(h)$-module \label{moravaKatp}
$K(h) \otimes  \left(\Sigma^{\infty} \Conf_p(\RR^n)_+ \otimes_{\Sigma_p} (S^k)^{\otimes p} \right)$  
is equivalent to 
$$     \begin{cases} 
    \ \ \    \displaystyle   \left(\Sigma^{k-1} K(h) \oplus \Sigma^{k} K(h) \right)^{ \oplus{\mathrm{min}\left(\frac{p^h-1}{p-1} ,\frac{n}{2}-1\right)}}     \oplus  \Sigma^{kp} K(h)   \oplus \Sigma^{pk+n-1} K(h)  \vspace{6pt} & \mbox{for } n \mbox{   even, } k \mbox{   even}\\
    \ \ \ \displaystyle \left(\Sigma^{k-1} K(h) \oplus  \Sigma^{k} K(h) \right)^{\oplus \mathrm{min}\left(\frac{p^h-1}{p-1} ,\frac{n}{2}\right)}  & \mbox{for } n \mbox{ even, } k \mbox{  odd} \vspace{6pt}\\
    \ \ \  \vspace{6pt}  \displaystyle \left(\Sigma^{k-1} K(h) \oplus \Sigma^{k} K(h) \right)^{\oplus {\mathrm{min}\left(\frac{p^h-1}{p-1} ,\frac{n-1}{2}\right)} }
 \oplus \Sigma^{kp} K(h) 
& \mbox{for } n \mbox{  odd, } k \mbox{  even}\\
    \ \ \  \displaystyle   \vspace{6pt}  \left(\Sigma^{k-1} K(h) \oplus \Sigma^{k} K(h) \right)^{\oplus \mathrm{min}\left(\frac{p^h-1}{p-1} ,\frac{n-1}{2}\right)}\oplus \Sigma^{k+ (2k+n-1)(\frac{p-1}{2})} K(h)  & \mbox{for } n  \mbox{  odd, } k \mbox{  odd}
\end{cases} $$ 

\end{theorem}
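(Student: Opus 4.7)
The plan is to deduce \Cref{Ktheoryatheightp} from \Cref{prop:general calculations} by reducing from $E$-theory to $K(h)$-theory via the maximal ideal $\mathfrak{m} = (p, u_1, \ldots, u_{h-1})$. Since $X := \Conf_p(\RR^n)_+ \otimes_{\Sigma_p}(S^k)^{\otimes p}$ is a finite spectrum and $K(h)$ is a field spectrum, $K(h)\otimes X$ automatically splits as a wedge of shifts of $K(h)$, so it suffices to compute the graded $K(h)_*$-dimension of $K(h)_*(X)$. Using $K(h)\otimes X \simeq K(h)\otimes_E (E\otimes X)$, the universal coefficient spectral sequence
\[\mathrm{E}^2_{s,t}=\Tor^{E_*}_s\!\big(K(h)_*,E_*^{\wedge}(X)\big)_t \ \Longrightarrow\ K(h)_{s+t}(X)\]
degenerates without extension problems (since the abutment is a wedge of shifts of $K(h)$), and so the problem reduces to computing $\Tor^{E_*}_*\big(K(h)_*, E_*^{\wedge}(X)\big)$ summand by summand.

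By \Cref{prop:general calculations}, $E_*^{\wedge}(X)$ is a sum of a few free $E_*$-summands—each of which contributes a single copy of $\Sigma^a K(h)$ and has vanishing higher Tor—together with one torsion summand $\Sigma^{k-1}E^*(B\Sigma_p)/(\tr, e^m)$, where $m \in \{\tfrac{n}{2}-1,\tfrac{n}{2},\tfrac{n-1}{2}\}$ according to the parities of $n$ and $k$. By \Cref{explicit!}, $E^0(B\Sigma_p)/(\tr)\cong R:=E_0[e]/f(e)$ is a free $E_0$-module of rank $d:=\tfrac{p^h-1}{p-1}$, and the relation $f(-x^{p-1})=[p](x)/x$ combined with $[p](x)\equiv x^{p^h}\!\pmod{\mathfrak{m}}$ forces $f(e)\equiv \pm e^d\!\pmod{\mathfrak{m}}$. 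The matrix of multiplication by $e$ on $R$ is the companion matrix of $f$ and so has determinant $\pm p$; hence $e$ acts injectively, and
\[0\longrightarrow R \xrightarrow{\ e^m\ } R\longrightarrow R/e^m R\longrightarrow 0\]
is a length-one free resolution of the torsion summand by projective $E_*$-modules.

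Applying $K(h)_*\otimes_{E_*}(-)$ reduces the Tor computation to the two-term complex $K(h)_*[e]/e^d\xrightarrow{\,e^m\,}K(h)_*[e]/e^d$. A direct count on $k[e]/e^d$ shows that multiplication by $e^m$ has kernel spanned by $e^{d-\min(m,d)},\ldots,e^{d-1}$ and cokernel isomorphic to $k[e]/e^{\min(m,d)}$, both of $k$-dimension $\min(m,d)$. Consequently, each torsion summand $\Sigma^{k-1}E^*(B\Sigma_p)/(\tr,e^m)$ contributes $\min(m,d)$ copies of $\Sigma^{k-1}K(h)$ from $\Tor_0$ and $\min(m,d)$ copies of $\Sigma^{k}K(h)$ from $\Tor_1$, which is shifted by one in the spectral sequence. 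Assembling these contributions with those of the free summands in each of the four parity cases of \Cref{prop:general calculations} reproduces the four formulas stated in the theorem. The main subtlety is the identification $f(e)\equiv e^d\!\pmod{\mathfrak{m}}$, which is what produces the height-dependent truncation at $d=\tfrac{p^h-1}{p-1}$; once this is in hand, everything else is a formal consequence of the universal coefficient theorem and the $E_0$-freeness of $R$.
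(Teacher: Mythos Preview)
Your argument is correct and follows essentially the same strategy as the paper: both reduce to analysing the torsion summand $E^*(B\Sigma_p)/(\tr,e^m)$ via a two-term free resolution and the key reduction $f(e)\equiv (\text{unit})\cdot e^{d}\pmod{\mathfrak{m}}$ with $d=\tfrac{p^h-1}{p-1}$.

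The implementations differ in two minor ways worth noting. First, the paper lifts the $E_*$-module identification to an equivalence of $E$-module \emph{spectra}, writing $E\otimes X$ as a sum of free modules and a cofibre $C_m=\mathrm{cofib}(A_m\xrightarrow{f(e)}A_m)$ with $\pi_*A_m=E_*[e]/e^m$; it then tensors this cofibre sequence with $K(h)$. You instead stay on the algebraic side and run the Tor spectral sequence. Second, your resolution uses multiplication by $e^m$ on $E_0[e]/f(e)$, while the paper uses multiplication by $f(e)$ on $E_0[e]/e^m$; after reducing mod $\mathfrak{m}$ these become $e^m$ on $k[e]/e^d$ and $e^d$ on $k[e]/e^m$ respectively, which visibly have the same kernel and cokernel dimensions $\min(m,d)$. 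Your route avoids the spectrum-level lifting argument, at the cost of invoking the Künneth/Tor spectral sequence.

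One small expository point: the parenthetical ``since the abutment is a wedge of shifts of $K(h)$'' justifies only the absence of extension problems (because $K(h)_*$ is a graded field), not the degeneration. Degeneration actually follows from your subsequent observation that the torsion summand has a length-one projective resolution, so that $\Tor_s$ vanishes for $s\geq 2$ and there is no room for differentials. You may want to reorder the logic accordingly.
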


\begin{proof}
Let $E$ denote the form of Morava $E$-theory in question.  Our strategy will be to first understand the $E$-module spectrum
$E \otimes  \left(\Sigma^{\infty} \Conf_p(\RR^n)_+ \otimes_{\Sigma_p} (S^k)^{\otimes p} \right),$ 
and then tensor \mbox{down to $K(h)$.}

Choosing a $p$-typical coordinate, we may, as in Proposition \ref{explicit!}, write
$E^*(B\Sigma_p)/(\mathrm{tr}) \cong E_*[e]/f(e)$
for a monic   polynomial $f(e)=e^{\frac{p^h-1}{p-1}}+\cdots+p$  over $E_0$ with \mbox{$f(-e^{p-1})=\frac{[p](e)}{e}$ 
in $E^*(B\Sigma_p)$.}

Given $m \ge 0$,  
 let $A_m$ be  
 the free $E$-module of rank $m$ with \mbox{homotopy groups $\pi_*(A_m) \cong E_*[e]/e^{m}$.} 
Write $\phi_m:A_m \to A_m$ for the unique $E$-module map such that
$\pi_*(\phi_m):E_*[e]/e^{m} \to E_*[e]/e^{m}$  
is given by multiplication by $f(e)$.
Note that $\pi_*(\phi_m)$ is injective, as $f(e)$ having constant term $p$ implies that the corresponding matrix has non-vanishing determinant.
Letting $C_m$ denote the cofibre of $\phi_m$, it follows that 
$\pi_*(C_m) \cong E_*[e]/(f(e),e^m).$

By Theorem \ref{prop:general calculations}, the $E$-module spectrum   $E \otimes  \left(\Sigma^{\infty} \Conf_p(\RR^n)_+ \otimes_{\Sigma_p} (S^k)^{\otimes p} \right)$ is given by  
$$   \begin{cases}
  \ \ \ \      \Sigma^{kp} E   \oplus \Sigma^{pk+n-1} E  \oplus \Sigma^{k-1} C_{\frac{n}{2}-1}      & \mbox{for } n \mbox{   even, } k \vspace{3pt} \mbox{   even}\\
  \ \ \ \  \Sigma^{k-1} C_{\frac{n}{2}}  & \mbox{for } n \mbox{ even, } k \vspace{3pt} \mbox{  odd}\\
  \ \ \ \  \Sigma^{kp}E  \oplus \Sigma^{k-1} C_{\frac{n-1}{2}} & \mbox{for } n \mbox{  odd, } k\vspace{3pt} \vspace{3pt}  \mbox{  even}\\
  \ \ \ \  \Sigma^{k+ (2k+n-1)(\frac{p-1}{2})} E \oplus \Sigma^{k-1} C_{\frac{n-1}{2}} & \mbox{for } n \mbox{  odd, } k \mbox{  odd}
\end{cases}$$ 
Indeed, consider for example the case where $n$ and $k$ are both even.  We have calculated that
$$E^\wedge_\ast \left(\Conf_p(\RR^n)_+ \otimes_{\Sigma_p} (S^k)^{\otimes p} \right) \cong \Sigma^{kp}E_\ast \oplus \Sigma^{k-1} E^\ast(B\Sigma_p)/( \tr, e^{\frac{n}{2} -1} )  \oplus \Sigma^{pk+n-1} E_*.$$
Since $\left(\Conf_p(\RR^n)_+ \otimes_{\Sigma_p} (S^k)^{\otimes p} \right)$ is a finite $CW$-complex, the left hand side of the above is just
$$ \pi_*\left(E \otimes \left(\Conf_p(\RR^n)_+ \otimes_{\Sigma_p} (S^k)^{\otimes p} \right)\right).$$
There is a  quotient map of $E_*$-modules 
$$\Sigma^{kp}E_\ast \oplus \Sigma^{k-1} E^\ast(B\Sigma_p)/(e^{\frac{n}{2}-1} )  \oplus \Sigma^{pk+n-1} E_* \to     \Sigma^{kp}E_\ast \oplus \Sigma^{k-1} E^\ast(B\Sigma_p)/( \tr, e^{\frac{n}{2}-1} )  \oplus \Sigma^{pk+n-1} E_*,$$
Since the domain is a free $E_*$-module this map can be refined to a map of $E$-module  spectra
$$\Sigma^{kp}E \oplus \Sigma^{k-1} A_{\frac{n}{2}-1} \oplus \Sigma^{pk+n-1} E \to E \otimes  \left(\Sigma^{\infty} \Conf_p(\RR^n)_+ \otimes_{\Sigma_p} (S^k)^{\otimes p} \right).$$
The following composition is zero on the level of homotopy groups: 
$$
\begin{tikzcd}[column sep = huge]
\Sigma^{k-1} A_{\frac{n}{2}-1} \arrow{r}{\Sigma^{k-1} \phi_{\frac{n}{2}-1}} & \Sigma^{k-1} A_{\frac{n}{2}-1} \arrow{r} & E \otimes  \left(\Sigma^{\infty} \Conf_p(\RR^n)_+ \otimes_{\Sigma_p} (S^k)^{\otimes p} \right)
\end{tikzcd}
$$
Hence, it vanishes  as a map of  $E$-modules since the domain is free.  We see that the  map 
$\displaystyle \Sigma^{k-1} A_{\frac{n}{2}-1}   \to E \otimes  \left(\Sigma^{\infty} \Conf_p(\RR^n)_+ \otimes_{\Sigma_p} (S^k)^{\otimes p} \right)$
extends over $ \Sigma^{k-1} C_{\frac{n}{2}-1} $.  
 The resulting map
$$\Sigma^{kp}E \oplus \Sigma^{k-1} C_{\frac{n}{2}-1} \oplus \Sigma^{pk+n-1} E \longrightarrow E \otimes  \left(\Sigma^{\infty} \Conf_p(\RR^n)_+ \otimes_{\Sigma_p} (S^k)^{\otimes p} \right)$$  induces an isomorphism  on homotopy groups.
 
To finish the proof, it  remains  to compute $C_m \otimes_E K(h)$ for a general $m$, which can be done using the defining cofibre sequence
$A_m \stackrel{\phi_m}{\longrightarrow} A_m \longrightarrow C_m.$
Indeed, note that by \Cref{explicit!}  
 $$f(-e^{p-1}) = u\frac{[p](e)}{e} \equiv u'e^{p^h-1} \text{ mod } \mathfrak{m},$$
where $u, u'$  
are units and $\mathfrak{m} = (p, u_1,\ldots,u_{m-1})$ is the maximal ideal in $E_0$.   
Modulo $\mathfrak{m}$,   
 $f(e)$ is a unit times $e^{\frac{p^h-1}{p-1}}$, and we obtain an equivalence 
$C_m \otimes_E K(h) \simeq \bigoplus_{\text{min}(\frac{p^h-1}{p-1} ,m)} K(h) \vee \Sigma K(h).\vspace{-10pt}$ 
\end{proof}
 
As $h\rightarrow \infty$, we recover a classical result of Cohen:
\begin{corollary}  
The $\FF_p$-vector space 
$\displaystyle H_\ast \left(\Conf_p(\RR^{2m})_+ \otimes_{\Sigma_p} (S^1)^{\otimes p};\FF_p \right)$
has \mbox{dimension $2m$.}

\begin{proof} Let $K(h)$ be a Morava $K$-theory  with $K(h)_\ast\cong \FF_p[v_n^{\pm 1}]$. 
For $h\gg 0$ very large, the Atiyah--Hirzebruch  spectral sequence of the finite space $(\Conf_p(\RR^{2m})_+ \otimes_{\Sigma_p} (S^1)^{\otimes p}$ degenerates, which leads to an 
isomorphism of  $K(h)$-modules
$$K(h)_\ast \left(\Conf_p(\RR^{2m})_+ \otimes_{\Sigma_p} (S^1)^{\otimes p} \right) \cong H_\ast \left(\Conf_p(\RR^{2m})_+ \otimes_{\Sigma_p} (S^1)^{\otimes p};\FF_p \right)[\beta^{\pm 1}].\vspace{-10pt}$$ 
\end{proof}

\begin{remark}\label{MSG}
This result agrees with the  work of Cohen  \cite[III]{CohenLadaMay:HILS},  which implies  that $$H_\ast \left(\Conf_p(\RR^{2m})_+ \otimes_{\Sigma_p} (S^1)^{\otimes p};\FF_p \right)$$  has a basis given by $\{\beta^{\epsilon} Q^s\ | \ 1 \leq s \leq m, \epsilon \in \{0,1\}\}$. Cohen's result is stronger, as it also gives the dimensions of individual homology groups, while our method only sees the even and odd degree, respectively. We believe that this  deficiency can be removed by studying $2(p^h-1)$-periodic versions of $K(h)$ using the prime-to-$p$ subgroup of the Morava stabiliser group, which acts on our spectral sequence. We will  not pursue \mbox{this approach here.}
\end{remark}

\end{corollary}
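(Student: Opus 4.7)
The plan is to obtain this corollary as a direct consequence of Theorem~\ref{Ktheoryatheightp} by specialising to the right parameters and then passing to the limit $h \to \infty$ via a collapse of the Atiyah--Hirzebruch spectral sequence.

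First, I would specialise Theorem~\ref{Ktheoryatheightp} to $n = 2m$ and $k = 1$, which lands in the ``$n$ even, $k$ odd'' case. This yields, for every $h$, an equivalence
\[
K(h) \otimes \Sigma^{\infty}\bigl(\Conf_p(\RR^{2m})_+ \otimes_{\Sigma_p} (S^1)^{\otimes p}\bigr) \simeq \bigl(K(h) \oplus \Sigma K(h)\bigr)^{\oplus \min\left(\frac{p^h-1}{p-1},\, m\right)}.
\]
Since $m$ is fixed, choosing $h$ large enough makes $\frac{p^h - 1}{p-1} \geq m$, so the minimum equals $m$. The $K(h)_\ast$-module $K(h)_\ast(X)$, where $X := \Conf_p(\RR^{2m})_+ \otimes_{\Sigma_p} (S^1)^{\otimes p}$, is therefore free of total rank $2m$ over $K(h)_\ast$.

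Second, since $X$ is a fixed finite CW complex whose cellular dimension depends only on $m$ and is in particular bounded independently of $h$, I would invoke the standard fact that the Atiyah--Hirzebruch spectral sequence
\[
E_2^{s,t} = H^s(X;\, \pi_t K(h)) \implies K(h)^{s+t}(X)
\]
collapses for all sufficiently large $h$. The point is that the lowest-degree differential is controlled by the Milnor operation $Q_h$ of total degree $2p^h - 1$, and once this degree exceeds $\dim(X)$ every differential is forced to vanish. Hence for $h \gg 0$ we have a $K(h)_\ast$-module isomorphism
\[
K(h)_\ast(X) \cong H_\ast(X; \mathbb{F}_p) \otimes_{\mathbb{F}_p} K(h)_\ast.
\]

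Comparing ranks as free modules over $K(h)_\ast \cong \mathbb{F}_p[v_h^{\pm 1}]$ with the rank computed in the first step yields $\dim_{\mathbb{F}_p} H_\ast(X; \mathbb{F}_p) = 2m$, as desired. There is no serious obstacle here: the nontrivial work has been done in Theorem~\ref{Ktheoryatheightp}, and the AHSS degeneration for $K(h)$ on a fixed finite complex at large height is classical. The only thing one has to be slightly careful about is choosing $h$ simultaneously large enough for the minimum in Theorem~\ref{Ktheoryatheightp} to stabilise and for all AHSS differentials to vanish, but both conditions reduce to $h$ exceeding an explicit constant determined by $m$.
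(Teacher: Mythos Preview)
Your proof is correct and follows essentially the same approach as the paper: specialise Theorem~\ref{Ktheoryatheightp} to $n=2m$, $k=1$, take $h$ large enough that the minimum stabilises to $m$, and then use degeneration of the Atiyah--Hirzebruch spectral sequence for $K(h)$ on a fixed finite complex at large height to identify the $K(h)$-rank with the total $\FF_p$-dimension. Your write-up is in fact slightly more explicit than the paper's, which leaves the invocation of Theorem~\ref{Ktheoryatheightp} and the reason for AHSS collapse implicit.
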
 
We briefly compare this with several previously known results mentioned in the introduction.

\subsection*{Height one} 
Langsetmo   
computed the $K(1)$-homology of iterated loop spaces  \cite{langsetmo1993k,MR1397734}, utilising 
the work of  Mahowald--Thompson  \cite{MR1153241} and    \mbox{McClure \cite[Chapter 9]{MR836132}.}
 
Write  $T^{a}(x_1,x_2,\ldots)$ \mbox{for the  truncated polynomial algebra  $\FF_p[x_1, x_2, \ldots]/(x_1^{a}, x_2^{a}, \ldots )$. Then:} 
\begin{theorem}[Langsetmo] \label{Lang1} For $h=1$ and $p$ odd, there are isomorphisms
$$ \ \ \ \ \ \   \  \ \ K(1)_\ast( \ \Omega^{2m} \  S^{2m+1})  \ \cong K(1)_\ast \otimes \Lambda(u_i \ | \ i\leq m) \otimes T^{p^m}(x_i \ | \ i \geq 1  )$$
The $u_i$ have odd and the $x_i$ even degrees, respectively. Elements with index $a$ live in \mbox{weight $p^a$.}
\end{theorem}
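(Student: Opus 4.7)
The plan is to apply our Hecke spectral sequence of \Cref{thm:main} to $M=\RR^{2m}$ with labels in $X=S^1$. By Snaith's theorem, $\Sigma_+^{\infty}\Omega^{2m}S^{2m+1}\simeq\bigoplus_k B_k(\RR^{2m};S^1)$, so computing the $p$-complete $K$-theory of all weights and then tensoring with $K(1)=K/p$ will assemble into the left-hand side of the claim.

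The first step is to identify the weighted Hecke Lie algebra $\mathfrak{g}:=\mathfrak{g}(\RR^{2m};S^1) \cong K_*^{\wedge}(\Omega^{2m}\Free^{\mathscr{L}}(\Sigma^{2m}))$. By \Cref{Lieheightone}, at height one the Hecke operations of weight $p^s$ vanish for $s\geq 2$, so the free Hecke Lie algebra on a single even-degree generator is concentrated in weights $1$ and $p$. Combining \Cref{prop:hecke desuspension} with the identification $e=p$ at height one (cf.\ \Cref{Gamma0}) yields $\mathfrak{g}\cong\mathfrak{g}_{0,2m}^{(1)}$ in the notation of \Cref{atomicalgebra}, with the Hecke action determined by $\alpha(x)=p^{m}y$.

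Next, I would analyze the Hecke Chevalley--Eilenberg complex $\CE_{\mathcal{H}_u}(\mathfrak{g})=\Gamma_{K_*}(\AR(\mathfrak{g})[1])$. Since $\sigma x$ has odd total degree its divided powers collapse, whereas $\sigma y$ has even total degree and supplies divided powers $\gamma_r(\sigma y)$ in all weights $rp$. Concretely, I expect the classes $x_i$ in the target to correspond to the divided powers $\gamma_{p^{i-1}}(\sigma y)$ in weight $p^i$, and the classes $u_i$ to paired odd-degree generators arising in higher simplicial degree, extending \Cref{atomiclemma} to arbitrary weight. The truncation $x_i^{p^m}=0$ and the restriction $i\leq m$ on the exterior generators both originate in the Euler class factor $e^m=p^m$ appearing in the Hecke action on $\mathfrak{g}_{0,2m}^{(1)}$.

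The main obstacle will be organizing the cascade of $d^{p-1}$ differentials across all weights. As in the proof of \Cref{warmup}, each class $\gamma_p(z)$ supports a nontrivial $d^{p-1}$ whose target is a $p$-torsion element, killing $\gamma_p(z)$ but leaving $z^p$ alive. The truncation at $p^m$ should arise from the interaction of these differentials with the divided-power identity $\gamma_p(\sigma y)^p=\binom{p^2}{p}\gamma_{p^2}(\sigma y)$ and its iterates, whose binomial coefficients contribute additional $p$-adic valuation at each layer and must be reconciled with the base torsion order $p^m$ coming from the Hecke action. Once these differentials and the resulting $K_*$-module extensions are settled, tensoring with $K(1)$ converts each surviving $K_*/p^j$ summand into $K(1)\oplus\Sigma K(1)$, producing exactly one exterior and one polynomial generator per summand and assembling into the claimed tensor product.
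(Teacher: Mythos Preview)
The paper does not prove this theorem. \Cref{Lang1} is quoted as a result of Langsetmo (with citations to \cite{langsetmo1993k,LangsetmoKloops}) for purposes of comparison; the paper's own contribution is only to check the weight-$p$ Snaith summand via \Cref{Ktheoryatheightp}, observing that the two copies of $K(1)_\ast$ produced there match Langsetmo's classes $u_1$ and $x_1$. Higher weights are explicitly listed under ``Future directions'' as work not carried out here.

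Your proposal therefore aims at substantially more than the paper establishes, and as written it is a plan rather than a proof. The identification $\mathfrak{g}(\RR^{2m};S^1)\cong\mathfrak{g}^{(1)}_{0,2m}$ with $\alpha(x)=p^m y$ is correct, as is the parity analysis of $\sigma x$ and $\sigma y$. But from weight $p^2$ onward the additive resolution $\AR(\mathfrak{g})$ contributes classes such as $[\alpha\,|\,y]$ in positive simplicial degree, and \Cref{atomiclemma} says nothing about these; you would need a full computation of $H^{\Lie^{\mathcal{H}_u}}(\mathfrak{g}^{(1)}_{0,2m})$ in all weights, not just $\leq p$. Your account of where the truncation $x_i^{p^m}=0$ and the bound $i\leq m$ come from is heuristic (``should arise from the interaction\ldots must be reconciled''), and the passage ``each surviving $K_*/p^j$ summand'' presupposes a classification of the $\mathrm{E}^\infty$-page that you have not supplied. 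Finally, the multiplicative structure implicit in Langsetmo's presentation as a tensor of exterior and truncated polynomial algebras is not something the additive spectral sequence of \Cref{thm:main} gives you directly; you would also need to control products. None of this is obviously wrong as a strategy, but it is not a proof, and it is not what the paper does.
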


For all integers $m\geq 1$, our \Cref{moravaKatp} gives an isomorphism between the $K(1)$-homology of the $p^{th}$ Snaith summand of $\Omega^{2m}S^{2m+1}$
and $ K(1)_\ast \oplus  \Sigma  K(1)_\ast $
These two copies  correspond precisely to the classes $u_1$ and $x_1$ in  Langsetmo's result.

\subsection*{Double loop spaces}

The Morava $K$-theory of double loop spaces was computed at all heights by Yamaguchi (cf. \cite{yamaguchi1988moravak}), using the Atiyah--Hirzebruch spectral sequence:
\begin{theorem}[Yamaguchi]\label{yamaguchi}
Let $p$ be an odd prime. For all heights $h\geq 0$, the Atiyah--Hirzebruch spectral sequence collapses and 
gives rise to an isomorphism
$$ K(h)_\ast(\Omega^2 S^{2m+1}) \cong K(h)_\ast \otimes  \Lambda(u_a \ |  \ a \leq h) \otimes T^{p^h}(x_i \ | \ i\geq 1).$$
The $u_i$ have odd and the $x_i$ even degrees, respectively. Elements with index $e$ live in \mbox{weight $p^e$.}
\end{theorem}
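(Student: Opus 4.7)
The plan is to apply the Hecke spectral sequence of \Cref{thm:main} to each Snaith wedge summand of $\Sigma^\infty_+ \Omega^2 S^{2m+1} \simeq \bigvee_{k \geq 0} B_k(\mathbb{R}^2; S^{2m-1})$, thereby extending the weight $p$ computations of \Cref{prop:general calculations} and \Cref{Ktheoryatheightp} to all weights $p^j$ in one sweep.

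First I would identify the input Hecke Lie algebra $\mathfrak{g}(\mathbb{R}^2, S^{2m-1}) = E_*^\wedge\bigl(\Omega^2 \Free^{\mathscr{L}_E}(\Sigma^{2m}E)\bigr)$. By \Cref{freeliehigher}, because the generating class lives in the even degree $2m$, the free Hecke Lie algebra $\LL(x_{2m})$ has vanishing bracket and underlying module $\bigoplus_{\ell=0}^{h} ({\mathcal{H}_u^{\Lie}})_{2m}^{*}(p^\ell)$. Looping twice then shifts internal degree by $-2$ and, by \Cref{prop:hecke desuspension} combined with \Cref{Gamma0}, pre-composes each Hecke operation with multiplication by the Euler class $e$ of the reduced standard representation. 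In each weight $p^j$ one thus obtains an atomic Hecke Lie algebra controlled by $E^0(B\Sigma_{p^j})^\vee/(\mathrm{tr}, e)$, directly generalising $\mathfrak{h}_{2m-1,2}^{(1)}$ of \Cref{higheratomicalgebra}.

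Next I would compute the Hecke Chevalley--Eilenberg homology in every weight. Following the pattern of \Cref{atomiclemmahigher}, the complex splits into contributions from divided powers $\gamma_i([x])$ of the fundamental class, bar-strings $[\alpha_1|\cdots|\alpha_j|x]$ of Hecke operations, and the mixing produced by the bar differential. I would filter the complex by simplicial degree in the additive resolution $\AR(\mathfrak{g})$ and apply the Koszul duality description of $\Ext_{\Gamma^i}^*$ of \cite{rezk2012rings} to each associated graded piece, inductively reducing the calculation to the weight-$p$ case already treated. The higher differentials should then truncate each divided power tower at the $p^h$-th power via the Euler-class identity $f(e) = 0$ of \Cref{explicit!}, producing the $T^{p^h}(x_i)$ factor; the residual torsion pieces assemble into $h$ odd-degree exterior generators $u_1, \ldots, u_h$, one for each nontrivial step of the suspension filtration $\Gamma^2(p) \hookrightarrow \Gamma^1(p) \twoheadrightarrow \Gamma^0(p) \hookrightarrow \Gamma^{-1}(p)$ surviving after imposing $e=0$.

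The main obstacle will be carrying out the Chevalley--Eilenberg computation in weights $p^j$ for $j \geq 2$: one must simultaneously organise nested iterated Hecke operations, divided powers, and the interactions of multiple suspension homomorphisms, and then show the resulting spectral sequence collapses at a finite page. Once the $\mathrm{E}^\infty$-page matches Yamaguchi's tensor product and the bookkeeping of generators is consistent, passage from $E$-theory to $K(h)$-theory splits all torsion extensions into wedges of even and odd $K(h)$-shifts via the finite-CW-complex argument of \Cref{Ktheoryatheightp}, yielding the asserted formula $K(h)_* \otimes \Lambda(u_a : a \leq h) \otimes T^{p^h}(x_i : i \geq 1)$.
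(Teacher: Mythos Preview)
The paper does not prove this theorem. It is stated as a result of Yamaguchi, with the attribution ``[Yamaguchi]'' in the heading and the citation \cite{yamaguchi1988moravak}; the surrounding text explicitly says ``The Morava $K$-theory of double loop spaces was computed at all heights by Yamaguchi (cf.\ \cite{yamaguchi1988moravak}), using the Atiyah--Hirzebruch spectral sequence.'' The paper only \emph{quotes} this result in order to check that its own weight-$p$ computation (\Cref{Ktheoryatheightp}) is consistent with the known answer: the two classes $u_1$ and $x_0$ in Yamaguchi's formula match the two copies of $K(h)_\ast$ the paper finds in weight $p$.

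Your proposal is therefore not a comparison target but an attempt to reprove Yamaguchi's theorem by the paper's Hecke methods, and this is precisely what the paper flags as open. In the ``Future directions'' paragraph the authors write: ``We have restricted attention to computations in Snaith weight $p$. A more organised approach will lead to similar computations in higher weights.'' The atomic-algebra lemmas (\Cref{atomiclemma}, \Cref{atomiclemmahigher}) are stated and proved only in weights $\leq pw$, and the paper's analysis of differentials relies on comparing with the $n=1$ James splitting, a trick that only pins down the single $d_{p-1}$ landing on weight-$p$ torsion.

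The honest part of your plan is the last paragraph, where you name the obstacle. But the sketch before it does not actually surmount it: saying you will ``filter by simplicial degree in $\AR(\mathfrak{g})$ and apply Koszul duality \ldots\ inductively reducing to the weight-$p$ case'' glosses over the real content. In weight $p^j$ the additive resolution contributes bar-strings $[\alpha_1|\cdots|\alpha_r|x]$ of total weight $p^j$ with $r$ ranging up to $j$, and these interact with divided powers of \emph{lower-weight} Hecke classes, not just of $[x]$. Controlling the resulting differentials and showing that the only survivors are exactly $u_1,\ldots,u_h$ and the truncated polynomials $x_i^{<p^h}$ requires either a multiplicative structure on the spectral sequence or an independent upper bound on the size of $K(h)_\ast(\Omega^2 S^{2m+1})$, neither of which you supply. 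Yamaguchi's original argument gets this bound from the Atiyah--Hirzebruch spectral sequence collapsing, which is exactly the input you would be trying to avoid.
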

\Cref{moravaKatp} asserts an isomorphism between the $K(h)$-homology of the $p^{th}$ Snaith summand of $\Omega^2 S^{2r+1}$ and  $ K(h)_\ast \oplus \Sigma  K(h)_\ast$; these two copies of $K(h)_\ast$ correspond to $y_1$ and $x_0$, respectively.

\subsection*{Triple loop spaces}
The case of triple loop spaces is  
  more delicate, and the problem has an interesting history in this case.  
For some time, it was 
 believed that the Atiyah--Hirzebruch  spectral sequence would degenerate (cf. \cite{MR928218}), 
which led to a  faulty computation and a surpring, yet incorrect, counterexample (cf. \cite{MR1022687}) to a conjecture of Miller--Snaith  \cite{MR551463} and Mahowald--Ravenel \cite{MR893848} (which later became a theorem of \mbox{Bousfield in \cite[Theorem 14.8]{MR1257059}).}

This state of affairs was resolved by Tamaki, who used a version of the  Eilenberg--Moore  spectral sequence \cite{MR1293304} to climb   from double to triple loop spaces \mbox{(cf. \cite[Proposition 6.1]{tamaki2002fiber}):}

\begin{theorem}[Tamaki]\label{tamaki} 
If $p$ is an odd prime, then the Eilenberg--Moore  spectral sequence degenerates and gives rise to an isomorphism
$$K(h)_\ast(\Omega^3 S^{2m+1}) \cong K(h)_\ast \otimes 
P(u_a \ | \ a \leq h)\otimes 
\Lambda(x_{i,j} \ | \ i+j \leq h, i>0) \otimes 
\bigotimes_{1\leq j \leq h-1}
T^{p^{h-j}}(y_{i,j} \  |  \ i > 0  )$$
where $u_i$ as Snaith weight $p^i$ and the elements $x_{i,j}$ and $y_{i,j}$ have   weight $p^{i+j}$.
\end{theorem}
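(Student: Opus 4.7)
The plan is to run the Eilenberg--Moore spectral sequence for the path-loop fibration
$$\Omega^3 S^{2m+1} \longrightarrow P(\Omega^2 S^{2m+1}) \longrightarrow \Omega^2 S^{2m+1}.$$
Since $K(h)$ satisfies a K\"unneth isomorphism on finite-type spaces, the $\mathrm{E}_2$-page identifies with $\mathrm{Cotor}^{K(h)_\ast(\Omega^2 S^{2m+1})}(K(h)_\ast, K(h)_\ast)$, and the input is completely known via Theorem \ref{yamaguchi} above: $K(h)_\ast(\Omega^2 S^{2m+1}) \cong K(h)_\ast \otimes \Lambda(u_a \mid a\leq h) \otimes T^{p^h}(x_i \mid i\geq 1)$ as a Hopf algebra.

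Next I would compute the relevant Cotor factor by factor. Each exterior generator $u_a \in \Lambda(u_a)$ contributes a polynomial class (in the loops of $\Omega^2 S^{2m+1}$), giving the polynomial factor $P(u_a \mid a \leq h)$. Each truncated polynomial factor $T^{p^h}(x_i)$ is itself, as a Hopf algebra, an iterated extension of truncated Hopf algebras $T^p$ by truncated Hopf algebras of height $p^{h-1}$; computing Cotor for a single truncated polynomial Hopf algebra $T^{p^{h-j}}$ of height $p^{h-j}$ produces an exterior class together with a truncated polynomial class of height $p^{h-j-1}$, which accounts for the generators $x_{i,j}$ and $y_{i,j}$, with Snaith weight $p^{i+j}$ inherited from the weight $p^i$ of $x_i$ and the height-$p^{h-j}$ splitting. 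Tracking internal and weight bidegrees through this Cotor calculation yields the stated generators and their weight assignments.

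The main obstacle is to show that the Eilenberg--Moore spectral sequence collapses at $\mathrm{E}_2$. One route is Serre-style: exhibit lifts of $u_a$ as genuine homotopy classes using the loops of the generators identified in $\Omega^2 S^{2m+1}$, check that $x_{i,j}$ and $y_{i,j}$ survive for transgression reasons, and then perform a dimension count against the Serre spectral sequence going the other direction (from $\Omega^3 S^{2m+1}$ to $\Omega^2 S^{2m+1}$) to rule out any room for higher differentials; this is essentially the strategy Tamaki executes.

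An alternative, more in the spirit of the present paper, would be to extend the Hecke spectral sequence of Theorem \ref{thm:main} with $M = \mathbb{R}^3$ and $X = S^{2m-2}$ beyond Snaith weight $p$. The weight-$p$ piece is already accessible from Theorem \ref{prop:general calculations}, and matches Tamaki's formula in that weight. The hard part would be controlling the Hecke Chevalley--Eilenberg complex $\mathrm{CE}_{\mathcal{H}_u}(\mathfrak{g}(\mathbb{R}^3; S^{2m-2}))$ in weights $p^k$ for $k \geq 2$, where one must analyze how divided powers of classes in $\mathfrak{g}$ interact with the higher Hecke operations and with the Lie bracket. This is flagged as a future direction in the introduction and would, if carried out, simultaneously recover Tamaki's theorem and furnish the $E$-theoretic refinement.
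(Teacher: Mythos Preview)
The paper does not prove this theorem; it is stated as a result of Tamaki (cf.\ \cite[Proposition~6.1]{tamaki2002fiber}) and quoted only for comparison with the paper's own weight-$p$ computation in \Cref{Ktheoryatheightp}. So there is no ``paper's own proof'' to compare against.

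That said, your first approach is exactly Tamaki's: he runs an Eilenberg--Moore-type spectral sequence feeding in Yamaguchi's description of $K(h)_\ast(\Omega^2 S^{2m+1})$, computes the relevant $\mathrm{Cotor}$, and establishes degeneration. Your sketch of the $\mathrm{Cotor}$ computation is along the right lines, though the collapse argument is where the real work lies and your outline is vague there; Tamaki's actual argument relies on a delicate analysis specific to his version of the spectral sequence \cite{MR1293304}, not just a naive dimension count.

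Your second proposed route---extending the Hecke spectral sequence of \Cref{thm:main} to all weights for $M=\RR^3$---is not carried out anywhere in the paper, and is precisely the ``higher weights'' problem flagged as a future direction in the introduction. The paper only verifies agreement with Tamaki's formula in Snaith weight $p$ (the classes $u_0^p$, $u_1$, $x_{1,0}$); it does not reprove the full result.
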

Our \Cref{moravaKatp}  shows that the $K(h)$-homology of the $p^{th}$ Snaith summand in $\Omega^3 S^{2m+1} $ is
$ K(h)_\ast \oplus \Sigma  K(h)_\ast  \oplus \Sigma  K(h)$; this corresponds to the  
classes $u_0^p$,  $u_1$, and $x_{1,0}$ in Snaith \mbox{weight $p$.}

\subsection*{A remark on a conjecture of Ravenel} 
We will briefly comment on Ravenel's  \cite[Conjecture 3]{ravenel1998we}, which concerns the $K(h)$-homology of iterated loop spaces of spheres. 
Ravenel’s conjecture suggests that for a fixed number of loops $n$,  the weight $p$ part of $K(h)_\ast (\Omega^{n} S^{n+1})$ 
 does not change in size as we vary the chromatic height $h$.

This is true whenever $n\leq 3$, as in this case, the multiplicities 
$\mathrm{min}(\frac{p^h-1}{p-1} ,\frac{n}{2}) $ (for $n$ even) and  $\mathrm{min}(\frac{p^h-1}{p-1} ,\frac{n-1}{2}) $ (for $n$ odd) appearing in  \Cref{Ktheoryatheightp}
are  both equal to $1$ for all heights $h\geq 1$. 

However, this pattern does not persist for general $n$, where Ravenel's conjecture does not seem to be true. Indeed,  
 \Cref{Ktheoryatheightp} shows that for $n\geq 4$, the size of $K(h)_\ast (\Omega^{n} S^{n+1})(p)$ depends on $h$. This can in fact already be observed from Langsetmo's \Cref{Lang1}.

\newpage 

\section{Configurations of $p$ points in surfaces}\label{surfaces} 
In this section, we  
 examine the $E$-theory and $\FF_p$-homology of unordered configuration spaces of points on the 
once-punctured orientable surface $\mathcal{S}_{g,1}$ of genus $g \geq 0$, a framed manifold.  

\subsection{The $E$-theory of $B_p(\mathcal{S}_{g,1}$)}
Let $E$ be an $E$-theory of height $h$ over a perfect field $k$ of characteristic $p>2$.
We   compute the $E$-theory of  $B_p(\mathcal{S}_{g,1}) = \Conf_p(\mathcal{S}_{g,1})/\Sigma_p$ at all  heights:   
\begin{theorem}[$E$-theory, surface case]\label{thm:open surfaces} \label{formulabetti} \ 

\begin{enumerate}[wide, labelwidth=!, labelindent=0pt] 
\item The $E$-cohomology of the unordered configuration space of $p$ points in the \mbox{punctured torus is}  
$$E^\ast(B_p(\dot{T})) \cong  \bigg(\bigoplus_{0 \leq i<p} \Sigma^i E_\ast^{\oplus \lfloor \frac{3i+2}{2}\rfloor} \bigg)
\oplus
 \Sigma^p E_\ast^{\oplus (p+1)} .  $$

\item More generally, the $E$-cohomology of the unordered configuration space of $p$ points in the punctured orientable genus $g$ surface $\mathcal{S}_{g,1}$   is given by  \vspace{3pt}
$$E^\ast(B_p(\mathcal{S}_{g,1})) \cong   \bigoplus_{0 \leq i\leq p} \Sigma^i E_\ast^{\oplus \beta_i}. \vspace{3pt}$$
For $ 0< i< p$, we  \vspace{3pt}have
$$ \beta_i = \hspace{-3pt} \sum_{\substack{0 \leq j \leq g\\ \\  j\equiv i \hspace{-6pt}\mod 2}}\hspace{3pt} \left(  {2g \choose j} \hspace{-3pt} - \hspace{-3pt}{2g \choose j-2} \right) {2g + \frac{i-j}{2} -1 \choose  2g-1}   +
\sum_{\substack{g+1 \leq j \leq 2g+1\\ \\  j\equiv i  \hspace{-6pt}\mod 2}}\hspace{-3pt} \left(  {2g \choose j-1} \hspace{-3pt}- \hspace{-3pt}{2g \choose j+1} \right) {2g + \frac{i-j}{2} -1 \choose  2g-1} 
 \vspace{5pt}$$
For $i=p$, we  \vspace{-3pt}have$$ \beta_p = \sum_{\substack{0 \leq j \leq g\\ \\  j\equiv i  \hspace{-6pt}\mod 2}} \left(  {2g \choose j} - {2g \choose j-2} \right) {2g + \frac{i-j}{2} -1 \choose  2g-1}   .
 $$
\end{enumerate}
\end{theorem}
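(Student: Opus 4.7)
The plan is to specialise the cohomological Hecke spectral sequence of Theorem \ref{thm:cohomology} to the parallelisable manifold $M = \mathcal{S}_{g,1}$ with $X = S^0$. The one-point compactification $\mathcal{S}_{g,1}^+ \simeq \mathcal{S}_g$ admits a standard CW structure (one $0$-cell, $2g$ $1$-cells, one $2$-cell) whose attaching map $\prod [a_i,b_i]$ becomes null-homotopic after a single suspension, so the skeletal filtration splits stably. Lemma \ref{lem:stably split} then decomposes the underlying Hecke module of $\mathfrak{g} := \mathfrak{g}(\mathcal{S}_{g,1};S^0)$ as
\[
\U(\mathbf{L}^{\mathcal{H}_u}(\Sigma E_*)) \;\oplus\; \bigl(\Omega\,\U(\mathbf{L}^{\mathcal{H}_u}(\Sigma E_*))\bigr)^{\oplus 2g} \;\oplus\; \Omega^2\,\U(\mathbf{L}^{\mathcal{H}_u}(\Sigma E_*)),
\]
and Proposition \ref{prop:bracket} endows it with the Lie bracket whose only nontrivial instances come from the cup-product pairings $\alpha_i\beta_i = \gamma$ in $\widetilde{E}^*(\mathcal{S}_g)$.

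Next, I would compute the weight-$p$ Hecke Chevalley--Eilenberg complex $\CE_{\mathcal{H}_u}(\mathfrak{g})$ along the lines of Sections \ref{warmupsection} and \ref{section:higher heights}, decomposing it into ``atomic'' summands whose homology is controlled by Lemma \ref{atomiclemmahigher} and grafting them together through the cross terms produced by the brackets $\alpha_i\beta_i = \gamma$. The ranks $\beta_i$ are best identified by inverting $p$: Proposition \ref{prop:ss comparison} together with Lemma \ref{lem:collapse} shows that the rationalised spectral sequence degenerates at $E_2$ and recovers the rational cohomology of $B_p(\mathcal{S}_{g,1})$ known from \cite{BoedigheimerCohen:RCCSS, Knudsen:BNSCSVFH, DrummondColeKnudsen:BNCSS}. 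Extracting the coefficient of weight $p$ in the resulting rational Poincar\'e series by a direct generating-function manipulation then yields the displayed binomial expressions for $\beta_i$; the punctured-torus statement follows by specialising to $g=1$.

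The main obstacle is to upgrade this rational conclusion to the integral statement that $E^*(B_p(\mathcal{S}_{g,1}))$ is genuinely $E_*$-free. Theorem \ref{thm:free part} guarantees that every nontrivial differential in the spectral sequence has $p$-power torsion target, so the torsion-free part of $E_2$ survives untouched to $E_\infty$. It therefore suffices to check that every $p$-torsion class on the $E_2$-page is either zero or is killed by a differential. The decisive feature here is the dimension $n=2$: in the Euclidean answer of Theorem \ref{prop:general calculations}, the torsion module $E^*(B\Sigma_p)/(\tr, e^{n/2-1}) = E^*(B\Sigma_p)/(\tr, 1)$ already vanishes, forcing the $0$-cell contribution to be torsion-free. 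For the remaining cells, a Bockstein-type cancellation analogous to the one exhibited in Theorem \ref{warmup} (where a divided power $\gamma_p(x)$ supports a $d^{p-1}$-differential landing on a $p$-torsion class, so both die while $x^p$ survives) should account for the torsion coming from the $1$- and $2$-cells and their bracket interactions. Verifying this cancellation in detail, while keeping careful track of how the cup-product cross terms interact with the divided-power differentials in $\CE_{\mathcal{H}_u}(\mathfrak{g})$, is the core technical step; once accomplished, the absence of extension problems is automatic and the final $E_*$-module emerges as free of the predicted rank.
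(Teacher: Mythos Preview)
Your overall architecture matches the paper's proof: run the cohomological Hecke spectral sequence, describe $\mathfrak{g}(\mathcal{S}_{g,1};S^0)$ via the stable splitting of $\mathcal{S}_g$, read off the free part rationally via Theorem~\ref{thm:free part} and the known rational Betti numbers, and then argue that all torsion on the $\mathrm{E}_2$-page is killed by differentials. Two points deserve correction.

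First, your Hecke-module decomposition is slightly off. The one-point compactification $\mathcal{S}_{g,1}^+\simeq \mathcal{S}_g$ has a single $0$-cell, and that cell is the basepoint; it therefore contributes nothing to the reduced cotensor. The summand $\U(\mathbf{L}^{\mathcal{H}_u}(\Sigma E_*))$ with no $\Omega$ should be deleted, leaving only the $2g$ copies of $\Omega(\cdots)$ from the $1$-cells and the single $\Omega^2(\cdots)$ from the $2$-cell. This matches the paper's explicit generators $a_1x,\ldots,b_gx$ and $cx$ in Lemma~\ref{E-surface-HLA}.

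Second, and more seriously, your torsion analysis misattributes the source of the torsion and omits the mechanism that kills it. The $1$-cell summands do \emph{not} produce any torsion in the weight-$p$ Hecke Chevalley--Eilenberg complex: for these, the relevant suspension map $(\mathcal{H}_u^{\Lie})_{0}^{-1}(p)\to(\mathcal{H}_u^{\Lie})_{1}^{0}(p)$ is an isomorphism (cf.\ Proposition~\ref{Gamma0}), so the additive-resolution piece is contractible. The \emph{only} torsion in weight $p$ is a single copy of $E_*/p$, coming from the $2$-cell generator $c$, where $\Susp^2$ has cokernel $E_*/p$. The paper kills this class not by an internal Bockstein argument but by a \emph{geometric comparison}: the embedding of an open disc $\mathbb{R}^2\hookrightarrow \mathcal{S}_{g,1}$ induces a map of spectral sequences, and under this map the torsion class $[cy]^\vee$ corresponds exactly to the torsion class in the Euclidean spectral sequence, which we already know (from the proof of Theorem~\ref{prop:general calculations}) is hit by a $d_{p-1}$ supported on the dual of $\gamma_p([x])$. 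Pulling this differential back along the comparison map forces the surface torsion to die as well. Your proposal gestures at the Euclidean analogy but does not supply this comparison map, and without it there is no way to locate the source of the killing differential inside $\CE_{\mathcal{H}_u}(\mathfrak{g}(\mathcal{S}_{g,1};S^0))$.
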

To apply \Cref{thm:cohomology}, we  
determine the Hecke Lie algebra \mbox{$\mathfrak{g}(\mathcal{S}_{g,1}, S^0) = E_*^{\wedge} (\Free_{\Lie}(S^{-1})^{\mathcal{S}_{g,1}} ).$}  
In the height  one case,  this Hecke Lie algebra is especially simple:
 
\begin{lemma}\label{K-surface-HLA}
At height $1$, the underlying $K_*$-module of $\mathfrak{g}(\mathcal{S}_{g,1};S^0)$ is free on $8g+4$ generators: \begin{center}\def\arraystretch{1.3}
\begin{tabular}{c|cccc}
$i\backslash  weight $ &$1$ & $2$ & $p$&$2p$\\
\hline
$-2$&$0$ & $0$ &$\substack{cy}$&$0$
\\
$-1$&$\substack{cx}$ &$0$  &$\substack{a_1y, \ldots , a_gy \\  b_1y, \ldots , b_gy}$&$\substack{c\widetilde y}$
\\
$0$&$\substack{a_1x, \ldots , a_gx \\  b_1x, \ldots , b_gx }$ & $\substack{c\widetilde x}$  &$0$&$ \substack{a_1\widetilde y, \ldots , a_g\widetilde y \\  b_1\widetilde y, \ldots , b_g\widetilde y }$\\
$1$&$0$&$\substack{a_1\widetilde x, \ldots , a_g\widetilde x \\  b_1\widetilde x, \ldots , b_g\widetilde x }$&$0$&$0$
\end{tabular}
\end{center} 
Here $i$ denotes the internal degree. The Hecke module structure is determined by the equations $$\alpha(ex)=\begin{cases}
ey&\quad \mbox{ for }  e=a_1,\ldots a_{g}, b_1,\ldots , b_g \\
pey&\quad \mbox{ for } e=c
\end{cases}$$ $$\ \ \ \  \ \ \  \alpha(e\widetilde x)=pe\widetilde y,\ \ \ \ \ \ \ \mbox{ for } e=a_1,\ldots,a_{g}, b_1, \ldots, b_g, c.\ \ \ $$
All other Hecke operations vanish. The only nonzero components of the Lie bracket are given by $\left[ a_ix, b_ix\right]=-c\widetilde x$ for $i=1,\ldots , g$.
\end{lemma}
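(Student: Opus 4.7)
The plan is to combine the three structural inputs from Section~\ref{recipe} with the explicit structure of the free Hecke Lie algebra on one generator at height one. Since $\mathcal{S}_{g,1}$ is a framed $2$-manifold, the target Hecke Lie algebra is $\mathfrak{g}(\mathcal{S}_{g,1};S^0) = K_*^{\wedge}(\Free^{\mathscr{L}}(\Sigma S^0)^{(\mathcal{S}_{g,1})^+})$. The one-point compactification $(\mathcal{S}_{g,1})^+$ is the closed genus $g$ surface $\mathcal{S}_g$ with the puncture as basepoint, carrying the standard CW structure with one $0$-cell (the basepoint), $2g$ one-cells $a_1,b_1,\ldots,a_g,b_g$, and one $2$-cell $c$ attached by the product of commutators $\prod_i[a_i,b_i]$. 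Since this attaching map is a commutator, it is stably null, so the skeletal filtration of $(\mathcal{S}_{g,1})^+$ splits stably.

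First, I will apply \Cref{lem:stably split} to obtain the splitting
\[
\U^{\Lie_{\mathcal{H}_u}}_{\Mod_{\mathcal{H}_u}}\bigl(\mathfrak{g}(\mathcal{S}_{g,1};S^0)\bigr)
\ \cong\ \bigoplus_{e\in\{a_i,b_i\}}\Omega\,\U^{\Lie_{\mathcal{H}_u}}_{\Mod_{\mathcal{H}_u}}\LL(x_1)\ \oplus\ \Omega^{2}\,\U^{\Lie_{\mathcal{H}_u}}_{\Mod_{\mathcal{H}_u}}\LL(x_1),
\]
where $x_1$ is a generator in internal degree $1$ and weight $1$. Then I will feed in \Cref{freelieone} (the odd-degree case, since $|x_1|=1$), which gives $\LL(x_1)$ as a free $K_*$-module on $x,\,y=\alpha(x),\,\widetilde x=[x,x],\,\widetilde y=\alpha(\widetilde x)$ in internal degrees $1,0,2,1$ and weights $1,p,2,2p$. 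Desuspending each summand once (for the $a_i,b_i$-cells) or twice (for the $c$-cell) produces exactly the $8g+4$ generators listed in the table, in the correct internal degrees and weights. This takes care of the underlying $K_*$-module.

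Next I will compute the Hecke operations by combining \Cref{prop:hecke desuspension} with the height-one version of \Cref{Gamma0}: at height one, $e=p$, so the single-step suspension $({\mathcal{H}_u^{\Lie}})_i^{i-1}(p)\to({\mathcal{H}_u^{\Lie}})_{i+1}^{i}(p)$ alternates between an isomorphism and multiplication by $p$. For the $a_i$- and $b_i$-summands ($r=1$ loop), applying $\Susp^1$ to $\alpha$ acting on $x$ or $\widetilde x$ is an isomorphism in one case and multiplication by $p$ in the other, yielding $\alpha(ex)=ey$ and $\alpha(e\widetilde x)=pe\widetilde y$. For the $c$-summand ($r=2$ loops), $\Susp^2$ is multiplication by $p$ in both parities, giving $\alpha(cx)=pcy$ and $\alpha(c\widetilde x)=pc\widetilde y$. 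All other Hecke operations vanish because they already vanished on $\LL(x_1)$ (the classes $y$ and $\widetilde y$ are Hecke-killed in the free algebra).

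Finally, the Lie bracket is obtained from \Cref{prop:bracket}, which identifies the underlying Lie algebra with $\widetilde K^*(\mathcal{S}_g)\otimes_{K_*}\LL(x_1)$ with bracket $[\alpha\otimes u,\beta\otimes v]=(-1)^{|u||\beta|}\alpha\beta\otimes[u,v]$. Since the only nonzero bracket in $\LL(x_1)$ among the module generators is $[x,x]=\widetilde x$, the bracket on $\mathfrak{g}$ can only be nonzero on pairs of the form $(ex,e'x)$ and must be valued in classes of the form $(ee')\widetilde x$. Using the cup product on $\widetilde K^*(\mathcal{S}_g)$---which, since $\mathcal{S}_g$ is stably split and $K$ is complex oriented, agrees with the cup product on integral cohomology---we have $a_i\cup b_j=\delta_{ij}\,c$ and $a_i\cup a_j=b_i\cup b_j=0$, so the only nontrivial brackets among generators are $[a_ix,b_ix]=-c\widetilde x$ (the sign $(-1)^{|x|\,|b_i|}=(-1)^{1\cdot(-1)}=-1$ comes from the Koszul rule on internal degrees). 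All other brackets vanish, either because $[x,y]=[y,y]=[y,\widetilde x]=0$ in $\LL(x_1)$ or because the relevant cup products vanish in $\widetilde K^*(\mathcal{S}_g)$.

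The only real subtlety will be bookkeeping of the suspension conventions: one must be careful that the arrows in \Cref{Gamma0} are running in the direction opposite to Rezk's suspension on $\Gamma$, and one must track the internal degrees carefully (cohomology classes in degree $k$ contribute internal degree $-k$). Otherwise the proof is a direct assembly of the three tools from \Cref{recipe}.
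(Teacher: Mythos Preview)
Your proposal is correct and follows essentially the same approach as the paper: both identify the underlying Lie algebra via \Cref{prop:bracket} together with the cup product on the one-point compactification, compute the Hecke module structure via \Cref{lem:stably split} and the height-one Euler class identification $e=p$ from \Cref{Gamma0}, and read off the free Hecke Lie algebra $\LL(x_1)$ from \Cref{freelieone}. Your write-up is in fact a bit more explicit than the paper's (you identify $(\mathcal{S}_{g,1})^+\simeq \mathcal{S}_g$ and track the Koszul sign in the bracket), but the logic is identical.
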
 
\begin{proof}

We recall three well-known facts about $\mathcal{S}_{g,1}$, the closed oriented surface of genus $g$:
\begin{enumerate} 
\item The cohomology $\widetilde E^{-*} (\mathcal{S}_{g,1})$ is free on  generators $a_1,\ldots, a_g$, $b_1,\ldots,b_g$ (in homological degree $-1$)  and $c$ (in homological degree $-2$).
\item We have $a_ib_i = -b_i a_i =c$ . The remaining products of the generators vanish.
\item The standard cell decomposition of $\mathcal{S}_{g,1}$ is stably split.
\end{enumerate}
By \Cref{freelieone}, the free Hecke Lie algebra $\LL(K_*(S^{1}))$
on a class in degree $1$ is generated as $K_\ast$-module by four classes
$$x\in \LL(K_*(S^{1}))_{1} \ , \ \ y\in \LL(K_*(S^{1}))_{0} \ , \ \
\widetilde{x}\in \LL(K_*(S^{1}))_{2} \ , \ \ \widetilde{y}\in \LL(K_*(S^{1}))_{1}$$
The Lie bracket satisfies $[x,x]=\widetilde{x}$ and vanishes otherwise; the Hecke operations satisfy $$\alpha(x)=y \ , \ \  \alpha(\widetilde{x})=\widetilde{y}\ , \ \ \alpha(\widetilde{x})=\alpha{\widetilde{y}}=0.$$
Combining facts $(1)$ and $(2)$ above with \Cref{prop:bracket}, we see that  the underlying $E_\ast$-module of 
$\mathfrak{g}(\dot{T};S^0)$ is generated by the $8g+4$ indicated elements, and that the Lie bracket behaves as claimed. The third fact allows us to apply \Cref{Gamma0} and   \Cref{lem:stably split} to  determine the structure
of $\mathfrak{g}(\dot{T};S^0)$ as a Hecke module.
\end{proof} 

We immediately generalise this claim to higher heights:

\begin{lemma}\label{E-surface-HLA}
The underlying $E_*$-module of $\mathfrak{g}(\mathcal{S}_{g,1};S^0)$ is given by
$$ E_\ast \langle  {a_1,\ldots,a_g,  b_1,\ldots,b_g , c}  \rangle \otimes_{E_\ast} \left( \bigoplus_{0\leq \ell \leq h} (\mathcal{H}_u^{\Lie})_{1}^{\ast}(p^\ell)  \oplus
\bigoplus_{0\leq \ell \leq h} (\mathcal{H}_u^{\Lie})_{2}^{\ast}(p^\ell) \right) $$
Write elements corresponding to $\phi\in (\mathcal{H}_u^{\Lie})_{1}^{\ast}(p^\ell)$  or  $\phi\in (\mathcal{H}_u^{\Lie})_{2}^{\ast}(p^\ell)$
as $x_\phi$ or 
 $\widetilde{x}_{\phi}$, respectively. \\
Elements  $e \otimes x_\phi$ with $e\in \{a_1,\ldots,a_g,b_1,\ldots b_g,c\}$ and  $\phi\in (\mathcal{H}_u^{\Lie})_{1}^{\ast}(p^\ell)$ have weight $p^\ell$.\\
Elements  $e \otimes \widetilde{x}_\phi$ with $e\in \{a_1,\ldots,a_g,b_1,\ldots b_g,c\}$ and  $\phi\in (\mathcal{H}_u^{\Lie})_{2}^{\ast}(p^\ell)$ have weight $2p^\ell$.

If $\phi\in (\mathcal{H}_u^{\Lie})_{1}^{\ast} $, then 
$$\alpha(e\otimes x_{\phi})=\begin{cases}
e\otimes x_{\Susp(\alpha)\cdot \phi}&\quad \mbox{ for } e=a_1,\ldots a_{g}, b_1,\ldots , b_g  \mbox{ and }  \alpha \in  (\mathcal{H}_u^{\Lie})_{\ast-1}^{j} \\
e\otimes x_{\Susp^2(\alpha)\cdot \phi}&\quad \mbox{ for } e=c \mbox{ and }  \alpha \in  (\mathcal{H}_u^{\Lie})_{\ast-2}^{j}
\end{cases}$$

If $\phi\in (\mathcal{H}_u^{\Lie})_{2}^{\ast} $, then 
$$\alpha(e\otimes \widetilde{x}_{\phi})=\begin{cases}
e\otimes\widetilde{x}_{\Susp(\alpha)\cdot \phi}&\quad \mbox{ for } e=a_1,\ldots a_{g}, b_1,\ldots , b_g  \mbox{ and }  \alpha \in  (\mathcal{H}_u^{\Lie})_{\ast-1}^{j} \\
e\otimes \widetilde{x}_{\Susp^2(\alpha)\cdot \phi}&\quad \mbox{ for } e=c \mbox{ and }  \alpha \in  (\mathcal{H}_u^{\Lie})_{\ast-2}^{j}
\end{cases}$$
The Lie bracket satisfies $[a_i \otimes x_\lambda,b_i \otimes x_\mu] =- c \otimes  \widetilde{x}_{\lambda \mu}$  for all scalars $\lambda, \mu \in E_\ast \cong  (\mathcal{H}_u^{\Lie})_{1}^{\ast}(1)$ and all $i=1,\ldots g$. It vanishes otherwise.

\end{lemma}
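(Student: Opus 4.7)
The plan is to reproduce the argument of \Cref{K-surface-HLA} verbatim, substituting the general-height tools developed in \Cref{sec:Hecke} for their height one counterparts. Throughout I would use the three standing facts recalled in the proof of \Cref{K-surface-HLA}: the one-point compactification $\mathcal{S}_{g,1}^+$ is the closed genus-$g$ surface $\Sigma_g$, whose reduced $E$-cohomology is free on $a_1,\ldots,a_g,b_1,\ldots,b_g$ in degree $-1$ and $c$ in degree $-2$; the only nonzero cup products are $a_ib_i=-b_ia_i=c$; and the skeletal filtration of $\Sigma_g$ splits stably.

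First, I would apply \Cref{freeliehigher} with $a=1$ odd to get the underlying Hecke Lie algebra structure of $\LL(x_1) = \LL(E_*(\Sigma^{n-1}S^0))$ for $n=2$: the generators are classes $x_\phi$ indexed by $\phi\in(\mathcal{H}_u^{\Lie})_1^*(p^\ell)$ (weight $p^\ell$) and $\widetilde{x}_\phi$ indexed by $\phi\in(\mathcal{H}_u^{\Lie})_2^*(p^\ell)$ (weight $2p^\ell$), with bracket $[x_\lambda,x_\mu]=\widetilde{x}_{\lambda\mu}$ for scalars and with free action of the Hecke operations by post-composition. Next, I would invoke \Cref{prop:bracket} to identify the underlying Lie algebra of $\mathfrak{g}(\mathcal{S}_{g,1};S^0)$ with $\widetilde{E}^*(\Sigma_g)\otimes_{E_*}\U^{\Lie_{\mathcal{H}_u}}_{\Lie_{E_*}}(\LL(x_1))$ equipped with bracket $[\alpha\otimes x,\beta\otimes y]=(-1)^{|x||\beta|}\alpha\beta\otimes [x,y]$. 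The module description of the statement falls out immediately, as does the bracket formula: the only non-vanishing contributions come from pairing an $a_i$-summand with the corresponding $b_i$-summand, and since $|x_\lambda|=1$ and $|b_i|=-1$, the sign works out to $-1$, yielding $[a_i\otimes x_\lambda,b_i\otimes x_\mu]=-c\otimes\widetilde{x}_{\lambda\mu}$.

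The final and most subtle step is to pin down the Hecke operations. Here I would apply \Cref{lem:stably split}, whose hypothesis holds by the third recalled fact about $\Sigma_g$, to obtain an isomorphism of Hecke modules
\[\U^{\Lie_{\mathcal{H}_u}}_{\Mod_{\mathcal{H}_u}}(\mathfrak{g}(\mathcal{S}_{g,1};S^0))\cong \bigoplus_{i=1}^g \Omega\,\U^{\Lie_{\mathcal{H}_u}}_{\Mod_{\mathcal{H}_u}}\LL(x_1)^{\oplus 2}\oplus \Omega^2\,\U^{\Lie_{\mathcal{H}_u}}_{\Mod_{\mathcal{H}_u}}\LL(x_1),\]
with the two $\Omega$-summands at the $i$-th place corresponding to the cells $a_i,b_i$ and the $\Omega^2$-summand corresponding to $c$. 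Combining this with \Cref{prop:hecke desuspension} and \Cref{loopingmodules}, the Hecke action on an $a_i$- or $b_i$-summand is precisely the action on $\LL(x_1)$ twisted by a single suspension $\Susp$ in the Hecke power ring, while the action on the $c$-summand is twisted by $\Susp^2$. Reading this off yields exactly the formulas in the statement.

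The mathematical content is a routine assembly of \Cref{freeliehigher}, \Cref{prop:bracket}, \Cref{lem:stably split}, and \Cref{prop:hecke desuspension}; the only place where care is required is bookkeeping — tracking the internal degree shifts in the looping functor and verifying that the sign $(-1)^{|x_\lambda||b_i|}=-1$ in the Lie bracket produces the minus sign in $-c\otimes\widetilde{x}_{\lambda\mu}$. I expect no serious obstacle beyond this sign check.
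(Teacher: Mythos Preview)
Your proposal is correct and follows essentially the same approach as the paper's proof: invoke \Cref{freeliehigher} and \Cref{prop:bracket} (together with the cup product structure on $\widetilde E^*(\Sigma_g)$) to obtain the underlying $E_*$-Lie algebra, then use \Cref{lem:stably split} and \Cref{prop:hecke desuspension} to determine the Hecke module structure via the stable splitting of the skeletal filtration of $\Sigma_g$. The paper's proof is a terse two sentences pointing to exactly these ingredients; your version simply unpacks them and adds the sign verification, which is a welcome clarification.
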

\begin{proof}
We proceed as in the proof of \Cref{K-surface-HLA}:  the structure of $\mathfrak{g}(\mathcal{S}_{g,1};S^0)$ as an $E_\ast$-Lie algebra is implied by 
\Cref{freeliehigher}  together with the   well-known ring structure of $E^\ast(\mathcal{S}_{g,1})$.
Moreover,  \Cref{lem:stably split}  determines the structure of $\mathfrak{g}(\mathcal{S}_{g,1};S^0)$ as a Hecke module.
\end{proof}
With this description of $\mathfrak{g}(\mathcal{S}_{g,1};S^0)$ at hand, we can now prove the main result of this section.
\begin{proof}[Proof of \ref{thm:open surfaces}]
Recall that \Cref{thm:cohomology} gives a  cohomological spectral sequence  \mbox{with signature}
\[{\mathrm{E}_2^{s,t}} \cong H^{s+1}\left( \CE_{{\mathcal{H}_u}}\left(\mathfrak{g}(\mathcal{S}_{g,1};S^0) \right)^\vee \right)_{t+1} \implies \bigoplus_k E^{t-s}(B_k(\mathcal{S}_{g,1};S^0)).\] 
In the interest of readability, we present the argument    at height $h=1$; we will then indicate   how it 
generalises to higher heights in a second step.

Recall the description of $\mathfrak{g}(\mathcal{S}_{g,1};S^0)$ given in \Cref{K-surface-HLA}.
We depict the additive resolution $\AR(\mathfrak{g}(\mathcal{S}_{g,1};S^0))$ (defined in \Cref{construction:additive resolution}) of the relevant Hecke Lie \mbox{algebra up to weight $p$:}
\vspace{-10pt}
\begin{figure}[H]
\begin{small}
\begin{center}\def\arraystretch{1.5}
\begin{tabular}{c|cccc}
$r\backslash weight$ &$1$ & $2$ & $p$&$\cdots$\\
\hline
$0$&$\substack{\ \\ \ \\ \    [cx]_{_{(-1,1,0,1)}}\\  [a_i x]_{_{(0,1,0,1)}} \\
   [b_i x]_{_{(0,1,0,1)}} 
}$ & \ \ \ \ \ $\substack{\ \\ \ \\   [c\widetilde{x}]_{_{(0,1,0,2)}}\\ [a_i \widetilde{x}]_{_{(1,1,0,2)}}\\
   [b_i  \widetilde{x}]_{_{(1,1,0,2)}}
}$ & $\substack{\ \\ \ \\  \ \ \ [cy]_{_{(-2,1,0,p)}}\\   \ \ \ [a_i y]_{_{(-1,1,0,p)}}\\
 \ \ \  [b_i y]_{_{(-1,1,0,p)}} 
}
$  \ \ \ \ \ &$\cdots$\ \vspace{-9pt}
\\\\
$1$&$\substack{\ \\ \ \\   [1|cx]_{_{(-1,1,1,1)}}\\   [1|a_i x]_{_{(0,1,1,1)}}\\
   [1|b_i  x]_{_{(0,1,1,1)}}
}
$ & \ \ \ \ \  $\substack{\ \\ \ \\  [1|c\widetilde{x}]_{_{(0,1,1,2)}}\\  [1|a_i \widetilde{x}]_{_{(1,1,1,2)}} \\
 [1|b_i \widetilde{x}]_{_{(1,1,1,2)}} 
}
 $  &$\substack{ [1|cy]_{_{(-2,1,1,p)}}\\   [1|a_i y]_{_{(-1,1,1,p)}}\\
   [1|b_i y]_{_{(-1,1,1,p)}} \\ \  \\
  \hspace{-3pt} [\alpha| cx]_{_{(-2,1,1,p)}}\\  [\alpha | a_i x]_{_{(-1,1,1,p)}} \\
   [\alpha|b_i x]_{_{(-1,1,1,p)}} 
}$

&$\cdots$  
\\
\vdots&\vdots&\vdots&\vdots
\end{tabular}\vspace{-10pt}
\end{center} 
\end{small}
\end{figure}
Thus, basis elements of weight $p$ in $ \CE_{{\mathcal{H}_u}}\left(\mathfrak{g}(\mathcal{S}_{g,1};S^0) \right)$ fall into two different classes:
\begin{enumerate} 
\item weight $p$ elements in the additive resolution (these belong to  $\Gamma^1_{K_\ast}\left(\AR(\mathfrak{g}(\mathcal{S}_{g,1};S^0) )\right)$)
\item products or divided powers of elements in weight $1$ or $2$ in the additive resolution (these belong to  $\Gamma^{>1}_{K_\ast}\left(\AR(\mathfrak{g}(\mathcal{S}_{g,1};S^0) )\right)$).
\end{enumerate}
The simplicial structure maps and the   differential send elements in each class to linear combinations of  
 elements in the same class. This gives a decomposition of simplicial chain complexes 
$$\CE_{{\mathcal{H}_u}}\left(\mathfrak{g}(\mathcal{S}_{g,1};S^0) \right)(p) = \CE_1(p) \oplus \CE_2(p).$$
The explicit description of the Hecke action on $\mathfrak{g}(\mathcal{S}_{g,1};S^0)$ in 
\Cref{K-surface-HLA} shows that the normalised chain complex of $\CE_1(p)$ is equivalent to  $ \ldots \rightarrow 0 \rightarrow  [\alpha | cx]   \xrightarrow{p}    [cy]\rightarrow    0 $. \mbox{Hence, we have}
$$H^{s+1}(\CE_1(p)^\vee)_{\ast+1} = \begin{cases}  \Sigma K_\ast/p & \mbox{ if } s=1 \\ 0 & \mbox{ else} \end{cases} . $$

The second summand  $\CE_2(p)$  can be understood by following the rational computations in \cite{BoedigheimerCohen:RCCSS},  \cite[Section 6.2]{Knudsen:BNSCSVFH}, and \cite[Section 3.2]{DrummondColeKnudsen:BNCSS}. More precisely, consider
the  ordinary Lie algebra $\mathfrak{h}$ in $\Mod_{K_*}^\NN$ with underlying module 
  generated by $2g+1$ classes $a_1 x, \ldots, a_g x, b_1x,\ldots, b_gx$ (in degree $0$ and weight $1$) and an additional  class   $c \widetilde{x} $ (in degree $0$ and weight $2$).
The only non-vanishing   Lie brackets of generators are given by 
$[a_i x, b_i x] = -c\widetilde{x}$ for $1\leq i \leq g$
Observe that $\CE_2(p)$ is   isomorphic to the weight $p$ component of the Chevalley--Eilenberg complex of   $$\mathfrak{h} \oplus \triv(K_*\langle a_i \widetilde{x}, b_i\widetilde{x}, cx \ | \ 1\leq i \leq g \rangle ),$$
where the right summand denotes a trivial Lie algebra on  $2g+1$ classes $a_1 \widetilde{x}, \ldots, a_g \widetilde{x}, b_1\widetilde{x},\ldots, b_g\widetilde{x}$ (in degree $1$ and weight $2$) and an additional  class   $c  {x} $ (in degree $-1$ and weight $1$). 

The Lie algebra \textit{co}homology of $ \triv(K_*\langle a_i \widetilde{x}, b_i\widetilde{x}, cx \ | \ 1\leq i \leq g \rangle ) $ is therefore given by a polynomial algebra $P_{K_\ast}(\widetilde{\alpha}_1,\ldots,\widetilde{\alpha}_g, \widetilde{\beta}_1, \ldots, \widetilde{\beta}_g,   {\gamma})$, where the classes  $\widetilde{\alpha}_i, \widetilde{\beta}_i$ sit in degree $-2$ and \mbox{weight $2$},  and ${\gamma}$ lives in degree $0$ and weight $1$. Here we have used that dualising divided powers of free $K_*$-modules gives symmetric powers.

Since $\mathfrak{h}$ sits in even degree,
 the Chevalley--Eilenberg complex does not contain any divided powers, and the cohomology $H^*(\CE(\mathfrak{h}^\vee)$ agrees with the homology of the complex
  $$( \Lambda[ {\alpha}_1,\ldots, {\alpha}_g,  {\beta}_1, \ldots,  {\beta}_g,\widetilde{\gamma}],d),$$
with differential satisfying $d(\widetilde{\gamma}) = -2(\alpha_1  {\beta}_1 + \ldots +\alpha_g  {\beta}_g)$ and vanishing otherwise.
Here  ${\alpha}_i, {\beta}_i$ sit  in degree $-1$ and weight $1$,  and  $\widetilde{\gamma}$ sits in degree $-1$ and weight $2$. The cohomology of this complex is computed in \cite[Theorem D]{BoedigheimerCohen:RCCSS},
We can therefore describe the $\mathrm{E}_2$-term in \mbox{weight $p$:}
$$
\mathrm{E}_2^{s,\ast}(p)  \cong \left( H_{-s-1} ( \Lambda[ {\alpha}_1,\ldots, {\alpha}_g,  {\beta}_1, \ldots,  {\beta}_g,\widetilde{\gamma}],d)  \otimes  P_{K_\ast}(\widetilde{\alpha}_1,\ldots,\widetilde{\alpha}_g, \widetilde{\beta}_1, \ldots, \widetilde{\beta}_g,   {\gamma})\right)(p)  \oplus \ \ \Sigma^{1}K_\ast[1].$$

Picking an embedding $\RR^2 \cong D^2 \rightarrow \mathcal{S}_{g,1}$ from an open disc into the punctured surface of \mbox{genus $g$}, we obtain a map of configuration spaces $\Conf_p(\RR^2) \rightarrow \Conf_p( \mathcal{S}_{g,1})$. Stably, this map corresponds to the map of spectral Lie algebras $\Free^{\mathscr{L}}(S^{-1})^{S^2} \rightarrow \Free^{\mathscr{L}}(S^{-1})^{\mathcal{S}_{g,1}} $ induced by the collapse map $\mathcal{S}_{g,1} \rightarrow S^2$.

 We obtain maps of cohomological spectral sequences 
$\mathrm{E}_r^{s,t}(\mathcal{S}_{g,1}) \rightarrow \mathrm{E}_r^{s,t}(\RR^2)$
from the spectral sequence computing $E^\ast(B_p(\mathcal{S}_{g,1}))$  to the corresponding sequence for $B_p(\RR^2)$, which we have studied in great detail in the preceding section.

On the $\mathrm{E}_1$-page, the map $\mathrm{E}_1^{s,t}(\mathcal{S}_{g,1}) \rightarrow \mathrm{E}_1^{s,t}(\RR^2)$ is obtained by applying the cohomological Hecke Chevalley--Eilenberg  complex to the map of Hecke Lie algebras
$$\mathfrak{g}(\RR^2, S^0)   \cong  \widetilde{E}^\ast(S^2) \otimes_{\EE_2} \LL(x_{1}) \longrightarrow \widetilde{E}^\ast(\mathcal{S}_{g,1}) \otimes_{\EE_2} \LL(x_{1}) \cong \mathfrak{g}(\mathcal{S}_{g,1}, S^0).$$
Under this map, the classes $[x]$ and $[y]$ on the left (cf. \Cref{warmup} and its proof) are sent to  the classes $[cx]$ and $[cy]$ on the right.
As in the proof of \Cref{warmup},  the   class dual to $[x]$ kills the torsion class dual to $[y]$ in the spectral sequence $\mathrm{E}_r^{s,t}(\RR^2)$,  which implies that the class dual to  $[cy] $ must have died on the $p^{th}$ page of the spectral sequence for $\mathcal{S}_{g,1}$.

By \Cref{thm:free part}, all nontrivial $d_r$-differentials with $r\geq 2$ must have torsion targets.  This implies that the pages $\mathrm{E}_2^{s,t}(\mathcal{S}_{g,1})$, \ldots,  $\mathrm{E}_{p-1}^{s,t}(\mathcal{S}_{g,1})$ have at most one torsion class $[cy]$, and that  $\mathrm{E}_{r}^{s,t}(\mathcal{S}_{g,1})$ is torsion-free for all $r\geq p$. This implies that the spectral sequence degenerates on the  $\mathrm{E}_{p}$-page, with  $\mathrm{E}_{p}^{s,t}(\mathcal{S}_{g,1})\cong \mathrm{E}_{\infty}^{s,t}(\mathcal{S}_{g,1})$ given by \begin{equation} \left( H_{-s-1} ( \Lambda[ {\alpha}_1,\ldots, {\alpha}_g,  {\beta}_1, \ldots,  {\beta}_g,\widetilde{\gamma}],d)  \otimes  P_{K_\ast}(\widetilde{\alpha}_1,\ldots,\widetilde{\alpha}_g, \widetilde{\beta}_1, \ldots, \widetilde{\beta}_g,   {\gamma})\right)(p). \label{betsi}\end{equation}
Since this is a free $K_\ast$-module, there are no extension problems. The claim for $K$-theory therefore follows by computing 
the dimensions in \eqref{betsi} explicitly, which is done in \cite[Section 3.2]{DrummondColeKnudsen:BNCSS}.

 The argument generalises to $E$-cohomology by starting with \Cref{E-surface-HLA} instead of \Cref{K-surface-HLA}. The important facts needed in the proof are:
\begin{itemize}
\item Up to scaling, Hecke operations of weight $p$ shift degree down by $1$ (cf. \Cref{Heckepowerring})
\item The suspension  $(\mathcal{H}^{\Lie}_u)_{i}^{i-1}(p) \rightarrow (\mathcal{H}^{\Lie}_u)_{i-1}^{i-2}(p)$ is an isomorphism for all $i$ even (cf. \Cref{Gamma0}), and has cokernel $E_\ast/p$ for $i$ odd; this  gives a description of $\CE_1(p)$ as $E_*/p$.
\item The $E$-cohomology of $B_p(\RR^2)$ is torsion-free (cf. \Cref{prop:general calculations}).
\end{itemize}
\end{proof}

 \subsection{The $\FF_p$-homology of $B_p(\mathcal{S}_{g,1}$)}
We close this paper with an application in classical topology which, to the best of our knowledge, is new: 
\begin{proposition}[$\FF_p$-homology, surface case]\label{Fpsurface}
Let $p$ be an odd prime. 
The $\FF_p$-homology groups  of the unordered configuration space of $p$ points in the punctured torus satisfy  
$$H_{even}(B_p(\dot{T}));\FF_p) = \bigoplus_i H_{2i}(B_p(\dot{T});\FF_p )   \cong \bigoplus_{0 \leq i<p \mbox{ \small{even} }} \FF_p^{\oplus  \frac{3i+2}{2} }   . \vspace{3pt}$$
$$\  \ \ \ \ \  \ \ \ \ \  \ \ \ \  \ \ \ H_{odd}(B_p(\dot{T}));\FF_p )   = \bigoplus_i H_{2i+1}(B_p(\dot{T});\FF_p )   \cong   \bigg(  \bigoplus_{0 \leq i<p \mbox{ \small{odd} }} \FF_p^{\oplus  \frac{3i+1}{2} } \bigg)
\oplus
 \FF_p^{\oplus (p+1)} . \vspace{3pt}$$  
More generally, the $\FF_p$-homology of the unordered configuration space of $p$ points in a punctured orientable genus $g$ surface $\mathcal{S}_{g,1}$  satisfies: \vspace{-3pt}

$$ \ \ H_{even}(B_p(\mathcal{S}_{g,1}));\FF_p) := \bigoplus_i H_{2i}(B_p(\mathcal{S}_{g,1}));\FF_p) \cong \bigoplus_{0 \leq i<p \mbox{ \small{even} }} \FF_p^{\oplus \beta_i }   . \vspace{3pt}$$
$$  \  \ \  \ H_{odd}(B_p(\mathcal{S}_{g,1}));\FF_p )   := \bigoplus_i H_{2i+1}(B_p(\mathcal{S}_{g,1}));\FF_p)   \cong    \ \bigoplus_{0 \leq i\leq p \mbox{ \small{odd} }} \FF_p^{\oplus \beta_i }   ,\vspace{3pt}$$
 where the numbers $\beta_i$ are defined as in \Cref{thm:open surfaces}.
\end{proposition}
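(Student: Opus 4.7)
The plan is to deduce Proposition~\ref{Fpsurface} by coupling the $E$-cohomology computation of \Cref{thm:open surfaces} with a height-stabilisation of the Atiyah--Hirzebruch spectral sequence. The crucial input is that \Cref{thm:open surfaces} exhibits $E^*(B_p(\mathcal{S}_{g,1}))$ as a \emph{free} $E_*$-module for every form of Morava $E$-theory at the odd prime $p$. Since $B_p(\mathcal{S}_{g,1})$ is a finite CW complex, base change along $E \to K(h)$ preserves freeness and gives
\[
K(h)^*(B_p(\mathcal{S}_{g,1})) \;\cong\; E^*(B_p(\mathcal{S}_{g,1})) \otimes_{E_*} K(h)_*.
\]
Writing $B_{\mathrm{ev}} = \sum_{i\,\mathrm{even}} \beta_i$ and $B_{\mathrm{odd}} = \sum_{i\,\mathrm{odd}} \beta_i$ (and noting that the summand $\Sigma^p E_*^{\oplus \beta_p}$ lands in the odd part because $p$ is odd), the 2-periodicity of $K(h)$ therefore yields $\dim_{K(h)_0} K(h)^0 = B_{\mathrm{ev}}$ and $\dim_{K(h)_0} K(h)^1 = B_{\mathrm{odd}}$.

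Next, I would compare $K(h)^*(B_p(\mathcal{S}_{g,1}))$ to $H^*(B_p(\mathcal{S}_{g,1}); \mathbb{F}_p)$ via the Atiyah--Hirzebruch spectral sequence
\[
\mathrm{E}_2^{s,t} = H^s\bigl(B_p(\mathcal{S}_{g,1}); K(h)_t\bigr) \;\Longrightarrow\; K(h)^{s+t}(B_p(\mathcal{S}_{g,1})).
\]
For $h$ such that $2p^h - 1$ exceeds the cellular dimension $2p$ of $B_p(\mathcal{S}_{g,1})$, every potentially nontrivial differential is controlled by a Milnor primitive $Q_j$ with $j \geq h$ and therefore vanishes for degree reasons, so the spectral sequence collapses at $\mathrm{E}_2$. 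Because $K(h)$ is concentrated in even degrees, this collapse respects the parity of total degree, yielding the identities $\dim_{\mathbb{F}_p} H^{\mathrm{ev}}(B_p(\mathcal{S}_{g,1}); \mathbb{F}_p) = B_{\mathrm{ev}}$ and $\dim_{\mathbb{F}_p} H^{\mathrm{odd}}(B_p(\mathcal{S}_{g,1}); \mathbb{F}_p) = B_{\mathrm{odd}}$.

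Universal coefficients over the field $\mathbb{F}_p$ then transfers these identities from cohomology to homology, and plugging in the explicit expressions for the $\beta_i$ recorded in \Cref{thm:open surfaces}---specialising $\lfloor (3i+2)/2 \rfloor$ into $(3i+2)/2$ for even $i$ and $(3i+1)/2$ for odd $i$ in the torus case $g = 1$---reproduces the stated formulas, both in the torus case and in arbitrary genus.

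The main obstacle will be pinning down the collapse of the $K(h)$-Atiyah--Hirzebruch spectral sequence on $B_p(\mathcal{S}_{g,1})$ once $h$ is sufficiently large relative to $\dim(B_p(\mathcal{S}_{g,1})) = 2p$. Although the degree-counting argument above is essentially folklore, the cleanest writeup may either invoke standard results on $K(h)$-Atiyah--Hirzebruch spectral sequences of finite-dimensional complexes, or package the high-height stabilisation as a comparison statement: whenever $E^*(X)$ is $E_*$-free on a finite CW complex $X$ and $h$ is large, the $E_*$-rank of $E^*(X)$ equals $\dim_{\mathbb{F}_p} H^*(X; \mathbb{F}_p)$, with the equality respecting the even/odd split.
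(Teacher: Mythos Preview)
Your proposal is correct and follows essentially the same approach as the paper: both arguments use the freeness of $E^*(B_p(\mathcal{S}_{g,1}))$ from \Cref{thm:open surfaces}, base-change to $K(h)$, and then invoke the collapse of the Atiyah--Hirzebruch spectral sequence for $h\gg 0$ to identify $K(h)$-(co)homology with $\FF_p$-(co)homology up to periodicity. The only cosmetic differences are that the paper phrases the base change via Spanier--Whitehead duality and works with $K(h)$-homology directly, while you work with $K(h)$-cohomology and pass back via universal coefficients; you also spell out the degree bound for AHSS collapse more explicitly than the paper does.
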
  
\begin{proof} Since $B_p(\mathcal{S}_{g,1})$ is a finite complex, we can pick $h\gg 0$ sufficiently large  that the $K(h)$-based Atiyah--Hirzebruch spectral sequence degenerates; this implies \begin{equation} \label{AHSSdeg} K(h)_\ast(B_p(\mathcal{S}_{g,1})) \cong H_\ast(B_p(\mathcal{S}_{g,1});\FF_p)[\beta^{\pm 1}].\end{equation}  
Here $K(h)$ is  the $2$-periodic Morava $K$-theory attached to the  $E$-theory $E_h$ 
corresponding to the height $h$ Honda formal group law over  the field $\FF_p$.

Writing $\DD$ for the Spanier--Whitehead duality functor in spectra, we can also use the finiteness of $B_p(\mathcal{S}_{g,1})$ to obtain  an equivalence
\begin{equation} K(h)^{B_p(\mathcal{S}_{g,1})_+} \simeq  K(h)\otimes \DD(B_p(\mathcal{S}_{g,1})_+) \simeq K(h) \otimes_{E_h} (E_h\otimes \DD(B_p(\mathcal{S}_{g,1})_+)\label{tensordown} \end{equation}
By \Cref{thm:open surfaces}, we know that the $E$-module $E_h\otimes \DD(B_p(\mathcal{S}_{g,1})_+) \simeq E_h^{B_p(\mathcal{S}_{g,1})_+}$ is a direct sum of 
$ \beta_{ \mbox{ \small{even} }}:= \bigoplus_{i<p \mbox{ \small{even} }}   \beta_i $ many copies of $E$ and $\beta_{ \mbox{ \small{odd} }}:=\bigoplus_{i\leq p \mbox{ \small{odd} }}  \beta_i   $ many copies of $\Sigma E$. Expression \eqref{tensordown} then implies that  $K(h)^\ast(B_p(\mathcal{S}_{g,1}))$ consists of $\beta_{ \mbox{ \small{even} }}$ many  copies of $K(h)_\ast$ and  $\beta_{ \mbox{ \small{odd} }}$ many   copies of $\Sigma K(h)_\ast$. 
Since $K(h)$ is a generalised field, the corresponding claim holds for $K(h)$-based homology, and \eqref{AHSSdeg}  implies the result.
\end{proof}

\begin{proof}[Proof of Theorem \ref{thm:no torsion}]
By the universal coefficient theorem, the $\mathbb{F}_p$-Betti number in degree $i$ is greater than or equal to the rational Betti number in degree $i$: 
$$\dim_{\QQ}H_i(B_p(\mathcal{S}_{g,1});\QQ)\leq \dim_{\FF_p}H_i(B_p(\mathcal{S}_{g,1});\FF_p) .$$
On the other hand, Proposition \ref{Fpsurface} and the known rational calculation show that the sum of the $\mathbb{F}_p$-Betti numbers coincides with the sum of the rational Betti numbers. Since these numbers are all non-negative, it follows that corresponding Betti numbers are in fact equal, and the claim follows by a second invocation of the universal coefficient theorem. 
\end{proof}

\newpage
\bibliography{There.bib}
\bibliographystyle{alpha}

\end{document}